\documentclass{article}
\usepackage{graphicx} 
\usepackage[utf8]{inputenc}
\usepackage{amsfonts,amssymb} 
\usepackage{epstopdf}
\usepackage{inputenc}
\usepackage{amsthm}
\usepackage{enumitem}
\usepackage{cite}
\usepackage{mathtools}
\usepackage{graphicx}
\usepackage{mathtools}
\usepackage{authblk}
\usepackage{stackengine} 
\usepackage{hyperref}
\usepackage{booktabs}
\usepackage{siunitx}
\usepackage{caption}
\usepackage{mdframed}
\usepackage[scr=esstix,cal=boondox]{mathalfa} 
\usepackage{lipsum}
\usepackage{aligned-overset} 
\usepackage[stdsubgroups,nocfg]{nomencl}

\renewcommand{\nomgroup}[1]{%
\ifthenelse{\equal{#1}{A}}{%
      \item[\textbf{Skew products}]}{%
        \ifthenelse{\equal{#1}{K}}{%
        \item[\textbf{Pesin theory}]}{%
          \ifthenelse{\equal{#1}{P}}{%
          \item[\textbf{Reduction steps}]}{%
            \ifthenelse{\equal{#1}{X}}{%
            \item[\textbf{General parameters}]}{%
              {}}}}}}
\usepackage{setspace}
    \makenomenclature
\hypersetup{
hidelinks,
colorlinks=true,
linkcolor=red,
citecolor=blue,
urlcolor=black
}

\makeatletter
\def\namedlabel#1#2{\begingroup
    #2%
    \def\@currentlabel{#2}%
    \phantomsection\label{#1}\endgroup
}
\makeatother

\begin{document}

\newtheorem{maintheorem}{Theorem}
\renewcommand*{\themaintheorem}{\Alph{maintheorem}}

\newcommand{\bbN}{\mathbb{N}}
\newcommand{\bbR}{\mathbb{R}}
\newcommand{\bbZ}{\mathbb{Z}}
\newcommand{\M}{{\mathcal{M}}}
\newcommand{\im}{{\mathrm{im}}}
\newcommand{\diam}{{\mathrm{diam}}}
\newcommand{\Jac}{\mathop{\mathrm{Jac}}\nolimits}
\def\cA{{\cal A}}
\def\cB{{\cal B}}
\def\cC{{\cal C}}
\def\cE{{\cal E}}
\def\cF{{\cal F}}
\def\cH{{\cal H}}
\def\cL{{\cal L}}
\def\cO{{\cal O}}
\def\cR{{\cal R}}
\def\cS{{\cal S}}
\def\cT{{\cal T}}

\newtheorem{maincorollary}[maintheorem]{Corollary}

\newtheorem{theorem}{Theorem}[section]
\newtheorem{definition}[theorem]{Definition} 
\newtheorem{remark}[theorem]{Remark} 
\newtheorem{conjecture}{Conjecture}
\newtheorem{proposition}[theorem]{Proposition}
\newtheorem{lemma}[theorem]{Lemma}
\newtheorem{assumption}{Assumption}
\newtheorem{corollary}[theorem]{Corollary}
\newtheorem{question}{Question}
\newtheorem{example}{Example}[section]
\newmdtheoremenv{slo}{Slogan}[section]

\title{
  Strong Positive Recurrence for Strongly Entropy Expanding Maps \\
  \large }
\author{Alexandre Delplanque, Hengyi Li}
\date{May, 2026}
\maketitle

\begin{abstract}
    We introduce \emph{expanding strong positive recurrence} (expanding-SPR) for non-invertible systems with critical points. We then 
    give a sufficient condition for maps to be SPR: \emph{strong entropy expansion} (SEE).
    We prove that expanding-SPR implies good thermodynamical properties, including exponential decay of correlations of H\"older observables for the measures of maximal entropy. The class of 
    maps that are SEE includes all $C^{\infty}$-interval maps with $h_{\rm{top}}(f)>0$ and all Viana maps.
\end{abstract}

\tableofcontents

\section{Introduction}

A key phenomenon in dynamical systems is that deterministic dynamics may give rise to chaotic behavior bearing the appearance of randomness. One way to investigate this phenomenon is via ergodic theory, where the notion of entropy plays a key role. One class of objects related to entropy, the \emph{measures of maximal entropy} (MME) of a dynamical system, carries rich dynamical information and has far-reaching interactions in geometric, topological, as well as symbolic dynamics.

MMEs are best understood in the context of uniformly hyperbolic dynamics, thanks to the works of Sinai-Ruelle-Bowen in which they introduced thermodynamical formalism to study the statistical properties of smooth dynamical systems \cite{Sinai} \cite{Ruelle} \cite{Bo}. Then, their pioneering ideas are carried to dynamical systems whose hyperbolicity are not as well-behaved as the uniformly hyperbolic ones \cite{GST} \cite{Cli} \cite{BCS3}. A key notion is \emph{strong positive recurrence}, adopted from the theory of Markov shifts \cite{G1} \cite{S3}.


In this paper, we introduce \emph{expanding strong positive recurrence} (expanding-SPR) for systems that are non-invertible. We show that one class of non-invertible systems, which we call \emph{strongly entropy expanding} (SEE), are expanding-SPR. This is a class of maps characterized by high topological entropy, which includes systems in any dimension, with arbitrary critical sets. This answers a question posed by Buzzi for general classes of SPR systems beyond surfaces \cite{Bu4}. Viana maps \cite{Viana} \cite{AlvesViana} also lie in this class. Recall that a Viana map is defined as $f=f_{a_0,d,\alpha}: \mathbb{S}^1\times \mathbb{R}\rightarrow \mathbb{S}^1\times \mathbb{R} $,
$$
(\theta,t)\longmapsto (d\cdot\theta, a_0+\alpha\cdot \sin(2\pi\theta)-t^2).
$$
For $\alpha$ sufficiently small, there exists a compact interval $I_0\subseteq (-2,2)$ so that $f(S^1\times I_0)\subseteq \operatorname{int} S^1\times I_0$, so we may consider only this attractor and assume that we work on compact manifolds \cite{AlvesViana} \cite{LDP}. Regarding Viana maps, our theory on expanding-SPR systems has the following consequence.

\begin{theorem}[Exponential mixing of the MME for Viana maps]\label{thm-Viana}
    Any Viana map admits a unique measure of maximal entropy $\mu$, such that for every $\beta>0$, there exist $C>0$ and $0<\theta<1$ such that for all $\psi,\varphi:\mathcal{M}\rightarrow \mathbb{R}$ $\beta$-H\"older functions, for all $n\geq 0$,
    $$
    |\int \psi\cdot(\varphi\circ f^n)d\mu-\int \psi\cdot\varphi d\mu|<C||\psi||_{\beta}||\varphi||_{\beta}\theta^n,
    $$
    where $||\varphi||_{\beta}:=\sup|\varphi|+\sup\{ \frac{|\varphi(x)-\varphi(y)|}{d(x,y)^{\beta}}: x,y\in \mathcal{M} \}$.
\end{theorem}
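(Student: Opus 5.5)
Theorem~\ref{thm-Viana} will be obtained as a corollary of the general theory developed later in the paper. The chain of implications is: Viana maps are strongly entropy expanding (SEE); SEE implies expanding-SPR; and expanding-SPR gives a finite number of ergodic MMEs, each exponentially mixing up to a period, for H\"older observables. The argument therefore reduces to three verifications specific to Viana maps.

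\textbf{Step 1: Viana maps are SEE.} Recall that $f=f_{a_0,d,\alpha}$ is a skew product over the expanding circle map $\theta\mapsto d\theta$, with quadratic fibre maps $t\mapsto a_0+\alpha\sin(2\pi\theta)-t^2$. The plan is to bound the topological entropy from below and the entropy of degenerate behaviour from above.
\begin{itemize}
\item For the lower bound, we use the projection to the base. Lifting, or using that the fibre contributes positive entropy (for $a_0$ near a Misiurewicz parameter the quadratic family has entropy close to $\log 2$), we get $h_{\rm top}(f)\ge \log d + h_{\rm fib}$, with $h_{\rm fib}$ the fibre entropy.
\item For the upper bound, SEE requires the entropy carried by measures with a nonpositive (or small) Lyapunov exponent, or near the critical set $\{t=0\}$, to be strictly smaller. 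The base direction has exponent exactly $\log d$, so only the fibre exponent can degenerate. A measure with small fibre exponent has fibre entropy near $0$ by Ruelle's inequality applied fibrewise (Ledrappier--Young for skew products). Its total entropy is therefore at most $\log d+\epsilon<h_{\rm top}(f)$.
\item The remaining ingredient is a uniform gap $h_{\rm fib}>0$. This I would take from the Alves--Viana analysis of slow recurrence, or from a direct horseshoe construction in the fibre.
\end{itemize}

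\textbf{Step 2: apply the general results.} Invoking the paper's theorem ``SEE $\Rightarrow$ expanding-SPR'' yields finitely many ergodic MMEs, each Bernoulli up to a period and exponentially mixing up to that period. This comes from a Young tower or Markov shift coding with exponential tails, and transfers to $\beta$-H\"older observables on $\mathcal M$ by the usual approximation argument. Here $\beta$ enters only through $C$ and $\theta$.

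\textbf{Step 3: uniqueness and aperiodicity.} This is the hard step. We must show that there is one homoclinic class of hyperbolic periodic points carrying maximal entropy, and that the period is $1$.
\begin{itemize}
\item \emph{Uniqueness.} I would exploit that the base map $\theta\mapsto d\theta$ is topologically exact, and that fibres are mixed by the full-branch quadratic dynamics for parameters near $a_0$. Any two hyperbolic periodic orbits with large fibre exponent then have transversally intersecting unstable manifolds and stable preimage sets, using the non-invertible notion of homoclinic relation from the expanding-SPR framework.
\item \emph{Aperiodicity.} It suffices to exhibit two periodic points of coprime periods in the class. The base supplies fixed points of $\theta\mapsto d\theta$, and we can pair them with fibre periodic points of periods $n$ and $n+1$, available since $f$ restricted to $S^1\times I_0$ is transitive.
\end{itemize}
With period one, the mixing statement holds without passing to a power of $f$, which yields the estimate as stated.
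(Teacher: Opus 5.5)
Your chain (SEE $\Rightarrow$ expanding-SPR via Theorem~\ref{thm:main1}, then Theorem~\ref{thm:EM}) is the paper's. However, Step~3, which you flag as the hard step, has a genuine gap.

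\textbf{Uniqueness.} The fibre maps are not full-branch, since $a_0$ is a Misiurewicz parameter in $(1,2)$. Moreover, ``base exact plus fibre mixing'' does not show that all ergodic MMEs are homoclinically related in the sense of Definition~\ref{def:homoclinic}. That requires an actual topological property of $f$ on the attractor, for instance that forward images of small balls eventually cover it while avoiding the critical curve $\{t=0\}$. Even topological mixing of Viana maps is a nontrivial theorem.

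\textbf{Aperiodicity.} Transitivity does not produce periodic points of periods $n$ and $n+1$, nor ones homoclinically related to the MME. The claim that ``two coprime periods in the class suffice'' is also not available in this framework. The mixing period is that of the irreducible component $\hat\Sigma'$ (item 4.c of Theorem~\ref{thm:coding-LDP}). Because $\hat\pi$ is only finite-to-one, a period-$n$ orbit lifts to a symbolic loop of length $kn$ with $k$ uncontrolled. You would therefore need a BCS-type theorem identifying the period with a gcd of periods of related orbits, and the paper does not prove one.

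\textbf{What the paper does instead.} It avoids all of this by citation. Topological mixing comes from Theorem~C of \cite{AlvesViana}; this is also the transitivity hypothesis of Theorem~\ref{thm:EM} that your Step~2 uses without stating it. Mixing then gives $q=1$ in Theorem~\ref{thm:EM}. Uniqueness of the MME is taken from \cite{LKC}.

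\textbf{Step~1.} Here you are checking the wrong condition. SEE is only $h_{\rm top}(f)>(\dim\M-1)R(f)=R(f)$ on the cylinder. The fact that measures with degenerate fibre exponent have small entropy is the \emph{output} of Lemma~\ref{lem:chi-expan}, not a hypothesis to verify. What you need is:
\begin{itemize}
\item[(i)] $R(f)=\log d$, with $d$ the degree. This holds because $df^n$ is lower triangular with diagonal entries $d^n$ and $\prod_i(-2t_i)$, where $|2t_i|<4<d$, and the off-diagonal entry is $O(d^n)$.
\item[(ii)] $h_{\rm top}(f)>\log d$.
\end{itemize}
For (ii), positive entropy of the single quadratic map at $a_0$ does not transfer to the skew product, because the fibre maps depend on $\theta$. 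What works is the absolutely continuous SRB measure of \cite{Alves}. Its exponents are $\log d$ and some $\lambda_c>0$, so Pesin's formula gives it entropy $\log d+\lambda_c$. The paper simply cites \cite{ALP}, Section~8, for $h_{\rm top}(f)>R(f)$.

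With these replacements for Steps~1 and~3, your argument becomes the paper's.
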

\begin{proof}
    This is Theorem \ref{thm:main1} and Theorem \ref{thm:main2} combined, together with the uniqueness of the measure of maximal entropy \cite{LKC}.

    Indeed, Viana maps are $C^{\infty}$ by definition. Then, as proved in \cite{ALP} (Section 8), Viana maps satisfy $h_{\rm{top}}(f)>R(f)$, so are SEE (Definition \ref{def:SEE}) and hence expanding-SPR. Lastly, Viana maps are topologically mixing (Theorem C from \cite{AlvesViana}). 
\end{proof}

\subsection{Preliminaries}\label{sec:1-pre}

We let $\bbZ = \{..., -1, 0, 1, ... \}$, $\bbN = \{ 0, 1, 2, ... \}$ and $\bbN^* = \bbN \backslash \{ 0 \}$.

Let $X$ be a compact metric space, $T:X\rightarrow X$ be a continuous map. Denote by $\mathbb{P}(T)$ the space of $T$-invariant Borel probability measures on $X$, equipped with the weak-$\ast$ topology. Denote by $\mathbb{P}_{\rm{erg}}(T)$ the subspace of ergodic measures in $\mathbb{P}(T)$. We denote by $(\hat{X},\hat{T})$ the natural extension of $(X,T)$. Given $\hat{x}=(x_n)_{n\in \mathbb{Z}}\in \hat{X}$, denote by $\vartheta_n(\hat{x}):=x_n\in \mathcal{M}$ the $n$-th projection of $\hat{x}$ into $\mathcal{M}$. For each $\mu\in \mathbb{P}(T)$, denote by $\hat{\mu}\in \mathbb{P}(\hat{T})$ the unique lift of $\mu$ to the natural extension (see \cite{SarigNotes} for a precise definition and properties).

For $x\in X$, denote by $B_T(x,n,\varepsilon):=\{ y\in X: d(f^kx,f^ky)<\varepsilon, 0\leq k\leq n-1 \}$, the Bowen ball at $x$. Denote by $h_{\rm{top}}(T)$ the topological entropy of $(X,T)$. For $\mu\in \mathbb{P}(T)$, denote by $h_T(\mu)$ the metric entropy of $(X,T,\mu)$. Variational principle implies that
$$
h_{\rm{top}}(T)=\sup_{\mu\in \mathbb{P}(T)}\{ h_T(\mu) \}.
$$
A measure $\mu\in \mathbb{P}(T)$ is called a measure of maximal entropy (MME) if $h_T(\mu)=h_{\rm{top}}(T)$. Harmonicity of the metric entropy implies that if $\mu$ is an MME of $(X,T)$, so are almost every one of its ergodic components. This also implies that $h_{\rm{top}}(T)=\sup_{\mu\in \mathbb{P}_{\rm{erg}}(T)}\{ h_T(\mu) \}$. More details could be found in \cite{PW}. When the system $(X,T)$ is clear from context, we abbreviate the entropy $h_T(\mu)$ by $h(\mu)$.

\bigskip

Throughout this paper, we always assume that $\mathcal{M}$ is a closed manifold of dimension $d$. 
For $\alpha \in \bbR$, denote by $\lfloor \alpha \rfloor$ the largest integer smaller than or equal to $\alpha$; $\lceil \alpha \rceil$ the minimal integer larger than or equal to $\alpha$.
Let $f:\mathcal{M}\rightarrow \mathcal{M}$ be a $C^1$-differentiable map. The norm of $d_xf: T_xM\rightarrow T_{fx}M$ is denoted by $||d_xf||$, its conorm by $m(d_xf):=\min \{ ||d_xf\cdot v||: v\in T_xM, ||v||=1 \}$. Denote by $\operatorname{Jac}_xf$ the Jacobian of $f$ at $x$. 
Denote by $R(f):=\lim_{n\rightarrow\infty}\frac{1}{n}\log^+ ||d f^n||_{\infty}$ the spectral radius of $f$, where $\log^+(x) = \max(\log x, 0)$, for $x > 0$.

A map $f:\mathcal{M}\rightarrow \mathcal{M}$ is $C^r$-smooth, with $r\in [1,\infty)\cup\{ \infty \}$, if $f$ is $\lfloor r\rfloor$-differentiable and $d^rf$ is $r-\lfloor r\rfloor$-H\"older continuous, we denote by $||d^r f ||_{\infty}$ the H\"older constant. We define the $C^r$-norm of $f$ by $|| f ||_r = \max(|| f ||_k, || d^r f ||_{\infty})$, $1\leq k\leq \lfloor r\rfloor$.

Let $C(f) \subset \M$ be the set of critical points of $f$. Given $x\in \mathcal{M}\setminus C(f)$, $v\in T_x\mathcal{M}\setminus\{0 \}$, let
$$
\lambda_{f}(x,v):=\limsup_{n\rightarrow \infty}\frac{1}{n}\log||d_xf^n(v)||.
$$
For all $x\in \mathcal{M}$, $r(x):=\sharp\{ \lambda_f(x,v): v\in T_x\mathcal{M} \}<\infty$. Moreover, write $\{ \lambda_f(x,v): v\in T_xM \}=\{ \lambda_{r(x)}(x)<\cdots< \lambda_1(x) \}$, there exists a filtration $\{0\}\subseteq V_{r(x)}\subseteq V_{r(x)-1}\subseteq \cdots\subseteq V_1(x)$, with $V_i(x):=\{ u\in T_xM: \lambda_f(x,u)\leq \lambda_i(x) \}$. The maps $x\mapsto r(x)$ and $x\mapsto V_i(x)$ are measurable.

Let $(\hat{\mathcal{M}},\hat{f})$ be the natural extension of $(\mathcal{M},f)$. Define the tangent bundle of $\hat{\mathcal{M}}$ to be
$$
T\hat{\mathcal{M}}:=\bigsqcup_{\hat{x}\in \hat{\mathcal{M}}\setminus \bigcup_{n\in \mathbb{Z}}\hat{f}^{-n}\vartheta_0^{-1}(C(f))} T_{x_0} \mathcal{M}.
$$
And we may identify $T_{\hat{x}}\hat{\mathcal{M}}=T_{x_0}\mathcal{M}$. Define the cocycle of differential of $\hat{f}$ for $\hat{x}\in \hat{\mathcal{M}}\setminus \bigcup_{n\in \mathbb{Z}}\hat{f}^{-n}\vartheta_0^{-1}(C(f))$ to be
$$
d_{\hat{x}}\hat{f}^n:=\begin{cases}
    d_{x_0}f^n, & n\geq 0\\
    (d_{x_{-n}}f)^{-1}\cdot \cdots \cdot (d_{x_{-1}}f)^{-1} & n<0.
\end{cases}
$$

Oseledet's multiplicative ergodic theorem \cite{V1} implies that there exists an $\hat{f}$-invariant subset $\operatorname{REG}_f\subseteq \hat{\mathcal{M}}$ such that
\begin{itemize}
    \item[1.] for all $\hat{x}\in \operatorname{REG}_f$, there is an $f$-invariant decomposition $T_x\mathcal{M}=E^1 (\hat{x})\oplus\cdots\oplus E^{r(x_0)}(\hat{x})$ such that  for all $v\in E_i(\hat{x})\setminus \{0\}$,
    $$
    \lim_{n\rightarrow\pm \infty}\frac{1}{|n|}\log||d_{\hat{x}}\hat{f}^n(v)||=\lambda_i(x_0),
    $$
    and
    $$
    \operatorname{dim} E_i(\hat{x})=\dim V_i(x_0)-\dim V_{i+1}(x_0);
    $$
    \item[2.] for all $\mu\in \mathbb{P}(f)$ with $\log||df||\in L^1(\mu)$, $\hat{\mu}(\operatorname{REG}_f)=1$.
    
\end{itemize}
For $x\in \vartheta_0(\operatorname{REG}_f)$, we call elements in $\{ \lambda_i(x) : 1\leq i\leq r(x) \}$ the Lyapunov exponents of $x$. For $x\in \vartheta_0(\operatorname{REG}_f)$, we denote $\Lambda_f(x):=\limsup_{n\rightarrow +\infty}\frac{1}{n}\log|\operatorname{Jac}_xf^n|$. Then for $\nu\in \mathbb{P}(f)$,
$$
\Lambda_f(\nu):=\int \log|\operatorname{Jac}_xf|d\nu.
$$
When $\nu\in \mathbb{P}_{\rm{erg}}(f)$, $\Lambda_f(\nu)=\Lambda_f(x)$ for $\nu$-a.e. $x\in \mathcal{M}$. For more details regarding Lyapunov exponents and especially for non-invertible systems, see \cite{V1} \cite{SETE}.

For $\hat{\mu}\in \mathbb{P}_{\rm{erg}}(\hat{f})$, as the unique lift of $\mu\in \mathbb{P}(f)$, its Lyapunov exponents are the Lyapunov exponents of $\mu$.

\bigskip

We see that for $\mu\in \mathbb{P}_{\rm{erg}}(f)$, $r(x)$, $\operatorname{dim} V_i(x)$ and $\lambda_i(x)$ are constant $\mu$-a.e.. So we may denote by $r(\mu):=\int r(x) d\mu$, $\operatorname{dim} V_i(\mu)=\int \operatorname{dim} V_i(x)d\mu $ and $\lambda_i(\mu):=\int \lambda_i(x)d\mu$. For $\mu\in \mathbb{P}_{\rm{erg}}(f)$, the $u$-index of $\mu$ is $\min\{ d-\operatorname{dim} V_i(\mu): \lambda_i(\mu)\leq 0 \}$. Given $\chi>0$, a measure $\mu\in \mathbb{P}_{\rm{erg}}(f)$ is $\chi$-expanding, if $\lambda_{r(\mu)}(\mu)>\chi$. If $\mu$ is $\chi$-expanding, then the $u$-index of $\mu$ is $d$. A measure $\mu\in \mathbb{P}(f)$ is $\chi$-expanding if its ergodic components are $\chi$-expanding $\mu$-a.e. A measure $\hat{\mu}\in \mathbb{P}_{\rm{erg}}(\hat{f})$, the unique lift of $\mu\in \mathbb{P}_{\rm{erg}}(f)$, is $\chi$-expanding if $\mu$ is $\chi$-expanding. So is $\hat{\mu}\in \mathbb{P}(\hat{f})$.

Given $\chi>0$, denote by $\operatorname{NUE}_{\chi}$ the set of $\chi$-non-uniformly expanding points in $\hat{\mathcal{M}}\setminus \bigcup_{n\in \mathbb{Z}}\hat{f}^{-n}\vartheta_0^{-1}(C(f))$, consisting of points $\hat{x}$ such that for all $v\in T_{\hat{x}}\hat{\mathcal{M}} \setminus  \{0\}$,
\begin{equation}
\label{eqn:NUE}
\tag{NUE}
\limsup_{n\rightarrow \infty}\frac{1}{n}\log||d_{\hat{x}}\hat{f}^{-n}\cdot v||<-\chi, \ \liminf_{n\rightarrow \infty}\frac{1}{n}\log||d_{\hat{x}}\hat{f}^n\cdot v||>0.
\end{equation}
Note that for all $\chi$-expanding measures $\mu\in \mathbb{P}_{\rm{erg}}(f)$, $\hat{\mu}(\operatorname{NUE}_{\chi})=1$ (\cite{LDP}). 

\subsection{Statements of main results}

We start by introducing the notion of expanding strong positive recurrence. One may compare the formulation in Definition \ref{def:exp-SPR} to that in \cite{BCS3}. 

\begin{definition}[Pesin block]\label{def:Pesinblock1}
    Take $C>0$, $\chi>0$, $\varepsilon>0$. The expanding-Pesin-block, denoted by $\Lambda_{C,\varepsilon,\chi}$, is defined to be the subset of $\hat{\mathcal{M}}$, the natural extension of $(\mathcal{M},f)$ (see Section \ref{sec:1-pre}), consisting of all the points $\hat{x}\in \hat{\mathcal{M}}$ such that for all $k\in \mathbb{N}$,  $n\in \mathbb{Z}$,
        $$||d_{\hat{f}^n \hat{x}}\hat{f}^{-k}||<C\cdot\exp(|n|\varepsilon-k\cdot \chi).$$
\end{definition}

\begin{remark}
    We remark that the definition immediately implies the 'slow approaching to critical set': for all $\hat{x}\in \Lambda_{C,\varepsilon,\chi}$,
    \begin{equation}
    \label{eqn:criticalappro}
        m(d_{x_n}f)\geq C^{-1}\cdot\exp(-|n|\varepsilon),
    \end{equation}
    for all $n\in \mathbb{Z}$.
\end{remark}

\begin{definition}[Expanding-SPR]\label{def:exp-SPR}
    The system $(\mathcal{M},f)$ is \emph{expanding strongly positively recurrent} (expanding-SPR), if there exist $C,\chi>0$, such that for all 
    $\varepsilon>0$, there exist $h_0>0,\tau>0$, such that for the Pesin block $\Lambda_{C,\varepsilon,\chi}$, for all $\mu\in \mathbb{P}_{\rm{erg}}(f)$ with $h(\mu)>h_0$,
    $$
    \hat{\mu}(\Lambda_{C,\varepsilon,\chi})>\tau.
    $$
\end{definition}

The attribute \emph{expanding} before SPR in Definition \ref{def:exp-SPR} is important, and it is due to our definition of Pesin block (Definition \ref{def:Pesinblock1}). Our notion of Pesin block is expanding with index $d$ (compare with \cite{BCS3}, \cite{LDP}). So the corresponding SPR property detects measures with $u$-index $d$. This, in a sense, makes the critical set the only contributor to the \emph{non-uniformity} of these measures. For more discussions regarding this issue, see Section \ref{sec:1-idea}.

Similarly to \cite{BCS3}, expanding-SPR systems enjoy great stochastic properties.

\begin{maintheorem}\label{thm:main2}
    Let $(\mathcal{M},f)$ be a $C^{\infty}$ expanding-SPR system which is also topologically mixing. Then $(\mathcal{M},f)$ admits a finite number of MMEs. Moreover, each MME satisfies 
    \begin{itemize}
        \item[1.] exponentially decay of correlations of H\"older observables,
        \item[2.] asymptotic Laplace transform and asymptotic variance for Birkhoff sums,
        \item[3.] central limit theorem.
    \end{itemize}
\end{maintheorem}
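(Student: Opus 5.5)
The plan is to follow the strategy of \cite{BCS3}: turn expanding-SPR into a \emph{strongly positively recurrent countable-state Markov shift} that codes all high-entropy measures, and then use the thermodynamic theory of SPR Markov shifts \cite{G1} \cite{S3}.

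\textbf{Step 1: symbolic coding.} Since every $\hat\mu$ with large entropy charges $\Lambda_{C,\varepsilon,\chi}$, such measures are $\chi$-expanding of $u$-index $d$. I would build a Sarig-type coding of the natural extension on the non-uniformly expanding set, adapted to the non-invertible setting with critical points as in \cite{LDP}. In the index-$d$ case the Pesin charts only contract backwards, so the coding uses inverse branches. Concretely, I aim for a locally compact countable-state Markov shift $(\Sigma,\sigma)$ and a H\"older map $\pi:\Sigma\to\hat{\mathcal{M}}$ with the following properties:
\begin{itemize}
\item[(a)] $\pi\circ\sigma=\hat f\circ\pi$;
\item[(b)] $\pi$ is finite-to-one on the recurrent part $\Sigma^\#$;
\item[(c)] every $\chi$-expanding ergodic $\hat\mu$ lifts to a $\sigma$-invariant measure of the same entropy.
\end{itemize}
The bound \eqref{eqn:criticalappro} controls the size of the Pesin charts along orbits in the block. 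This lets the critical set be handled like the singular set in \cite{BCS3}.

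\textbf{Step 2: transfer SPR to the shift.} I want a \emph{finite} set $F$ of symbols whose cylinder $[F]$ has $\pi^{-1}$-measure at least $\tau'>0$ for every lift $\nu$ with $h(\nu)>h_0$. I would get this by arranging the coding so that $\pi^{-1}(\Lambda_{C,\varepsilon,\chi})$ is covered by finitely many cylinders, using compactness of the block and a uniform chart size on it. By the discussion in \cite{BCS3} (Buzzi's criterion), a uniform lower bound on the measure of a finite cylinder set along any maximizing sequence gives the following: each irreducible component of $\Sigma$ carrying entropy close to $h_{\rm top}(f)$ is SPR in Sarig's sense, namely its Gurevich entropy strictly exceeds the entropy at infinity. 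Finitely many symbols can meet only finitely many such components, so there are only finitely many. Each SPR component has a unique MME. Projecting by $\pi$, and using entropy preservation and finite-to-one-ness, gives finitely many MMEs for $f$.

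\textbf{Step 3: statistical properties.} On an SPR irreducible component, the MME has the following properties \cite{S3} \cite{G1}:
\begin{itemize}
\item it has spectral gap for the transfer operator on suitable function spaces after passing to the one-sided shift;
\item it is exponentially mixing modulo the period $p$;
\item it satisfies the asymptotic Laplace transform, variance and CLT for H\"older potentials.
\end{itemize}
Composing a H\"older $\psi$ on $\mathcal{M}$ with $\vartheta_0\circ\pi$ gives a H\"older function on $\Sigma$. So items 2 and 3 transfer directly. For item 1 the period must be $1$. I would show that topological mixing of $f$ forces aperiodicity, via the density of periodic orbits in the support and their periods being coprime, as in \cite{BCS3}. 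If the period were $p>1$, the $p$ cyclic pieces would give an $f^p$-invariant decomposition of the support, which contradicts mixing.

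\textbf{Main obstacle.} I expect Step 1 to be the hardest part, since it needs a coding in the non-invertible setting with critical points and Pesin charts degenerating near $C(f)$. The uniform finiteness in Step 2 is also delicate. I would first try to import the coding from \cite{LDP}, and then check that the block $\Lambda_{C,\varepsilon,\chi}$ is covered by finitely many symbols once $\varepsilon$ is small relative to $\chi$.
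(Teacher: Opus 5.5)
Your architecture is the paper's: the coding of Theorem \ref{thm:coding-LDP}, the finite cylinder cover of $\hat{\pi}^{-1}(\Lambda_{C,\varepsilon,\chi})$ (Lemmas \ref{lem:twoblocks}, \ref{lem:Q-lb} and item 3 there), SPR of the relevant irreducible components as in Proposition \ref{prop:5-cSPR}, then Theorem \ref{thm:Markov} and a H\"older transfer. Items 2--3 go through as you say. For finiteness you take a genuinely different and cleaner route, and it is correct. Irreducible components are vertex-disjoint. Every ergodic MME lifts to a measure of entropy $h_{\rm{top}}(f)$ charging $[F]$, hence to the unique MME of an SPR component meeting $F$. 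Distinct MMEs therefore need distinct components, which gives at most $\#F$ ergodic MMEs. This needs only lifting, entropy preservation and $\hat{\pi}^{-1}(\Lambda_{C,\varepsilon,\chi})\subseteq[F]$. The paper instead argues by contradiction, via compactness of the block, continuity of local invariant sets on it (Lemma \ref{lem:su-con}), homoclinic relations between nearby MMEs, and item 4 of the coding.

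The gap is the aperiodicity needed for item 1.

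First, a period $p>1$ of $\hat{\Sigma}'$ does not by itself give a cyclic decomposition of $\operatorname{supp}\hat{\mu}$. The map $\hat{\pi}$ is only finite-to-one, so the sets $\hat{\pi}(\sigma^i\hat{\Sigma}'_0)$ may overlap or coincide. The paper handles this bookkeeping through item 4.c and Lemmas 12.2--12.3 of \cite{BCS3}. These relate $p$ to the period $q$ of $\mu$, and identify the $\hat{f}^q$-ergodic components of $\hat{\mu}$ with pushforwards of $\sigma^p$-ergodic components of $\overline{\mu}$. This yields exponential mixing of $f^q$ already in the transitive case (Theorem \ref{thm:EM}), before specializing to the mixing case.

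Second, and more seriously, even a genuine decomposition $\operatorname{supp}\mu=X_0\cup\dots\cup X_{p-1}$ with $f(X_i)=X_{i+1}$ does not contradict topological mixing of $f$ on $\mathcal{M}$. The support may be a proper, even nowhere dense, invariant set; a period-$p$ orbit of the doubling map is the trivial example. Likewise, ``periods being coprime'' restates the claim rather than proving it. The orbits realizing $f^nU\cap V\neq\emptyset$ need not be expanding or avoid $C(f)$, so they do not by themselves produce periodic points homoclinically related to $\mu$.

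You need an input specific to the MME. One option is total ergodicity of $\mu$, which would follow from uniqueness of the MME for the (still mixing) iterates $f^n$. That is not available here, since only finiteness is proved. Another option is to show that some cyclic piece projects to a proper closed subset of $\mathcal{M}$ with nonempty interior, which is what actually contradicts mixing. In the natural extension, disjoint pieces can have the same projection, so properness is not automatic. The paper does not argue through the support either: it only records, where the period of a measure is defined, that topological mixing gives period $1$. This is exactly where your write-up needs a real argument.
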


Item $1$ is proved in Theorem \ref{thm:EM}; items $2$ and $3$ are given explicitly in Theorem \ref{thm:AV-BCS} and Corollary \ref{cor:CLT-BCS}.

\bigskip

The next question is which systems are expanding-SPR. 

\begin{definition}\label{def:SEE}
Let $\mathcal{M}$ be a closed Riemannian manifold. A $C^{\infty}$-map $f:\mathcal{M}\rightarrow \mathcal{M}$ is called \emph{strongly entropy expanding} (SEE) if $h_{\rm{top}}(f)>(d-1)\cdot R(f)$. 
\end{definition}

The terminology comes from Buzzi's notion of entropy expanding maps from \cite{B-dm} (of course, strongly entropy expanding implies entropy expanding).
By Proposition \ref{prop:SEE-open}, the set of SEE maps is open among $C^{\infty}$-maps in the $C^1$-topology.

One important part of this work is the entropic-continuity of the sum of Lyapunov exponents for SEE-maps. This is a consequence of a $C^r$-perturbative version (Theorem \ref{th:main-cont}).
For $r < \infty$, we emphasize that this $C^r$ version does not give the entropic continuity of the Lyapunov exponents, since there is an additional error in the entropy inequality (item $2)$ of Theorem \ref{th:main-cont}).
We state this simpler version for $C^{\infty}$ regularity.

\begin{maintheorem}[Entropic-continuity of exponents]\label{thm:main-con-1}
Let $f : \M \to \M$ be a $C^{\infty}$-map.
Let $(\nu_k)$ be a sequence of ergodic measures, with $u$-index $d = \dim \M$, converging to $\mu$.
There exist $\mu_0, \mu_1$ $f$-invariant probability measures and $\beta \in [0,1]$ such that $\mu = \beta \mu_1 + (1-\beta) \mu_0$ and
\begin{itemize}
\item[1)] if $\beta > 0$, then $\mu_1$ is expanding, i.e. it has $u$-index $d$, 
\item[2)] $\limsup\limits_{k \to +\infty} h_f(\nu_k) \leq \beta h_f(\mu_1) + (1-\beta) (d-1) R(f)$,
\item[3)] $\limsup\limits_{k \to +\infty} \Lambda_f(\nu_k) \geq \beta \Lambda_f(\mu_1)$.
\end{itemize}
\end{maintheorem}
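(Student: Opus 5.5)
The plan is to follow the Buzzi--Crovisier--Sarig scheme for entropic continuity of exponents, adapted to the non-invertible, expanding setting with Yomdin-type reparametrizations. We lift everything to the natural extension. For a fixed threshold $\chi>0$ and scale $\varepsilon>0$, we split each $\hat\nu_k$ along orbits into "good" times and "bad" times. A time $n$ is good when $\hat f^n\hat x$ lies in a uniform expanding Pesin block $\Lambda_{C,\varepsilon,\chi}$, or more flexibly when the backward cocycle $d\hat f^{-m}$ contracts at rate $\chi$ over all $m$ up to some scale $N$. Since each $\nu_k$ has $u$-index $d$, all its exponents are positive, though possibly tending to $0$ as $k\to\infty$.

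We then pass to a subsequence so that the restrictions of $\hat\nu_k$ to the "good" sets converge. The good set is closed in an appropriate sense, using the uniform bound in Definition \ref{def:Pesinblock1} and the slow approach to critical points \eqref{eqn:criticalappro}. The limit of these restrictions defines $\beta\hat\mu_1$, and the rest defines $(1-\beta)\hat\mu_0$; we then let $\chi,\varepsilon\to0$ and $N\to\infty$ diagonally. Invariance of $\mu_1$ needs the usual trick: we use the Pliss-lemma characterization of hyperbolic times, which defines an invariant object after averaging over the orbit, as in BCS's "$\chi$-good" decomposition.

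For item 1, the uniform backward contraction passes to weak-$\ast$ limits on the closed block. Hence $\hat\mu_1$-a.e. point satisfies \eqref{eqn:NUE}-type bounds, and $\mu_1$ is expanding.

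For item 3, we use that $\log|\operatorname{Jac} f|$ is upper semicontinuous and bounded above, which gives the $\limsup$ inequality for the full measures. The decomposition then has to be arranged so that the bad part contributes $\ge0$ in the limit, since exponents along bad segments are nearly nonnegative. Along bad segments the backward contraction fails, but the measures $\nu_k$ are expanding, so $\Lambda_f(\nu_k)>0$; on good segments the Jacobian is $\ge d\chi$ per step. The integrals must be localized away from $C(f)$, where $\log|\operatorname{Jac}|=-\infty$, using the truncation $\max(\log|\operatorname{Jac}|,-M)$ and letting $M\to\infty$.

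Item 2 is the main obstacle. We must show that the entropy produced during bad times is at most $(d-1)R(f)$ per unit time, with no $C^r$ error because $f$ is $C^\infty$. The plan is a Yomdin--Burguet reparametrization argument, in the form of Theorem \ref{th:main-cont} applied with $r\to\infty$. During a bad segment, some direction fails to be expanded at rate $\chi$ in backward time, so there is a direction whose forward growth is $\le\chi$. The number of $\varepsilon$-Bowen balls needed to cover the image of a $d$-dimensional piece is then controlled by the growth of $(d-1)$-dimensional volume. That growth is at most $e^{n(d-1)R(f)}$, up to a factor of $e^{n(\chi+\log\|f\|_r/r)}$. Combining this with the good-time entropy via a Misiurewicz-type local entropy estimate, or Katok's entropy formula on the lifted measures, yields
\[
\limsup_k h(\nu_k)\le \beta h(\mu_1)+(1-\beta)\big((d-1)R(f)+\chi+\tfrac{C}{r}\big).
\]
Letting $\chi\to0$ and $r\to\infty$ gives item 2. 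The delicate point is controlling combinatorial entropy from switching between good and bad segments; this is handled by making good/bad segments long, of length at least $N$, and choosing $N$ large.
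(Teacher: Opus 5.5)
The gap is in item 2, at exactly the step you flag as the main obstacle. Your mechanism is: a bad time gives a window $[j,n]$ with $m(d_{x_j}f^{n-j})<e^{\chi(n-j)}$, hence ``one weak direction'', hence a $(d-1)$-dimensional covering cost. This is a linear-algebra heuristic that does not plug into a Yomdin reparametrization argument. That argument is inductive step by step. At each time you must re-cover $f(\text{piece})$ by new $C^r$-controlled pieces, and the number needed is the interior cubes (of order the $d$-volume of the image) plus the boundary cubes (controlled by $\max_{l\le d-1}\|\Lambda^l df\|$). A weak singular direction of $df^{n-j}$ at one point does not bound the $d$-volume of the current reparametrized piece: the other directions may expand at full rate, and $|\Jac f^{n-j}|$ can be as large as $e^{\chi(n-j)}\|df\|_\infty^{(d-1)(n-j)}$. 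Nor does it say anything uniformly on the piece, since singular values have no distortion control there.

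The missing idea is the paper's choice of neutral times. They are the non-Pliss times of the Jacobian-type potential $\psi=\log|\Jac f|-\frac{d}{r+d-1}\log^+|\Jac f|-\frac{d(r-1)}{r(r+d-1)}\log\|df\|_\infty$. This $\psi$ is built so that $\frac{r+d-1}{r-1}\psi$ is the logarithm of the Jacobian times the volume lost in the affine contractions of Lemma \ref{lem:iter} (item 4). At a neutral time there is $j$, after the last hyperbolic time, with $\psi^{n-j}(f^jx)<\delta(n-j)$. This forces the reparametrized image to have $d$-volume $\lesssim\varepsilon_q^d$, so the interior cubes are $O(1)$ and only boundary cubes are paid. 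That is where the $(d-1)\log\|df\|$ per step comes from (Case 3 of Lemma \ref{lem:repara}, with bounded couples and Lemma \ref{lem:dist-dim} giving Jacobian distortion). Times inside hyperbolic blocks are paid instead by the partition $\mathcal{R}_q$, through Propositions \ref{prop:entropy-final} and \ref{prop:entropy-cv}. With bad times defined by failure of backward norm contraction, nothing keeps the volume of order $\varepsilon^d$, so your plan does not resolve this step.

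Your argument for item 3 is also wrong as written. Upper semicontinuity of $\log|\Jac f|$ gives $\limsup_k\Lambda_f(\nu_k)\le\Lambda_f(\mu)$, the wrong direction. Positivity of $\Lambda_f(\nu_k)$ says nothing about the bad part, since mass drifting to $C(f)$ is exactly what can make it very negative. Truncating at $-M$ needs uniform integrability, which is what Corollary \ref{cor:UI} later deduces from this very theorem. What works is structural:
\begin{itemize}
\item each neutral gap ends at a hyperbolic time, so it carries Jacobian at least $e^{\delta\cdot\mathrm{length}}$ (Proposition \ref{prop:expo-neutral});
\item inside blocks of length at most $M$ the orbit stays uniformly away from $C(f)$ (Lemma \ref{lem:prop-hb}, item 3), which gives Proposition \ref{prop:conv-expo-HB}.
\end{itemize}
With your Pesin-block good times an analogous argument (bad segments ending at a good time inherit expansion) could be made, but it is not the reason you give.

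The paper also gets item 1 differently. Jacobian times only give $\Lambda_f\ge\delta$ almost everywhere for $\widetilde\mu_1$. The non-expanding part of $\widetilde\beta\widetilde\mu_1$ is then moved into $\mu_0$ at entropy cost at most $(d-1)R(f)$ by Ruelle's inequality, which is absorbed since $\alpha>(d-1)R(f)$.

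Finally, three points in your limiting procedure need repair:
\begin{itemize}
\item $\mu_1$ must come from empirical measures along blocks with few boundary points (Lemma \ref{lem:f-inv}), not from restrictions to a Pesin block, which are not invariant.
\item Expansion is not closed under weak-$*$ limits, so letting $\chi\to0$ or $r\to\infty$ loses item 1. The paper uses the domination $\beta_n\mu_1^n\le\mu$ to upgrade to setwise convergence, which preserves items 1 and 3.
\item The same limit needs Newhouse's upper semicontinuity of entropy for the term $\beta h_f(\mu_1)$.
\end{itemize}
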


Using Theorem \ref{thm:main-con-1}, we prove that all SEE maps are expanding-SPR.

\begin{maintheorem}\label{thm:main1}
    Let $f:\mathcal{M}\rightarrow \mathcal{M}$ be an SEE map, then $(\mathcal{M},f)$ is expanding-SPR.
\end{maintheorem}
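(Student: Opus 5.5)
We argue by contradiction using Theorem \ref{thm:main-con-1}, following the scheme of \cite{BCS3}. Fix $h_0$ with $(d-1)R(f)<h_0<h_{\rm top}(f)$. The first step is to show that every ergodic $\nu$ with $h(\nu)>h_0$ is uniformly expanding. By the Ruelle inequality for non-invertible maps, $h(\nu)\le \sum_i \lambda_i^+(\nu)\dim E_i$. If some exponent of $\nu$ were $\le 0$, then at most $d-1$ positive exponents would remain, each bounded by $R(f)$, so $h(\nu)\le (d-1)R(f)$, a contradiction. Hence $\nu$ has $u$-index $d$ and $\Lambda_f(\nu)>0$. To make the expansion uniform, we also use the Ruelle inequality in the form $h(\nu)\le (d-1)R(f)+\lambda_{\min}(\nu)$. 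This gives $\lambda_{r(\nu)}(\nu)\ge h_0-(d-1)R(f)=:2\chi$, so every such $\nu$ is $2\chi$-expanding. We then fix this $\chi$ and choose $C$ later.

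Next, given $\varepsilon>0$, suppose the conclusion fails. Then there are ergodic $\nu_k$ with $h(\nu_k)>h_0$, possibly with $h_0$ increased toward $h_{\rm top}$, and $\hat\nu_k(\Lambda_{C,\varepsilon,\chi})\to 0$. By compactness we pass to a limit $\nu_k\to\mu$ and apply Theorem \ref{thm:main-con-1}, which gives $\mu=\beta\mu_1+(1-\beta)\mu_0$. Item 2) yields $h_0\le \beta h(\mu_1)+(1-\beta)(d-1)R(f)\le \beta h_{\rm top}+(1-\beta)(d-1)R(f)$. Hence $\beta\ge \beta_0:=\frac{h_0-(d-1)R(f)}{h_{\rm top}-(d-1)R(f)}>0$, uniformly in the sequence. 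Moreover $\mu_1$ is expanding, and item 3) gives $\limsup_k\Lambda_f(\nu_k)\ge\beta\Lambda_f(\mu_1)$. Since $\Lambda_f(\nu_k)\le \int\log|\Jac f|\,d\mu$ fails in general near critical points, this inequality is exactly what says that no Jacobian mass escapes to the critical set. In this sense item 3) is the substitute for upper semicontinuity.

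The key step, and the main obstacle, is to turn this limiting information into a uniform lower bound on $\hat\nu_k(\Lambda_{C,\varepsilon,\chi})$. The plan is to use Pliss-type hyperbolic times in the natural extension. Since $\nu_k$ is $2\chi$-expanding, the lemma of \cite{LDP} shows that a point $\hat x$ lies in $\Lambda_{C,\varepsilon,\chi}$ when both the backward contraction $\log\|(d_{x_{-j}}f)^{-1}\|$ and the critical approach $-\log m(d_{x_{-j}}f)$ are controlled along the past orbit with tempered sums. Two ingredients then have to be combined. The first is a uniform continuity estimate: the functions $\log m(df)$ truncated at a level $-L$ are continuous, and item 3) together with the convergence $\nu_k\to\mu$ shows that the $\nu_k$-mass where $-\log m(df)>L$ contributes little to $\int\log|\Jac f|\,d\nu_k$, uniformly for large $k$ as $L\to\infty$. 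The second is the Pliss lemma. Applied to the Birkhoff sums of the truncated expansion rate, whose $\nu_k$-average is $\ge \beta_0\cdot 2\chi-o(1)$ because the good part sits near $\mu_1$, it gives a density $\tau'>0$ of times $n$ at which $\hat f^n\hat x$ satisfies the block condition with some $C=C(L,\varepsilon)$. Together these yield $\hat\nu_k(\Lambda_{C,\varepsilon,\chi})\ge\tau'>0$, contradicting the assumption that $\hat\nu_k(\Lambda_{C,\varepsilon,\chi})\to0$.

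The delicate point is that $C$ must be chosen independently of $\varepsilon$ in Definition \ref{def:exp-SPR}, while $h_0$ and $\tau$ may depend on $\varepsilon$. We will handle this by fixing $C$ from the uniform expansion bound on the good part. The $\varepsilon$-dependence is then absorbed into the temperedness exponent $e^{|n|\varepsilon}$ by choosing $h_0$ close enough to $h_{\rm top}$ that $1-\beta_0$ is small compared with $\varepsilon$.
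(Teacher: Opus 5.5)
Your first step is the paper's Lemma \ref{lem:chi-expan}. Using Theorem \ref{thm:main-con-1} to stop Jacobian mass from escaping to $C(f)$ is, in spirit, Corollary \ref{cor:UI}, but two corrections are needed there. First, what can fail near $C(f)$ is lower semicontinuity of $\Lambda_f$, not upper: upper semicontinuity always holds because $\log|\Jac f|$ is bounded above. Second, item 3) alone only bounds the defect $\Lambda_f(\mu)-\lim_k\Lambda_f(\nu_k)$ (along a convergent subsequence) by $(1-\beta)\Lambda_f(\mu_0)\le(1-\beta)\log\|\Jac f\|_\infty$. So you really need $\beta\to1$, i.e.\ $h(\nu_k)\to h_{\rm top}(f)$, which the negation of SPR permits.

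The genuine gap is your key step. A Pliss time $n$ gives only one-sided information at that time: $\Pi(\hat f^n\hat x)\le C$, where $\Pi(\hat y)=\sup_{k\ge0}e^{\chi k}\|d_{\hat y}\hat f^{-k}\|$. By contrast, $\hat f^n\hat x\in\Lambda_{C,\varepsilon,\chi}$ means $\Pi(\hat f^{n+j}\hat x)\le Ce^{|j|\varepsilon}$ for \emph{every} $j\in\bbZ$.
\begin{itemize}
\item Forward in time, $\Pi(\hat f\hat y)\ge e^{\chi}m(d_{y_0}f)^{-1}$, so a single later passage close enough to $C(f)$ destroys membership.
\item Backward in time, you only have $\Pi(\hat f^{-j}\hat y)\le(e^{-\chi}\|df\|_\infty)^j\,\Pi(\hat y)$.
\end{itemize}
So control along the past orbit does not place a point in the block, and a positive density $\tau'$ of Pliss times gives no lower bound on $\hat\nu_k(\Lambda_{C,\varepsilon,\chi})$.

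What is missing is a quantitative tempering step, the paper's Lemma \ref{temper}. With $g=\log\frac{\Pi\circ\hat f}{\Pi}\le\chi-\log m(d_{x_0}f)$, it reads
\[
\hat\nu(\{\Pi_\varepsilon>C\})\le\frac{4c_0}{\varepsilon}\,\hat\nu(\{\Pi>C\})+\frac{4}{\varepsilon}\int_{\{g>c_0\}}g\,d\hat\nu .
\]
The second term is where uniform integrability enters, and it must be small compared with $\varepsilon$; your last sentence only gestures at this. The first term shows that you need $\hat\nu(\{\Pi\le C\})$ close to $1$, not merely positive. This is why the paper proves Proposition \ref{prop:uniformPliss} for every $\tau<1$. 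It applies the Pliss lemma only to observables whose averages are close to their upper bounds: $1_{U_{N_1}}$ with $\hat\mu(U_{N_1})>1-\delta^2$, and the near-critical part of $\log m(d_{x_{-1}}f)$ with bound $A=0$. The resulting set is then fed into Proposition \ref{Pesinblock}.

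There is a second problem with the observable you give to the Pliss lemma. Being $2\chi$-expanding is a statement about Lyapunov exponents of a matrix cocycle, and no one-step rate need have positive average. For the SEE toral endomorphism $A=\left(\begin{smallmatrix}2&K\\0&2\end{smallmatrix}\right)$ with $K\ge5$, both exponents equal $\log 2$, yet $m(A)\le 4/K<1$. One must pass to a block length $N$ (the sets $U_N$), uniformly over the family. The paper gets this uniformity (Lemma \ref{uniformbigexpan}) from compactness of the set $\mathcal L$ of limits of ergodic measures whose entropy tends to $h_{\rm top}(f)$. All ergodic components of such limits are MMEs, hence $\chi$-expanding (Lemma \ref{lem:Lcompact}).

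With $h_0<h_{\rm top}(f)$ fixed, this breaks down. The part $(1-\beta)\mu_0$ of the limit is uncontrolled, and Theorem \ref{thm:main-con-1} gives only $\lambda^d_f>0$ on $\mu_1$, not $2\chi$. So your lower bound $\beta_0\cdot 2\chi-o(1)$ is unjustified. Even if it held, it would only give contraction at a rate below $2\beta_0\chi$, which is less than the block's $\chi$ unless $\beta_0>1/2$.

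Finally, your key step produces $C=C(L,\varepsilon)$, and your last paragraph does not explain how $C$ becomes independent of $\varepsilon$. Making $1-\beta_0$ small relative to $\varepsilon$ only controls the error term above, not the requirement $\hat\nu(\{\Pi>C\})\ll\varepsilon/c_0$. The paper's own argument likewise chooses $C$ after $\varepsilon$, through $\tau$, $C_1$ and $n_0$.
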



\subsection{Consequences}\label{sec:1-FR}

We first give an account of previously studied families of systems that are SEE, (so expanding-SPR by Theorem \ref{thm:main1}). By Proposition \ref{prop:SEE-open} SEE maps are open in the $C^1$-topology. So Theorem \ref{thm-Viana} can be strengthened to a neighborhood of Viana maps.

\begin{proposition}\label{prop:Vianaper}
    There is a $C^1$-neighborhood of Viana maps 
    such that all $C^{\infty}$-maps in this neighborhood are expanding-SPR, so satisfy all the properties in Theorem \ref{thm:main2}.
\end{proposition}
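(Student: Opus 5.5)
The plan is to combine the $C^1$-openness of the SEE condition (Proposition \ref{prop:SEE-open}) with Theorem \ref{thm:main1} and Theorem \ref{thm:main2}. Let $f_0=f_{a_0,d,\alpha}$ be a Viana map restricted to its trapping region $S^1\times I_0$. As in the proof of Theorem \ref{thm-Viana}, \cite{ALP} (Section 8) gives $h_{\rm top}(f_0)>R(f_0)$. Since the phase space has dimension $2$, this is exactly the SEE inequality $h_{\rm top}(f_0)>(d-1)R(f_0)$. So $f_0$ is SEE.

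Next we take the $C^1$-neighborhood. By Proposition \ref{prop:SEE-open}, there is a $C^1$-neighborhood $\mathcal{U}$ of $f_0$ such that every $C^\infty$-map $g\in\mathcal{U}$ is SEE. Underlying this is upper semicontinuity of $R$ in the $C^1$ topology, since $R(g)=\inf_n \frac1n\log^+\|dg^n\|_\infty$ is an infimum of continuous functions, together with lower semicontinuity of $h_{\rm top}$ near $f_0$. We shrink $\mathcal{U}$ so that each $g\in\mathcal{U}$ still maps $S^1\times I_0$ into its interior. Then $g$ acts on the same compact region, and we may regard it as a map of a compact manifold, as the paper does for Viana maps. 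Theorem \ref{thm:main1} then shows that every $C^\infty$ map $g\in\mathcal{U}$ is expanding-SPR. Taking the union of such neighborhoods over all Viana maps gives the claimed neighborhood.

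It remains to obtain the properties listed in Theorem \ref{thm:main2}, and I expect this to be the main obstacle, because Theorem \ref{thm:main2} assumes topological mixing, which Proposition \ref{prop:SEE-open} does not provide. I see two routes.

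The first route is to show that mixing persists under $C^1$-small perturbations. The strategy of Theorem C of \cite{AlvesViana} is robust: it uses uniform expansion in the $\theta$-direction and the fact that images of small disks grow to cover the attractor in finitely many steps. Both features are stable under $C^1$ perturbation once the neighborhood is small. One would check that the relevant covering time and the expansion estimates depend only on $C^1$ data.

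The second, more abstract route avoids mixing. Since $g$ is expanding-SPR, the MMEs are hyperbolic and carried by finitely many irreducible Markov components. One then applies the standard spectral decomposition into cyclically permuted mixing pieces, so that the conclusions of Theorem \ref{thm:main2} hold for suitable iterates on each piece. This yields the finiteness of MMEs and the stochastic properties up to a period.

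I would attempt the first route first, since it gives literally the statement of Theorem \ref{thm:main2}. The second route is a fallback if persistence of mixing turns out to require more than $C^1$ control.
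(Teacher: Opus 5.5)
Your expanding-SPR argument is exactly the paper's. You use \cite{ALP} to get $h_{\rm top}(f)>R(f)=(d-1)R(f)$ for Viana maps, Proposition \ref{prop:SEE-open} to propagate SEE to all $C^\infty$ maps in a $C^1$-neighborhood, and Theorem \ref{thm:main1} to get expanding-SPR. The paper then invokes Theorem \ref{thm:main2} without comment. You are right that nothing in the paper supplies topological mixing for the perturbed maps. But neither of your routes closes that step, so the clause ``so satisfy all the properties in Theorem \ref{thm:main2}'' is not established by your proposal.

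\textbf{Route 1 fails in the $C^1$ topology.} The second feature you invoke, that images of small disks grow to cover the attractor, is essentially what has to be proved. In \cite{Viana} and \cite{AlvesViana} it comes from non-uniform expansion at Lebesgue-almost every point: hyperbolic times produce disks of definite size. That is established only for $C^3$-small perturbations, using the non-degenerate quadratic fold along $\{t=0\}$ and distortion bounds. Only the horizontal cone expansion is $C^1$-robust.

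Here is a concrete obstruction. Since $|\partial_t f|=2|t|$ is small near $\{t=0\}$, replace $-t^2$ by a $C^\infty$ function $\phi$ with $\phi'(t)=-2t\chi(t)$, where $\chi$ vanishes on $\{|t|\le\eta/2\}$ and equals $1$ on $\{|t|\ge\eta\}$. This is a perturbation of $C^1$-size $O(\eta)$ whose critical set contains the open strip $\{|t|<\eta/2\}$. The resulting map is still SEE by Proposition \ref{prop:SEE-open}, but none of the Alves--Viana estimates apply to it. Its infinitely flat critical set is also of the kind excluded by the polynomial regularity conditions (A1)--(A7) of Remark \ref{re-Acondi}, under which the paper's proof of Theorem \ref{thm:main2} currently runs.

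\textbf{Route 2 gives a weaker conclusion.} It only reproduces Theorem \ref{thm:EM}, which is itself stated under transitivity. That yields finitely many MMEs, each exponentially mixing for $f^q$ on its $f^q$-ergodic components. This is strictly weaker than Theorem \ref{thm:main2} when $q>1$.

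\textbf{What is actually proved.} Your argument, like the paper's, shows that every $C^\infty$ map in the $C^1$-neighborhood is expanding-SPR, and that those which are topologically mixing satisfy Theorem \ref{thm:main2}. To get the full statement you must do one of two things. Either weaken the mixing part to a $C^3$-neighborhood, intersected with your $C^1$ one, where \cite{AlvesViana} gives mixing. Or supply a genuinely $C^1$-robust mixing argument.
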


For $C^{\infty}$-maps $f:I\rightarrow I$, $I=[0,1]$ or $S^1$, the dimension of $I$ is $d=1$ and $(d-1)\cdot R(f)=0$. So as long as $h_{\rm{top}}(f)>0$, $f$ is SEE, so expanding-SPR.

\begin{proposition}\label{prop:1-dim}
    Let $I=[0,1]$ or $S^1$, and $f:I\rightarrow I$ be a $C^{\infty}$-map, with $h_{\rm{top}}(f)>0$. Then $(I,f)$ is expanding-SPR.
\end{proposition}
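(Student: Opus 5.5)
This is a direct reduction to Theorem \ref{thm:main1}: it suffices to check that $f$ is SEE in the sense of Definition \ref{def:SEE}. Here $I$ is a compact one-dimensional manifold, so $d=1$ and $(d-1)\cdot R(f)=0$ whatever the value of $R(f)$. The SEE condition therefore reads $h_{\rm{top}}(f)>0$, which is exactly our hypothesis. Note that $R(f)$ is finite because $f$ is $C^1$ on a compact space, so $\|df^n\|_\infty\le \|df\|_\infty^n$; in any case it is multiplied by zero.

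The only genuine point is that Definition \ref{def:SEE} and the machinery behind Theorem \ref{thm:main1} are formulated for a closed Riemannian manifold. For $I=S^1$ there is nothing to add: $S^1$ is closed and we conclude immediately.

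For $I=[0,1]$ the space has boundary, so I will check that the proof of Theorem \ref{thm:main1} still goes through. I see two routes.

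\begin{itemize}
\item \textbf{Route 1 (verify the proof directly).} Go through the proof of Theorem \ref{thm:main1}, via Theorem \ref{thm:main-con-1}, and confirm that it only uses the following ingredients:
\begin{itemize}
\item compactness of the ambient space;
\item $C^\infty$ regularity, through Yomdin/Burguet-type estimates, which hold on $[0,1]$ equally well;
\item the natural-extension Pesin theory of Section \ref{sec:1-pre}.
\end{itemize}
None of these uses the absence of boundary.
\item \textbf{Route 2 (embed into a circle).} Extend $f$ to a $C^\infty$ map $F$ on a circle containing $[0,1]$. Choose $F$ so that $[0,1]$ is forward invariant and points outside are mapped into $[0,1]$ after finitely many steps. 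Then every ergodic measure of $F$ is supported on $[0,1]$, hence is an ergodic measure of $f$, and $h_{\rm top}(F)=h_{\rm top}(f)>0$. Theorem \ref{thm:main1} applied to $F$ gives the Pesin-block bound for all ergodic $\mu$ with $h(\mu)>h_0$. These measures and their lifts to the natural extension coincide for $F$ and $f$, and the derivatives agree on $[0,1]$, so the blocks $\Lambda_{C,\varepsilon,\chi}$ agree on the relevant supports.
\end{itemize}

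The expected obstacle is Route 2's requirement that $F$ be $C^\infty$ and have no new invariant measures off $[0,1]$. Arranging both at once near the endpoints, where $f$ may have nonzero derivatives of all orders, requires some care. If this becomes awkward, I would fall back on Route 1 and simply remark that the proof of Theorem \ref{thm:main1} never uses the absence of boundary.
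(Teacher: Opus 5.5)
Your argument is the paper's: for $d=1$ the SEE condition $h_{\rm top}(f)>(d-1)R(f)$ is just $h_{\rm top}(f)>0$, and Theorem \ref{thm:main1} then gives expanding-SPR. The paper does not comment on the boundary of $[0,1]$, so your Route 1 remark is extra care rather than a missing step; if you pursue Route 2, note that requiring outside points to enter $[0,1]$ in finitely many steps can be impossible (for example, if $f(1)=1$ and $0<f'(1)<1$, points just beyond $1$ are attracted to $1$), but all you actually need is that no $F$-invariant measure charges the complement of $[0,1]$, and such attracted points do not violate this.
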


Another class of SEE maps comes from linear maps of the torus.

\begin{proposition}\label{prop:linear}
    If a linear map $A:\mathbb{T}^d\rightarrow \mathbb{T}^d$ satisfies
    $$
    \sum_{i=1}^d\log^+ |\lambda_i| > (d-1)\cdot \max\{ \log^+|\lambda_i| \},
    $$
    then $(\mathbb{T}^d,A)$ is expanding-SPR.
    In particular, in dimension $2$, for all linear maps $A:\mathbb{T}^2\rightarrow \mathbb{T}^2$, as long as $\lambda_i>1$, $i=1,2$, $(\mathbb{T}^2,A)$ is expanding-SPR.
\end{proposition}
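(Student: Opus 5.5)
The plan is to reduce Proposition \ref{prop:linear} to Theorem \ref{thm:main1}. A linear endomorphism $A$ of $\mathbb{T}^d$ is $C^{\infty}$, so it is enough to check that $A$ is SEE in the sense of Definition \ref{def:SEE}, i.e. that $h_{\rm{top}}(A)>(d-1)\cdot R(A)$. I will identify both sides in terms of the eigenvalues $\lambda_1,\dots,\lambda_d$ of $A$, counted with multiplicity.

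For the entropy side I will use the classical formula for toral endomorphisms (Bowen's formula, valid also for non-invertible integer matrices with nonzero determinant):
$$
h_{\rm{top}}(A)=\sum_{i=1}^d\log^+|\lambda_i|.
$$
Note that the hypothesis forces at least one $|\lambda_i|>1$, and hence nondegeneracy in the relevant sense. If $\det A=0$, the formula is still the entropy of the induced map on the image subtorus. Alternatively, the inequality we need only uses the lower bound $h_{\rm{top}}\geq h_{\rm Leb}$, which is available when $\det A\neq 0$.

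For the spectral radius, $d_xA^n=A^n$ for every $x$. Gelfand's formula then gives $\lim_n\frac1n\log\|A^n\|=\log\max_i|\lambda_i|$, so
$$
R(A)=\lim_n\tfrac1n\log^+\|A^n\|=\max_i\log^+|\lambda_i|.
$$
With these two identities, the hypothesis of the proposition is exactly the SEE condition, and Theorem \ref{thm:main1} yields that $(\mathbb{T}^d,A)$ is expanding-SPR.

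For the two-dimensional statement with $|\lambda_1|,|\lambda_2|>1$, I would take $\log|\lambda_1|\geq\log|\lambda_2|$ without loss of generality. The left side is $\log|\lambda_1|+\log|\lambda_2|$ and the right side is $\log|\lambda_1|$, so the inequality reduces to $\log|\lambda_2|>0$, which holds by assumption. Complex conjugate eigenvalues are handled the same way, since the moduli are equal.

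The only real obstacle is justifying the entropy formula in the possibly non-invertible case. I would either cite Bowen's computation for endomorphisms of compact groups, or argue directly: Haar measure is $A$-invariant when $\det A\neq0$, and its entropy equals $\sum\log^+|\lambda_i|$ by Pesin's formula for expanding directions, or by the standard algebraic computation. This suffices, since we only need $h_{\rm{top}}(A)\geq\sum\log^+|\lambda_i|$.
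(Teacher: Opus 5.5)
Your proposal is correct and matches the paper's implicit argument: with $h_{\rm{top}}(A)=\sum_i\log^+|\lambda_i|$ and $R(A)=\max_i\log^+|\lambda_i|$, the hypothesis is exactly the SEE condition, and Theorem \ref{thm:main1} applies. One simplification: the hypothesis forces \emph{every} $|\lambda_i|>1$, since if some $\log^+|\lambda_j|=0$ the sum is at most $(d-1)\max_i\log^+|\lambda_i|$; hence $\det A\neq 0$ automatically (indeed $A$ is uniformly expanding), and your discussion of the degenerate case $\det A=0$ is unnecessary.
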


Again, we allow perturbations.

\begin{proposition}\label{prop:tori}
    There is a $C^1$-neighborhood of the linear toral maps in Proposition \ref{prop:linear} such that all $C^{\infty}$-maps in this neighborhood are expanding-SPR. This open neighborhood contains maps which are not uniformly hyperbolic.
\end{proposition}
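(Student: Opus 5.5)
The plan is to reduce Proposition \ref{prop:tori} to Theorem \ref{thm:main1} through the $C^1$-openness of the SEE condition (Proposition \ref{prop:SEE-open}), and then to exhibit one explicit map in the neighborhood that is not uniformly hyperbolic.

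\textbf{Step 1: the linear maps are SEE.} Let $A$ be a linear toral map as in Proposition \ref{prop:linear}.
\begin{itemize}
\item By the classical formula for toral endomorphisms, $h_{\rm top}(A)=\sum_i \log^+|\lambda_i|$.
\item The spectral radius in the paper's sense is $R(A)=\lim_n \frac1n\log^+\|A^n\| = \max_i \log^+|\lambda_i|$, by Gelfand's formula.
\end{itemize}
The hypothesis of Proposition \ref{prop:linear} is therefore exactly $h_{\rm top}(A)>(d-1)R(A)$, so $A$ is SEE in the sense of Definition \ref{def:SEE}.

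\textbf{Step 2: the neighborhood.} By Proposition \ref{prop:SEE-open}, there is a $C^1$-neighborhood $\mathcal U_A$ of $A$ such that every $C^\infty$ map $g\in\mathcal U_A$ satisfies $h_{\rm top}(g)>(d-1)R(g)$. The mechanism behind this is:
\begin{itemize}
\item $R$ is upper semicontinuous in the $C^1$ topology, since it is an infimum of the continuous functions $\frac1n\log^+\|dg^n\|_\infty$.
\item Topological entropy is bounded below near $A$. One can argue that $g$ is homotopic to $A$, so Misiurewicz--Przytycki type degree or homology lower bounds give $h_{\rm top}(g)\ge \log|\deg A|$. Alternatively, Proposition \ref{prop:SEE-open} already packages this.
\end{itemize}
If one wants to argue directly rather than cite the proposition, the delicate case is when $A$ has eigenvalues of modulus less than one, since then $\log|\det A|\neq \sum\log^+|\lambda_i|$. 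There the homological (Manning-type) bound $h_{\rm top}(g)\ge \log$ of the spectral radius of the action on $H_*(\mathbb T^d)$ recovers $\sum\log^+|\lambda_i|$. Taking the union of the $\mathcal U_A$ over all such $A$ and applying Theorem \ref{thm:main1} to each $C^\infty$ map $g$ in the union shows that $g$ is expanding-SPR. Theorem \ref{thm:main2} then applies whenever $g$ is also topologically mixing.

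\textbf{Step 3: a non-uniformly hyperbolic example.} I would work in dimension $2$ with $A=\mathrm{diag}(\lambda_1,\lambda_2)$, where $\lambda_1\ge\lambda_2>1$ are integers; then SEE reads $\log\lambda_2>0$.
\begin{itemize}
\item Compose $A$ with a $C^\infty$ map $\phi$ that is $C^1$-close to the identity and is supported in a small disk $D$.
\item Choose $\phi$ so that $d\phi$ contracts the second coordinate by the factor $1/\lambda_2$ at some point $p\in D$.
\item Then $g=A\circ\phi$ has $d_pg$ with an eigenvalue of modulus $1$ at $p$, so $g$ is not uniformly expanding.
\end{itemize}
The required $C^1$-size of the perturbation is of order $1-1/\lambda_2$, which is small only if $\lambda_2$ is close to $1$. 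That is impossible for integer matrices, so the diagonal choice has to be modified:
\begin{itemize}
\item Take $A$ with a large spread between $\lambda_1$ and $\lambda_2$, or non-diagonalizable, so that the SEE margin $h_{\rm top}-(d-1)R=\log^+|\lambda_2|$ stays positive even after killing the expansion in one direction at a point.
\item In particular, one can push $g$ to create a critical point; this is allowed, since SEE permits critical sets.
\end{itemize}
Such a perturbation is $C^0$-small but not $C^1$-small. So the realistic route is to verify SEE for the explicit perturbed map directly:
\begin{itemize}
\item bound $R(g)\le\log\|dg\|_\infty$;
\item bound $h_{\rm top}(g)\ge\log|\deg g|=\log|\det A|$ via Misiurewicz--Przytycki;
\item choose $\lambda_1\gg\lambda_2$ so that $\log(\lambda_1\lambda_2)>\log\|dg\|_\infty\approx\log\lambda_1+\text{small}$.
\end{itemize}
I expect this example construction, that is, reconciling ``not uniformly hyperbolic'' with membership in the open SEE neighborhood, to be the main obstacle. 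The first attempt would be a product-like $A$ with a localized fold in the weak direction, controlled as above.
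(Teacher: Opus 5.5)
Your Steps 1 and 2 are correct and are how the paper obtains the first sentence: the SEE computation for $A$, then Proposition \ref{prop:SEE-open}, then Theorem \ref{thm:main1}. Your Misiurewicz--Przytycki variant is a legitimate, more elementary substitute for the Gelfert--Shub argument in this toral case.

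The ``delicate case'' you discuss cannot occur. If some $|\lambda_j|\le 1$, then $\sum_i\log^+|\lambda_i|$ has at most $d-1$ nonzero terms, each at most $\max_i\log^+|\lambda_i|$, so the hypothesis of Proposition \ref{prop:linear} fails. Hence all $|\lambda_i|>1$ and $\log|\det A|=h_{\rm top}(A)$. The degree bound alone then shows that
\[
\mathcal W_A=\{g \in C^\infty \text{ homotopic to } A : (d-1)R(g)<\log|\det A|\}
\]
is a $C^1$-open set of SEE maps containing $A$.

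The genuine gap is in Step 3, and it comes from missing this same fact.
\begin{itemize}
\item[1.] \emph{Choice of neighborhood.} Each such $A$ is an expanding map ($\|A^{-n}\|<1$ for some $n$), and uniform expansion is $C^1$-open. So every sufficiently small $C^1$-neighborhood of $A$, possibly including your $\mathcal U_A$, consists only of uniformly expanding maps. The second sentence is therefore false for small neighborhoods. It holds only for a large open set such as $\bigcup_A\mathcal W_A$ or the whole open set of SEE maps, and you must define the neighborhood that way. Otherwise the map you finally verify to be SEE ``directly'' is not in the neighborhood you built. The obstruction is not that integer matrices cannot have $\lambda_2$ near $1$: for $N\ge 5$ both roots of $\lambda^2-N\lambda+N$ exceed $1$, and the smaller tends to $1$.
\item[2.] \emph{Certifying non-uniform expansion.} A single point $p$ where $d_pg$ has an eigenvalue of modulus $1$ does not contradict uniform expansion unless $p$ is periodic. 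You need a fixed point or a critical point.
\item[3.] \emph{Controlling $\|dg\|_\infty$.} A bump $\phi$ that is the identity off a disk and contracts a direction must expand that direction elsewhere. So $\|dg\|_\infty\approx\lambda_1$ needs proof.
\end{itemize}

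Your large-spread idea is exactly what handles point 3, and no bump is needed. On $\mathbb T^2$ take $A=\mathrm{diag}(4,2)$ and $g(x,y)=(4x,\,2y+\pi^{-1}\sin 2\pi y)$.
\begin{itemize}
\item[$\cdot$] $dg=\mathrm{diag}(4,\,2+2\cos 2\pi y)$, with second entry in $[0,4]$. Hence $\|dg^n\|_\infty=4^n$ and $R(g)=\log 4$.
\item[$\cdot$] $g$ is homotopic to $A$, so $h_{\rm top}(g)\ge\log 8$ by Misiurewicz--Przytycki.
\item[$\cdot$] Thus $g\in\mathcal W_A$ is SEE, hence expanding-SPR. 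Yet it has the circle $\{y=1/2\}$ of critical points, so it is not uniformly hyperbolic.
\item[$\cdot$] The path $g_t(x,y)=(4x,\,2y+t\pi^{-1}\sin 2\pi y)$, $t\in[0,1]$, stays in $\mathcal W_A$, so $g$ lies in the connected component of $A$.
\end{itemize}
The paper does not write out this part of the statement, so an explicit example of this kind is what your proposal still has to supply.
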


In \cite{OV1} \cite{OV2}, a class of local diffeomorphism is studied as smooth perturbations of affine expanding maps, where they showed the existence and uniqueness of measure of maximal entropy. In dimension $2$, their systems are always SEE. In higher dimensions, the two classes of maps overlaps but neither one includes the other. 


\bigskip
Finally, notice that SPR properties for Markov shifts are open for the space of potentials \cite{CS} (for further explanations on terminologies, see Section \ref{sec:3}). This implies that we may also generalize the notion of SPR to non-zero potentials. In particular, this allows us to treat the unique equilibrium states studied in \cite{LKC}.

\begin{proposition}\label{prop:non0}
    Let $(\mathcal{M},f)$ be an expanding-SPR system. Then there exists $\varepsilon>0$ such that for all potentials $\varphi:\mathcal{M}\rightarrow \mathbb{R}$ with $|| \varphi ||_{\infty}< \varepsilon$, there exist 
    finitely many equilibrium states for $\varphi$, each satisfying the properties in Theorem \ref{thm:main1}.
\end{proposition}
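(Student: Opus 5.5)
The plan is to transfer the expanding-SPR property from the zero potential to small potentials $\varphi$, and then rerun the machinery behind Theorem \ref{thm:main2} for equilibrium states instead of MMEs. Fix the constants $C,\chi$ from Definition \ref{def:exp-SPR}, and let $\varepsilon>0$ be arbitrary for now. We get $h_0,\tau$ such that every ergodic $\mu$ with $h(\mu)>h_0$ satisfies $\hat\mu(\Lambda_{C,\varepsilon,\chi})>\tau$. Since the system is expanding-SPR and the definition requires $h_0$ to be meaningful, we have $h_0<h_{\rm top}(f)$. Indeed, measures of entropy close to $h_{\rm top}(f)$ exist by the variational principle, and in the SEE setting $h_0$ can be taken as close to $(d-1)R(f)<h_{\rm top}(f)$ as we like. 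Set $\delta:=\frac{1}{3}(h_{\rm top}(f)-h_0)>0$ and require $\|\varphi\|_\infty<\delta$.

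\textbf{Step 1 (entropy gap for $\varphi$-SPR).} Write $P(\varphi)=\sup_\mu\{h(\mu)+\int\varphi\,d\mu\}$. Then $P(\varphi)\ge h_{\rm top}(f)-\|\varphi\|_\infty$. Suppose $\mu$ is ergodic with $h(\mu)+\int\varphi\,d\mu>P(\varphi)-\delta$. Then
\[
h(\mu)>h_{\rm top}(f)-2\|\varphi\|_\infty-\delta>h_0 ,
\]
so $\hat\mu(\Lambda_{C,\varepsilon,\chi})>\tau$. Thus every ergodic measure of sufficiently high pressure gives mass at least $\tau$ to the Pesin block. This is exactly the $\varphi$-analogue of Definition \ref{def:exp-SPR}: entropy is replaced by the free energy $h+\int\varphi$.

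\textbf{Step 2 (coding).} We use the Markov coding that underlies the proof of Theorem \ref{thm:main2}: a countable-state Markov shift $(\Sigma,\sigma)$ with a Hölder factor map $\pi:\Sigma\to\hat{\mathcal M}$, built from the Pesin block $\Lambda_{C,\varepsilon,\chi}$. It is finite-to-one on the relevant set, and every measure charging the block lifts to $\Sigma$. By Step 1, every equilibrium state of $\varphi$ gives positive mass to the block, hence lifts. Each such lift is an equilibrium state of $\varphi\circ\vartheta_0\circ\pi$ on one of the irreducible components of $\Sigma$. The uniform bound $\tau$ together with the pressure gap shows that only finitely many components can carry pressure $P(\varphi)$. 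This is the same counting argument as for the MMEs, so there are finitely many equilibrium states.

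\textbf{Step 3 (SPR for the potential on the shift) — the main obstacle.} We need that $\Phi:=\varphi\circ\vartheta_0\circ\pi$ is strongly positively recurrent on each relevant component, in the sense of Sarig's theory. This is where Hölder regularity of $\varphi$ is required, and the statement should be read with $\varphi$ Hölder; otherwise the cited results on SPR and the stochastic properties do not apply. I would try to prove it directly: the entropy/pressure gap in Step 1, pushed to the shift, gives the criterion of \cite{BCS3}. That criterion says that escape of mass from a finite set of states forces a pressure drop, and this drop is precisely the uniform Pesin-block mass $\tau$ for measures within $\delta$ of the top pressure. As a fallback, I would use the openness result \cite{CS}: the zero potential is SPR by Theorem \ref{thm:main2}'s proof, and SPR persists for small perturbations in the appropriate norm. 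The risk in the fallback is that $\|\cdot\|_\infty$-smallness may not match the norm in \cite{CS}; this is why the direct route through Step 1 is preferable.

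\textbf{Step 4 (conclusion).} With $\Phi$ SPR on each relevant component, we apply the spectral gap for the transfer operator and the period decomposition, exactly as in the proofs of Theorem \ref{thm:EM}, Theorem \ref{thm:AV-BCS} and Corollary \ref{cor:CLT-BCS}, using topological mixing to rule out nontrivial periods. This yields, for each equilibrium state, exponential decay of correlations for Hölder observables, the asymptotic Laplace transform and variance, and the central limit theorem.
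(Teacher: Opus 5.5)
Your proposal is correct, but your main route is not the paper's. The paper gives no real proof of this proposition. It only remarks that SPR for Markov shifts is open in the space of potentials \cite{CS}, so that SPR of the zero potential on the relevant irreducible components (Proposition \ref{prop:5-cSPR}) persists for small $\varphi$. That is exactly your fallback.

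Your direct route instead upgrades expanding-SPR to a geometric pressure version, using the elementary estimate $h(\mu)>h_{\rm top}(f)-2\|\varphi\|_\infty-\delta$. You then transport it to the coding. For an ergodic $\overline{\nu}$ on a component with $h_\sigma(\overline{\nu})+\int\Phi\,d\overline{\nu}>P(\varphi)-\delta$, item 2 of Theorem \ref{thm:coding-LDP} gives a projection with the same entropy and the same integral of $\varphi$. Step 1 then gives block mass $>\tau$. Item 3 converts this into mass $>\tau$ on finitely many cylinders, which is the potential version of Definition \ref{def:1-sSPR}.

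This buys three things:
\begin{itemize}
\item[1.] an explicit threshold $(h_{\rm top}(f)-h_0)/3$ in the sup norm that the statement actually uses, without checking which topology \cite{CS} works in;
\item[2.] uniformity over all components at once, since only the finitely many components containing the cylinders that cover the block can carry equilibrium states (the paper's one-liner is silent on this);
\item[3.] a clear view that $\varphi$ must be H\"older, which the statement omits.
\end{itemize}
The paper's route is shorter but leaves these points implicit.

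Two small corrections.
\begin{itemize}
\item[1.] Your Step 2 count also needs at most one equilibrium state per component. This follows from your Step 3 (or from Buzzi--Sarig uniqueness for H\"older potentials), so either put Step 3 first or cite that input.
\item[2.] Your justification of $h_0<h_{\rm top}(f)$ does not work. The variational principle does not give it. Nothing in Section \ref{section:SPRSEE} lets you take $h_0$ close to $(d-1)R(f)$: those constructions only produce some $h_0<h_{\rm top}(f)$, by compactness near measures of entropy $h_{\rm top}(f)$. The proposition also concerns general expanding-SPR systems, not only SEE maps. What you need is simply that $h_0<h_{\rm top}(f)$ is part of Definition \ref{def:exp-SPR}. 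It must be, or the definition is vacuous, and it is what the paper's proofs provide.
\end{itemize}
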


\subsection{Contexts of previous works}\label{sec:Intro-CPW}

SPR theory for diffeomorphisms has been established by Buzzi-Crovisier-Sarig \cite{BCS2} \cite{BCS3}. The theory has far reaching applications, including surface diffeomorphisms \cite{BCS3} and certain partially hyperbolic systems \cite{MPP}.

The current work, sharing the same philosophy, addresses SPR phenomena from a different direction. In our approach as well as in that of Buzzi-Crovisier-Sarig's, the defect of SPR is traced to the defect of continuity of exponents at MMEs (Corollary \ref{cor:UI} and Section 3.2 in \cite{BCS3}). In the current work, this is then traced to the loss of mass towards the critical set. This is a first major difference in difficulties from \cite{BCS2} \cite{BCS3}, whose difficulties are caused mostly by the non-continuity of unstable/stable distributions. 

Theorem \ref{thm:main1} is a consequence of Theorem \ref{thm:main-con-1}, which provides significant dynamical information in itself (certain implications apart from expanding-SPR are given in Sections \ref{sec:2-UI} and \ref{sec:2-con}). In a sense, Theorem \ref{th:main-cont} shows that high topological entropy prevents measures losing exponents by losing weight towards \emph{non-hyperbolic/non-expanding regions} when approaching MMEs. Research works in this direction include \cite{BCS2} \cite{B}, which deal with surface diffeomorphisms, and \cite{L} \cite{DL}, which deal with maps in dimension $1$, with \cite{BCS2} \cite{B} \cite{DL} using reparametrization techniques to control the entropy along \emph{neutral} components, where we don't see expansion. Reparametrization techniques are also used to build SRB measures in various contexts \cite{Bur2} \cite{A} \cite{BB} \cite{A2}. 

The current work, as well as \cite{BCS3}, takes ideas from the SPR theory for Markov shifts \cite{G1} \cite{S3}. We note that there are other ways to incorporate this idea of SPR into smooth dynamical systems \cite{GST} \cite{Cli}.


\bigskip

As in the uniform hyperbolic case \cite{Bowen}, a powerful tool used in the current work is the \emph{symbolic model} $(\Sigma,\sigma)$ of the system $(\mathcal{M},f)$. The model and the projection $\pi:\Sigma\rightarrow \hat{\M}$ is called a coding. The codings that we use are \cite{LDP} (see \cite{ALP} as well) and \cite{BLY}. A comparison of the two codings can be seen at Remark \ref{re-Acondi}. The theory starts from \cite{S1} for surface diffeomorphisms, which is then extended to non-uniformly hyperbolic diffeomorphisms in any dimension \cite{SBO} and for non-invertible systems \cite{L1} \cite{ALP}.

\bigskip

The definition of SEE maps is motivated by \cite{DL} \cite{BB} from reparametrization theory, and by \cite{B-dm} from entropy theory. Much work has been done regarding certain subclasses of SEE maps. For $1$-dimensional systems, the existence of MMEs for $1$-dimensional systems is studied \cite{BurguetExistMME} \cite{BuzziRuette}, as well as finiteness \cite{Buzzi1997} and exponential mixing of the MMEs \cite{HK} \cite{Li2} \cite{BS}. The uniqueness of MME for Viana maps \cite{Viana} is proved in \cite{Santana} \cite{LKC}. The existence follows from the general work of Newhouse \cite{NewhouseContEntropy}. The existence of SRB measures for Viana maps, and their fast mixing, is proved in \cite{Alves}. Pertubations of affine torus maps, regarding existence of MMEs, are studied in \cite{OV1} \cite{OV2}. 

We remark that, as applications of Theorem \ref{thm:main1} and Theorem \ref{thm:main2}, Theorem \ref{thm-Viana} is unknown until this work, so is Proposition \ref{prop:Vianaper} in the perturbation version; Proposition \ref{prop:1-dim}, for exponential mixing and all other properties, is new in the sense that previous works assume additional properties regarding critical sets, be them unimodal \cite{BS}, piecewise monotone \cite{HK}, so requiring $f$ to have finitely many monotone branches; as far as we know, Proposition \ref{prop:tori} is also new in the sense of exponential mixing and other thermodynamical properties regarding MMEs, when the system studied is non-uniformly expanding. Proposition \ref{prop:non0} then extends the thermodynamical properties to equilibrium states of non-zero potentials \cite{LKC}.










\subsection{Ideas of the proof}\label{sec:1-idea}

The proof consists of two major parts. The first one establishes the theory of expanding-SPR systems via non-uniformly hyperbolic dynamics and symbolic dynamics (Theorem \ref{thm:main2}, Theorem \ref{thm:main1}); the second studies how critical sets, exponents and the entropy of the system interact via Yomdin theory \cite{Yomdin1987VolumeGA} (Theorem \ref{thm:main-con-1}). 

\bigskip

The goal of the first part is to show that our \emph{geometric definition} of SPR (Definition \ref{def:exp-SPR}) implies the symbolic model of the system being SPR \emph{as Markov shifts} (Definition \ref{def:1-sSPR}). This resembles the approach in \cite{BCS3}. The difference is that, since we choose a rather restrictive definition of SPR, we exclude all the other causes of loss of uniformity in hyperbolicity apart from the critical set of the system. Then, this defect of uniformity caused by the critical set is quantified via a defect of \emph{uniform integrability} ($\gamma$ in Corollary \ref{cor:UI}). One may compare our version of Pesin's tempered envelope lemma (Lemma \ref{temper}) to Lemma 2.20 in \cite{BCS3}, where ours has an additional error term caused by defect of uniform integrability.


\bigskip

The goal of the second part is to find a sufficient condition to control the defect of uniform integrability. For $C^{\infty}$-surface diffeomorphisms, as well as $C^{\infty}$-maps in dimension $1$, the condition is $h_{\rm{top}}(f)>0$ \cite{BCS2} \cite{BCS3} \cite{DL}. In our setting it is SEE condition. 

In spirit, the proof of Theorem \ref{thm:main-con-1} is a higher-dimensional generalization of the techniques in \cite{B} and \cite{DL}. Here our condition SEE forces expansion in all direction (Lemma \ref{lem:chi-expan}), so also reduces the contribution of defect of exponents continuity to critical set. However, there are substantial differences between the $1$-dimensional version and the SEE-version.

For SEE maps, the hyperbolic times considered are expanding on average directionwise, that is, it is built from the Jacobian, not an expanding observable like that of Ben Ovadia-Burguet \cite{BB}. This is enough to get the continuities (entropy and exponent), but not the expansion. We need one further step to get expansion, when the limit entropy goes to $h_{\rm{top}}(f)$.

Since we work in higher dimensions, neutral times can carry entropy (unlike in \cite{BCS2}, \cite{B}, \cite{DL}), but in at most $(d-1)$-directions. This justifies our definition of SEE: the $(d-1)\cdot R(f)$ term comes from our reparametrization along the neutral times, and we recover uniform integrability at MMEs when the MMEs themselves has entropy greater than this 'error'. 
This reparametrization technique requires two results.
The first one is Lemma \ref{lem:iter}, which is a non-invertible version of Lemma 4.1 from Ben Ovadia-Burguet \cite{BB}.
Then we iterate this lemma and obtain Lemma \ref{lem:repara}, which is an adaptation of the semi-local reparametrization lemma by Burguet \cite{B}.

Contrarily to \cite{B} and \cite{DL}, we do not obtain the entropic continuity of the Lyapunov exponents in $C^r$ regularity, $r < \infty$.
The reason is that 'bounded couples' (Definition \ref{def:bounded-couples}), which are the higher-dimensional version of 'bounded curves' \cite{B}, do not have as good geometric properties, nor as good shapes.
Technically, we have an additional error term coming from the geometry of the intersection between a small ball and a bounded couple.


\subsection{Acknowledgments}

The authors thank David Burguet and J\'er\^ome Buzzi for their suggestions. The authors also thank Davi Obata, Lima Yuri and Mat\'eo Ghezal for the fruitful discussions.

No AI has been used to produce this paper.

\section{From continuity of the exponents to SPR}\label{sec:2}

\subsection{Uniform integrability}
\label{sec:2-UI}

\begin{corollary}[Uniform integrability]\label{cor:UI}
Let $f : \M \to \M$ be an SEE map. 
For any $\varepsilon > 0$, there exist $h_0 \in (0, h_{\rm{top}}(f))$ and $\eta > 0$ such that, for any ergodic measure $\mu$ with $h(\mu) > h_0$, we have
$$
\int_{\{ |\Jac f| < \eta \}} -\log |\Jac f| d \mu < \varepsilon.
$$
\end{corollary}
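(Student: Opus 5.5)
We argue by contradiction, using Theorem \ref{thm:main-con-1} together with the SEE gap $h_{\rm{top}}(f)>(d-1)R(f)$. Suppose the statement fails for some $\varepsilon>0$. Then there are ergodic measures $\nu_k$ with $h(\nu_k)\to h_{\rm{top}}(f)$ and $\eta_k\to 0$ such that
$$\int_{\{|\Jac f|<\eta_k\}}-\log|\Jac f|\,d\nu_k\geq \varepsilon .$$
Pass to a subsequence with $\nu_k\to\mu$ weak-$\ast$. Since $h(\nu_k)$ exceeds $(d-1)R(f)$ for $k$ large, these measures should have $u$-index $d$. The reason is that the Ruelle inequality forces at least one exponent above $h/(d-1)$. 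I expect the paper's Lemma \ref{lem:chi-expan} to upgrade this to all exponents positive, and more precisely $\chi$-expanding for some uniform $\chi>0$. I will assume this, so that Theorem \ref{thm:main-con-1} applies to $(\nu_k)$.

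Theorem \ref{thm:main-con-1} gives a decomposition $\mu=\beta\mu_1+(1-\beta)\mu_0$. Item 2) yields
$$h_{\rm{top}}(f)\le \beta h(\mu_1)+(1-\beta)(d-1)R(f)\le \beta h_{\rm{top}}(f)+(1-\beta)(d-1)R(f).$$
The SEE condition then forces $\beta=1$ and $h(\mu_1)=h_{\rm{top}}(f)$, so $\mu=\mu_1$ is an expanding MME. Item 3) gives $\limsup_k\Lambda_f(\nu_k)\ge\Lambda_f(\mu)$.

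It remains to convert this into a contradiction via a truncation argument. Fix $\eta>0$ such that $\mu(\{|\Jac f|=\eta\})=0$; this holds for all but countably many $\eta$. Put $g_\eta:=\max(\log|\Jac f|,\log\eta)$, which is continuous. Then:
\begin{itemize}
\item[(a)] By weak-$\ast$ convergence, $\int g_\eta\,d\nu_k\to\int g_\eta\,d\mu$.
\item[(b)] We have $\log|\Jac f|\in L^1(\mu)$, because $\Lambda_f(\mu)$ is finite: it is bounded above by $d\log\|df\|_\infty$ and is at least the sum of the positive exponents of $\mu$. Hence $\int (g_\eta-\log|\Jac f|)\,d\mu\to0$ as $\eta\to0$.
\end{itemize}
For $k$ large we have $\eta_k<\eta$, so
$$\int g_\eta\,d\nu_k-\Lambda_f(\nu_k)\ge\int_{\{|\Jac f|<\eta_k\}}(\log\eta-\log|\Jac f|)\,d\nu_k\ge \varepsilon+\log\eta\cdot\nu_k(\{|\Jac f|<\eta_k\}).$$
Write $\nu_k(\{|\Jac f|<\eta_k\})=:m_k$. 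To show $m_k\to0$, note that $\limsup_k m_k\le\mu(\{|\Jac f|\le\eta'\})$ for every fixed $\eta'>0$, and the right-hand side tends to $0$ as $\eta'\to0$ because $\mu$ gives no mass to $C(f)$ (its Jacobian is log-integrable).

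Combining (a), (b) and $m_k\to0$, we obtain
$$\limsup_k\Lambda_f(\nu_k)\le\int g_\eta\,d\mu-\varepsilon\le\Lambda_f(\mu)-\varepsilon+o_{\eta\to0}(1).$$
Letting $\eta\to0$ contradicts $\limsup_k\Lambda_f(\nu_k)\ge\Lambda_f(\mu)$. This contradiction gives $h_0$ and $\eta$ as claimed, with $h_0<h_{\rm{top}}(f)$ automatically.

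One subtlety is that item 3) concerns only the $\limsup$, while the inequality above holds for all large $k$; this is consistent. The main obstacle is the first step: verifying that high-entropy ergodic measures have $u$-index $d$, so that Theorem \ref{thm:main-con-1} is applicable. Ruelle's inequality alone does not give this in dimension $d\ge3$, since a neutral or contracting direction could coexist with large entropy carried by the other directions. I would first try to invoke Lemma \ref{lem:chi-expan}. If that lemma were not available, I would argue directly that entropy beyond $(d-1)R(f)$ cannot be carried by at most $d-1$ expanding directions, using the Ruelle inequality for the unstable directions, each bounded by $R(f)$.
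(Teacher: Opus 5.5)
Your proof is correct and follows essentially the paper's route: a contradiction sequence fed into Theorem~\ref{thm:main-con-1}, with the SEE gap $h_{\rm{top}}(f)>(d-1)R(f)$ doing the work. Your ordering, using item 2) first to force $\beta=1$ and only then contradicting the $\limsup$ inequality of item 3) via the truncation bound, is if anything cleaner than the paper's, which bounds $\beta$ through the defect $\gamma$ and therefore has to borrow the $\liminf$ inequality 3') from Theorem~\ref{th:main2}.

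One correction: your closing worry is unfounded, and your early claim that Ruelle's inequality forces an exponent above $h/(d-1)$ is false, since such an exponent would exceed $R(f)$. Ruelle's inequality together with $\lambda_i\le R(f)$ gives $h(\nu_k)\le d_u(\nu_k)\,R(f)$, where $d_u$ is the number of positive exponents. So $h(\nu_k)>(d-1)R(f)$ by itself forces all $d$ exponents to be positive in every dimension; this is exactly the content of Lemma~\ref{lem:chi-expan}.
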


\begin{proof}
By contradiction, assume that there is a sequence $(\nu_k, \eta_k, \delta_k)$ with
\begin{itemize}
\item[i)] $\nu_k$ is ergodic and converges to some $\mu$,
\item[ii)] $\eta_k, \delta_k \to 0$,
\item[iii)] $h(\nu_k) \geq h_{\rm{top}}(f) - \delta_k$,
\item[iv)] $\int_{\{ |\Jac f| < \eta_k \}} - \log |\Jac f| d \nu_k \geq \varepsilon$,
\item[v)] $(h(\nu_k))$ converges.
\end{itemize}
Let
$$
\gamma = \lim\limits_{\eta \to 0} \limsup\limits_{k \to +\infty} \int_{\{ |\Jac f| < \eta \}} - \log |\Jac f| d \nu_k.
$$
Let $\alpha = (d-1) R(f) < h_{\rm{top}}(f)$ and $\beta, \mu_0, \mu_1$ be given by Theorem \ref{thm:main-con-1}.
Notice that $\gamma \geq \varepsilon$, and
\begin{align*}
\gamma
&= \int \log |\Jac f| d \mu - \liminf\limits_{k \to +\infty} \int \log |\Jac f| d \nu_k\\
&\leq \Lambda(\mu) - \beta \Lambda(\mu_1)\\
&= (1-\beta) \Lambda(\mu_0)\\
&\leq (1- \beta) \log || \Jac f ||_{\infty}.
\end{align*}
The first inequality does not come from Theorem \ref{thm:main-con-1}, which gives a $\limsup$ and not a $\liminf$ inequality, but item $3')$ from Theorem \ref{th:main2}.
Thus $\beta \leq 1 - \frac{\gamma}{\log || \Jac f ||_{\infty}}$.
On the other hand we have the following
\begin{align*}
h_{\rm{top}}(f)
&\leq \lim\limits_{k \to +\infty} h(\nu_k) + \delta_k\\
&\leq \beta h(\mu_1) + (1-\beta) \alpha\\
&= \beta (h(\mu_1) - \alpha) + \alpha\\
&\leq \beta (h_{\rm{top}}(f) - \alpha) + \alpha\\
&\leq \left ( 1-\frac{\gamma}{\log || \Jac f ||_{\infty}} \right ) h_{\rm{top}}(f) + \frac{\gamma}{\log || \Jac f ||_{\infty}} \alpha.
\end{align*}
Therefore $\gamma = 0$ and we cannot have $\gamma \geq \varepsilon$.
\end{proof}

\begin{corollary}[Existence of measures of maximal entropy and entropic continuity of the Lyapunov exponent]
\label{cor:Cont-expo}
Let $f : \M \to \M$ be an SEE map. 
Let $(\nu_k)_{k\in \mathbb{N}}$ be a sequence of ergodic measures, $\nu_k\rightarrow \mu$ in the weak-$\ast$ topology and $\lim_{k\rightarrow \infty}h_f(\nu_k) = h_{\rm{top}}(f)$.
Then $\mu$ is a measure of maximal entropy for $f$ and
$$
\lim_{k\rightarrow \infty} \sum_{i=1}^d\lambda^+_i(\nu_k)=\sum_{i=1}^d\lambda_i(\mu).
$$
\end{corollary}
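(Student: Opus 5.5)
We will deduce the statement from Theorem \ref{thm:main-con-1} together with the uniform integrability Corollary \ref{cor:UI}, using the SEE inequality $\alpha:=(d-1)R(f)<h_{\rm{top}}(f)$. The first step is to check that the $\nu_k$ have $u$-index $d$ for $k$ large, so that Theorem \ref{thm:main-con-1} applies. For an ergodic $\nu$, Ruelle's inequality bounds $h(\nu)$ by the sum of positive exponents. If $\nu$ had a nonpositive exponent, this sum would be at most $(d-1)R(f)=\alpha$. Since $h(\nu_k)\to h_{\rm{top}}(f)>\alpha$, this cannot happen for large $k$ (this is the content of Lemma \ref{lem:chi-expan}). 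For such $k$ we also have $\sum_i\lambda_i^+(\nu_k)=\Lambda_f(\nu_k)$.

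Next we apply Theorem \ref{thm:main-con-1} and obtain $\mu=\beta\mu_1+(1-\beta)\mu_0$ with
$$h_{\rm{top}}(f)=\lim_k h(\nu_k)\le \beta h(\mu_1)+(1-\beta)\alpha\le \beta h_{\rm{top}}(f)+(1-\beta)\alpha .$$
Since $\alpha<h_{\rm{top}}(f)$, this forces $\beta=1$. Then $\mu=\mu_1$ and $h(\mu)\ge h_{\rm{top}}(f)$, so $\mu$ is an MME. Moreover $\mu$ is expanding by item 1), so almost every ergodic component has $u$-index $d$. Hence $\Lambda_f(\mu)=\int\log|\Jac f|\,d\mu=\sum_i\lambda_i(\mu)$, where all exponents are positive. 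This uses that $\log|\Jac f|\in L^1(\mu)$, which follows from the expansion of $\mu$ and finite entropy; we expect the paper's statement of Theorem \ref{thm:main-con-1} to include this implicitly.

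It remains to prove $\lim_k\Lambda_f(\nu_k)=\Lambda_f(\mu)$. For the upper bound we use the truncation $\phi_\eta=\max(\log|\Jac f|,\log\eta)$, which is continuous. We get $\int\log|\Jac f|\,d\nu_k\le\int\phi_\eta\, d\nu_k\to\int\phi_\eta\, d\mu$. Letting $\eta\to0$ and using monotone convergence gives $\limsup_k\Lambda_f(\nu_k)\le\Lambda_f(\mu)$. For the lower bound we fix $\varepsilon>0$. Corollary \ref{cor:UI} provides $h_0<h_{\rm{top}}(f)$ and $\eta$ such that
$$\int_{\{|\Jac f|<\eta\}}-\log|\Jac f|\,d\nu_k<\varepsilon$$
for $k$ large, since $h(\nu_k)>h_0$ eventually. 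Then $\int\log|\Jac f|\,d\nu_k\ge\int\phi_\eta\, d\nu_k-\varepsilon-|\log\eta|\,\nu_k(\{|\Jac f|<\eta\})$. The last term is itself at most $\varepsilon$, because on that set $-\log|\Jac f|\ge|\log\eta|$. Passing to the limit in $k$ and then letting $\eta\to0$ gives $\liminf_k\Lambda_f(\nu_k)\ge\Lambda_f(\mu)-2\varepsilon$. Alternatively, item 3) of Theorem \ref{thm:main-con-1} with $\beta=1$ gives this bound along subsequences directly.

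The main obstacle is the bookkeeping of the first step, namely ensuring that every $\nu_k$ (not just a tail) fits the hypotheses. We handle this by discarding finitely many terms, which does not affect limits. The integrability of $\log|\Jac f|$ for $\mu$ near the critical set is controlled by the bound $\Lambda_f(\mu)\ge\limsup_k\Lambda_f(\nu_k)\ge\liminf_k h(\nu_k)>0$, so the limit is finite.
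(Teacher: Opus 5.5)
Your proposal is correct and follows essentially the same route as the paper. You use Ruelle's inequality to get $u$-index $d$ for large $k$, Corollary \ref{cor:UI} for the convergence of $\Lambda_f(\nu_k)$ (you unpack it through the continuous truncation $\max(\log|\Jac f|,\log\eta)$, where the paper just states the corresponding identity), and the entropy inequality of the decomposition theorem with $(d-1)R(f)<h_{\rm{top}}(f)$ to force $\beta=1$, so that $\mu$ is an MME. As a minor remark, the term $|\log\eta|\,\nu_k(\{|\Jac f|<\eta\})$ in your lower bound actually enters with a plus sign, so subtracting it is unnecessary though harmless.
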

\begin{proof}
We first mention that for $k$ large enough, the ergodic measure $\nu_k$ has only positive Lyapunov exponents.
Indeed, by Ruelle's inequality \cite{Ruelle1978}, if $d_u(\nu_k)$ is the number of positive exponents of $\nu_k$, then $h_f(\nu_k) \leq d_u(\nu_k) R(f)$, thus $d_u(\nu_k) > d-1$ and $d_u(\nu_k) = d$.\\

    Let $\nu_k\rightarrow \mu$ be as stated in the corollary. Notice that
    $$
    \lim_{\eta\rightarrow 0}\limsup_{k\rightarrow \infty}\int_{\{ |\Jac f| < \eta \}} -\log |\Jac f| d \nu_k=\sum_{i=1}^d\lambda_i(\mu)-\liminf_{k\rightarrow \infty} \sum_{i=1}^d\lambda^+_i(\nu_k).
    $$
    As Corollary \ref{cor:UI} implies the left hand of the equation being $0$, we have the continuity as stated.

    Moreover, by the same proof as in \cite{B} and \cite{DL}, Theorem \ref{th:main-cont}, applied to $\nu_k\rightarrow \mu$, implies $\beta=1$. So $\mu$ is an MME.
\end{proof}

\subsection{Lower semi-continuity of the topological entropy}
\label{sec:2-con}

We show that the topological entropy is lower semi-continuous at SEE maps and that the set of SEE maps is open.
This does not use Theorem \ref{thm:main-con-1}, and we in fact have the following stronger statement.

\begin{proposition}
Let $f : \M \to \M$ be a $C^r$-map, $r>1$, satisfying the following restrictive variational principle
$$
h_{\rm{top}}(f) = \sup\limits_{\substack{
\mu \: f\text{-invariant s.t.}\\
\mu \text{ has only positive exponents}}}
h_f(\mu).
$$
Then the topological entropy is lower semi-continuous at $f$ in the $C^1$-topology.
\end{proposition}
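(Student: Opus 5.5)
Given $\delta>0$, we aim to find a $C^1$-neighborhood $\mathcal{U}$ of $f$ such that $h_{\rm{top}}(g)\geq h_{\rm{top}}(f)-\delta$ for every $g\in\mathcal{U}$. The idea is to realize most of the entropy of $f$ on a uniformly expanding compact invariant set. Such a set is $C^1$-robust, so its entropy persists under perturbation.

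First, by the restrictive variational principle and ergodic decomposition, there is an ergodic measure $\mu$ with $h_f(\mu)>h_{\rm{top}}(f)-\delta/2$ all of whose Lyapunov exponents are positive. Since $\mu$ is ergodic, $\mu$ is $\chi$-expanding for some $\chi>0$. Here we use that the ergodic components of a measure with only positive exponents a.e.\ have only positive exponents, and that $\log\|df\|\in L^1(\mu)$ is automatic because $f$ is $C^1$ on a compact manifold. Note that $\log|\Jac f|$ may fail to be integrable only if $\mu$ charges $C(f)$. But an expanding measure has $\Lambda_f(\mu)>0$ finite, so $\mu(C(f))=0$.

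Second, apply a Katok-type horseshoe theorem for non-invertible $C^r$ maps, $r>1$, to the expanding measure $\mu$. This yields, for some $N$, a compact set $K$ and a power $f^N$ with $f^N(K)=K$, such that $f^N|_K$ is topologically conjugate (or semi-conjugate, finite-to-one) to a full shift on $m$ symbols with $\frac1N\log m>h_f(\mu)-\delta/2$. Moreover $f^N$ is uniformly expanding on a neighborhood of $K$: there are disjoint compact "disks" $D_1,\dots,D_m$, of uniform size in Pesin charts, such that $f^N$ restricted to each $D_i$ is a diffeomorphism onto a set containing $\bigcup_j D_j$, with $m(d f^N)\geq e^{N\chi/2}$ there. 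This is the standard construction: take a Pesin-type block of positive $\mu$-measure, together with a Bowen-ball cover of it by many orbit segments returning to a small ball. By (NUE) and the slow approach to $C(f)$ along typical orbits, the inverse branches along these segments are well-defined contractions on a ball of uniform radius. The Hölder (that is, $r>1$) regularity is used to control distortion in the charts. I expect this step to be the main obstacle, because the relevant horseshoe result must be available (or reproved) in the non-invertible setting with critical points. The plan is to build the inverse branches directly from backward contraction along points of $\mathrm{NUE}_\chi$, which avoids stable manifolds entirely since all directions expand.

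Third, establish robustness. The covering property $f^N(D_i)\supset\bigcup_j D_j$ and the conorm bound $m(df^N)\geq e^{N\chi/2}>1$ on $\bigcup_i D_i$ are open conditions in the $C^1$-topology on $f^N$, and hence on $f$. Thus for $g$ $C^1$-close to $f$, $g^N$ restricted to each $D_i$ is still injective and expanding, with image covering $\bigcup_j D_j$ (by a degree/invariance-of-domain argument on slightly shrunk disks). The inverse branches of $g^N$ then compose into contractions. This yields a $g^N$-invariant compact set semi-conjugate onto the full $m$-shift, so $h_{\rm{top}}(g)\geq\frac1N h_{\rm{top}}(g^N)\geq\frac1N\log m>h_{\rm{top}}(f)-\delta$. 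Since $\delta$ was arbitrary, this gives lower semi-continuity at $f$.
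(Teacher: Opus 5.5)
Your proposal is correct and follows essentially the same route as the paper. The paper's proof cites Gelfert's non-uniformly expanding analogue of Katok's horseshoe theorem, which you instead propose to rebuild via inverse branches along orbits in $\operatorname{NUE}_{\chi}$. It then cites Shub's $C^1$-structural stability of uniformly expanding compact sets, which you replace by a direct inverse-branch/covering argument.

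One small slip: $\log|\Jac f|$ can fail to be $\mu$-integrable even when $\mu(C(f))=0$, so that step is wrong as written. You do not need it, because a positive smallest exponent $\lambda$ together with $|\Jac f^n|\geq m(df^n)^d$ already gives $\int\log|\Jac f|\,d\mu\geq d\lambda>0$, and hence also $\mu(C(f))=0$.
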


\begin{proof}
We first use the result by Gelfert \cite{Gelfert}, which is an analog for non-uniformly expanding maps of the horseshoe approximation result by Katok \cite{Katok} for surface diffeomorphisms, then we use the structural stability result by Shub \cite{Shub} regarding uniformly expanding compact sets.
\end{proof}

Since any SEE map satisfies the above restrictive variational principle, this gives the following:

\begin{proposition}
\label{prop:lsc-top-ent}
The topological entropy is lower semi-continuous at SEE maps for the $C^1$-topology.
\end{proposition}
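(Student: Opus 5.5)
The plan is to reduce Proposition \ref{prop:lsc-top-ent} to the preceding proposition. It then suffices to check that every SEE map $f$ satisfies the restrictive variational principle
$$
h_{\rm{top}}(f) = \sup\{ h_f(\mu) : \mu \text{ invariant with only positive exponents}\}.
$$
SEE maps are $C^{\infty}$ by Definition \ref{def:SEE}, so in particular $C^r$ with $r>1$, and the regularity hypothesis holds. The inequality ``$\geq$'' is the variational principle. For ``$\leq$'', we use that $h_{\rm{top}}(f)=\sup_{\mu\in\mathbb{P}_{\rm{erg}}(f)} h_f(\mu)$. Since $h_{\rm{top}}(f)>(d-1)R(f)$, we may restrict the supremum to ergodic $\mu$ with $h_f(\mu)>(d-1)R(f)$ without changing its value.

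For such an ergodic $\mu$ we show that all Lyapunov exponents are positive, following the argument at the start of the proof of Corollary \ref{cor:Cont-expo}. Let $d_u(\mu)$ be the number of positive exponents counted with multiplicity. Ruelle's inequality \cite{Ruelle1978} for $C^1$ non-invertible maps gives
$$
h_f(\mu)\le \sum_i \lambda_i^+(\mu)\le d_u(\mu)\, R(f).
$$
Each positive exponent is at most $R(f)$, because $\frac1n\log\|d_xf^n v\|\le \frac1n\log\|df^n\|_\infty$. Combining with $h_f(\mu)>(d-1)R(f)$ forces $d_u(\mu)>d-1$, hence $d_u(\mu)=d$. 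The same inequality also forces $R(f)>0$, so this is not vacuous. This also covers the case where $\mu$ charges the critical set. There $\log|\Jac f|$ may fail to be integrable, but the Lyapunov exponents are still defined, since $\log\|df\|$ is bounded above. If $-\infty$ appears as an exponent it is nonpositive, so $d_u(\mu)$ still counts only the positive ones and the Ruelle bound is unaffected.

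With the restrictive variational principle in hand, we conclude by the preceding proposition, whose proof combines Gelfert's approximation by uniformly expanding repellers with Shub's structural stability. Lower semi-continuity of $h_{\rm{top}}$ at $f$ in the $C^1$-topology then follows directly.

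The main obstacle is making sure the version of Ruelle's inequality used is valid for maps with critical points, and with the exponents defined as in Section \ref{sec:1-pre}. If the cited version requires integrability of $\log m(df)$, I would instead apply it to the lift $\hat\mu$, or use the classical statement for $C^1$ maps, which only requires $\log^+\|df\|\in L^1$. Either route gives $h_f(\mu)\le\sum\lambda_i^+(\mu)$ with no condition on critical points.
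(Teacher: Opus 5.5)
Your proposal is correct and follows the paper's route. The paper proves this proposition by observing that SEE maps satisfy the restrictive variational principle and then invoking the preceding Gelfert--Shub proposition; your Ruelle-inequality check, that ergodic $\mu$ with $h_f(\mu)>(d-1)R(f)$ has all $d$ exponents positive, is exactly the implicit justification the paper uses in Lemma~\ref{lem:chi-expan} and Corollary~\ref{cor:Cont-expo}.
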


This has the following consequence:
\begin{proposition}
\label{prop:SEE-open}
The set of SEE maps is open in the $C^1$-topology.
\end{proposition}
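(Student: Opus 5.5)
The plan is to combine the lower semi-continuity of topological entropy at SEE maps (Proposition \ref{prop:lsc-top-ent}) with an upper semi-continuity of the spectral radius $R$ in the $C^1$-topology. Fix an SEE map $f$. Set $\delta := h_{\rm{top}}(f) - (d-1)R(f) > 0$; we will find a $C^1$-neighborhood $\mathcal{U}$ of $f$ such that every $C^\infty$-map $g\in\mathcal{U}$ satisfies $h_{\rm{top}}(g) > (d-1)R(g)$.

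\textbf{Step 1 (entropy does not drop).} By Proposition \ref{prop:lsc-top-ent}, there is a $C^1$-neighborhood $\mathcal{U}_1$ of $f$ such that $h_{\rm{top}}(g) > h_{\rm{top}}(f) - \delta/2$ for all $g \in \mathcal{U}_1$.

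\textbf{Step 2 (spectral radius does not jump up).} The sequence $a_n(g) := \log^+ \|dg^n\|_\infty$ is subadditive, by the chain rule and submultiplicativity of operator norms. Fekete's lemma then gives
$$R(g) = \inf_n \frac{1}{n} a_n(g).$$
For each fixed $n$, the map $g \mapsto \|dg^n\|_\infty$ is continuous in the $C^1$-topology. Indeed, $dg^n$ is a composition of $dg$ evaluated along the orbit $x, gx, \dots, g^{n-1}x$. Since $\mathcal{M}$ is compact, $dg$ is uniformly continuous, so both the points $g^k x$ and the matrices $d_{g^k x}g$ depend uniformly continuously on $g$ in $C^1$. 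Hence $R$ is an infimum of continuous functions, and is therefore upper semi-continuous. Choose $n$ with
$$\frac{1}{n} a_n(f) < R(f) + \frac{\delta}{4(d-1)},$$
which we may assume $d \ge 2$ for; if $d=1$ the condition reads $h_{\rm{top}}>0$ and Step 1 suffices. Continuity of $a_n$ then yields a neighborhood $\mathcal{U}_2$ of $f$ on which $R(g) \le \frac1n a_n(g) < R(f) + \frac{\delta}{2(d-1)}$.

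\textbf{Step 3 (conclusion).} For $g \in \mathcal{U}_1 \cap \mathcal{U}_2$ that is $C^\infty$,
$$(d-1)R(g) < (d-1)R(f) + \frac{\delta}{2} = h_{\rm{top}}(f) - \frac{\delta}{2} < h_{\rm{top}}(g),$$
so $g$ is SEE.

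The only substantive input is Step 1, which the paper has already established. The main point needing care is that Proposition \ref{prop:lsc-top-ent} is stated for $C^1$-perturbations among maps where the Gelfert–Shub argument applies. Here we need it at the SEE map $f$ only, with arbitrary $C^1$-close (in particular $C^\infty$) maps $g$. This is exactly what structural stability of uniformly expanding compact sets provides, because such sets persist under $C^1$-small perturbations with conjugate dynamics. The remaining check is the routine uniform continuity estimate in Step 2.
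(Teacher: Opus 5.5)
Your proof is correct and follows the same route as the paper: lower semi-continuity of $h_{\rm{top}}$ at SEE maps (Proposition~\ref{prop:lsc-top-ent}) combined with upper semi-continuity of $R$, viewed as an infimum of the $C^1$-continuous functions $g \mapsto \frac{1}{n}\log^+ \|dg^n\|_\infty$. You additionally spell out the $\delta/2$ bookkeeping, the subadditivity behind Fekete's lemma, and the case $d=1$, all of which the paper leaves implicit.
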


\begin{proof}
By definition of SEE maps and Proposition \ref{prop:lsc-top-ent}, it remains to check that $f \mapsto R(f)$ is upper semi-continuous, which it is as an infimum of continuous maps: $R(f) = \inf\limits_{n \geq 1} \frac{1}{n} \log^+ ||d f^n ||_{\infty}$.
\end{proof}

\subsection{SPR property}\label{section:SPRSEE}

In this section we prove Theorem \ref{thm:main1}. Our approach resembles the one given in \cite{BCS3}. The idea is to first find a Pliss set in $\hat{\mathcal{M}}$ with measure lower bounded for measures of high entropy. Then we apply a modified version of Pesin's tempered envelope lemma to compare the measure of the Pliss set with that of a Pesin block, uniformly with respect to measures with large entropy.

We first prove that $f:\mathcal{M}\rightarrow \mathcal{M}$ being SEE implies that all measures with sufficiently high entropy are $\chi$-expanding.


\begin{lemma}\label{lem:chi-expan}
    Let $f:\mathcal{M}\rightarrow \mathcal{M}$ be a $C^1$-map, $h_{\rm{top}}(f)>(d-1)R(f)$. Then there exist $\chi>0$, $h_0>0$ such that for all $\mu\in \mathbb{P}_{\rm{erg}}(f)$ with $h(\mu)>h_0$, $\mu$ is $\chi$-expanding.
\end{lemma}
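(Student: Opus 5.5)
The plan is to run Ruelle's inequality through a truncated sum of positive exponents. Set $\alpha=(d-1)R(f)<h_{\rm{top}}(f)$ and let $\lambda_1\geq\cdots\geq\lambda_d$ be the exponents of $\mu$ listed with multiplicity. If all exponents are positive, the $(d-1)$ largest are bounded by $R(f)$, and the smallest one is what we must control. The key input is Ruelle's inequality for $C^1$ maps, non-invertible and possibly with critical points \cite{Ruelle1978}:
$$h(\mu)\leq \int\sum_i\lambda_i^+\,d\mu .$$
For ergodic $\mu$ this gives $h(\mu)\leq \sum_{i=1}^d\lambda_i^+(\mu)$.

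The main step is a sharper bound using the spectral radius. Every exponent satisfies $\lambda_i(\mu)\leq R(f)$, since $\|d_xf^n v\|\leq \|df^n\|_\infty\|v\|$ and $\frac1n\log^+\|df^n\|_\infty\to R(f)$. Suppose $\lambda_d(\mu)\leq\chi$ for some $\chi$ with $0<\chi<R(f)$ (if $R(f)=0$ the hypothesis forces $d=1$ and the argument below still applies). Then
$$h(\mu)\leq \sum_{i=1}^{d-1}\lambda_i^+(\mu)+\lambda_d^+(\mu)\leq (d-1)R(f)+\chi .$$
Choose $\chi>0$ small so that $\alpha+2\chi<h_{\rm{top}}(f)$, and set $h_0:=\alpha+\chi$, which lies in $(0,h_{\rm{top}}(f))$. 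Any ergodic $\mu$ with $h(\mu)>h_0$ must then satisfy $\lambda_d(\mu)>\chi$, i.e. $\lambda_{r(\mu)}(\mu)>\chi$, which is the definition of $\chi$-expanding.

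The remaining technical points are these.

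First, the exponents must be well defined. If $h(\mu)>0$, then $\mu$ does not charge the critical set heavily enough to produce an exponent of $-\infty$. Indeed, if some exponent were $-\infty$ (for instance when $\log|\Jac f|\notin L^1$), we could still read $\lambda_{r(\mu)}$ via the $\limsup$ in the definition of $\lambda_f(x,v)$. Ruelle's inequality only involves positive parts, so the bound above holds with $\lambda_d^+\leq\max(\lambda_d,0)\leq\chi$ regardless. In particular, we need no integrability of $\log m(df)$.

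Second, the $C^1$ version of Ruelle's inequality must be available for non-invertible maps. This is standard: Ruelle's original proof works for $C^1$ maps of compact manifolds without invertibility.

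Third, the exponents in the paper are defined as $\limsup$ of forward growth on $\mathcal{M}$. For the bound we only use the forward exponents of $\mu$, which coincide $\mu$-a.e. with the Oseledets exponents whenever $\log\|df\|$ is integrable; it is, being bounded above.

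I expect the main obstacle to be the bookkeeping around critical points, namely making sure "$\chi$-expanding" refers to the smallest forward exponent and that this smallest exponent is indeed bounded by $\chi$ in the contrapositive. The entropy estimate itself is a one-line application of Ruelle's inequality, as in the proof of Corollary \ref{cor:Cont-expo}.
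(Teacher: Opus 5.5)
Your proof is correct and is essentially the paper's: Ruelle's inequality for $C^1$ maps, together with $\lambda_i(\mu)\le R(f)$ for the $d-1$ largest exponents, gives $h(\mu)\le (d-1)R(f)+\lambda_{r(\mu)}^+(\mu)$, so $h(\mu)>h_0=(d-1)R(f)+\chi$ forces $\lambda_{r(\mu)}(\mu)>\chi$. Two of your side remarks are slightly off but unused: $R(f)=0$ is impossible under the hypothesis in any dimension (Ruelle's inequality and the variational principle give $h_{\rm top}(f)\le dR(f)$), so the restriction $\chi<R(f)$ is unnecessary; and $h(\mu)>0$ alone does not exclude an exponent equal to $-\infty$---what makes that case harmless is the positive-part form of Ruelle's inequality, as you go on to note.
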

\begin{proof}
    Take $0<\chi<h_{\rm{top}}(f)-(d-1)R(f)$. Then set $h_0:=(d-1)R(f)+\chi$. For all measure $\mu\in \mathbb{P}_{\rm{erg}}(f)$ with $h(\mu)>\chi$, we have
    $$
    (d-1)R(f)+\chi<h(\mu)\overset{(a)}{\leq} \sum_{i=0}^d \lambda^+_i(\mu)\overset{(b)}{\leq} (d-1)R(f)+\lambda^+_{r(\mu)}(\mu),
    $$
    where $\lambda^+_i(\mu):=\max\{ \lambda_i(\mu),0 \}$. Inequality $(a)$ is Ruelle inequality; inequality $(b)$ follows from $\lambda_i(\mu)\leq R(f)$. Cancel the $(d-1)R(f)$ in the left and right hand sides, we may conclude.
\end{proof}





\bigskip

Firstly, we collect some technical lemmas that will be used in this section.

\begin{lemma}[Pliss lemma, Lemma 2.18 \cite{BCS3}]\label{Pliss}
Let $(X,T)$ be an invertible topological system and $\nu\in \mathbb{P}(T)$. Let $\varphi\in L^1(\nu)$, upper-bounded by $A \in \bbR$ $\nu$-a.e.
Then for every $\beta<A$,
$$
\nu(\{ x:\forall j\geq1, \sum_{i=0}^{j-1}\varphi\circ T^i(x)\geq \beta\cdot j \})\geq \frac{\int \varphi d\nu-\beta}{A-\beta}.
$$
\end{lemma}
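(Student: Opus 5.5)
We prove the Pliss lemma by a maximal-inequality (rising sun / Riesz) argument, applied to the shifted potential $\psi:=\varphi-\beta$, which is integrable and bounded above by $A-\beta>0$. Let
$$
P:=\Big\{x:\ \forall j\geq 1,\ \sum_{i=0}^{j-1}\psi(T^ix)\geq 0\Big\}.
$$
This is the set in the statement. The claim is equivalent to
$$
\int\psi\,d\nu\leq (A-\beta)\,\nu(P).
$$
If $\int\psi\,d\nu\leq 0$ there is nothing to prove, so we may assume it is positive.

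The key step is to show that $\int_{P^c}\psi\,d\nu\leq 0$; then $\int\psi\,d\nu\leq\int_P\psi\,d\nu\leq (A-\beta)\nu(P)$ and we are done. For $x\in P^c$, let $j(x)\geq1$ be the first time with $S_{j}\psi(x):=\sum_{i=0}^{j-1}\psi(T^ix)<0$. The right tool is a forward/backward decomposition of orbits, using invertibility of $T$.

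Fix $N$ and a point $x$. Along the orbit segment $x,Tx,\dots,T^{N-1}x$, run the following greedy scheme. Scan times $n=0,1,\dots$. If $T^nx\in P$, record it as a ``good'' time and move to $n+1$. If $T^nx\notin P$, the block $[n,n+j(T^nx))$ has negative sum; mark it and jump to its end. Every time in $[0,N)$ is then covered by good singletons or by negative blocks, except possibly a last incomplete block of length at most $L$. Restricting to $j(\cdot)\leq L$ on a set of almost full measure is done by truncation, letting $L\to\infty$ at the end.

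This yields
$$
S_N\psi(x)\leq (A-\beta)\,\#\{n<N: T^nx\in P\}+\text{(boundary error)}.
$$
The boundary error is controlled by $\sum_{n=N-L}^{N-1}|\psi(T^nx)|$, whose integral is $L\|\psi\|_1$. Integrating over $\nu$, using invariance, dividing by $N$ and letting $N\to\infty$ gives $\int\psi\,d\nu\leq (A-\beta)\nu(P)$ for the truncated version.

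To handle the truncation rigorously, replace $P^c$ by $P^c_L:=\{x\notin P: j(x)\leq L\}$. Points of $P^c\setminus P^c_L$ are treated as good times; their contribution is bounded by $\int_{P^c\setminus P^c_L}\psi^+\,d\nu$, which tends to $0$ as $L\to\infty$ by dominated convergence. Invertibility is not really needed here, since the argument only uses forward orbits; it is just the setting of the lemma.

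The main obstacle is making the covering bookkeeping precise. Two issues arise:
\begin{itemize}
\item Blocks starting at bad points may run past a point which itself lies in $P$. This causes no problem, since we only need an upper bound: good times inside negative blocks are simply not counted, and that only lowers the right side we compare against.
\item The boundary terms must vanish after division by $N$.
\end{itemize}
Both are routine once the greedy decomposition is fixed. An alternative, should the greedy scheme become messy, is to quote the Hopf maximal ergodic theorem. Applied to $-\psi$, it gives $\int_{E}(-\psi)\,d\nu\geq0$, where $E=\{\sup_j S_j(-\psi)>0\}=P^c$. This is exactly the key inequality $\int_{P^c}\psi\,d\nu\leq0$.
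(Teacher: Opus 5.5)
The paper does not prove this lemma; it quotes it from \cite{BCS3}, so there is no in-paper argument to compare against. Your proof is correct and self-contained, and its cleanest form is your ``alternative''.

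\textbf{The Hopf route.} Set $\psi=\varphi-\beta$. The complement of the Pliss set is exactly
$$P^c=\Bigl\{x:\ \sup_{j\geq1}S_j(-\psi)(x)>0\Bigr\}.$$
The strict inequality in Hopf's maximal ergodic theorem matches the non-strict inequality that defines $P$. Hopf therefore gives $\int_{P^c}\psi\,d\nu\leq 0$, and hence
$$\int\psi\,d\nu\leq\int_P\psi\,d\nu\leq (A-\beta)\,\nu(P).$$
This uses only $T$-invariance of $\nu$, with neither ergodicity nor invertibility. That generality is needed: in the proof of Proposition~\ref{prop:uniformPliss} the lemma is applied to arbitrary, not necessarily ergodic, $\hat\mu\in\mathbb{P}^{h_1}(\hat f)$.

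\textbf{The greedy covering.} This is essentially the combinatorial proof of the maximal inequality, integrated against $\nu$. With the truncation at $L$, invariance alone gives
$$\int\psi\,d\nu\leq (A-\beta)\,\nu(P)+(A-\beta)\,\nu(P^c\setminus P^c_L)+\frac{L\|\psi\|_1}{N}.$$
You then let $N\to\infty$ and afterwards $L\to\infty$. No pointwise ergodic theorem is needed, and no dominated convergence either, since $\psi^+\leq A-\beta$ already bounds the middle term.

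\textbf{One expository point.} The greedy scheme proves the target inequality directly. It does not prove the ``key step'' $\int_{P^c}\psi\,d\nu\leq0$ that you announce it will, so you should say so.
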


In the end, we introduce some notations. Given $T:X\rightarrow X$ and $0<h<h_{\rm {top}}(T)$, we denote by
$$
\begin{aligned}
&\mathbb{P}^h(T):=\{ \mu\in \mathbb{P}(T): h(\mu)>h \};\\
&\mathbb{P}^h_{\rm{erg}}(T):=\{\mu\in \mathbb{P}_{\rm{erg}}(T): h(\mu)>h  \}.
\end{aligned}
$$

\subsubsection{Constructing Pliss sets}

Throughout the remainder of Section \ref{section:SPRSEE}, we always consider SEE maps, that is, $f:\mathcal{M}\rightarrow \mathcal{M}$, $C^{\infty}$, $h_{\rm{top}}(f)>\left(d-1\right)R(f)$.

\begin{definition}\label{def:Plissset}
Given $C>0$, $\chi>0$, $n_0\in \mathbb{N}$, \emph{the $(C, \chi,n_0)$-Pliss set} is defined to be $P_{C, \chi,n_0}\subseteq \hat{\mathcal{M}}$, consists of all $\hat{x}\in \hat{\mathcal{M}}$ such that
\begin{itemize}
\item[1.] $||d_{\hat{x}}\hat{f}^{-j\cdot n_0}||\leq \exp(-\chi\cdot j\cdot n_0)$ for all $j\geq 0$,
\item[2.] $m(d_{x_0}f)\geq C^{-1}$. 
\end{itemize}
Without condition $2)$, we say that the set is \emph{the $(\chi,n_0)$-Pliss set}, denoted by $P_{\chi,n_0}$.
\end{definition}

\begin{definition}\label{def:Eset}
Given $C>0$, $\chi>0$, \emph{the $(C,\chi)$-expanding set} $P_{C,\chi}\subseteq \hat{\mathcal{M}}$, consists of all $\hat{x}\in \hat{\mathcal{M}}$ such that
\begin{itemize}
\item[1.] $||d_{\hat{x}}\hat{f}^{-k}||\leq C\cdot \exp(-k\cdot \chi)$ for all $k\geq 0$,
\item[2.] $m(d_{x_0}f)\geq C^{-1}$.
\end{itemize}
\end{definition}

In this section, we prove the following proposition.

\begin{proposition}\label{prop:uniformPliss}
There exists $\chi_0>0$ such that for all $0<\tau<1$, there exist $C_0>0$ and $h_0>0$, satisfying for all $\hat{\mu}\in \mathbb{P}^{h_0}(\hat{f})$, 
$$
\hat{\mu}(P_{C_0,\chi_0})>\tau.
$$
\end{proposition}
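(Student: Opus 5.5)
We have to show that measures of high entropy give mass at least $\tau$ to the set $P_{C_0,\chi_0}$, uniformly in the measure. The plan is to combine three ingredients: the Pliss lemma (Lemma \ref{Pliss}) applied to a block observable for a power $f^{n_0}$; uniform integrability (Corollary \ref{cor:UI}), to control the mass near the critical set; and a finite-step argument converting $(\chi,n_0)$-Pliss points into points of $P_{C,\chi}$. By the ergodic decomposition and Corollary \ref{cor:UI} it suffices to treat ergodic $\mu$ with $h(\mu)>h_0$.

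\textbf{Step 1: choosing $\chi_0$ and $n_0$.} Lemma \ref{lem:chi-expan} gives $\chi_1>0$ such that every ergodic $\mu$ with $h(\mu)>h_1$ is $\chi_1$-expanding. We need a uniform rate for $\|d_{\hat{x}}\hat{f}^{-n}\|$, so we work with the observable
$$\varphi_n(\hat{x})=-\log\|d_{\hat{f}^{n}\hat{x}}\hat{f}^{-n}\|=\log m(d_{x_0}f^n),$$
with the convention $\varphi_n=-\infty$ on critical orbits. It is bounded above by $A_n=n\log\|df\|_\infty$. The key claim is that there exist $n_0$ and $h_0$ with
$$\int\varphi_{n_0}\,d\mu\ge 2\chi_0 n_0 \quad\text{for all ergodic }\mu\text{ with }h(\mu)>h_0.$$

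\textbf{Step 2: the main obstacle, a uniform lower bound on $\int\varphi_{n_0}\,d\mu$.} For a single $\chi_1$-expanding $\mu$ the bound follows from Oseledets. Uniformity over all high-entropy measures is the hard part. I would argue by contradiction: suppose $\nu_k$ ergodic with $h(\nu_k)\to h_{\rm top}$ and $\frac1n\int\varphi_n\,d\nu_k$ small for $n=n_k\to\infty$ chosen suitably. Pass to a limit $\nu_k\to\mu$; by Corollary \ref{cor:Cont-expo}, $\mu$ is an MME, hence $\chi_1$-expanding, and its sum of exponents is the limit of those of $\nu_k$. Corollary \ref{cor:UI} makes $\log m(df)$ near the critical set uniformly integrable, since $m(df)\ge|\Jac f|/\|df\|^{d-1}$. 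Then $\varphi_n$ is uniformly integrable at each fixed $n$, which gives upper semicontinuity failure only through critical mass, so
$$\lim_k\int\varphi_n\,d\nu_k=\int\varphi_n\,d\mu.$$
Since $\mu$ is a finite combination, or compact family, of $\chi_1$-expanding MMEs (compactness of the set of MMEs), we can choose $n_0$ with $\int\varphi_{n_0}\,d\mu\ge 3\chi_0n_0$ for all MMEs $\mu$, where $\chi_0=\chi_1/4$. Semicontinuity then yields the claim for $h>h_0$. If uniformity over MMEs is delicate, I would use upper semicontinuity of $\mu\mapsto -\int\varphi_n\,d\mu$ restricted to the set of MMEs, which is compact by Corollary \ref{cor:Cont-expo}, together with Kingman's subadditivity $\frac1n\int\varphi_n\,d\mu\uparrow$ its limit, which is at least $\chi_1$.

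\textbf{Step 3: Pliss lemma.} Apply Lemma \ref{Pliss} to $T=\hat f^{-n_0}$, which is invertible, and $\varphi=\varphi_{n_0}\circ\hat f^{-n_0}$, with $\beta=\chi_0n_0$. Since $\|d\hat f^{-jn_0}\|\le\prod\|d\hat f^{-n_0}\|$ along the backward blocks, the Pliss times give
$$\hat\mu(P_{\chi_0,n_0})\ge\frac{\chi_0n_0}{A_{n_0}-\chi_0n_0}=:c>0,$$
uniformly in $\mu$. This bound alone does not reach an arbitrary $\tau$, so we use the Pliss set only as a recurrence device. For a generic point, the density of backward times $j$ at which $\hat f^{-jn_0}\hat x\in P_{\chi_0,n_0}$ is at least $c$. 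To obtain mass greater than $\tau$, I would instead enlarge to $P_{C,\chi_0/2}$. Hyperbolic-time arguments (Pliss with $\beta$ close to the mean) show that, outside a set of measure at most $\tau'$, a point reaches a Pliss time within $N$ backward blocks, where $N$ depends only on $\tau'$ and the uniform bounds. Between these, the norms are controlled by $C=\sup_{\text{non-critical region}}\|df^{-1}\|^{Nn_0}$. This is finite once we also remove the set $\{m(df)<C^{-1}\}$, whose mass is uniformly small by Corollary \ref{cor:UI} and the Markov inequality. Intermediate times $k$ not multiples of $n_0$ are absorbed into $C$ in the same way.
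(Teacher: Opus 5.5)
Your Steps 1--2 and the first use of Lemma \ref{Pliss} are sound. Uniform integrability, compactness of the set of MMEs and superadditivity of $\varphi_n$ do give a uniform $n_0$. Intermediate times are harmless, since $\|d_{\hat x}\hat f^{-k}\|\le\|df\|_\infty^{K-k}\|d_{\hat x}\hat f^{-K}\|$ for $k\le K$.

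\textbf{The gap.} It is the final upgrade from $\hat\mu(P_{\chi_0,n_0})\ge c$ to mass $>\tau$ for arbitrary $\tau$, which is where the whole difficulty lies. You assert that, outside a set of measure $\tau'$, every point reaches a Pliss time within $N$ backward blocks, with $N$ independent of $\mu$. This is essentially the statement to be proved, and nothing you have established implies it, for three reasons.
\begin{itemize}
\item[1.] The bound $\frac{\int\varphi-\beta}{A-\beta}$ of Lemma \ref{Pliss} is close to $1$ only when the mean of $\varphi$ is close to its upper bound $A$. Choosing $\beta$ close to the mean drives it to $0$, not $1$. For $\varphi_{n_0}$ the normalized mean is at most $\lambda_d(\mu)$, which in general stays well below $A_{n_0}/n_0=\log\|df\|_\infty$ for every $n_0$.
\item[2.] A uniform lower bound $\hat\mu(P_{\chi_0,n_0})\ge c$ only controls the mean return time to the Pliss set (Kac), not the tail of the first hitting time. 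An invariant measure may visit $P_{\chi_0,n_0}$ in long runs separated by rare but very long backward stretches of weak expansion. Then, for any fixed $N$, a mass close to $1-c$ does not reach $P_{\chi_0,n_0}$ within $N$ blocks, while $\int\varphi_{n_0}\,d\mu$ stays as large as you like. A stationary sequence of increments alternating long blocks of $+A$ and $-b$ realizes this, with the Pliss bound attained exactly.
\item[3.] Compactness cannot be applied directly to $\{\sup_k\|d\hat f^{-k}\|e^{\chi k}>C\}$. This set involves infinitely many times and is open, so its $\hat\nu_k$-mass is not bounded above by its mass for the limit measure.
\end{itemize}

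\textbf{What is missing.} You need to feed the Pliss lemma only with finite-time observables whose mean is \emph{close to their supremum}, so that the resulting Pliss sets have measure close to $1$. The paper does this twice.
\begin{itemize}
\item[(i)] The set $U_N=\{\|d\hat f^{-N}\|<e^{-\chi N}\}$ is open, so $\hat\mu\mapsto\hat\mu(U_N)$ is lower semicontinuous. The set $\mathcal{L}$ of limits of high-entropy measures is compact, and all its elements are $\chi$-expanding (Lemma \ref{lem:Lcompact}). Lemma \ref{uniformbigexpan} then gives $N_1\le N$ with $\hat\mu(U_{N_1})>1-\delta^2$ for every high-entropy $\hat\mu$. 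Your Step 2 compactness argument can be adapted to produce exactly this: Oseledets gives $\hat\mu(U_N)\to1$ for each MME. Pliss applied to $1_{U_{N_1}}$ with $A=1$, $\beta=1-\delta$ gives mass $\ge1-\delta$ of points whose backward orbit lies in $U_{N_1}$ with frequency $\ge1-\delta$ at every scale.
\item[(ii)] By Corollary \ref{cor:UI}, the nonpositive function $1_{\{m(d_{x_{-1}}f)<\eta\}}\log m(d_{x_{-1}}f)$ has mean $>-\frac12\chi\gamma^2$. Pliss with $A=0$, $\beta=-\chi\gamma$ gives mass $\ge1-\frac{\gamma}{2}$ of points whose accumulated near-critical loss over any $n$ backward steps is at most $\chi\gamma n$.
\end{itemize}
On the intersection, cut $[\![0,n[\![$ greedily into blocks of length $N_1$ starting at visits to $U_{N_1}$. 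This leaves at most $\delta n+N$ times, each costing at most $\eta^{-1}$ or absorbed by the bound in (ii). The result is $\|d\hat f^{-n}\|\le C_0e^{-\chi_0 n}$ for all $n$.

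If you replace your $n_0$-Pliss set by (i), you still need (ii). Removing $\{m(df)<C^{-1}\}$ at finitely many times is not enough, because the leftover times occur at all backward scales and their factors $m(d_{x_{-k}}f)^{-1}$ are unbounded.
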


\bigskip

Denote by $\mathcal{L}\subseteq \mathbb{P}(\hat{f})$ the set of accumulation points of sequences $(\hat{\nu}_k)_{k\in \mathbb{N}}$ such that $\hat{\nu}_k\in \mathbb{P}_{\rm{erg}}(\hat{f})$, $h(\hat{\nu}_k)\rightarrow h_{\rm{top}}(f)$. We remark that 1. $\mathcal{L}\subseteq \mathbb{P}(\hat{f})$ is closed as a set of accumulation points; 2. elements $\hat{\mu}\in \mathcal{L}$ are not necessarily ergodic. 

\begin{lemma}\label{lem:Lcompact}
There exists $0<\chi<h_{\rm{top}}(f)$ such that $\mathcal{L}\subseteq \{ \hat{\nu}\in \mathbb{P}(\hat{f}): \text{$\nu$ is $\chi$-expanding} \}$ and is compact. Moreover, all ergodic components $\mu_x$ of $\mu$ whose lift $\hat{\mu}\in \mathcal{L}$ are $\chi$-expanding.
\end{lemma}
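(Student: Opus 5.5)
The plan is to combine Corollary \ref{cor:Cont-expo} (every element of $\mathcal{L}$ projects to an MME) with Lemma \ref{lem:chi-expan} (every ergodic measure of large entropy is $\chi$-expanding). Compactness should be immediate: $\mathcal{L}$ is closed in the compact metrizable space $\mathbb{P}(\hat{f})$. Indeed, $\hat{\mathcal{M}}$ is a closed subset of $\mathcal{M}^{\mathbb{Z}}$, hence compact, so $\mathbb{P}(\hat{f})$ is weak-$\ast$ compact, and a set of accumulation points is closed. So the content is the expansion claim.

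First, fix $\chi>0$ and $h_0<h_{\rm{top}}(f)$ as in Lemma \ref{lem:chi-expan}. Shrinking $\chi$ if necessary, we may assume $\chi<h_{\rm{top}}(f)$; this is possible since any $0<\chi<h_{\rm{top}}(f)-(d-1)R(f)$ works there.

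Next, take $\hat{\mu}\in\mathcal{L}$, the limit of ergodic $\hat{\nu}_k$ with $h(\hat{\nu}_k)\to h_{\rm{top}}(f)$. Projecting by $\vartheta_0$ (which is continuous, so $\vartheta_{0*}$ is weak-$\ast$ continuous) gives ergodic $\nu_k\to\mu$. Lifts preserve entropy, so $h(\nu_k)\to h_{\rm{top}}(f)$. By Corollary \ref{cor:Cont-expo}, $\mu$ is an MME.

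Then decompose $\mu$ into ergodic components. By harmonicity of entropy, $\mu$-almost every ergodic component $\mu_x$ satisfies $h(\mu_x)=h_{\rm{top}}(f)>h_0$. Lemma \ref{lem:chi-expan} then shows $\mu_x$ is $\chi$-expanding. By the definitions in Section \ref{sec:1-pre}, this means $\mu$, and hence $\hat{\mu}$, is $\chi$-expanding. This also gives the ``moreover'' clause.

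The only real obstacle is ensuring that $\mu$ is an MME, i.e.\ that no entropy is lost in the limit. This is exactly what Corollary \ref{cor:Cont-expo} supplies, through $\beta=1$ in Theorem \ref{th:main-cont}, so it can be taken as given. If one wanted to avoid that corollary, one would instead use Theorem \ref{thm:main-con-1}. Its item 2 gives $h_{\rm{top}}(f)\le \beta h(\mu_1)+(1-\beta)(d-1)R(f)$, which together with SEE forces $\beta=1$, since $h(\mu_1)\le h_{\rm{top}}(f)$ and $(d-1)R(f)<h_{\rm{top}}(f)$. Hence $\mu=\mu_1$ and $h(\mu)=h_{\rm{top}}(f)$, after which the rest of the argument is the same.
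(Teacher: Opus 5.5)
Your argument is correct. The compactness part and the last step (harmonicity of entropy, then Lemma \ref{lem:chi-expan}) are exactly what the paper does.

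The difference is in how you show that the limit $\mu$ is an MME. The paper invokes upper semi-continuity of $\nu\mapsto h_f(\nu)$ for $C^\infty$ maps (Newhouse). This gives $h(\mu)\geq\limsup_k h(\nu_k)=h_{\rm top}(f)$ in one line, and uses nothing about SEE beyond Lemma \ref{lem:chi-expan}. You instead go through Corollary \ref{cor:Cont-expo}, which rests on Theorem \ref{thm:main-con-1} and the whole reparametrization machinery of Section 4. This is logically sound, since Section 4 uses nothing from Section \ref{section:SPRSEE}, so there is no circularity.

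It is, however, much heavier and gains nothing here. The proof of Theorem \ref{thm:main-con-1} itself ends by invoking the same upper semi-continuity of entropy for $C^\infty$ maps. In finite regularity, Theorem \ref{th:main-cont} only gives the entropy bound with the extra $\alpha'$ term, which would not show that $\mu$ is an MME either. So the ``only real obstacle'' you identify is already removed by a classical fact.

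One small point on your alternative route: Theorem \ref{thm:main-con-1} requires the $\nu_k$ to have $u$-index $d$. This holds for large $k$ by Lemma \ref{lem:chi-expan}, since $h(\nu_k)\to h_{\rm top}(f)>h_0$, and you should say so explicitly.
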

\begin{proof}
    We know that $\mathbb{P}(\hat{f})$ is compact, and $\mathcal{L}$ is closed, so compact in $\mathbb{P}(\hat{f})$ under the weak-$\ast$ topology. Take $\hat{\mu}\in \mathcal{L}$, by definition it suffices to show that for $\hat{\mu}$-a.e. ergodic component $\hat{\mu}_{\hat{x}}$ of $\hat{\mu}$, $\lambda_{\tau(\hat{\mu}_{\hat{x}})}(\hat{\mu}_{\hat{x}})>\chi$.

    First notice that metric entropy is upper semi-continuous as $f$ is $C^{\infty}$. So $\lim_{k\rightarrow \infty}h(\hat{\mu}_k)=h_{\rm{top}}(f)$ implies that $h(\hat{\mu})=h_{\rm{top}}(f)$, thus $h(\hat{\mu}_{\hat{x}}) = h_{\rm{top}}(f)$ for $\hat{\mu}$-a.e. ergodic component $\hat{\mu}_{\hat{x}}$ of $\hat{\mu}$. Lemma \ref{lem:chi-expan} implies that there exist $h_0>0$ and $\chi>0$ such that for all $\hat{\nu}\in \mathbb{P}_{\rm{erg}}^{h_0}(\hat{f})$, $\nu$ is $\chi$-expanding. 

    Fix this $\chi$. Indeed, $\hat{\mu}$-a.e. ergodic component $\hat{\mu}_{\hat{x}}$ has entropy $h_{\rm{top}}(f)>h_0$. So they are $\chi$-expanding. This concludes the lemma.
\end{proof}

Given $N\in \mathbb{N}^{\ast}$, denote by
$$
U_N:=\{ \hat{x}\in \hat{\mathcal{M}}: ||d_{\hat{x}}\hat{f}^{-N}||<e^{-\chi\cdot N} \}.
$$

\begin{lemma}\label{uniformbigexpan}
Let $\chi>0$, $\mathcal{K}\subseteq \{\hat{\nu}\in \mathbb{P}(\hat{f}): \text{ $\mu$ is $\chi$-expanding}\}$ a compact subset, and $0<\delta<\frac{1}{2}$. Then there exist $N\in \mathbb{N}^{\ast}$ and $\mathcal{V}$, an open neighborhood of $\mathcal{K}$, such that for all $\hat{\mu}\in \mathcal{V}$, there exists a positive integer $N_1=N_1(\hat{\mu})\leq N$, with
$$
\hat{\mu}(U_{N_1})>1-\delta^2.
$$
\end{lemma}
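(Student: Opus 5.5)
The plan is a pointwise-then-compactness argument: first show that every single measure in $\mathcal{K}$ gives mass $>1-\delta^2$ to some $U_{N}$, then use openness of $U_N$ and the portmanteau theorem to spread this to a weak-$\ast$ neighborhood, and finally use compactness of $\mathcal{K}$ to get a uniform bound $N$.

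\textbf{Step 1: each $\hat{\mu}\in\mathcal{K}$ charges some $U_n$ heavily.} Fix $\hat{\mu}\in\mathcal{K}$. Its ergodic components are $\chi$-expanding, so each has finite exponents and $\log|\Jac f|$ is integrable for it. Hence $\hat{\mu}$ gives zero mass to $\bigcup_{n\in\mathbb{Z}}\hat{f}^{-n}\vartheta_0^{-1}(C(f))$ and $\hat{\mu}(\operatorname{REG}_f)=1$. For $\hat{\mu}$-a.e.\ $\hat{x}$, the smallest exponent of the corresponding ergodic component is $>\chi$. By the Oseledets decomposition (uniform over the finitely many Oseledets subspaces), this gives
$$
\lim_{n\to\infty}\frac1n\log\|d_{\hat{x}}\hat{f}^{-n}\|=-\lambda_{r}(\hat{x})<-\chi .
$$
So there is $n_0(\hat{x})$ with $\hat{x}\in U_n$ for all $n\ge n_0(\hat{x})$. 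Therefore $\hat{\mu}\bigl(\bigcap_{m\ge n}U_m\bigr)\uparrow 1$, and in particular $\hat{\mu}(U_n)\to 1$. Choose $N_{\hat{\mu}}$ with $\hat{\mu}(U_{N_{\hat{\mu}}})>1-\delta^2$.

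\textbf{Step 2: $U_N$ is open.} Since $d_{\hat{x}}\hat{f}^{-N}=(d_{x_{-N}}f^N)^{-1}$, we have
$$
\|d_{\hat{x}}\hat{f}^{-N}\|^{-1}=m(d_{x_{-N}}f^N).
$$
The right-hand side is a continuous function of $\hat{x}$ on all of $\hat{\mathcal{M}}$, being the composition of $\vartheta_{-N}$ with the continuous conorm, and it vanishes exactly on critical preimages. With the natural convention that points where $d_{x_{-N}}f^N$ is singular are not in $U_N$, we get
$$
U_N=\{\hat{x}: m(d_{x_{-N}}f^N)>e^{\chi N}\},
$$
which is open in $\hat{\mathcal{M}}$.

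\textbf{Step 3: local neighborhoods.} By the portmanteau theorem, $\hat{\nu}\mapsto\hat{\nu}(U)$ is lower semicontinuous for the weak-$\ast$ topology when $U$ is open. Hence
$$
\mathcal{V}_{\hat{\mu}}:=\{\hat{\nu}:\hat{\nu}(U_{N_{\hat{\mu}}})>1-\delta^2\}
$$
is an open neighborhood of $\hat{\mu}$.

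\textbf{Step 4: compactness.} By compactness of $\mathcal{K}$, finitely many $\mathcal{V}_{\hat{\mu}_1},\dots,\mathcal{V}_{\hat{\mu}_m}$ cover it. Set $\mathcal{V}=\bigcup_i\mathcal{V}_{\hat{\mu}_i}$ and $N=\max_i N_{\hat{\mu}_i}$. For $\hat{\nu}\in\mathcal{V}$, pick $i$ with $\hat{\nu}\in\mathcal{V}_{\hat{\mu}_i}$ and take $N_1(\hat{\nu})=N_{\hat{\mu}_i}\le N$; then $\hat{\nu}(U_{N_1})>1-\delta^2$, as required.

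\textbf{Main obstacle.} The delicate point is Step 2. The norm $\|d\hat{f}^{-N}\|$ blows up near critical preimages and is undefined on them. One must rewrite $U_N$ through the continuous conorm $m(d_{x_{-N}}f^N)$ to see that it is genuinely open, and the membership convention for points with singular $d_{x_{-N}}f^N$ has to be fixed explicitly. By contrast, Step 1 is routine once one notes that expanding measures do not charge the critical set.
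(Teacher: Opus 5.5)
Your proposal is correct and follows essentially the same route as the paper, which simply defers to the proof of Lemma 3.13 in \cite{BCS3}. That route is: show $\hat{\mu}(U_n)\to 1$ for each $\hat{\mu}\in\mathcal{K}$ via Oseledets, use openness of $U_N$ and the portmanteau theorem to obtain an open neighborhood, then take a finite subcover with $N$ the maximum of the finitely many times; your rewriting $U_N=\{\hat{x}: m(d_{x_{-N}}f^N)>e^{\chi N}\}$ is exactly what makes the openness step go through in this non-invertible setting with critical points.
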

\begin{proof}
    See Lemma 3.13 in \cite{BCS3}. The proof is exactly the same.
\end{proof}

Now we are ready to prove Proposition \ref{prop:uniformPliss}.  Before the proof, we first record here certain calculations which will be used repeatedly throughout the later sections. 
By linear algebra, for all $x\in \M$,
\begin{equation}\label{eqn:Jacconor}
    m(d_xf)^d\leq |\operatorname{Jac}_xf|\leq m(d_xf)\cdot ||d_xf||^{d-1} \leq m(d_xf)\cdot||df||_{\infty}^{d-1};
\end{equation}
\begin{equation}\label{eqn:Jacnor}
    |\operatorname{Jac}_xf|\leq ||d_x f||^d.
\end{equation}
For two integers $a \leq b \in \bbZ$, we let $[\![ a , b]\!] = [a,b] \cap \bbZ$ and $[\![ a, b [\![ = [a,b) \cap \bbZ = [\![a , b-1 ]\!]$.

\begin{proof}[Proof of Proposition \ref{prop:uniformPliss}]
    Fix $\tau>0$. Fix $0<\chi<h_{\rm{top}}(f)$, then $0<\chi_0<\chi$.
    
    Fix $0<\gamma<\min\{\frac{1}{4},\frac{1}{\chi},\frac{1-\tau}{3}\}$ such that $ (1-\frac{3}{2}\gamma)\chi>\chi_0$.
    Take $\varepsilon=2\gamma^2$ in Corollary \ref{cor:UI}, then it implies that there exist $\eta>0$, $h_1>0$ such that for all $\hat{\mu}\in \mathbb{P}^{h_1}(\hat{f})$ (recall how measures are lifted to the natural extension),
    \begin{equation}
    \label{eqn:UIphi'}
    \int_{\hat{\mathcal{M}}} 1_{\{ |\operatorname{Jac}_{x_{-1}}f|<\eta ||df||^{d-1}_{\infty} \}} \log |\operatorname{Jac}_{x_{-1}}f| d\hat{\mu}>-\frac{1}{2}\chi\gamma^2.
    \end{equation}

    By $\eqref{eqn:Jacconor}$, we have $\{ m(d_{x_{-1}}f)<\eta \}\subseteq \{ |\operatorname{Jac}_{x_{-1}}f|<\eta ||df||^{d-1}_{\infty}\}$, and 
    $$
    \begin{aligned}
        \int_{\hat{\mathcal{M}}} 1_{\{ m(d_{x_{-1}f})<\eta \}}\log m(d_{x_{-1}}f) d\hat{\mu} &\overset{(a)}{\geq} \int_{\hat{\mathcal{M}}} 1_{\{ |\operatorname{Jac}_{x_{-1}}f|<\eta ||df||^{d-1}_{\infty} \}} \log m(d_{x_{-1}}f) d\hat{\mu} \\
        &\overset{(b)}{\geq} \int_{\hat{\mathcal{M}}} 1_{\{ |\operatorname{Jac}_{x_{-1}}f|<\eta ||df||^{d-1}_{\infty} \}} \left ( \log |\operatorname{Jac}_{x_{-1}}f|-(d-1)\log||df||_{\infty} \right ) d\hat{\mu}\\
        &\geq 2\int_{\hat{\mathcal{M}}} 1_{\{ |\operatorname{Jac}_{x_{-1}}f|<\eta ||df||^{d-1}_{\infty} \}} \log |\operatorname{Jac}_{x_{-1}}f| d\hat{\mu}\overset{(c)}{>}-2\gamma^2.
    \end{aligned}
    $$
    Inequality $(a)$ follows from the set inclusion we stated and by choosing $\eta<||df||_{\infty}^{-(d-1)}$. Inequality $(b)$ follows from the right hand side of $\eqref{eqn:Jacconor}$. Inequality $(c)$ is just $\eqref{eqn:UIphi'}$.

    That is,
    \begin{equation}
    \label{eqn:UIphi}
    \int_{\hat{\mathcal{M}}} 1_{\{ m(d_{x_{-1}}f)<\eta \}}\log m(d_{x_{-1}}f) d\hat{\mu}>-\frac{1}{2}\chi\gamma^2,
    \end{equation}
    and define $\varphi(\hat{x}):=1_{\{ m(d_{x_{-1}}f)<\eta \}}\log m(d_{x_{-1}}f)$.

    For all $\hat{\mu}\in \mathbb{P}^{h_1}(\hat{f})$, apply Lemma \ref{Pliss} to $(\hat{\mathcal{M}},\hat{f},\hat{\mu})$, with $\varphi$ taken as above and $A=0$, $\beta=-\chi\gamma$. Then 
    $$
    \hat{\mu}(\{ \hat{x}: \forall j\geq 1 \sum_{i=0}^{j-1}\varphi(\hat{f}^i\hat{x})\geq -\chi\gamma\cdot j \})\geq \frac{\int \varphi d\hat{\mu}-(-\chi\gamma)}{0-(-\chi\gamma)}\geq 1-\frac{1}{2}\gamma.
    $$

    Apply Lemma \ref{uniformbigexpan} with $\mathcal{K}$ taken as $\mathcal{L}\subseteq \mathbb{P}_{\chi}(\hat{f})$, and 
    $$
    \delta:=\min\left\{ \frac{\gamma\cdot \chi}{-4\log\eta+4}, \frac{1}{2}\gamma \right \}.
    $$
    This gives $N\in \mathbb{N}^*$ and $\mathcal{V}$, open neighborhood of $\mathcal{L}$ such that for all measures $\hat{\nu}\in \mathcal{V}$, there exists $N_1(\hat{\nu})\leq N$, satisfying $\hat{\nu}(U_{N_1})>1-\delta^2$. As $\mathbb{P}(\hat{f})$ is compact, and $\mathcal{L}$ is the set of accumulation points of measures of entropy increasing to $h_{\rm{top}}(f)$, we may find $h_2<h_{\rm{top}}(f)$ such that $\mathbb{P}^{h_2}(\hat{f})\subseteq \mathcal{V}$. Define $h:=\max\{ h_1,h_2 \}$.

    Take one $\hat{\mu}\in \mathbb{P}^h(\hat{f})$. Since $\hat{\mu}\in \mathcal{V}$, there exists an $N_1\leq N$. Apply Lemma \ref{Pliss} again, this time taking $\varphi':=1_{U_{N_1}}$, $A=1$, $\beta=1-\delta$ . Since $\hat{\mu}(U_{N_1})>1-\delta^2$, we get 
    $$
    \begin{aligned}
    \hat{\mu}(\{ \hat{x}: \forall n\geq 1, \sharp\{ 0\leq k<n: x_{-k}\in U_{N_1} \}\geq n\cdot (1-\delta)\})&\geq \frac{1-\delta^2-(1-\delta)}{1-(1-\delta)}\\
    &=1-\delta>1-\frac{1}{2}\gamma.
    \end{aligned}
    $$ 

    Now consider the subset $\hat{H} \subseteq \hat{\mathcal{M}}$ such that for all $\hat{x}\in \hat{H}$,
    \begin{itemize}
    \item[1.] for all $n\in \mathbb{N}$, $\sum_{i=0}^{n-1}\varphi (\hat{f}^i\hat{x})\geq - \chi\gamma\cdot n$;
    \item[2.] for all $n\in \mathbb{N}$, $\sharp\{ 0\leq k<n: \hat{f}^{-k}\hat{x}\in U_{N_1} \}\geq n (1-\delta) \geq n\cdot \left(1-\frac{\gamma\cdot \chi}{-4\log\eta+4} \right)$.
    \end{itemize}
    The set $\hat{H}$ satisfies $\hat{\mu}(\hat{H})>1-3\gamma$ by our above observation. 

    Take $\hat{x}\in \hat{H}$. For all $n\geq 0$, we define $b_0:=0$, then
    $$
    a_{i+1}:=\min\{ k\geq b_i: \hat{f}^{-k}\hat{x}\in U_{N_1} \}, \ b_i:=a_i+N_1.
    $$

    We set $J:=\max\{ i\geq 1: b_i\leq n \}$ and $V_1:=\bigcup_{i=1}^J [\![a_i,b_i[\![$. So
    $$
    \begin{aligned}
    &\sharp V_1\geq n\cdot (1-\delta)-N_1\geq n\cdot (1-\frac{\gamma}{4})-N;\\
    &\sharp \left([\![0,n[\![\setminus V_1\right)\leq n\cdot (1-(1-\delta))+N_1\leq n\cdot \frac{\gamma\cdot \chi}{-4\log\eta+4}+N.
    \end{aligned}
    $$
    
    Now take $\hat{x}\in \hat{H}$, we have
    $$
    \begin{aligned}
    ||d_{\hat{x}}\hat{f}^{-n}||&\leq\prod_{i=1}^{J} e^{-\chi\cdot (b_i-a_i)}\cdot \prod_{k\notin V_1: m(d_{x_{-k-1}}f)<\eta} m(d_{x_{-k-1}}f)^{-1}\cdot \prod_{k\notin V_1:m(d_{x_{-k-1}}f) \geq \eta}  m(d_{x_{-k-1}}f)^{-1}\\
    &\leq \prod_{i=1}^{J} e^{-\chi\cdot (b_i-a_i)}\cdot \prod_{k\notin V_1: m(d_{x_{-k-1}}f)<\eta} m(d_{x_{-k-1}}f)^{-1}\cdot \prod_{k\notin V_1:m(d_{x_{-k-1}}f)\geq \eta} \eta^{-1}\\
    &\overset{(a)}{\leq} e^{-\chi\cdot \sharp V_1}\cdot e^{\chi\gamma\cdot n}\cdot e^{-\log\eta\cdot\sharp([\![0,n[\![\setminus V_1)}\\
    &\leq e^{-(1-\frac{\gamma}{4})\cdot n\cdot \chi+N\cdot \chi}\cdot e^{\chi\gamma\cdot n}\cdot e^{-\log \eta\cdot(n\cdot \frac{\gamma\cdot \chi}{-4\log\eta+4}+N)}\\
    &\leq e^{-(1-\frac{\gamma}{2}-\gamma)\chi\cdot n}\cdot e^{(\chi-\log \eta)\cdot N}\\
    &\leq C_0 \cdot e^{-\chi_0\cdot n},
    \end{aligned}
    $$
    where $C_0:=e^{(\chi-\log\eta)\cdot N}$ and inequality $(a)$ follows from the fact that $\sum_{i=0}^{n-1}\varphi (\hat{f}^i\hat{x})\geq - \chi\gamma\cdot n$.
    Note that $\gamma$ was taken small so that $-(1-\frac{\gamma}{2}-\gamma)\cdot \chi<-\chi_0$. Meanwhile, take $n=1$ in item $1$ of properties of $\hat{H}$ we see we may enlarge $C_0$ so that $C_0^{-1}=\gamma$ if necessary to conclude $|f'(x_0)|\geq C_0^{-1}$. In this way, $\hat{H}\subseteq P_{C_0,\chi_0}$. By our assumption $\gamma<\frac{1-\tau}{3}$, we get for all $\hat{\mu}\in \mathbb{P}^h(\hat{f})$, $\hat{\mu}(P_{C_0},\chi_0)\geq \hat{\mu}(\hat{H})>1-3\gamma>\tau$.
\end{proof}

\subsubsection{From Pliss set to Pesin blcok}

Given an invertible topological system $T:X\rightarrow X$, a function $\Pi: X\rightarrow (0,\infty)$ and $\varepsilon>0$, the $\varepsilon$-tempered envelope of $\Pi$ is the function
$$
\Pi_{\varepsilon}(x):= \sup_{n\in \mathbb{Z}} \{ e^{-|n|\varepsilon}\Pi(T^{n} x) \}.
$$

\begin{lemma}\label{temper}
Consider an invertible topological system $(X,T)$, $\nu\in \mathbb{P}_{\rm{erg}}(T)$. Let $\Pi: \Omega\rightarrow (0,\infty)$ be a measurable function, and $\varepsilon>0$. Then
\begin{itemize}
\item[1.] if $\log\frac{\Pi\circ T}{\Pi}\in L^1(\nu)$, then $\Pi_{\varepsilon}$ is finite for $\nu$-a.e. $x\in X$,
\item[2.] if $\log\frac{\Pi\circ T}{\Pi}\in L^1(\nu)$, for any $\varepsilon<c_0<1$, for any $t>0$,
$$
\begin{aligned}
\nu(\{ x\in \Omega: \Pi_{\varepsilon}(x)>t \})\leq &4\max\{ \frac{c_0}{\varepsilon},1 \}\cdot \nu(\{ x\in X: \Pi(x)>t \})\\&+\frac{4}{\varepsilon}\int_{\{\log\frac{\Pi\circ T}{\Pi}>c_0\}} \log\frac{\Pi\circ T}{\Pi} d\nu.
\end{aligned}
$$
\end{itemize}
\end{lemma}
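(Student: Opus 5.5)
We prove item 1 by the Birkhoff ergodic theorem applied to the cocycle $g:=\log\frac{\Pi\circ T}{\Pi}\in L^1(\nu)$, and item 2 by a maximal-inequality argument that splits $g$ into a bounded part and a large-jump part.

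For item 1, write $\log\Pi(T^n x)-\log\Pi(x)=\sum_{i=0}^{n-1} g(T^i x)$ for $n\ge 0$, with the analogous formula for $n<0$. By Birkhoff's theorem applied to both $T$ and $T^{-1}$, $\frac{1}{|n|}\sum g\circ T^i\to \int g\,d\nu$ as $n\to\pm\infty$. A telescoping cocycle with integrable increments has integral $0$, so $\frac{1}{|n|}\log\Pi(T^n x)\to 0$ for $\nu$-a.e. $x$. Hence $e^{-|n|\varepsilon}\Pi(T^n x)\to 0$ as $|n|\to\infty$, and the supremum defining $\Pi_\varepsilon$ is finite.

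For item 2, we bound the set $\{\Pi_\varepsilon>t\}$ by treating forward and backward times separately; by symmetry it suffices to handle $n\ge 0$ and multiply the resulting bound by $2$. If $e^{-n\varepsilon}\Pi(T^n x)>t$ for some $n\ge 0$, take the smallest such $n$. The idea is that we start at the point $y=T^n x$, which satisfies $\Pi(y)>t$, and move backwards; along the way $\log\Pi-|k|\varepsilon$ must drop below $\log t$. Each backward step changes $\log\Pi$ by $-g(T^{-1}\cdot)$.

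We split $g=g\mathbf{1}_{\{g\le c_0\}}+g^+\mathbf{1}_{\{g>c_0\}}$. Along a backward orbit, a step where $g\le c_0$ can lower $\log\Pi$ by at most $c_0$. Since the envelope decreases by $\varepsilon$ per step, the run of such steps that can stay above the threshold starting from $y$ has length at most about $c_0/\varepsilon+1$. We therefore show that every $x$ in the set lies in one of two classes:
\begin{itemize}
\item[(a)] $x$ lies within $\lceil c_0/\varepsilon\rceil+1$ backward steps of a point of $\{\Pi>t\}$;
\item[(b)] the orbit segment between $x$ and $T^n x$ uses large-jump increments whose total exceeds roughly $\varepsilon$ times the length of the segment.
\end{itemize}
The measure of class (a) is bounded by $\max\{c_0/\varepsilon,1\}\cdot\nu(\Pi>t)$, using $T$-invariance and a union bound over the allowed shifts. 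The measure of class (b) is controlled by the maximal ergodic theorem, or equivalently Lemma \ref{Pliss}, applied to $\psi=g\mathbf{1}_{\{g>c_0\}}-\varepsilon/2$. The set of points having some $j$ with $\sum_{i<j}\psi\circ T^i>0$ has measure at most $\frac{2}{\varepsilon}\int_{\{g>c_0\}}g\,d\nu$. Collecting constants over both time directions and both classes yields the factors $4$.

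The main obstacle is the combinatorial splitting in class (b). Small increments can also be negative, so $\log\Pi$ can rise along the run, and the decomposition has to be organized so that every time the envelope stays above $t$ is charged either to proximity to $\{\Pi>t\}$ or to the accumulated large jumps. I would do this by a stopping-time decomposition, in the style of the proof of Lemma 2.20 in \cite{BCS3}, adding the extra large-jump term.
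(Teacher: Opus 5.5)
Item 1 is fine. The genuine gap is in item 2: the dichotomy (a)/(b) is false, because the heuristic behind it runs the wrong way.

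Write $g=\log\frac{\Pi\circ T}{\Pi}$, $v(k)=\log\Pi(T^kx)-\log t$, and let $\Pi^-_\varepsilon(x)=\sup_{n\ge0}e^{-n\varepsilon}\Pi(T^nx)$ be the forward envelope you treat. Take a long periodic orbit with its uniform measure. Put $v\equiv-1$ except on $[\![N-L,N]\!]$, where $v(k)=-1+c_0(k-N+L)$, and let $L$ be the least integer with $c_0L>N\varepsilon+1$.
\begin{itemize}
\item[(i)] Every increment of $v$ is $0$, $c_0$ or negative, so there are no large jumps and (b) fails.
\item[(ii)] $e^{-N\varepsilon}\Pi(T^Nx)>t$, so $\Pi^-_\varepsilon(x)>t$, and likewise for $T^kx$, $0\le k\le N$.
\item[(iii)] The orbit of $x$ only enters $\{\Pi>t\}$ after time $N-L\approx N(1-\varepsilon/c_0)$. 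So (a) fails for any fixed window once $N$ is large, on a positive proportion of the orbit.
\end{itemize}
Hence no union bound over $\lceil c_0/\varepsilon\rceil+2$ shifts can cover $\{\Pi^-_\varepsilon>t\}$.

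What backward iteration from $T^nx$ actually gives is a \emph{lower} bound proportional to $n$: $\Pi(T^{n-i}x)>t$ whenever $c_0i\le n\varepsilon-G$, where $G=\sum_{k<n}(g(T^kx)-c_0)^+$. So what is controlled is the \emph{proportion} of visits to $\{\Pi>t\}$ in $[\![0,n]\!]$, corrected by the large jumps. Turning this into a measure bound needs an averaging device.

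The paper uses disjoint stopping intervals $[\![n_i,n_i+l_i]\!]$ covering the visits to $\{\Pi^-_\varepsilon>t\}$, each ending in a window of length $\frac{\varepsilon l_i}{c_0}-g_i$ inside $\{\Pi>t\}$, followed by Birkhoff averages. A shorter repair of your outline is one application of the maximal ergodic theorem to $\phi=\mathbf{1}_{\{\Pi>t\}}+\frac{1}{c_0}(g-c_0)^+$. The estimate above gives $\sum_{i=0}^{n}\phi(T^ix)\ge\max(1,\frac{n\varepsilon}{c_0})\ge\frac{\varepsilon}{c_0+\varepsilon}(n+1)$. Hence $\nu(\Pi^-_\varepsilon>t)\le(1+\frac{c_0}{\varepsilon})\int\phi\,d\nu\le\frac{2c_0}{\varepsilon}\nu(\Pi>t)+\frac{2}{\varepsilon}\int_{\{g>c_0\}}g\,d\nu$, which is exactly the paper's intermediate bound. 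Lemma~\ref{Pliss} cannot stand in for the maximal ergodic theorem here, since $g\mathbf{1}_{\{g>c_0\}}$ is not bounded above.

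Second, the reduction ``by symmetry'' to $n\ge0$ does not give the inequality as stated. Replacing $T$ by $T^{-1}$ turns the cocycle into $-g\circ T^{-1}$. So the forward argument applied to $\Pi^+_\varepsilon(x)=\sup_{n\ge0}e^{-n\varepsilon}\Pi(T^{-n}x)$ produces $\frac{2}{\varepsilon}\int_{\{g<-c_0\}}|g|\,d\nu$, not $\frac{2}{\varepsilon}\int_{\{g>c_0\}}g\,d\nu$. The paper asserts the same symmetry without justification, so you cannot lean on it.

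The one-sided bound for $\Pi^+_\varepsilon$ is nonetheless true, by a different mechanism. Set $w(k)=\log\Pi^+_\varepsilon(T^kx)-\log t$. Then $w(k)=\max\bigl(v(k),w(k-1)-\varepsilon\bigr)$ and $v(k)\le w(k-1)+g(T^{k-1}x)$. So on a maximal run where $w>0$:
\begin{itemize}
\item[(i)] $w$ drops by exactly $\varepsilon$ at times outside $\{\Pi>t\}$;
\item[(ii)] $w$ rises by at most $c_0+(g(T^{k-1}x)-c_0)^+$ at times inside it, including the first time $a$ of the run, where $w(a)=v(a)\le g(T^{a-1}x)$.
\end{itemize}
Thus the run's length is at most $(1+\frac{c_0}{\varepsilon})$ times its number of visits to $\{\Pi>t\}$, plus $\frac1\varepsilon\sum(g-c_0)^+$ over the run. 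Birkhoff averages then give $\nu(\Pi^+_\varepsilon>t)\le\frac{2c_0}{\varepsilon}\nu(\Pi>t)+\frac1\varepsilon\int_{\{g>c_0\}}g\,d\nu$, and adding the two halves yields the lemma.
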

\begin{proof}
    The proof of item 1 is exactly the same as the corresponding item in Lemma 2.20, \cite{BCS3}.

    Now we proceed to prove item 2. Observe that $\Pi_{\varepsilon}(x)=\max\{ \Pi^{-}_{\varepsilon}(x), \Pi^{+}_{\varepsilon}(x) \}$, where
    $$
    \Pi^{-}_{\varepsilon}(x):=\sup_{n\geq 0} \{ e^{-n\varepsilon}\Pi(T^n x) \}, \Pi^{+}_{\varepsilon}(x):=\sup_{n\geq 0} \{ e^{-n\varepsilon}\Pi(T^{-n} x) \}.
    $$
    In the remainder of the proof, we show that given $c_0>0$,
    \begin{equation}
    \nu(\{\ x: \Pi^{-}_{\varepsilon}(x)>t\})\leq \frac{2c_0}{\varepsilon}\nu(\{ x: \Pi(x)>t \})+\frac{2}{\varepsilon}\int_{\{\log\frac{\Pi\circ T}{\Pi}>c_0\}} \log\frac{\Pi\circ T}{\Pi} d\nu.
    \label{eqn:5.3.1}
    \end{equation}
    Indeed, the following proof gives the same inequality for $\Pi^{+}_{\varepsilon}(x)$. We also assume that $\nu(\{x:\Pi^{-}_{\varepsilon}(x)>t\})\neq 0$, since otherwise $\eqref{eqn:5.3.1}$ holds trivially.

    Poincar\'e recurrence theorem implies that we may assume without loss of generality that $x$ is a point such that $\Pi^{-}_{\varepsilon}(T^nx)>t$ for infinitely many iterates $n>0$. We make the following definitions. 
    \begin{itemize}
    \item[i.] $n_1(x):=\min \{ n\geq 0:  \Pi^{-}_{\varepsilon}(T^nx)>t\}$, 
    $$
    l_1:=\min\{ l\geq 0: e^{-l\varepsilon}\Pi(T^{n_1+l}x)>t \};
    $$
    \item[ii.] then, inductively, let
    $$
    \begin{aligned}
    n_{i+1}(x)&:=\min \{ n\geq n_i+l_i: \Pi^{-}_{\varepsilon}(T^nx)>t \}\\
    l_{i+1}(x)&:=\min\{ l\geq 0: e^{-l\varepsilon}\Pi(T^{n_{i+1}+l}x)>t \}.
    \end{aligned}
    $$
    \end{itemize}

    \bigskip

    {\bf Claim.} For one such $x$,
    \begin{itemize}
    \item[1.] $\{ n: \Pi^{-}_{\varepsilon}(T^n x)>t \}\subseteq \bigsqcup^{\infty}_{i=1} [\![n_i,n_i+l_i]\!]$;
    \item[2.] $\{ n: \Pi(T^n x)>t \}\supset \bigsqcup^{\infty}_{i=1} [\![n_i+l_i-\max\{\frac{\varepsilon l_i}{c_0}-g_i(x),0\},n_i+l_i]\!]$, where
    $$
    g_i:= \frac{1}{c_0} \sum_{k\in [\![n_i+l_i-\frac{\varepsilon l_i}{c_0},n_i+l_i]\!], \log\frac{\Pi\circ T}{\Pi}(T^k x)>c_0} \left(\log\frac{\Pi\circ T}{\Pi}(T^k x)-c_0 \right).
    $$
    \end{itemize}

    \begin{proof}[Proof of the Claim]

    Assertion $1$ of the claim is obvious.

    Now we prove assertion $2$. By the definition of $l_i$, $\Pi(T^{n_i+l_i}x)>e^{\varepsilon l_i}t>t$. Define $L_i$ by
    \begin{equation}
        \label{eqn:L}
        L_i:=\max\{ L\in[\![0,l_i]\!]:\frac{\Pi(T^{n_i+l_i}x)}{\Pi(T^{n_i+l_i-L}x)}<e^{\varepsilon l_i} \}.
    \end{equation}
    One sees from definition that for all $n\in [\![n_i+l_i-L_i,n_i+l_1]\!]$, $ \Pi(T^n x)>t$ by our distortion, that is,
    $$
    \{ n: \Pi^{-}_{\varepsilon}(T^n x)>t \}\supset [\![n_i+l_i-L_i,n_i+l_i]\!].
    $$
    Hence, what remains is to shrink the integer interval $[\![n_i+l_i-L_i,n_i+l_i]\!]$ until we reach the desired integer interval in assertion $2$.

    Define $L'_i$ by the following property: the maximal integer $L\in [\![0,l_i]\!]$ satisfying
    \begin{equation}
    c_0\cdot L + \sum_{k=n_i+l_i-L}^{n_i+l_i}1_{\log\frac{\Pi\circ T}{\Pi}(T^{k} x)>c_0} \left(\log\frac{\Pi\circ T}{\Pi}(T^{k_1} x)-c_0\right) -\varepsilon l_i<0.
    \label{eqn:3.3}
    \end{equation}
    One sees that $L'_i\leq L_i$: take $0\leq L\leq L'_i$,
    $$
    \begin{aligned}
        \log\frac{\Pi(T^{n_i+l_i}x)}{\Pi(T^{n_i+l_i-L}x)}&=\sum_{k\in [\![n_i+l_i-L,n_i+l_i]\!]}\log\frac{\Pi\circ T}{\Pi}(T^kx)\\
        &\overset{(a)}{\leq} \sum_{k\in [\![n_i+l_i-L,n_i+l_i]\!]}\log\frac{\Pi\circ T}{\Pi}(T^kx)+c_0\cdot (L'_i-L)\\
        &\leq \sum_{k\in [\![n_i+l_i-L'_i,n_i+l_i]\!], \log\frac{\Pi\circ T}{\Pi}(T^kx)<c_0}c_0+\sum_{k, \log\frac{\Pi\circ T}{\Pi}(T^kx)>c_0}\log\frac{\Pi\circ T}{\Pi}(T^kx)\\
        &=c_0\cdot L'_i+\sum_{k\in [\![n_i+l_i-L'_i,n_i+l_i]\!], \log\frac{\Pi\circ T}{\Pi}(T^kx)>c_0}\left(\log\frac{\Pi\circ T}{\Pi}(T^kx)-c_0\right)\\
        &<\varepsilon l_i.
    \end{aligned}
    $$
    Inequality $(a)$ follows from $c_0>0$.

    Finally, define $L''_i$ by the following property: the maximal integer $L\in [\![0,l_i]\!]$ satisfying
    \begin{equation}
    c_0\cdot L-\varepsilon l_i+\sum_{k=n_i+l_i-\frac{\varepsilon l_i}{c_0}}^{n_i+l_i}(1_{\log\frac{\Pi\circ T}{\Pi}(T^{k} x)>c_0} \log\frac{\Pi\circ T}{\Pi}(T^{k_1} x)-c_0)<0.
    \label{eqn:3.4}
    \end{equation}
    One sees that by definition $\frac{\varepsilon l_i}{c_0}>L''_i$, 
    so
    $$
    \begin{aligned}
    &c_0\cdot L''_i-\varepsilon l_i+\sum_{k=n_i+l_i-L''_i}^{n_i+l_i}(1_{\log\frac{\Pi\circ T}{\Pi}(T^{k} x)>c_0} \log\frac{\Pi\circ T}{\Pi}(T^{k_1} x)-c_0)\\
    &\leq c_0\cdot L''_i-\varepsilon l_i+\sum_{k=n_i+l_i-\frac{\varepsilon l_i}{c_0}}^{n_i+l_i}(1_{\log\frac{\Pi\circ T}{\Pi}(T^{k} x)>c_0} \log\frac{\Pi\circ T}{\Pi}(T^{k_1} x)-c_0)<0.
    \end{aligned}
    $$
    Thus, by our requirement on $L'_i$ being maximal satisfying $\eqref{eqn:3.4}$, $L''_i\leq L'_i\leq L_i$. Notice that by definition,
    $$
    \begin{aligned}
    L''_i&=\frac{\varepsilon l_i}{c_0}-\left(\frac{\log\frac{\Pi\circ T}{\Pi}(T^{k_1} x)}{c_0}-1\right)^+-\cdots-\left(\frac{\log\frac{\Pi\circ T}{\Pi}(T^{k_s} x)}{c_0}-1\right)^+\\
    &=\frac{\varepsilon l_i}{c_0}-g_i,
    \end{aligned}
    $$
    where $A^+:=\max\{ A,0 \}$. Note that the assumption $c_0>\varepsilon$ guarantees $n_i+l_i-(\frac{\varepsilon l_i}{c_0}-g_i)>n_i$. We take $\max\{ \frac{\varepsilon l_i}{c_0}-g_i,0 \}$ to guarantee that $[\![n_i+l_i-\max\{\frac{\varepsilon l_i}{c_0}-g_i(x),0\},n_i+l_i]\!]$ is a well-defined integer interval.
    \end{proof}

    Let's return to the proof of item $2$ of the lemma. Since $\nu$ is $T$-ergodic, we may take $x\in X$ such that
    $$
    \begin{aligned}
    &1. \ \lim_{N\rightarrow \infty}\frac{1}{N}\sum_{k=0}^{N-1}1_{\{ \Pi^{-}_{\varepsilon}(x)>t \}}(T^kx)=\nu(\{ \Pi^{-}_{\varepsilon}(x)>t \}),\\
    &2. \ \lim_{N\rightarrow \infty}\frac{1}{N}\sum_{k=0}^{N-1}1_{\{ \Pi (x)>t \}}(T^kx)=\nu(\{ \Pi (x)>t \}),\\
    &3.\  \lim_{N\rightarrow \infty}\frac{1}{N}\sum_{k=0}^{N-1} 1_{\{ \log \frac{\Pi\circ T}{\Pi}( x)>c_0\}} \log \frac{\Pi\circ T}{\Pi}(T^k x)=\int_{\{ \log \frac{\Pi\circ T}{\Pi}( x)>c_0\}} \log \frac{\Pi\circ T}{\Pi}( x) d\nu.
    \end{aligned}
    $$
    Note that this third item is possible since $\log \frac{\Pi\circ T}{\Pi}( x)\in L^1(\nu)$ by assumption.

    For this $x$, abbreviate $n_i=n_i(x)$, $l_i=l_i(x)$ and $g_i=g_i(x)$. Then
    $$
    \begin{aligned}
    \nu(\{ \Pi^{-}_{\varepsilon}(x)>t \})&\overset{(a)}{=}\lim_{N\rightarrow \infty}\frac{1}{N} \sharp\{ 1\leq k\leq N: \Pi^{-}_{\varepsilon}(T^kx)>t \}\\
    &\overset{(b)}{\leq} \lim_{i\rightarrow \infty} \frac{1}{n_i+l_i} \sum_{j=1}^i (l_j+1)\\
    &\leq \frac{2c_0}{\varepsilon }\limsup_{i\rightarrow \infty} \frac{1}{n_i+l_i} \sum_{j=1}^i (\frac{\varepsilon l_j}{c_0})+1\\
    &\leq \frac{2c_0}{\varepsilon }\limsup_{i\rightarrow \infty} \frac{1}{n_i+l_i} \sum_{j=1}^i (\max\{\frac{\varepsilon l_j}{c_0}-g_i,0\}+1)+\frac{2}{\varepsilon}\limsup_{i\rightarrow \infty} \frac{1}{n_i+l_i}\sum_{j=1}^i  c_0\cdot g_i\\
    &\overset{(c)}{\leq} \frac{2c_0}{\varepsilon} \lim_{N\rightarrow \infty }\frac{1}{N}\sum_{k=0}^{N_1}1_{\{ \Pi (x)>t \}}(T^kx)+\frac{2}{\varepsilon}\lim_{N\rightarrow \infty}\frac{1}{N}\sum_{k=0}^{N-1} 1_{\{ \log \frac{\Pi\circ T}{\Pi}( x)>c_0\}} \log \frac{\Pi\circ T}{\Pi}(T^k x)\\
    &\overset{(d)}{=}\frac{2c_0}{\varepsilon}\nu(\{ x: \Pi(x)>t \})+\frac{2}{\varepsilon}\int_{\log\frac{\Pi\circ T}{\Pi}>c_0} \log\frac{\Pi\circ T}{\Pi} d\nu.
    \end{aligned}
    $$
    Equality $(a)$ follows from property $1$ of our chosen $x$. Inequality $(b)$ follows from item $1$ of the claim. Inequality $(c)$ follows from item $2$ of the claim. Equality $(d)$ follows from property $2$ and $3$ of our chosen $x$.
\end{proof}

\begin{proposition}\label{Pesinblock}
Given a Pliss set $P_{\chi,n_0}$, for all $\gamma>0$ and $\varepsilon>0$, there exist $C>0, c_0>0$, $0<h<h_{\rm{top}}(f)$, and  a Pesin block $\hat{\Lambda}_{C,\varepsilon,\chi}$, such that for all $\hat{\nu}\in \mathbb{P}^h_{\rm{erg}}(\hat{f})$,
$$
\hat{\nu}(\hat{\mathcal{M}}\setminus \hat{\Lambda}_{C,\varepsilon,\chi})\leq 4\max\{ \frac{c_0}{\varepsilon},1 \}\cdot \hat{\nu}(\hat{\mathcal{M}}\setminus P_{C,\chi,n_0})+\gamma.
$$
\end{proposition}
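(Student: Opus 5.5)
We will apply Lemma \ref{temper} to $(\hat{\mathcal{M}},\hat f,\hat\nu)$ with a well-chosen function $\Pi$, so that the Pesin block is a superlevel complement of the tempered envelope $\Pi_\varepsilon$. We take
$$
\Pi(\hat x):=\max\Big\{\sup_{k\geq 0}\|d_{\hat x}\hat f^{-k}\|e^{k\chi},\ m(d_{x_0}f)^{-1}\Big\}.
$$
If $\Pi_\varepsilon(\hat x)\leq C$, then for every $n\in\bbZ$ and $k\geq 0$ we get $\|d_{\hat f^n\hat x}\hat f^{-k}\|\leq \Pi(\hat f^n\hat x)e^{-k\chi}\leq Ce^{|n|\varepsilon-k\chi}$. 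Hence $\{\Pi_\varepsilon\leq C\}\subseteq\hat\Lambda_{C,\varepsilon,\chi}$ and
$$
\hat\nu(\hat{\mathcal M}\setminus\hat\Lambda_{C,\varepsilon,\chi})\leq\hat\nu(\Pi_\varepsilon>C).
$$
Next we relate $\{\Pi> C\}$ to the complement of the Pliss set. On $P_{C',\chi',n_0}$ (with slightly larger $\chi'$), the bound along multiples of $n_0$, together with $m(d_{x_{-j}}f)$ bounded below on the residual steps, gives $\sup_k\|d\hat f^{-k}\|e^{k\chi}\leq C$ for $C$ large depending on $n_0$ and $\|df\|_\infty$. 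Equivalently, one can replace $P_{C,\chi,n_0}$ by the expanding set $P_{C,\chi}$ of Definition \ref{def:Eset}, which is exactly $\{\Pi\leq C\}$ once $C$ is enlarged. If the residual steps are problematic, we simply take $n_0=1$ in the form used by Proposition \ref{prop:uniformPliss}. This yields $\hat\nu(\Pi>C)\leq\hat\nu(\hat{\mathcal M}\setminus P_{C,\chi,n_0})$, and the first term of Lemma \ref{temper} gives the factor $4\max\{c_0/\varepsilon,1\}$.

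The remaining task, and the main obstacle, is the error term $\frac4\varepsilon\int_{\{\log\frac{\Pi\circ\hat f}{\Pi}>c_0\}}\log\frac{\Pi\circ\hat f}{\Pi}\,d\hat\nu$. We must show it is at most $\gamma$ uniformly over $\hat\nu\in\mathbb P^h_{\rm erg}(\hat f)$, and that $\log\frac{\Pi\circ\hat f}{\Pi}\in L^1$.

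To handle it, we compare $\Pi(\hat f\hat x)$ with $\Pi(\hat x)$. Since $d_{\hat f\hat x}\hat f^{-(k+1)}=d_{\hat x}\hat f^{-k}\circ(d_{x_0}f)^{-1}$, we get $\sup_k\|d_{\hat f\hat x}\hat f^{-k}\|e^{k\chi}\leq \max\{1,e^{\chi}m(d_{x_0}f)^{-1}\Pi(\hat x)\}$. We also have $m(d_{x_1}f)^{-1}$ in the second slot, and this is where the critical set enters. We therefore expect
$$
\log^+\frac{\Pi\circ\hat f}{\Pi}\leq \chi+\log^+ m(d_{x_0}f)^{-1}+\log^+m(d_{x_1}f)^{-1}.
$$
In the other direction, $\Pi\geq1$ and $\Pi(\hat x)\leq \max\{m(d_{x_0}f)^{-1},\|d_{x_1}f\|e^{\chi}\Pi(\hat f\hat x)\}$, so the negative part is bounded by constants plus $\log m(d_{x_0}f)^{-1}$. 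The $L^1$ property then follows from integrability of $\log m(df)$, which by \eqref{eqn:Jacconor} follows from that of $\log|\Jac f|$ for $\chi$-expanding $\nu$ (Lemma \ref{lem:chi-expan}).

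Choosing $c_0>\chi+2\log^+(1/\eta)$, the set $\{\log\frac{\Pi\circ\hat f}{\Pi}>c_0\}$ forces $m(d_{x_0}f)<\eta$ or $m(d_{x_1}f)<\eta$. The integral is then bounded by a constant multiple of $\int_{\{m(df)<\eta\}}-\log m(df)\,d\nu$, using invariance for the $x_1$ term. By \eqref{eqn:Jacconor} and the computation in the proof of Proposition \ref{prop:uniformPliss}, this is controlled by $\int_{\{|\Jac f|<\eta\|df\|^{d-1}_\infty\}}-\log|\Jac f|\,d\nu$. Corollary \ref{cor:UI}, applied with $\varepsilon\gamma/16$, gives $\eta$ and $h$ making this smaller than $\varepsilon\gamma/8$ for all ergodic $\nu$ with $h(\nu)>h$, and this closes the estimate. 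The delicate points are verifying the one-step comparison of $\Pi$ carefully and checking that the choice of $c_0$ (which depends on $\eta$, hence on $\gamma,\varepsilon$) is compatible with the constraint $\varepsilon<c_0<1$ in Lemma \ref{temper}. For the latter we may need to rescale: replace $\Pi$ by $\Pi^{s}$ with small $s>0$, which multiplies $\log\frac{\Pi\circ \hat f}{\Pi}$ by $s$, and replace $C$ by $C^s$.
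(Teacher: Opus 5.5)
Your proposal is correct and follows essentially the paper's route. Both arguments apply Lemma \ref{temper} to $\Pi(\hat x)=\sup_{k\ge 0}\|d_{\hat x}\hat f^{-k}\|e^{k\chi}$, bound $\log\frac{\Pi\circ\hat f}{\Pi}$ by $\chi-\log m(d_{x_0}f)$ via the one-step comparison, control the error term with Corollary \ref{cor:UI} through \eqref{eqn:Jacconor}, and use $P_{\chi,n_0}\subseteq\{\Pi\le C\}$. Three small adjustments: your extra term $m(d_{x_0}f)^{-1}$ in $\Pi$ is harmless but unnecessary; the residual steps need no fallback to $n_0=1$, since $d_{\hat x}\hat f^{-(jn_0+i)}=d_{x_{-(j+1)n_0}}f^{n_0-i}\circ d_{\hat x}\hat f^{-(j+1)n_0}$; and if you rescale $\Pi\mapsto\Pi^s$ to respect $c_0<1$, you must also take the envelope at rate $s\varepsilon$, because $(\Pi^s)_{s\varepsilon}=(\Pi_\varepsilon)^s$ while $(\Pi^s)_{\varepsilon}$ gives the wrong block.
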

\begin{proof}
    We prove the proposition by applying Lemma \ref{temper} for a suitable function $\Pi$. 

    First fix some $\gamma>0$ and $0<\varepsilon<1$.

    Recall the definition of $\operatorname{NUE}_{\chi}$ via $\eqref{eqn:NUE}$. We define a function $\Pi: \operatorname{NUE}_{\chi}\rightarrow (0,\infty)$ by
    $$
    \Pi(\hat{x}):=\sup_{k\geq 0} \{ ||d_{\hat{x}}\hat{f}^{-k} ||\cdot e^{\chi\cdot k} \}.
    $$
    Note that 
    $$
    \begin{aligned}
    \Pi\circ\hat{f}(\hat{x})&=\sup_{k\geq 0} \{ ||d_{\hat{f}\hat{x}}\hat{f}^{-k} ||\cdot e^{\chi\cdot k} \}\\
    &\leq\sup\left(\{ m(d_{x_0}f)^{-1}||d_{\hat{x}}\hat{f}^{-k+1}||\cdot e^{\chi\cdot(k-1+1)}: k\geq 1 \}\cup\{1\}\right)\\
    &=m(d_{x_0}f)^{-1}e^{\chi}\cdot \sup_{k\geq 0} \{ ||d_{\hat{x}}\hat{f}^{-k} ||\cdot e^{\chi\cdot k} \}=m(d_{x_0}f)^{-1}e^{\chi}\cdot \Pi(\hat{x}).
    \end{aligned}
    $$
    Similarly, we have
    $$
    \Pi(\hat{x}) \leq ||d_{x_0} f || e^{-\chi} \Pi(\hat{f}\hat{x}) \leq || d f ||_{\infty} e^{-\chi} \Pi(\hat{f}\hat{x}).
    $$
    This gives $\chi - \log ||df ||_{\infty} \leq g:=\log \frac{\Pi\circ \hat{f}}{\Pi} \leq \chi-\log m(d_{x_0}f)$.
    Apply the arguments in the proof of Proposition \ref{prop:uniformPliss} (the bound of $\int -\log m(d_xf)d\nu$ by $\int -\log|\operatorname{Jac}(x)|d\nu$), we get that uniform integrability of $(\mathcal{M},f)$ (Corollary \ref{cor:UI}, recall that $(\mathcal{M},f)$ is always assumed to be SEE) implies that there exist $c>0, 0<h<h_{\rm{top}}(f)$ such that for all $\hat{\nu}\in \mathbb{P}_{\rm{erg}}^{h}(\hat{f})$,
    \begin{equation}
    \label{eqn:UISPR}
    \frac{2}{\varepsilon}\int_{\{\hat{x}:g>c\}} g d\hat{\nu}<\frac{\gamma}{4}.
    \end{equation}
    In particular, $g\in L^1(\hat{\mu})$ for all $\hat{\nu}\in \mathbb{P}^{h}_{\rm{erg}}(\hat{f})$.

    Define $C:=\max_{0\leq k\leq n_0}\{ ||d_{\hat{x}}f^{-k}|| : \hat{x}\in P_{\chi,n_0}\}$, note that by definition of $P_{\chi,n_0}$, $C<\infty$. Moreover,
    $$
    P_{\chi,n_0}\subseteq \{ \hat{x}: \Pi(\hat{x})\leq C \}.
    $$

    Now consider the $\varepsilon$-envelope of $\Pi$. One observes that the Pesin block can be identified as follows
    $$
    \Lambda_{C,\varepsilon,\chi}=\{\hat{x}: \Pi_{\varepsilon}(\hat{x})\leq C  \}.
    $$
    By item $1)$ of Lemma \ref{temper}, $\Pi_{\varepsilon}(\hat{x})<\infty$ for $\hat{\mu}$-a.e. $\hat{x}$.    

    Then, for all $\nu\in \mathbb{P}^h_{\rm{erg}}(f)$,
    $$
    \begin{aligned}
    \hat{\nu}(\hat{\mathcal{M}}\setminus\Lambda_{C,\varepsilon,\chi})&=\hat{\nu}(\operatorname{NUE}_{\chi}\setminus \Lambda_{C,\varepsilon,\chi})\\
    &=\hat{\nu}(\{ \hat{x}\in \operatorname{NUE}_{\chi}: \Pi_{\varepsilon}>C \})\\
    \overset{(a)}{&}\leq 4\max\{ \frac{c_0}{\varepsilon},1 \}\hat{\nu}( \{ \hat{x}\in \operatorname{NUE}_{\chi}: \Pi(\hat{x})>C \})+\gamma\\
    &\overset{(b)}{\leq} 4\max\{ \frac{c_0}{\varepsilon},1 \}\hat{\nu}( \hat{\mathcal{M}}\setminus P_{C, \chi,n_0})+\gamma.
    \end{aligned}
    $$
    Inequality $(a)$ follows from item $2)$ of Lemma \ref{temper} and $\eqref{eqn:UISPR}$. Inequality $(b)$ follows from the $\chi$-expanding of $\mu$, guaranteed by Lemma \ref{lem:chi-expan}, and the fact that $\hat{\nu}(\operatorname{NUE}_{\chi})=1$ for all $\nu\in \mathbb{P}_{\rm{erg}}(f)$, $\chi$-expanding (\cite{LDP}).
    
    This finishes the proof of our proposition.
\end{proof}

With all the preparations, we may proceed to the proof of Theorem \ref{thm:main1}.

\begin{proof}[Proof of Theorem \ref{thm:main1}]
    Let $\chi_0 > 0$ be given by Proposition \ref{prop:uniformPliss}.
    
    Then, take one $\gamma>0$ and $\varepsilon>0$.
    For this $\gamma, \varepsilon$, Proposition \ref{Pesinblock} gives $C>0$, $c_0>0$ and $0<h_0<h_{\rm{top}}(f)$. 

    Now take $\tau$ such that $1-\tau<\frac{\gamma}{8}\min\{ \frac{\varepsilon}{c_0}, 1 \}$. 

    With respect to this $\tau$, Proposition \ref{prop:uniformPliss} shows that there exist $C_1>0$, $0<h_1<h_{\rm{top}}(f)$ and an expanding set $P_{C_1,\chi_0}$ such that for all $\hat{\nu}\in \mathbb{P}^{h_1}_{\rm {erg}}(f)$,
    $$
    \hat{\nu}(P_{C_1,\chi_0})>\tau.
    $$

    Define $n_0:=\frac{\log C_1}{\chi_0-\chi}$. Then $P_{C_1,\chi_0}\subseteq P_{C_1,n_0,\chi}$.

    Denote $h:=\max\{ h_0,h_1 \}$. Finally, we may apply Proposition \ref{Pesinblock} with error being $\frac{\gamma}{2}$, to see that for all $\hat{\nu}\in \mathbb{P}^h_{\rm{erg}}(f)$,
    $$
    \begin{aligned}
    1-\hat{\nu}(\Lambda_{C,\varepsilon,\chi})&\leq 4\max\{ \frac{c_0}{\varepsilon},1 \}(1-\hat{\nu}(P_{C_1,n_0,\chi}))+\frac{\gamma}{2}\\
    &\leq \frac{\gamma}{2}+\frac{\gamma}{2}=\gamma,
    \end{aligned}
    $$
    where the second inequality follows from the choice of $\tau$. Since $\gamma$ is arbitrarily small, $\tau$ can be chosen arbitrarily close to $1$.
    \end{proof}

\section{Implications of SPR}\label{sec:3}

In this section we prove the following theorems, which will conclude the first item of Theorem \ref{thm:main2}.

\begin{theorem}\label{thm:EM}
    Let $(\mathcal{M},f)$ be an $C^r$ expanding-SPR system, $r>1$, and is topologically transitive. Then 
    \begin{itemize}
        \item[1.] there exist finitely many measures of maximal entropy of $f$,
        \item[2.] there exists $q>0$ such that for any $\mu$, MME of $f$, each $f^q$-ergodic component $\mu'$ is $f^q$-exponentially mixing. If $f$ is topologically mixing, then $q=1$. 
    \end{itemize}
\end{theorem}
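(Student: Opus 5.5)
The plan follows the strategy of \cite{BCS3}: transfer the geometric expanding-SPR property to the symbolic model, and then use the thermodynamic formalism of strongly positively recurrent countable Markov shifts \cite{G1} \cite{S3} \cite{CS}.

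First, fix $C,\chi$ from Definition \ref{def:exp-SPR}, choose $\varepsilon\ll\chi$, and take the non-invertible coding of \cite{LDP} (or \cite{BLY}). This is a locally compact countable state Markov shift $(\Sigma,\sigma)$ with a H\"older, finite-to-one projection $\pi:\Sigma\to\hat{\mathcal M}$ such that $\pi\circ\sigma=\hat f\circ\pi$. Its image carries every $\chi$-expanding ergodic measure, and it has the entropy-preserving lifting and projection property. By Lemma \ref{lem:chi-expan}, every ergodic measure with entropy above some $h_0$ is $\chi$-expanding, so it lifts to $\Sigma$ with the same entropy. In particular $h_{\rm top}(f)=h_{\rm Gur}(\Sigma)$ on the relevant part, and MMEs of $f$ correspond to MMEs of $\Sigma$.

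Second, turn the measure bound $\hat\mu(\Lambda_{C,\varepsilon,\chi})>\tau$ into a symbolic statement. The uniformity of the Pesin block (uniform expansion constants and the slow approach to $C(f)$ from \eqref{eqn:criticalappro}) means that $\Lambda_{C,\varepsilon,\chi}$ is covered by the images of finitely many cylinders $[a_1],\dots,[a_N]$; this is the standard property that Pesin charts over a block have size bounded below. This gives a finite set of states $F$ and $h_1<h_{\rm top}$ such that every ergodic $\sigma$-invariant $\bar\mu$ with $h(\bar\mu)>h_1$ satisfies $\bar\mu([F])>\tau/\mathrm{deg}$. The degree is controlled by the bounded-to-one property on recurrent points. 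I expect this step to be the main obstacle. It needs the finiteness of the cover, and a uniform bound on the multiplicity of $\pi$ restricted to lifts of Pesin-block points. The fallback is to reproduce the corresponding argument from \cite{BCS3}, using the expanding charts of \cite{LDP}.

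Third, show that this uniform mass on a finite set makes each irreducible component of $\Sigma$ with Gurevich entropy $h_{\rm top}$ strongly positively recurrent. If a component were not SPR, one could construct ergodic measures with entropy tending to $h_{\rm Gur}$ whose mass on any given finite set tends to $0$, contradicting the second step. Equivalently, one can use the characterization that the entropy at infinity is strictly below $h_{\rm Gur}$. Only finitely many components meet $F$, so there are finitely many MMEs on $\Sigma$. Each is the unique MME of its SPR component and projects to an MME of $f$. Conversely, every MME of $f$ lifts to one of them, which proves item 1.

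Fourth, for item 2, let $q$ be the period of the relevant SPR components. For SPR Markov shifts, the MME of each $\sigma^q$-mixing piece has exponential decay of correlations for H\"older observables; this follows from Gou\"ezel--Sarig style spectral gap or a Young tower with exponential tails, via the SPR characterization of \cite{CS}. Since $\pi$ is H\"older, H\"older observables on $\mathcal M$ pull back to H\"older observables on $\Sigma$ (precomposed with $\vartheta_0$), so correlations transfer to $\mu$. If $f$ is topologically mixing, the period must be $1$, because $f^q$-ergodic decomposition of an MME into $q>1$ cyclically permuted pieces contradicts mixing. This last step uses the fact that the supports of MMEs are the whole homoclinic class, as in \cite{BCS3}.
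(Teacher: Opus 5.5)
Your finiteness argument takes a different and shorter route than the paper's. The paper argues geometrically. Infinitely many ergodic MMEs would have support points in the compact block $\Lambda_{C,\varepsilon,\chi}$ accumulating at some $\hat x_0$. Continuity of local stable/unstable sets on the block (Lemma \ref{lem:su-con}) then makes these MMEs eventually pairwise homoclinically related, so they lift to one irreducible component, which is SPR by Proposition \ref{prop:5-cSPR}; this contradicts uniqueness of its MME.

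You count components instead:
\begin{itemize}
\item the ergodic lift $\overline{\mu}$ of an MME charges $[F]=\bigcup_{v\in F}C(v)$, so it is carried by an irreducible component containing a vertex of $F$;
\item there are at most $\#F$ such components, each SPR and hence carrying exactly one MME (Theorem \ref{thm:Markov});
\item distinct MMEs of $f$ cannot share a component, since $\hat\pi_*\overline{\mu}=\hat\mu$.
\end{itemize}
This needs no local-manifold geometry or homoclinic relation. It also gives existence: by pigeonhole, some component meeting $F$ carries lifts of entropy tending to $h_{\rm top}(f)$, so its Gurevich entropy is $h_{\rm top}(f)$ and Theorem \ref{thm:Markov} gives an MME. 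The paper simply assumes existence. With Definition \ref{def:1-sSPR}, your Step 2 already is the SPR property of each component, so the contrapositive in Step 3 is unnecessary.

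Step 2 needs repair: the multiplicity of $\hat\pi$ is the wrong tool. If you only know $\Lambda_{C,\varepsilon,\chi}\subseteq\hat\pi([F])$, no degree bound forces $\overline{\nu}([F])>0$ for an \emph{arbitrary} high-entropy ergodic $\overline{\nu}$. Such a $\overline{\nu}$ may be carried by preimages of block points lying outside $[F]$. What you need, and what the proof of Proposition \ref{prop:5-cSPR} actually uses, is the stronger inclusion $\hat\pi^{-1}(\Lambda_{C,\varepsilon,\chi})\cap\hat\Sigma^{\#}\subseteq[F]$. In words, every $\#$-preimage of a block point has its zeroth symbol in $F$. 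This is the inverse-problem and local-finiteness property of the coding, combined with the lower bound on $q_\varepsilon$ from Lemma \ref{lem:Q-lb}. It gives $\overline{\nu}([F])\geq\hat\pi_*\overline{\nu}(\Lambda_{C,\varepsilon,\chi})>\tau$ with no loss. Separately, $\chi$-expansion of high-entropy measures should be read off from $\hat\mu(\Lambda_{C,\varepsilon,\chi})>0$ and ergodicity, not from Lemma \ref{lem:chi-expan}, which assumes SEE.

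The last step of item 2 has a genuine gap: topological mixing does not force period $1$ by itself. A topologically mixing map can carry ergodic measures with nontrivial cyclic decompositions whose pieces all have full support. For example, on the full shift, alternate two different Bernoulli laws on even and odd coordinates. So ``a cyclic decomposition of an MME contradicts mixing'' proves nothing without specific information on how the pieces' supports intersect.

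You also try to show that the symbolic period $p$ equals $1$. That is more than needed and is not guaranteed: the period of $\mu$ only divides $p$ (Lemma 12.2 of \cite{BCS3}, as in the paper). The right reduction is:
\begin{itemize}
\item for the general statement, take $q$ to be the l.c.m.\ of the periods of the finitely many relevant components;
\item if $\mu$ has period $1$, then $\hat\mu$ is $\hat f^p$-ergodic, so all $\sigma^p$-pieces of $\overline{\mu}$ project to $\hat\mu$;
\item hence $\hat\mu$ is exponentially mixing for $\hat f^p$, and therefore for $\hat f$, since $\varphi\circ f^j$ is H\"older for $j<p$.
\end{itemize}
The remaining input, that MMEs of a topologically mixing expanding-SPR map have period $1$, is not supplied by your argument. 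In the paper, this is where the period discussion of \cite{BCS3} (Lemmas 12.2--12.3) and item 4.c of Theorem \ref{thm:coding-LDP} enter.
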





To conclude Theorem \ref{thm:EM}, we will maneuver through the symbolic dynamics. Heuristically, the idea could be summarized as follows. We consider the symbolic model which encodes the properties of $(\M, f)$ we are interested in (Theorem \ref{thm:coding-LDP}). Then, we show that the expanding-SPR property implies good statistical behavior of this symbolic model. Finally, we translate back these properties from the symbolic model to $(\mathcal{M},f)$ (concluding Theorem \ref{thm:EM}). 

We first review certain definitions in symbolic dynamics. Let $\mathcal{G}=(\mathcal{V},\mathcal{E})$ be a directed graph, with $\mathcal{V}$ the set of vertices being countable; $\mathcal{E}$ the set of edges. We denote an edge connecting $v,u\in \mathcal{V}$ by $v\rightarrow u$. The Markov shift generated by $\mathcal{G}$ is a pair $(\Sigma,\sigma)$, where
$$
\Sigma:=\{ \underline{v}:=(v_k)_{k\in \mathbb{Z}}\in \mathcal{V}^{\mathbb{Z}}: v_k\rightarrow v_{k+1} \},
$$
and $\sigma$ is the left shift. We denote by $\Sigma^{\#}$ the subset of $\Sigma$, consisting of $\underline{v} = (v_k)$ such that there exist $u_0,w_0\in \mathcal{V}$ and subsequences $(v_{n_k})$, $n_k\uparrow \infty$, $(v_{n_l})$, $n_l\downarrow -\infty$ with $v_{n_k}=u_0$, $v_{n_l} = w_0$ for every $k,l$. An irreducible component of $(\Sigma,\sigma)$ is the Markov shift $(\Sigma',\sigma)$ generated by the maximal subset $\mathcal{V}'$ with
$$
\forall v,v'\in \mathcal{V}', \exists \underline{v}\in \Sigma, n\geq 1 \text{ such that } v_0=v, v_n=v'.
$$


\begin{definition}\label{def:1-sSPR}[SPR for Markov shifts]
    A Markov shift $(\Sigma,\sigma)$ is \emph{strongly positively recurrent}, if there exists a finite number of cylinders $C(v):=\{ \underline{v}: v_0=v \}$ and $h>0$, $\tau>0$, such that for all $\mu\in \mathbb{P}_{\rm{erg}}(\sigma)$ with $h(\sigma,\mu)>h$,
    $$
    \mu(\bigcup C(v))>\tau.
    $$
\end{definition}

\begin{theorem}[\cite{BCS3}, Theorem B.12]\label{thm:Markov}
    Let $(\Sigma,\sigma)$ be an irreducible SPR Markov shift. Then
    \begin{itemize}
        \item[1.] there exists a unique measure $\mu\in \mathbb{P}_{\rm{erg}}(\sigma)$ such that $h(\mu)=\sup_{\nu\in \mathbb{P}_{\rm{erg}}(\sigma)}\{ h(\nu) \}$, which we call a measure of maximal entropy,
        \item[2.] the measure of maximal entropy $\mu$ is Bernoulli,
        \item[3.] the measure of maximal entropy $\mu$ is exponentially mixing.
    \end{itemize}
\end{theorem}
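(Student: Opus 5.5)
The statement is Theorem~B.12 of \cite{BCS3}, a result purely about countable-state Markov shifts. I will not reprove it from scratch. Instead I will reduce it to the classical theory of strongly positively recurrent (SPR) Markov shifts developed by Gurevich \cite{G1} and Sarig \cite{S3}. The plan has three steps.

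\textbf{Step 1: Definition~\ref{def:1-sSPR} implies classical SPR.} Classical SPR for a fixed vertex $v$ says that the Gurevich entropy exceeds the entropy of the first-return loop graph at $v$. Equivalently, $\limsup \frac1n\log Z_n^*(v) < h_G$, where $Z_n^*(v)$ counts first-return loops at $v$ of length $n$.

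I argue by contrapositive. Suppose $h_G(\Sigma\setminus \text{loops through }F)$ equals $h_G$, where $F$ is the finite set of vertices in the definition. Then there are ergodic measures with entropy close to $h_G$ that give arbitrarily small mass to $\bigcup_{v\in F} C(v)$. I would build them as periodic or Markov measures on long loops that avoid $F$ except for a short connecting path. This contradicts the uniform lower bound $\tau$ for measures with $h>h$.

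Conversely, one can argue via the entropy gap. By Gurevich's variational principle applied to the subshift avoiding $F$, the entropy at infinity is strictly less than $h_G$. Sarig's criterion, ``entropy at infinity $<h_G$'', is then equivalent to SPR.

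\textbf{Step 2: existence, uniqueness and Bernoullicity.} With SPR in hand, Gurevich's theorem gives positive recurrence, so an MME exists. For irreducible shifts it is unique, by Gurevich. Its Parry-type description is a Markov measure $\mu[v_0\cdots v_n]=\ell_{v_0}r_{v_n}e^{-nh_G}$, built from the left and right eigenvectors of the transition matrix. Bernoullicity in the aperiodic case is Sarig's result, applied to a Markov measure with summable return-time tails. In the period-$p$ case one gets a Bernoulli system times a finite rotation; I take ``Bernoulli'' to refer to the aperiodic part or to $\sigma^p$.

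\textbf{Step 3: exponential mixing, the main obstacle.} I would induce on the first return to the cylinder $C(v)$. SPR gives exponential tails for the return time under $\mu$, namely $\mu(\text{return}>n)\le Ce^{-\delta n}$, because $Z_n^*(v)e^{-nh_G}$ decays exponentially. The induced map is a full shift on countably many symbols, with a Gibbs measure for a locally constant potential. Young's tower theorem then yields exponential decay of correlations for Hölder observables with respect to the natural metric on $\Sigma$.

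The delicate point is making the tail estimate uniform. It follows from the strict inequality in the SPR definition, via the generating-function identity $\sum_n Z_n^*(v)x^n$. This function has radius of convergence strictly larger than $e^{-h_G}$, and $1-\sum Z_n^* x^n$ has a simple root at $x=e^{-h_G}$. That simple root also gives the renewal-theoretic expansion of $\mu(C(v)\cap\sigma^{-n}C(v))$ with an exponentially small error, which is an alternative route to mixing.
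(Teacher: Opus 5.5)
The paper gives no proof of this statement; it imports it verbatim from \cite{BCS3}. Your plan to unpack the citation is reasonable: reduce Definition~\ref{def:1-sSPR} to Gurevich's classical SPR, then quote Gurevich for existence and uniqueness, the Bernoulli property of mixing positive recurrent Markov measures, and a tower or renewal argument for mixing. Steps~2 and~3 are essentially sound. The step that carries the content specific to Definition~\ref{def:1-sSPR} is Step~1, and there your argument has a genuine gap.

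You negate classical SPR as ``the subshift avoiding $F$ has Gurevich entropy $h_G$''. Failure of SPR actually means $\limsup_n\frac1n\log Z_n^*(v)=h_G$, a statement about first-return loops, and it is compatible with the subshift avoiding $F$ having small entropy. Take a loop graph: one vertex $v$ carrying $a_n$ disjoint simple loops of length $n$, with $h=\limsup_n\frac1n\log a_n$ and $\sum_n a_ne^{-nh}\le 1$. Deleting $v$ leaves no cycles, yet $h_G=h$ and the shift is not SPR. So your contrapositive only covers non-SPR shifts whose $F$-deleted part already carries full entropy. The same confusion breaks your ``conversely'' paragraph: the Gurevich entropy of the subshift avoiding $F$ does not bound the entropy at infinity, which equals $h_G$ in the loop graph. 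Nor can you just use first-return loops at one vertex $v\in F$, since such loops may spend a positive proportion of time in $F\setminus\{v\}$.

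What is missing is the passage from ``first-return loops at a vertex grow at rate $h_G$'' to ``excursions off the whole finite set $F$ grow at rate $h_G$''. Here is one way to get it.
\begin{itemize}
\item[1.] Let $t_{ab}(n)$ count paths of length $n$ from $a$ to $b$ with no interior vertex in $F$, and set $T(x)=\bigl(\sum_n t_{ab}(n)x^n\bigr)_{a,b\in F}$. By irreducibility of $\Sigma$, this is an irreducible nonnegative matrix, and its spectral radius satisfies $\rho(T(x))<1$ for $x<e^{-h_G}$.
\item[2.] Suppose its entries converged beyond $e^{-h_G}$. Then $\rho(T(e^{-h_G}))\le 1$, and strict monotonicity of the Perron root gives $\rho(T_{F'F'}(e^{-h_G}))<1$ for $F'=F\setminus\{v\}$. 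Hence $\sum_n Z_n^*(v)x^n=T_{vv}+T_{vF'}(I-T_{F'F'})^{-1}T_{F'v}$ would converge beyond $e^{-h_G}$, i.e.\ $v$ would be SPR.
\item[3.] So if SPR fails, some $a,b\in F$ satisfy $\limsup\frac1n\log t_{ab}(n)=h_G$, and your construction now works. Close each of the $t_{ab}(n)$ excursions with a fixed path of length $L$ from $b$ to $a$, take the Bernoulli measure on these loops, and average over $n+L$ shifts. This gives ergodic measures with entropy $\frac{\log t_{ab}(n)}{n+L}\to h_G$ along a subsequence and mass at most $\frac{L+1}{n+L}$ on $\bigcup_{v\in F}C(v)$.
\end{itemize}
This contradicts Definition~\ref{def:1-sSPR} only if its entropy threshold is below $\sup_\nu h(\nu)$. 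As printed the definition reads $h>0$, which makes it vacuous, so state explicitly that you use $h<h_G<\infty$.

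Three smaller points:
\begin{itemize}
\item[1.] When the period $p>1$, item~3, like item~2, holds only for $\sigma^p$ on a cyclic class.
\item[2.] Young's theorem needs $\gcd$ of return times $=1$ and yields one-sided statements, so two-sided H\"older observables need the usual finite-coordinate approximation.
\item[3.] The ``uniformity'' you worry about in Step~3 is a non-issue: under the MME, the normalized restriction to $C(v)$ gives first-return time $n$ mass exactly $Z_n^*(v)e^{-nh_G}$.
\end{itemize}
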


Finally, recall that a measure $\mu\in \mathbb{P}_{\rm{erg}}(\sigma)$ is an MME if $h(\mu)=\sup\{ h(\nu): \nu\in \mathbb{P}_{rm{erg}}(\sigma) \}$.

For general reviews on Markov shifts and their SPR theory, see \cite{SMarkov}, \cite{S3}, part II of \cite{BCS3}.

\bigskip



\subsection{SPR coding: statements and comparisons 
}\label{sec:SPRstate}

The symbolic model $\left(\hat{\Sigma}^{\#},\sigma\right)$ that we will use is a variation of the one in \cite{LDP}. This process of building symbolic models to encode complicated geometric systems by symbolic ones, usually called a \emph{coding}, has a long history which could be traced back to \cite{Bo} in the 1970s. Usually, as the coding is not bijective, topological mixing of $(\mathcal{M},f)$ does not imply irreducibility of $(\hat{\Sigma}^{\#},\sigma)$. So as a first step, we will have to identify the invariant subsets in $(\mathcal{M},f)$ corresponding to irreducible components of $(\hat{\Sigma}^{\#},\sigma)$. For uniformly hyperbolic systems, they are homoclinic classes \cite{Bowen}. For non-uniformly hyperbolic systems, they are topological homoclinic classes for measures \cite{BCS1} \cite{LDP} \cite{Li2}.

\bigskip

Throughout the remainder of this section, we will be considering the system $(\mathcal{M},f)$ with $f$ a $C^r$-map, $r>1$, and $h_{\rm{top}}(f)>0$.

Take $0<\chi<h_{\rm{top}}(f)$. Recall the definition of $\operatorname{NUE}_{\chi}\subseteq \hat{\mathcal{M}}$ via $\eqref{eqn:NUE}$. For each $\hat{x}\in \operatorname{NUE}_{\chi}$, there exist well-defined local stable/unstable sets $W^{s/u}_{\rm{loc}}(\hat{x})\subseteq \hat{\mathcal{M}}$. Since $\operatorname{NUE}_{\chi}$ consists of points $\hat{x}$ which are $\chi$-non-uniformly expanding in all direction, we have $W^u_{\rm{loc}}(\hat{x})$ being homeomorphic to a $d$-disk at $x_0\in \M$; and $W^s_{\rm{loc}}(\hat{x})=\vartheta_0^{-1}(x_0)$ \cite{LDP} \cite{Li2}.

\begin{definition}\label{def:2-u/ssets}
    The stable/unstable set at $\hat{x}\in \operatorname{NUE}_{\chi}$ is defined to be
    $$
    \begin{aligned}
        & W^s(\hat{x}):=\{ \hat{y}\in \hat{\M}: \limsup_{n\rightarrow\infty}\frac{1}{n}\log d(y_n,x_n)<0 \},\\
        & W^u(\hat{x}):=\{ \hat{y}\in \hat{\M}: \limsup_{n\rightarrow\infty}\frac{1}{n}\log d(y_{-n},x_{-n})<0 \}.
    \end{aligned}
    $$
\end{definition}

Then, we have
$$
    \begin{aligned}
        & W^s(\hat{x})=\bigcup_{n\geq 0} \hat{f}^{-n}\left(W^s_{\rm{loc}}(\hat{f}^n\hat{x})\right),\\
        & W^u(\hat{x})=\bigcup_{n\geq 0} \hat{f}^n \left( W^u_{\rm{loc}}(\hat{f}^{-n}\hat{x})\right).
    \end{aligned}
$$
We remark that $W^s(\hat{x})=\{ \hat{y}=(y_n)_{n\in \mathbb{Z}}: \exists n\geq 0 \text{ such that } y_n=x_n \}$.

With well-defined stable/unstable sets, we define homoclinic relation between measures.

\begin{definition}\label{def:homoclinic}
Given $\mu_1,\mu_2\in \mathbb{P}_{\rm{erg}}(f)$, hyperbolic measures, we say that $\mu_1$ \emph{precedes} $\mu_2$, denoted by $\mu_1\overset{h}{\leq}\mu_2$, if there are measurable sets $\hat{A}_1,\hat{A}_2\subseteq \hat{\M}$, $\hat{\mu_1}(\hat{A}_1)>0$, $\hat{\mu}_2(\hat{A}_2)>0$, such that for all $(\hat{x}_1,\hat{x}_2)\in \hat{A}_1\times \hat{A}_2$, there exists $\hat{z}\in \hat{\M}$ such that
\begin{itemize}
\item[1.]\emph{(intersection)} $\hat{z}\in W^u(\hat{x}_1)\cap W^s(\hat{x}_2)$,
\item[2.]\emph{(transverse)} $\hat{z}\notin \bigcup_{n\in \mathbb{Z}} \hat{f}^n \vartheta^{-1}(C(f))$. 
\end{itemize}
We say {that} two hyperbolic measures $\mu_1, \mu_2\in \mathbb{P}_{\rm{erg}}(f)$ are homoclinic related, denoted by $\mu_1\overset{h}{\sim}\mu_2$, if $\mu_1\overset{h}{\leq}\mu_2$ and $\mu_2\overset{h}{\leq}\mu_1$.
\end{definition}

We briefly comment again how this definition differs from those in \cite{BCS1} \cite{BCS2} and \cite{LDP}. For diffeomorphisms, hyperbolic points admit non-degenerate directions that are non-uniformly contracting. In our setup, points in $\operatorname{NUE}_{\chi}$ expand in all directions, making stable sets 'degenerate'. So the transverse condition is replaced here by 'non-criticality' (item 2 of Definition \ref{def:homoclinic}). This is again the analogy between 'criticality' and 'non-transverse' we mentioned in Section \ref{sec:1-idea}.

\begin{proposition}[Proposition 4 \cite{LDP}]
    The homoclinic relation on measures is an equivalence relation.
\end{proposition}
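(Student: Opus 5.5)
Reflexivity and symmetry are immediate from Definition \ref{def:homoclinic}: symmetry is built into $\overset{h}{\sim}$, and reflexivity requires $\mu\overset{h}{\leq}\mu$. So the substance is transitivity of $\overset{h}{\leq}$. I will argue reflexivity and transitivity by the same shadowing mechanism, an inclination-lemma type argument adapted to the non-invertible expanding setting. The key point is that for $\hat{x}\in \operatorname{NUE}_{\chi}$ the local unstable set $W^u_{\rm loc}(\hat x)$ projects to a $d$-disk around $x_0$, while stable sets are just "eventually equal forward orbits". Hence $W^u(\hat x_1)\cap W^s(\hat x_2)\neq\emptyset$ simply means that some point of the unstable disk of $\hat x_1$ eventually lands exactly on $x_n$ for some $n$.

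For transitivity, assume $\mu_1\overset{h}{\leq}\mu_2$ via sets $\hat A_1,\hat A_2$ and $\mu_2\overset{h}{\leq}\mu_3$ via $\hat A_2',\hat A_3$. I first shrink all sets to Pesin blocks $\Lambda_{C,\varepsilon,\chi}$ of positive measure (Definition \ref{def:Pesinblock1}), on which local unstable disks have uniform size and uniform backward contraction. Then I use ergodicity of $\hat\mu_2$ and Poincaré recurrence to find, for a.e. $\hat x_2\in\hat A_2$, a time $n$ with $\hat f^{n}\hat x_2\in \hat A_2'$ inside a small Pesin-block neighbourhood. Given $(\hat x_1,\hat x_3)$, pick $\hat z\in W^u(\hat x_1)\cap W^s(\hat x_2)$ and $\hat w\in W^u(\hat f^n\hat x_2)\cap W^s(\hat x_3)$. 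Since $z_m=x_{2,m}$ for large $m$, a small disk around $z_m$ in $W^u(\hat x_1)$, pulled back without meeting $C(f)$ by the transversality item 2, maps by $f^{k}$ onto a neighbourhood of $f^{k}x_{2,m}$ of size bounded below; this uses the expansion along the Pesin block, with distortion controlled by \eqref{eqn:criticalappro}. Choosing $k$ so that $\hat f^{m+k}\hat x_2$ lies near $\hat f^{n}\hat x_2$ (recurrence), this image contains the point $\hat f^{n}\hat x_2$'s unstable piece that reaches $w_0$. So there is an orbit in $W^u(\hat x_1)$ landing exactly on the forward orbit of $\hat w$, hence on that of $\hat x_3$. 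Finally I will avoid the critical set generically, either by choosing the landing preimage off $\bigcup_n f^{-n}C(f)$ (a countable union of nowhere-dense sets in a $d$-disk if $C(f)$ is small), or by using that the preimage branch is a local diffeomorphism on the chosen disk.

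Reflexivity follows from the same argument with $\mu_1=\mu_2=\mu_3$ and trivial intersections. Take $\hat x,\hat y$ in a Pesin block with $x_0,y_0$ in a small ball. Then $f^{k}(W^u_{\rm loc})$ of a return of $\hat x$ covers a neighbourhood of $y_0$ by uniform unstable size, giving a non-critical preimage path.

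The main obstacle is the "covering" step. I need to show that the forward image of a small unstable disk of $\hat x_1$ around $z_m$ contains an honest neighbourhood of a recurrent point of $\mu_2$, uniformly over the Pesin block, while staying away from critical points. I would first try to import the uniform-size unstable-disk estimates from the coding of \cite{LDP} directly, and deal with the critical set by local injectivity on Pesin charts.
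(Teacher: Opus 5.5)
The paper gives no proof of this statement; it is quoted from \cite{LDP}, so I judge your sketch on its own. Your mechanism is the natural one, and reflexivity is fine. It is even simpler than you write, with no return needed. Let $\rho$ be the uniform radius of $\vartheta_0W^u_{\rm loc}$ on $\Lambda$, and take $\hat x,\hat y\in\Lambda\cap\vartheta_0^{-1}(B)$ with $\diam B<\rho$. Then the lift $\hat v\in W^u_{\rm loc}(\hat x)$ with $v_0=y_0$ is a non-critical point of $W^u(\hat x)\cap W^s(\hat y)$. Transitivity, however, has two genuine holes.

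\textbf{Wrong return target.} The point $\hat w$ lies in the \emph{global} set $W^u(\hat f^n\hat x_2)$, so $w_0$ need not lie in $\vartheta_0W^u_{\rm loc}$ of any point near $\hat f^n\hat x_2$. Expansion only gives that $f^k(V)$ contains a ball of radius about $\rho$ around $x_{2,m+k}$. That ball need not contain $w_0$ even if $\hat f^{m+k}\hat x_2$ is close to $\hat f^n\hat x_2$. You must recur to the past of $\hat f^n\hat x_2$:
\begin{itemize}
\item since $d(w_{-i},x_{2,n-i})\to0$ exponentially, choose $i$ with $\hat f^{n-i}\hat x_2\in\Lambda$ and $d(w_{-i},x_{2,n-i})<\rho/2$;
\item then choose $k$ large with $\hat f^{m+k}\hat x_2\in\Lambda$ and $d(x_{2,m+k},x_{2,n-i})<\rho/2$, using Poincar\'e recurrence for the countably many points $\hat f^{n-i}\hat x_2$;
\item this gives $w_{-i}\in f^k(V)$.
\end{itemize}

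\textbf{Lost time alignment.} Here $W^s(\hat x)=\{\hat y: y_p=x_p\text{ for some }p\geq 0\}$ with \emph{matched} indices, and $W^u(\hat x_1)$ is not shift invariant. Your point satisfies $v_{m+k+i+l}=w_l=x_{3,l}$. So $\hat v\in W^u(\hat x_1)\cap W^s(\hat f^{-s}\hat x_3)$ with $s=m+k+i$, not $\hat v\in W^s(\hat x_3)$. Moreover $s$ depends on the pair: $m$ through $\hat z$, $i$ through $\hat w$, and $k$ on both. ``Landing on the forward orbit of $\hat x_3$'' therefore does not give the rectangle $\hat B_1\times\hat B_3$ required in Definition \ref{def:homoclinic}.

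One fix is to note that one aligned non-critical intersection for a typical pair $(\hat x_1,\hat x_3')$, with $\hat x_3'=\hat f^{-s}\hat x_3$ and $v_p=x'_{3,p}$, spreads to a rectangle:
\begin{itemize}
\item pick $q$ with $\hat f^{-q}\hat x_1\in\Lambda$ and $d(v_{-q},x_{1,-q})<\rho/4$;
\item $f^{q+p}$ maps a small ball $B'\ni v_{-q}$ diffeomorphically onto an open $N\ni x'_{3,p}$;
\item then every $\hat x_1''$ with $\hat f^{-q}\hat x_1''\in\Lambda$ and $d(x''_{1,-q},x_{1,-q})<\rho/4$, and every $\hat x_3''\in\operatorname{NUE}_\chi$ with $x''_{3,p}\in N$, are joined by a non-critical point of $W^u(\hat x_1'')\cap W^s(\hat x_3'')$;
\item both sets have positive measure for typical choices.
\end{itemize}
Alternatively, pigeonhole $m$, a lower bound on the radius of $V$, and $i$ separately on $\hat A_1$ and $\hat A_3$. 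Then one $k$, hence one $s$, serves a whole rectangle, and you replace $\hat A_3$ by $\hat f^{-s}\hat A_3$.

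The covering step you flag as the main obstacle is routine here. For $\hat y\in\Lambda$, the inverse branch of $f^k$ along the past of $\hat y$ is defined on $B(y_0,\rho)$, with image of diameter $\lesssim e^{-k\chi'}$; that is exactly $f^k(V)\supseteq B(x_{2,m+k},\rho)$. Non-criticality of the final point is automatic, since each segment of its orbit is an inverse branch or part of the non-critical orbits of $\hat z,\hat w,\hat x_3$. The ``generic avoidance'' alternative is unnecessary and unjustified.
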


\begin{definition}\label{def:1-uHC}
    Given $\mu\in \mathbb{P}_{\rm{erg}}(f)$, hyperbolic, the topological homoclinic class of $\mu$ is the set
    $$
    \operatorname{HC}(\mu):=\overline{\bigcup\{ \operatorname{supp}(\hat{\nu}): \nu\in \mathbb{P}_{\rm{erg}}(f), \text{ hyperbolic, }\nu\overset{h}{\sim} \mu \}}.
    $$
\end{definition}

Recall that $n$ is the period of a measure $\mu\in \mathbb{P}(f)$ if there exists $A\neq \emptyset$ such that $f^nA=A$, $f^jA\cap A=\emptyset$, $j=1,\cdots, n-1$ and $\mu(\bigcup_{j=0}^{n-1}A)=1$. When it exists, the largest period is \emph{the period of} $\mu$. When $(\mathcal{M},f)$ is topologically mixing, the period of $\mu$ is $1$.

With this preparation, we state the symbolic model that we will use. We first introduce a parameter that will be used later. For $\hat{x}\in \operatorname{NUH}_{\chi}$, one may define a linear map $C_{\chi}(\hat{x}):\mathbb{R}^d\rightarrow T_{\hat{x}}\hat{\mathcal{M}}$, such that 
\begin{equation}
\label{eqn:C_chi}
||C_{\chi}(\hat{x})^{-1}||=\left(\max_{v\in T_{\hat{x}}\hat{\mathcal{M}},||v||=1} 2\cdot\sum_{n=0}^\infty e^{2n\chi}||d_{\hat{x}}\hat{f}^{-n}v||^2\right)^{\frac{1}{2}}.
\end{equation}
Define $Q_{\varepsilon}(\hat{x}):=\varepsilon^{\frac{6}{\beta}}\min\{ ||C_{\chi}(\hat{x})^{-1}||^{-\frac{48}{\beta}}, d(\{ x_{-1},x_0,x_1 \}, C(f))^{\frac{96a}{\beta}} \}$, where $a$ only depends on $f$ and comes from conditions $(A1)-(A7)$ in \cite{LDP}, which we explain in the following Remark \ref{re-Acondi}. Then define
\begin{equation}
\label{eqn:q}
q_{\varepsilon}(\hat{x}):=\varepsilon\cdot \min\{ e^{\varepsilon|n|} Q_{\varepsilon}(\hat{f}^n\hat{x}): n\leq 0 \}.
\end{equation}
For details regarding these parameters, see \cite{ALP}, Section 3.

\begin{theorem}[Theorem A, Theorem 3.1, Theorem 4.2 \cite{LDP}]\label{thm:coding-LDP}
Let $f:\mathcal{M} \rightarrow \mathcal{M}$ be a $C^r$-map, $r>1$, $h_{\rm{top}}(f)>0$, with singularities and satisfying conditions $(A1)$-$(A7)$ in \cite{LDP}. For every $\chi>0$, there exist a locally compact countable state (two-sided) Markov shift $(\hat{\Sigma},\sigma)$, induced by a graph $(\mathcal{R}, \mathcal{E})$, and a H\"older continuous map $\hat{\pi}: \hat{\Sigma}\rightarrow \hat{\mathcal{M}}$ such that $\hat{f}\circ \hat{\pi}=\hat{\pi}\circ \sigma$ and:
\begin{itemize}
\item[1.] the restriction of $\hat{\pi}$ to $\hat{\Sigma}^{\#}$ is finite-to-one,
\item[2.] for all $\mu\in \mathbb{P}_{\rm{erg}}(f)$, $\chi$-expanding, $\hat{\mu}(\hat{\pi}(\hat{\Sigma}^{\#}))=1$, and there exists $\overline{\mu}\in \mathbb{P}_{\rm{erg}}(\sigma)$, such that $\hat{\pi}_{\ast}\overline{\mu}=\hat{\mu}$, $h_{\sigma}(\overline{\mu})=h_f(\mu)$; for all $\overline{\nu}\in \mathbb{P}_{\rm{erg}}(\sigma)$, $\hat{\nu}:=\hat{\pi}_{\ast}\overline{\nu}$ is $\chi/3$-expanding, $h_{\sigma}(\overline{\nu})=h_{\hat{f}}(\hat{\nu})$,
\item[3.] let $\hat{K}\subseteq \operatorname{NUE}_{\chi}$ and $\inf\{ \hat{x}\in \hat{K}: q_{\varepsilon}(\hat{x}) \}\geq q_0>0$, then there exist finitely many cylinders $C(v)$ such that $\hat{K}\subseteq\hat{\pi}(\bigcup C(v))$,
\item[4.] take $\mu\in \mathbb{P}_{\rm{erg}}(f)$, $\chi$-expanding, then there exists $\hat{\Sigma}'$, an irreducible component of $\hat{\Sigma}$, such that 
    \begin{itemize}
        \item[4.a.] for all $\nu\in \mathbb{P}_{\rm{erg}}(f)$, $\nu\overset{h}{\sim}{\mu}$, there exists $\overline{\nu}$, supported on $\hat{\Sigma}'$, such that $\hat{\pi}_{\ast}\overline{\nu}=\hat{\nu}$, $h_{\sigma}(\overline{\nu})=h_f(\hat{\nu})$,
        \item[4.b.] for all $\overline{\nu}\in \mathbb{P}_{\rm{erg}}(\sigma)$, supported on $\hat{\Sigma}'$, $\hat{\pi}_{\ast}\overline{\nu}$ is a lift to natural extension of $\nu\in \mathbb{P}_{\rm{erg}}(f)$, and $\nu\overset{h}{\sim} \mu$,
        \item[4.c.] there exists at most one MME on $(\hat{\Sigma}',\sigma)$. When it exists, the MME is Bernoulli times a finite rotation with period equal to the period of $(\hat{\Sigma}',\sigma)$.
    \end{itemize}
\end{itemize}
\end{theorem}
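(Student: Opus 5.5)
This statement is quoted from \cite{LDP}, so the plan is to check that our setting satisfies its hypotheses and then assemble the coding from the non-invertible Sarig-type construction there, rather than to rebuild the coding. First I would verify that a $C^r$ map with $h_{\rm{top}}(f)>0$ and conditions $(A1)$--$(A7)$ fits the framework of \cite{LDP} (see also \cite{ALP}). These conditions control the growth of $d^2f$ and $(d_xf)^{-1}$ polynomially in $d(x,C(f))$. Then I would work on $\operatorname{NUE}_{\chi}$, where every direction is unstable. Consequently the Pesin charts are the maps $C_\chi(\hat x)$ of \eqref{eqn:C_chi}, the stable direction is trivial, and $W^s_{\rm loc}(\hat x)=\vartheta_0^{-1}(x_0)$.

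The construction proceeds in the usual order. I would introduce $\varepsilon$-double charts of size governed by $q_\varepsilon$ from \eqref{eqn:q}; the truncation at $n\le 0$ reflects that only the backward (expanding-inverse) direction needs temperedness. Next I would choose a countable discrete set of charts so that every point of $\operatorname{NUE}_{\chi}$ with $q_\varepsilon>0$ is shadowed by an $\varepsilon$-chain. This follows by a compactness/precompactness argument on level sets $\{q_\varepsilon\ge e^{-k}\}$. The graph of $\varepsilon$-overlapping transitions gives $(\hat\Sigma,\sigma)$, and the shadowing map gives $\hat\pi$. $\hat\pi$ is Hölder, and $\hat f\circ\hat\pi=\hat\pi\circ\sigma$ holds by graph transform of unstable disks (Lipschitz "admissible manifolds" of full dimension).

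Item 3 follows because the level set $\{q_\varepsilon\ge q_0\}$ is covered by finitely many charts of the discrete set. For item 2, I would use the generic lift of $\chi$-expanding measures to $\hat\Sigma^{\#}$ via temperedness (Lemma \ref{temper}, item 1, applied to $q_\varepsilon$). Conversely, projected shift-ergodic measures are $\chi/3$-expanding by the hyperbolicity estimates built into chains. Entropy equality follows from finiteness-to-one (Abramov–Rokhlin/finite extension).

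The main obstacle is item 1, finite-to-one on $\hat\Sigma^{\#}$. This requires the Bowen–Sinai refinement of the cover into a countable Markov partition, together with local finiteness: each chart meets finitely many others. I would follow \cite{LDP}, exploiting that the degenerate stable direction makes the inverse-branch refinement simpler. The critical set is handled by the $d(\cdot,C(f))^{96a/\beta}$ factor in $Q_\varepsilon$.

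Item 4 uses the homoclinic relation of Definition \ref{def:homoclinic}. Measures homoclinically related to $\mu$ lift to the same irreducible component. For the converse, orbits in one component produce non-critical points in $W^u\cap W^s$. Uniqueness and the Bernoulli-times-rotation structure of the MME in 4.c then follow from Theorem \ref{thm:Markov} and Gurevich theory on that component.
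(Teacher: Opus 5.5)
Your proposal matches the paper's treatment: the paper gives no argument of its own and imports the statement directly from \cite{LDP}, and your outline is the standard construction behind that citation (charts sized by $q_\varepsilon$, Bowen--Sinai refinement for finiteness-to-one, homoclinic classes for item 4). The one step to correct is 4.c: Theorem \ref{thm:Markov} assumes SPR, which is not among the hypotheses here (the paper only uses it after SPR is established in Proposition \ref{prop:5-cSPR}), so uniqueness must come from Gurevich's theorem for irreducible countable Markov shifts, and the Bernoulli-times-rotation structure from Sarig's Bernoulli theorem applied to the period-$p$ cyclic decomposition of $\hat{\Sigma}'$.
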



\begin{remark}\label{re-Acondi}
    Additional assumptions considering the critical set are made in \cite{LDP} ($(A1)$-$(A7)$). This is partly due to the fact that singularities are allowed in \cite{LDP}. In a work under preparation \cite{BLY}, we build a coding as in Theorem \ref{thm:coding-LDP}, removing the assumptions $(A1)$-$(A7)$ altogether. We note that Viana maps all satisfy the additional regularity conditions \cite{LDP}, so imposing them will not affect Theorem \ref{thm-Viana}. It will bring additional conditions to Theorem \ref{thm:main1}. We will proceed to the remainder of the proof with these conditions. We hope the forthcoming paper \cite{BLY} will be ready soon enough so that we will refer to it instead of \cite{LDP} for Theorem \ref{thm:coding-LDP}.
\end{remark}

Our definition of Pesin blocks (Definition \ref{def:Pesinblock1}) are slightly different to that in \cite{LDP}, which they denote by $\Lambda_{C',\varepsilon',\chi,l}$, in Section 4.2. We will conclude this section by showing that the two definitions are compatible.

\begin{lemma}\label{lem:discon}
    Let $\M$ be a closed Riemannian manifold and $f:\M\rightarrow \M$ be a $C^{1+\beta}$-map, $\beta > 0$. Then there exists $H>0$ such that for all $x\in \M\setminus C(f)$,
    $$
    d(x,C(f))>H \cdot m(d_x f)^{\frac{1}{\beta}}.
    $$
\end{lemma}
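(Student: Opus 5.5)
The plan is to compare $m(d_x f)$ with the distance from $x$ to the critical set, using only the $\beta$-Hölder continuity of $df$. The key observation is that the conorm is $1$-Lipschitz with respect to the operator norm and vanishes exactly on $C(f)$.

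First I reduce to a local statement. By compactness of $\M$, fix a finite atlas of charts in which the Riemannian metric is uniformly comparable to the Euclidean one, and a Lebesgue number $r_0>0$. If $d(x,C(f))\geq r_0$, the inequality is trivial once $H\leq r_0/\|df\|_\infty^{1/\beta}$, since $m(d_xf)\leq \|df\|_\infty$. If $C(f)=\emptyset$ there is nothing to prove beyond this. So assume $C(f)\neq\emptyset$ and $d(x,C(f))<r_0$. Since $C(f)$ is closed, pick $c\in C(f)$ with $d(x,c)=d(x,C(f))$; then $x$ and $c$ lie in a common chart (and similarly $f(x)$, $f(c)$ after possibly shrinking $r_0$ by uniform continuity of $f$).

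The core estimate works in such a chart, where $d_xf$ and $d_cf$ are matrices. The conorm satisfies
$$
|m(A)-m(B)|\leq \|A-B\|,
$$
because $\|Av\|\geq \|Bv\|-\|A-B\|$ for unit vectors $v$. Since $c$ is critical, $m(d_cf)=0$. Hence
$$
m(d_xf)\leq \|d_xf-d_cf\|\leq K\, d(x,c)^{\beta},
$$
where $K$ is the $\beta$-Hölder constant of $df$ in the charts. This constant is controlled by $\|d^{1+\beta}f\|_\infty$ and the distortion of the charts. Rearranging gives $d(x,C(f))\geq K^{-1/\beta} m(d_xf)^{1/\beta}$.

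Finally, take $H$ strictly smaller than $\min\{K^{-1/\beta}, r_0\|df\|_\infty^{-1/\beta}\}$ (say half of it) to obtain the strict inequality. This works since $m(d_xf)>0$ for $x\notin C(f)$, so the right-hand side is positive and halving $H$ makes the inequality strict.

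The only mildly delicate point is comparing differentials at different base points on a manifold. I handle this with the finite atlas, or equivalently with parallel transport, whose distortion is $O(d(x,c))$ and is absorbed into $K$ because $\beta\leq 1$. The remaining steps are routine.
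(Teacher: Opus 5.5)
Your proof is correct and follows the paper's argument: the conorm is $1$-Lipschitz in the operator norm, $m(d_cf)=0$ at a critical point, and $df$ is $\beta$-H\"older, so $m(d_xf)\le K\,d(x,c)^{\beta}$, which rearranges to the claim. Your treatment of charts (comparing differentials at different base points), of points far from $C(f)$, and of the strict inequality fills in details that the paper's short proof leaves implicit.
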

\begin{proof}
    By basic linear algebra, let $A,B:V\rightarrow V$ be two linear morphisms of a normed vector space,
    $$
    |m(A)-m(B)|\leq ||A-B||.
    $$

    Take $x\in \M\setminus C(f)$. For all $c\in C(f)$, $m(d_c f)=0$. So
    $$
    \begin{aligned}
        m(d_x f)&=|m(d_xf)-m(d_c f)|\\
        &\leq ||d_xf-d_cf||\\
        &\leq || d^{1+\beta} f ||_{\infty} \cdot d(x,c)^{\beta}.
    \end{aligned}
    $$
    And $d(x,c)\geq || d^{1+\beta} f ||_{\infty}^{-1/\beta} \cdot m(d_xf)^{\frac{1}{\beta}}$.
\end{proof}

\begin{lemma}\label{lem:twoblocks}
    Let $\Lambda_{C,\varepsilon,\chi}$ be an expanding-Pesin block as defined in Definition \ref{def:Pesinblock1}. Then there exists a Pesin block $\Lambda_{C',\varepsilon',\chi,d}$, as defined in \cite{LDP}, such that 
    $$
    \Lambda_{C,\varepsilon,\chi} \subseteq \Lambda_{C',\varepsilon',\chi.d}.
    $$
\end{lemma}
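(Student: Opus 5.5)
The plan is to check the defining conditions of the Pesin block of \cite{LDP} one at a time against Definition \ref{def:Pesinblock1}, adjusting constants as needed. As I recall from \cite{LDP}, Section 4.2, $\Lambda_{C',\varepsilon',\chi,l}$ (with $l=d$ the unstable index) consists of points $\hat{x}$ whose orbit satisfies two families of conditions with tempered constants along $n\in\mathbb{Z}$. The first is a hyperbolicity condition: backward contraction along the $d$-dimensional unstable space, $\|d_{\hat{f}^n\hat{x}}\hat{f}^{-k}v\|\le C'e^{|n|\varepsilon'-k\chi}\|v\|$; the stable part is trivial since $l=d$. The second is a distance-to-critical-set condition: $d(x_n,C(f))\ge (C')^{-1}e^{-|n|\varepsilon'}$, possibly with a power $a$ coming from (A1)--(A7). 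I would first rewrite the \cite{LDP} definition in exactly this form, so that the inclusion reduces to a term-by-term comparison.

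The hyperbolicity condition is immediate. For $\hat{x}\in\Lambda_{C,\varepsilon,\chi}$, Definition \ref{def:Pesinblock1} gives $\|d_{\hat{f}^n\hat{x}}\hat{f}^{-k}\|<Ce^{|n|\varepsilon-k\chi}$. Since all of $T_{x_n}\mathcal{M}$ is the unstable direction, this is the required bound with $C'\ge C$ and $\varepsilon'\ge\varepsilon$.

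For the critical-set condition, I would combine the slow-approach estimate \eqref{eqn:criticalappro}, $m(d_{x_n}f)\ge C^{-1}e^{-|n|\varepsilon}$, with Lemma \ref{lem:discon}. Together they give
$$d(x_n,C(f))> H\,m(d_{x_n}f)^{1/\beta}\ge H C^{-1/\beta}e^{-|n|\varepsilon/\beta}.$$
If \cite{LDP} requires $d(x_n,C(f))^a$ or a similar power, it suffices to raise this bound to the appropriate power. Choosing $\varepsilon'=\max\{\varepsilon,\,a\varepsilon/\beta\}$ (or $\varepsilon/\beta$ if there is no power) and $C'=\max\{C,\,H^{-a}C^{a/\beta}\}$ then yields both conditions at once.

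The main obstacle is bookkeeping. First, the \cite{LDP} block may include auxiliary conditions, for instance on the norms $\|C_\chi(\hat{f}^n\hat{x})^{-1}\|$ from \eqref{eqn:C_chi}, or on $\|d_{x_n}f\|$, which is bounded by $\|df\|_\infty$. For the $C_\chi$ norm, the hyperbolicity bound gives
$$\|C_\chi(\hat{f}^n\hat{x})^{-1}\|^2\le 2\sum_k e^{2k\chi}C^2e^{2|n|\varepsilon-2k\chi},$$
and this sum diverges. So I would use the bound at a slightly smaller exponent $\chi'<\chi$, or observe that \cite{LDP} use $\chi$ in the sum only up to a rate loss, and if necessary replace $\chi$ by a smaller rate in $C_\chi$. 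This exponent matching is the step I expect to require care. A second issue is whether $\varepsilon'$ must be small relative to $\chi$. Since $\varepsilon$ is arbitrary in Definition \ref{def:exp-SPR}, this can always be arranged by taking $\varepsilon$ small.
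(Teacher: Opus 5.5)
Your proposal is correct and follows the paper's argument. The backward-contraction condition (PS1) and the conditions made trivial by $l=d$ come directly from Definition \ref{def:Pesinblock1}, and the distance-to-critical-set condition (PS3) follows from \eqref{eqn:criticalappro} combined with Lemma \ref{lem:discon} after adjusting $C$ and $\varepsilon$. Your worry about the divergent $C_\chi$ sum is not part of this lemma: the paper handles it separately in Lemma \ref{lem:Q-lb} by using a block exponent $\chi'>\chi$, exactly as you suggest.
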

\begin{proof}
    Items $(PS1)$, $(PS2)$, $(PS4)$ from \cite{LDP} are automatically satisfied, as we take $l=d$. Apply Lemma \ref{lem:discon}, we get item $(PS3)$ from $\eqref{eqn:criticalappro}$ while shrinking $C$ and $\varepsilon$ if necessary.
\end{proof}



\subsection{SPR coding and outcomes}\label{sec:SPREM}

In this section we prove Theorem \ref{thm:EM}, hence Theorem \ref{thm:main2}.

\begin{lemma}\label{lem:Q-lb}
    $q_{\varepsilon}$ is lower bounded on $\Lambda_{C,\varepsilon,\chi',d}$ (defined in \cite{LDP}) for $\chi'>\chi$.
\end{lemma}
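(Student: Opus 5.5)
The plan is to bound each of the two quantities inside $Q_{\varepsilon}$ uniformly on the block, and then let the tempered $\sup$ over $n\le 0$ in \eqref{eqn:q} absorb the subexponential growth. Concretely, for $\hat{x}\in\Lambda_{C,\varepsilon,\chi',d}$ I aim to prove, for every $n\in\mathbb{Z}$,
$$
\|C_{\chi}(\hat{f}^n\hat{x})^{-1}\|\le C_1 e^{\varepsilon|n|},\qquad d(\{x_{n-1},x_n,x_{n+1}\},C(f))\ge C_2 e^{-\varepsilon'|n|}.
$$
Here $C_1,C_2$ depend only on $C,\chi,\chi',f$, and $\varepsilon'$ is a fixed multiple of $\varepsilon$.

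For the first estimate I use the Pesin-block bound $\|d_{\hat{f}^n\hat{x}}\hat{f}^{-k}\|\le C e^{|n|\varepsilon-k\chi'}$; this is (PS1)/(PS2) with $l=d$, or the defining inequality of Definition \ref{def:Pesinblock1} for blocks coming from Lemma \ref{lem:twoblocks}. Substituting it into \eqref{eqn:C_chi} gives
$$
\|C_\chi(\hat{f}^n\hat{x})^{-1}\|^2\le 2C^2e^{2|n|\varepsilon}\sum_{k\ge0}e^{2k(\chi-\chi')}.
$$
The series is finite exactly because $\chi'>\chi$, which is where the strict inequality in the statement enters. So $C_1=C\sqrt{2/(1-e^{-2(\chi'-\chi)})}$.

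For the second estimate I start from the slow approach to the critical set. Either (PS3) gives it directly, or \eqref{eqn:criticalappro} gives $m(d_{x_n}f)\ge C^{-1}e^{-|n|\varepsilon}$. Lemma \ref{lem:discon} then converts this into $d(x_n,C(f))\ge H C^{-1/\beta}e^{-|n|\varepsilon/\beta}$. Applying this at $n-1,n,n+1$ costs only a factor $e^{\varepsilon/\beta}$ in the constant. Consequently,
$$
Q_\varepsilon(\hat{f}^n\hat{x})\ge \varepsilon^{6/\beta}\min\{C_1^{-48/\beta}e^{-48\varepsilon|n|/\beta},\ C_2^{96a/\beta}e^{-96a\varepsilon|n|/\beta^2}\}\ge c\,e^{-K\varepsilon|n|},
$$
where $K=\max(48/\beta,96a/\beta^2)$.

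The main obstacle is that $q_\varepsilon$ multiplies only by $e^{\varepsilon|n|}$, whereas the decay of $Q_\varepsilon$ along the orbit is $e^{-K\varepsilon|n|}$ with $K>1$. The product $e^{\varepsilon|n|}Q_\varepsilon(\hat{f}^n\hat{x})$ is therefore not obviously bounded below. I will resolve this through the freedom in the block parameter: the block in \cite{LDP}, and the block produced by Lemma \ref{lem:twoblocks}, is allowed to have its tempering rate shrunk. I choose the block's rate $\varepsilon_0$ with $K\varepsilon_0\le\varepsilon$, which is the same shrinking of $\varepsilon$ already done in the proof of Lemma \ref{lem:twoblocks}. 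With this choice, $e^{\varepsilon|n|}Q_\varepsilon(\hat{f}^n\hat{x})\ge c$ for all $n\le0$, and hence $q_\varepsilon\ge\varepsilon c>0$ on the block. If the \cite{LDP} block instead already comes with the rate matched to $q_\varepsilon$ (as in \cite{ALP}, Section 3), the same computation applies verbatim. In either case the resulting uniform lower bound is what is needed to invoke item 3 of Theorem \ref{thm:coding-LDP}.
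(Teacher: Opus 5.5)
Your proof is correct and uses the same two ingredients as the paper. The first is the geometric series $\sum_k e^{2k(\chi-\chi')}$, finite because $\chi'>\chi$, which controls $\|C_\chi^{-1}\|$. The second is (PS3), equivalently \eqref{eqn:criticalappro} together with Lemma \ref{lem:discon}, which controls the distance to $C(f)$.

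Where you differ is in where you apply them. The paper bounds only $Q_\varepsilon(\hat{x})$ at the point itself and then asserts that $q_\varepsilon$ is bounded below too. You notice that $q_\varepsilon$ in \eqref{eqn:q} is an infimum of $e^{\varepsilon|n|}Q_\varepsilon(\hat{f}^n\hat{x})$ over the whole backward orbit, so tempered versions of both bounds along $\hat{f}^n\hat{x}$, $n\le 0$, are needed. You also observe that the exponents in $Q_\varepsilon$, with the extra $1/\beta$ from Lemma \ref{lem:discon}, turn a block tempered at rate $\varepsilon_0$ into decay $e^{-K\varepsilon_0|n|}$ with $K=\max(48/\beta,96a/\beta^2)$. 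The factor $e^{\varepsilon|n|}$ compensates this only when $K\varepsilon_0\le\varepsilon$.

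Strictly, you therefore prove the lemma for a block with tempering parameter $\varepsilon/K$, not the same $\varepsilon$ as in $q_\varepsilon$. With literally the same parameter, the one-point bound gives no uniform control of $q_\varepsilon$ unless the rates in the block of \cite{LDP} are pre-matched, as you note. This costs nothing in Proposition \ref{prop:5-cSPR}: expanding-SPR provides blocks $\Lambda_{C,\varepsilon_0,\chi}$ for every $\varepsilon_0>0$ with $C,\chi$ fixed. On the block of Definition \ref{def:Pesinblock1}, which is where the lemma is actually used, the tempered derivative bound you rely on is guaranteed by definition.

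So your route supplies the justification that the paper's closing sentence ``So will be $q_\varepsilon$'' leaves out. Incidentally, your constant $2/(1-e^{-2(\chi'-\chi)})$ is the right one; the paper's $2/(1-e^{2(\chi'-\chi)})$ has a sign slip.
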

\begin{proof}
    Recall that $q_{\varepsilon}(\hat{x})$ is defined via $C_{\chi}(\hat{x})$ and $d(\{x_{-1}, x_0,x_1 \}, C(f))$ in $\eqref{eqn:q}$. By condition $(PS1)$ of Pesin block in \cite{LDP}, for all $\hat{x}\in \Lambda_{C,\varepsilon,\chi',d}$,
    $$
    ||d_{\hat{x}}\hat{f}^{-k}||\leq Ce^{-k\chi'}. 
    $$
    So $||C_{\chi}(\hat{x})^{-1}||^2\leq C^2\cdot\frac{2}{1-e^{2(\chi'-\chi)}}$.

    By condition $(PS3)$ of Pesin block in \cite{LDP}, $d(\{x_{-1}, x_0,x_1 \}, C(f))>Ce^{-\varepsilon}$. So $Q_{\varepsilon}(\hat{x})$ is lower-bounded on $\Lambda_{C,\varepsilon,\chi',d}$. So will be $q_{\varepsilon}(\hat{x})$. 
\end{proof}

As in \cite{ALP} \cite{LDP}, the local stable/unstable sets $W^{s/u}_{\rm{loc}}(\hat{x})$ depends on $q_{\varepsilon}(\hat{x})$ continuously. As a result, Lemma \ref{lem:Q-lb} implies the following fact.

\begin{lemma}\label{lem:su-con}
    On $\Lambda_{C,\varepsilon',\chi',d}$, $W^{u/s}_{\rm{loc}}(\hat{x})$ varies continuously with respect to $\hat{x}$.
\end{lemma}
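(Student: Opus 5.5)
The plan is to reduce Lemma \ref{lem:su-con} to the standard graph-transform construction of local unstable manifolds, as in \cite{ALP} \cite{LDP}, and to use Lemma \ref{lem:Q-lb} to make every constant in that construction uniform on $\Lambda_{C,\varepsilon',\chi',d}$. The local stable set is the easy half. For $\hat{x}\in\operatorname{NUE}_{\chi}$ we have $W^s_{\rm{loc}}(\hat{x})=\vartheta_0^{-1}(x_0)$, and $\hat{x}\mapsto x_0$ is continuous. So it only remains to check that $x_0\mapsto\vartheta_0^{-1}(x_0)$ is continuous in the Hausdorff topology on compact subsets of $\hat{\M}$. This holds because $f$ is a continuous surjection of a compact space onto its image (we restrict to the attractor if needed), so the fibers of $\vartheta_0$ vary upper semicontinuously. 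Where lower semicontinuity fails, I would instead use the coding's actual definition of $W^s_{\rm{loc}}$, namely the intersection with a Pesin chart of size $q_\varepsilon(\hat{x})$. That intersection is continuous once the chart sizes are bounded below.

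For the unstable part, I would write $W^u_{\rm{loc}}(\hat{x})$ as a limit of graph transforms. In the Pesin chart $\Psi_{\hat{x}}=\exp_{x_0}\circ C_\chi(\hat{x})$ of size $q_\varepsilon(\hat{x})$, $W^u_{\rm{loc}}(\hat{x})$ is the image of the local inverse branches $f^{-1}_{x_{-1}}\circ\cdots\circ f^{-1}_{x_{-n}}$ applied backward along the orbit, restricted to small $d$-disks. On $\Lambda_{C,\varepsilon',\chi',d}$, Lemma \ref{lem:Q-lb} gives $q_\varepsilon\geq q_0>0$. Hence the charts have uniformly bounded distortion $\|C_\chi^{-1}\|$ and a uniform size. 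Moreover, by \eqref{eqn:criticalappro} and Lemma \ref{lem:discon}, each $x_{-n}$ lies at distance at least $He^{-n\varepsilon'}$ from $C(f)$. The inverse branches are therefore defined on balls of radius $\gtrsim e^{-n\varepsilon'}$, and they contract by at least $Ce^{-k\chi'}$. Since $\varepsilon'\ll\chi'$, the contraction beats the tempered shrinking of the domains.

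Continuity then follows from an $\varepsilon/3$ argument. Fix $\hat{x}\in\Lambda_{C,\varepsilon',\chi',d}$ and let $\hat{y}\to\hat{x}$ inside the block. First choose $n$ so large that the $n$-th approximations (inverse branches along $x_{-n},\dots,x_{-1}$, respectively $y_{-n},\dots,y_{-1}$) are within $\delta$ of the true local unstable manifolds. This is uniform over the block because the contraction rate $\chi'$ and the constant $C$ are uniform. Next note that the product topology on $\hat{\M}$ makes $y_{-k}$ close to $x_{-k}$ for all $k\leq n$. The finitely many branches involved are $C^1$-close, since they are inverses of the same map $f$ on neighbourhoods bounded away from $C(f)$, and these neighbourhoods overlap for $\hat{y}$ close to $\hat{x}$. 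Consequently, the $n$-th approximations are $\delta$-close, and we conclude. Finally, the lemma is also stated for $\varepsilon'$ in place of $\varepsilon$ and for $\chi'>\chi$, so I would apply Lemma \ref{lem:twoblocks} and Lemma \ref{lem:Q-lb} with those parameters.

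I expect the main obstacle to be the uniformity in the first step. The rate at which the approximations converge must not depend on $\hat{x}$, even though the backward orbit approaches $C(f)$ at the tempered rate $e^{-n\varepsilon'}$. I would handle this by the standard Pesin comparison: the $n$-th graph-transform error is at most $C e^{-n(\chi'-\varepsilon')}q_0^{-1}$. The exact form of this bound has to be checked against the estimates in \cite{ALP}, Section 3, where the chart maps' nonlinearity is controlled by $Q_\varepsilon$.
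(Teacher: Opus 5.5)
Your overall route is the paper's. The paper gives no argument beyond the remark that, in the construction of \cite{ALP} \cite{LDP}, the local manifolds depend continuously on $q_\varepsilon$, combined with the lower bound of Lemma \ref{lem:Q-lb}. Your unstable half (uniform disk size, tempered distance of $x_{-n}$ to $C(f)$, contraction beating tempering, agreement of finitely many local inverse branches along nearby backward orbits) is a reasonable unpacking of that.

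The stable half, however, contains a false step. The map $x_0\mapsto\vartheta_0^{-1}(x_0)$ need not be Hausdorff-continuous. If $c\in C(f)$ is a fold and $v=f(c)$, points on one side of $v$ have no preimage near $c$. So backward orbits of $v$ through $c$ are not approximated by backward orbits of such nearby points. Moreover, $x_0=v$ is permitted in the block, since the block constrains only the single orbit $\hat x$ and not the other preimages of $x_0$. You half-notice this, but the fallback does not help: the Pesin chart sits in $\M$ around $x_0$, and every element of the fiber already has zeroth coordinate $x_0$. Intersecting with (the preimage of) the chart therefore changes nothing. The meaningful and sufficient content of the stable half is that $\vartheta_0(W^s_{\rm loc}(\hat x))=\{x_0\}$, and $\hat x\mapsto x_0$ is trivially continuous. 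This is all the finiteness argument in Theorem \ref{thm:EM} uses. There, for $\hat y,\hat z$ close in the block, one needs $z_0\in\vartheta_0(W^u_{\rm loc}(\hat y))$, i.e. a uniform lower bound on the size of the unstable disks.

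Second, the obstacle you single out is not a real one. Since the $u$-index is $d$, the graph transform is degenerate: with no stable directions, a $u$-admissible manifold in a chart is the whole chart box. Also, closeness in $\hat\M$ is governed uniformly by finitely many coordinates, the forward ones being determined continuously by the zeroth. So truncating at time $n$ costs nothing, and no convergence rate of approximations is needed.

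What needs care is the domain:
\begin{itemize}
\item Fix the disk radius at a constant $\rho$, small in terms of $q_0$ and $C$, rather than $q_\varepsilon(\hat x)$. The latter is an infimum over the backward orbit and need not vary continuously in $\hat x$.
\item Verify once, by your contraction-versus-tempering induction, that all backward branches along every $\hat x$ in the block are defined on $B(x_0,\rho)$.
\item Then, given $n$, for $\hat y$ close enough to $\hat x$, the first $n$ branches along $\hat y$ and along $\hat x$ coincide on the overlap of their domains, since they are local inverses of the same map $f$ inside an injectivity domain.
\end{itemize}
If you prefer to work in the charts $\Psi_{\hat x}$, you also need continuity of $C_\chi$ on the block. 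This follows from uniform convergence there of the series in \eqref{eqn:C_chi}, because $\chi'>\chi$.
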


\begin{proposition}\label{prop:5-cSPR}
    Let $(\mathcal{M},f)$ be an expanding-SPR system. Let $\mu\in \mathbb{P}_{\rm{erg}}(f)$ be an ergodic measure, $\chi$-expanding. Then there exists a coding $\hat{\pi}:\hat{\Sigma}\rightarrow \hat{\M}$ built in Theorem \ref{thm:coding-LDP} for which the irreducible component $\hat{\Sigma}'\subseteq \hat{\Sigma}$, which lifts all measures $\nu\in \mathbb{P}_{\rm{erg}}(f)$ with $\nu\overset{h}{\sim}\mu$ (Item 4, Theorem \ref{thm:coding-LDP}), is SPR (Definition \ref{def:1-sSPR}).
\end{proposition}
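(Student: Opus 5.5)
The plan is to transfer the geometric expanding-SPR property (Definition \ref{def:exp-SPR}) to the symbolic model through item 3 of Theorem \ref{thm:coding-LDP}. First fix the constants $C,\chi$ given by Definition \ref{def:exp-SPR}. Then build the coding of Theorem \ref{thm:coding-LDP} with a slightly smaller parameter $\chi_1<\chi$, so that the Pesin blocks at rate $\chi$ sit inside blocks at a rate $\chi'>\chi_1$, as Lemma \ref{lem:Q-lb} requires. Choose $\varepsilon>0$ small, compatible with the $\varepsilon$ in $q_\varepsilon$ of the coding, and let $h_0,\tau$ be given by expanding-SPR for this $\varepsilon$. By Lemma \ref{lem:twoblocks}, $\Lambda_{C,\varepsilon,\chi}\subseteq \Lambda_{C',\varepsilon',\chi,d}$. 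Lemma \ref{lem:Q-lb} then bounds $q_{\varepsilon}$ below by some $q_0>0$ on this set. Item 3 of Theorem \ref{thm:coding-LDP} applied to $\hat K=\Lambda_{C,\varepsilon,\chi}\cap\operatorname{NUE}_{\chi_1}$ yields finitely many vertices $v_1,\dots,v_N$ with $\hat K\subseteq\hat\pi(\bigcup_i C(v_i))$.

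The main step is to show that, for every $\overline{\nu}\in\mathbb{P}_{\rm erg}(\sigma)$ supported on $\hat\Sigma'$ with $h_\sigma(\overline\nu)>h_0$, the mass $\overline\nu(\bigcup_i C(v_i))$ is bounded below uniformly. By item 4.b, $\hat\pi_*\overline\nu=\hat\nu$ is the lift of an ergodic $\nu\overset{h}{\sim}\mu$. By item 2, $h_f(\nu)=h_\sigma(\overline\nu)>h_0$, so expanding-SPR gives $\hat\nu(\Lambda_{C,\varepsilon,\chi})>\tau$. Since $\nu$ is $\chi$-expanding for $h_0$ large (Lemma \ref{lem:chi-expan}), we also have $\hat\nu(\operatorname{NUE}_{\chi_1})=1$. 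The difficulty is that $\hat\pi(\bigcup C(v_i))\supseteq\hat K$ does not give $\overline\nu(\bigcup C(v_i))\ge\hat\nu(\hat K)$, because a point of $\hat K$ may have preimages outside these cylinders.

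I expect this non-injectivity to be the main obstacle, and I would handle it in one of two ways. The first is to use the finite-to-one property on $\hat\Sigma^{\#}$ together with the structure of the LDP coding. In Sarig-type codings, for a point of a Pesin block, every $\#$-preimage $\underline v$ has $v_0$ among the finitely many charts $\psi_{x}^{p}$ whose parameter $p$ is comparable to $q_\varepsilon(\hat x)\geq q_0$ (the "$p^s\wedge p^u\approx q_\varepsilon$" property in \cite{ALP,LDP}). There are only finitely many such vertices, so enlarging $\{v_i\}$ to all of them gives $\hat\pi^{-1}(\hat K)\cap\hat\Sigma^{\#}\subseteq\bigcup C(v_i)$. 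Hence $\overline\nu(\bigcup C(v_i))\ge\overline\nu(\hat\pi^{-1}\hat K)=\hat\nu(\hat K)>\tau$, using $\overline\nu(\hat\Sigma^{\#})=1$ by Poincaré recurrence. The fallback is to use that $\hat\pi$ is at most $M$-to-one on the relevant set of $\hat\Sigma^{\#}$ with a Bowen-type bound. Then the mass of $\hat K$ is spread over preimages meeting a finite cylinder family, which gives a bound like $\tau/M$; that weaker lower bound is still enough for Definition \ref{def:1-sSPR}.

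Finally, restrict the finite collection to vertices in $\hat\Sigma'$ and take $h=h_0$ (enlarged if needed) and the threshold $\tau$ (or $\tau/M$). This verifies Definition \ref{def:1-sSPR} for $(\hat\Sigma',\sigma)$. I would also check that $h_0<h_{\rm top}(f)=\sup h_\sigma$ on $\hat\Sigma'$, so the condition is not vacuous. This uses item 4.a and the existence of high-entropy measures homoclinically related to $\mu$, which is relevant when $\mu$ is itself of high entropy.
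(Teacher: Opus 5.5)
Your argument is essentially the paper's: it too builds the coding at a smaller rate ($\chi/2$), combines Lemma~\ref{lem:twoblocks}, Lemma~\ref{lem:Q-lb} and item 3 of Theorem~\ref{thm:coding-LDP}, and restricts to the cylinders in $\hat\Sigma'$. It then concludes with $\overline{\nu}(\bigcup C(v))\geq\overline{\nu}(\hat\pi^{-1}(\Lambda_{C,\varepsilon,\chi}))>\tau$, simply asserting the preimage containment that you rightly flag as going beyond item 3 as stated; your first justification ($\#$-preimages of points with $q_\varepsilon\geq q_0$ have vertices with parameters comparable to $q_\varepsilon$, and only finitely many vertices lie above a threshold) is the correct way to supply it.

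Drop the $\tau/M$ fallback, though: finite-to-one does not stop a particular ergodic lift from living entirely on preimages outside the chosen cylinders. Also, Lemma~\ref{lem:chi-expan} is an SEE statement and is not needed here, because $\Lambda_{C,\varepsilon,\chi}\subseteq\operatorname{NUE}_{\chi_1}$ already follows from Definition~\ref{def:Pesinblock1} when $\varepsilon<\chi$.
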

\begin{proof}
    Fix $\varepsilon$ sufficiently small. By definition of expanding-SPR property, there exist $C,\chi>0$, $h_0 > 0$,$\tau>0$, such that the Pesin block $\Lambda_{C,\varepsilon,\chi}$ has $\mu(\Lambda_{C,\varepsilon,\chi})>\tau$ for all $\mu\in \mathbb{P}_{\rm{erg}}(f)$, $h(\mu)>h_0$.

    By Lemma \ref{lem:twoblocks}, there exists a Pesin block $\Lambda_{C',\varepsilon',\chi,d}$ which contains $\Lambda_{C,\varepsilon,\chi}$. Consider the coding $(\hat{\Sigma}^{\#},\sigma)$ on $\operatorname{NUE}_{\frac{\chi}{2}}$. By Lemma \ref{lem:Q-lb}, $q_{\varepsilon}$ is lower bounded on $\Lambda_{C,\varepsilon,\chi}$. Item 3 of Theorem \ref{thm:coding-LDP} then implies that there exists a finite collection of cylinders $C(v)\subseteq \hat{\Sigma}$ such that $\hat{\pi}^{-1}(\Lambda_{C',\varepsilon',\chi,d})\subseteq \bigcup C(v)$.

    Only consider the cylinders $C(v)$ which are in the irreducible component $\hat{\Sigma}'$. Lift $\mu$ to $\hat{\Sigma}'$ by Item $4.a$ of Theorem \ref{thm:coding-LDP} and we see that this set is non-empty. All ergodic measures $\overline{\nu}$ on $\hat{\Sigma}'$ can be seen as ergodic measures on $\hat{\Sigma}^{\#}$. Thus, if $h_{\sigma}(\overline{\nu})>h_0$, Item $4.b$ of Theorem \ref{thm:coding-LDP} implies that $h_{\hat{f}}(\hat{\pi}_{\ast}\overline{\nu})>h_0$. The SPR property of $(\mathcal{M},f)$ implies that $\hat{\pi}_{\ast}\overline{\nu}(\Lambda_{C,\varepsilon,\chi,d})>\tau$. So
    $$
    \overline{\nu}(\bigcup_{\text{finite}}C(v))\geq \overline{\nu} (\hat{\pi}^{-1}(\Lambda_{C,\varepsilon,\chi,d}))
    > \tau.
    $$
    This concludes our proof.
\end{proof}

\begin{proof}[Proof of Theorem \ref{thm:EM}]

Let $\{ \mu \}$ be the set of MMEs of $(\mathcal{M},f)$. Assume that this set is non-empty, or there is nothing left to prove.

We first prove the finiteness. Since $(\mathcal{M},f)$ is expanding-SPR, there exist $0<h_0<h_{\rm{top}}(f)$ and a Pesin block $\Lambda_{C,\varepsilon,\chi,d}$ such that for all $\nu\in \mathbb{P}_{\rm{erg}}(f)$ with $h_f(\nu)>h_0$, $\nu(\Lambda_{C,\varepsilon,\chi,d})>\tau$. 
We prove by contradiction. Assume that $\sharp\{\mu\}=\infty$. Then compactness of $\Lambda_{C,\varepsilon,\chi,d}$ implies that we may find a sequence of MMEs $\mu_k$ and $\hat{x}_k\in \operatorname{supp}(\mu_k)\cap \Lambda_{C,\varepsilon,\chi,d}$ such that $\hat{x}_k\rightarrow \hat{x}_0\in \Lambda_{C,\varepsilon,\chi,d}$. Take a neighborhood $U_0$ of $\hat{x}_0$ such that for all $\hat{y},\hat{z}\in U_0$, we have transverse intersection of the local stable/unstable sets at $\hat{y}$ and $\hat{z}$ respectively. This is possible due to Lemma \ref{lem:su-con}. For $k$ large enough, $\hat{x}_k\in U_0$, and $\mu_k(U_0)>0$. By Definition \ref{def:homoclinic}, this implies $\mu_k\overset{h}{\sim}\mu_{k'}$ for all $k,k'$ latge enough.  However, MMEs cannot be homoclinically related, since if so, they are lifted to different measures to the irreducible symbolic model $(\hat{\Sigma}',\sigma)$ over one of the topological homoclinic class $\operatorname{HC}(\mu)$ by Proposition \ref{prop:5-cSPR}, contradicting uniqueness of MME on $(\hat{\Sigma}',\sigma)$ (Theorem \ref{thm:Markov}, item 1 ).

Now we proceed to the mixing rate. Take $\mu$, an ergodic MME of $(\mathcal{M},f)$. 

By Definition \ref{def:1-uHC}, we know that $\operatorname{supp}(\hat{\mu})\subseteq\operatorname{HC}(\mu)$. Thus the system $(\operatorname{HC}(\mu),\hat{f})$ has topological entropy $ h_{\rm{top}}(f)=h(\mu)\leq h_{\rm{top}}(f|_{\operatorname{HC}(\mu)})\leq h_{\rm{top}}(f)$. 

Let $\hat{\pi}:\hat{\Sigma}'\rightarrow \hat{\mathcal{M}}$ be the symbolic coding built in Theorem \ref{thm:coding-LDP} (item 4). 
The model $(\hat{\Sigma}',\sigma)$ is irreducible, but may still be periodic. In this case, it has a symbolic spectral decomposition $\hat{\Sigma}'=\bigcup_{j=0}^{p-1}\sigma^j\hat{\Sigma}'_0$, with $p$ being the \emph{period of} $\hat{\Sigma}'$. 
We know that the Gurevich measure $\overline{\mu}$ on $\hat{\Sigma}'$ pushes to $\hat{\mu}$ by $\hat{\pi}$: $\hat{\pi}_{\ast}\overline{\mu}=\hat{\mu}$. Lemma 12.2 in \cite{BCS3} also implies $q|p$, where $q$ is the period of $\mu$.

Item $4.c$ of Theorem \ref{thm:coding-LDP} shows that when $q=1$, $p=1$ also and $\mu$ is mixing. If not, since $\overline{\mu}$ is Bernoulli, so will be $\hat{\mu}$. We consider $(\hat{\mathcal{M}},\hat{f}^q)$, and $\hat{\mu}$ has $q$-ergodic components, $\hat{\mu}_i:=\hat{f}^i_{\ast}\hat{\mu}$, $i=0,\cdots, q-1$. It is left to show that for each $\hat{\mu}_i$, we have exponential decay of correlations of H\"older observables for $\hat{f}^q$. Since the coding $\hat{\pi}$ is H\"older continuous in itself, it suffices to show that each $\hat{\mu}_i$ is the pushforward of some $\overline{\mu}_j$, an ergodic component of the MME $\overline{\mu}$ with respect to $(\hat{\Sigma}',\sigma^p)$.
The proof of this is the same as that of Lemma 12.3 in \cite{BCS3}.
\end{proof}

As mentioned in Theorem \ref{thm:main2} and below, there are more implications of expanding-SPR, as for the SPR in \cite{BCS3}.
Since their proofs, once we have Theorem \ref{thm:EM} and Proposition \ref{prop:5-cSPR}, are identical to the corresponding ones in \cite{BCS3}, we omit them.

\bigskip

Let $\mu\in \mathbb{P}_{\rm{erg}}(f)$ and $\psi:\mathcal{M}\rightarrow \mathbb{R}$ be in $L^2(\mu)$. Denote by $\psi_n:=\sum_{k=0}^{n-1}\psi\circ f^k$. Recall that the variance of $\psi$ with respect to $(\mathcal{M}, \mu)$ is
$$
\operatorname{Var}_{\mu}(f):=\int \left( \psi-\int \psi d\mu \right)^2d\mu. 
$$
And the \emph{asymptotic invariance} of $\psi$ with respect to $(\mathcal{M},f,\mu)$ is
$$
\sigma^2_{\psi}:=\lim_{n\rightarrow \infty}\frac{1}{n}\log\operatorname{Var}_{\mu}(\psi_n).
$$

\begin{theorem}[Theorem 11.1, \cite{BCS3}]\label{thm:AV-BCS}
    Let $f:\mathcal{M}\rightarrow \mathcal{M}$ be a $C^r$-map, $r>1$, and expanding-SPR. Let $\mu\in \mathbb{P}_{\rm{erg}}(f)$ be an MME of $(\mathcal{M},f)$. Let $\psi:\mathcal{M}\rightarrow \mathbb{R}$ be H\"older continuous. Then
    \begin{itemize}
        \item[1.]  Green-Kubo formula: let $p$ be the period of $\mu$, then
        $$
        \sigma^2_{\psi}=\frac{1}{p}\left( \operatorname{Var}_{\mu}(\psi)+2\sum_{n=1}^{\infty}\operatorname{Cov}_{\mu}(\psi_p,\psi_p\circ f^{np}) \right),
        $$
        where
        $$
        \operatorname{Cov}_{\mu}(\varphi,\psi)=\int \varphi\psi d\mu-\left( \int \varphi d\mu \right)\left( \int \psi d\mu \right),
        $$
        \item[2.] Asymptotic Laplace transform: suppose $\int \psi d\mu=0$, then for all $z\in \mathbb{C}$,
        $$
        \int e^{z\psi_n/\sqrt{n}} d\mu\overset{n\rightarrow \infty}{\longrightarrow} e^{\frac{1}{2}z^2\sigma_{\psi}^2}.
        $$
    \end{itemize}
\end{theorem}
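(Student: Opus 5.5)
The plan is to transfer the statement to the symbolic model and then invoke the corresponding Markov shift result, which is the route of Theorem 11.1 in \cite{BCS3}. Let $\mu$ be an MME of $(\mathcal{M},f)$ with period $p$. By Theorem \ref{thm:EM} and Proposition \ref{prop:5-cSPR}, $\mu$ is $\chi$-expanding, and the irreducible component $\hat{\Sigma}'$ of the coding $\hat{\pi}$ from Theorem \ref{thm:coding-LDP} is SPR. Moreover, the Gurevich measure $\overline{\mu}$ on $\hat{\Sigma}'$ satisfies $\hat{\pi}_\ast\overline{\mu}=\hat{\mu}$. The period $p$ of $\hat{\Sigma}'$ is a multiple of the period of $\mu$, and in fact the two coincide by Lemma 12.2--12.3 of \cite{BCS3}, exactly as in the proof of Theorem \ref{thm:EM}.

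\textbf{Lifting the observable.} Set $\overline{\psi}:=\psi\circ\vartheta_0\circ\hat{\pi}$ on $\hat{\Sigma}'$. This function is H\"older continuous for the usual symbolic metric, since $\hat{\pi}$ and $\vartheta_0$ are H\"older and $\psi$ is H\"older. Because $\vartheta_0\circ\hat{\pi}$ semi-conjugates $\sigma$ to $f$ and pushes $\overline{\mu}$ to $\mu$, we have $\overline{\psi}_n=\psi_n\circ\vartheta_0\circ\hat{\pi}$. Consequently all the quantities in the statement are the same whether computed on $(\mathcal{M},f,\mu)$ or on $(\hat{\Sigma}',\sigma,\overline{\mu})$: variances, covariances $\operatorname{Cov}(\psi_p,\psi_p\circ f^{np})$, and the Laplace transforms $\int e^{z\psi_n/\sqrt n}d\mu$. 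It therefore suffices to prove both items for the Gurevich measure of an irreducible SPR Markov shift with a H\"older potential.

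\textbf{Symbolic statement.} On an irreducible SPR shift of period $p$, we pass to $\sigma^p$ restricted to one cyclic class $\hat{\Sigma}'_0$. There the system is topologically mixing and SPR, and $\overline{\mu}$ restricted to that class is its MME. By Theorem \ref{thm:Markov}, this measure is exponentially mixing for H\"older observables. Along the lines of Part II of \cite{BCS3}, SPR also yields a spectral gap for the transfer operator of the one-sided quotient on a suitable Banach space, for instance via Cyr--Sarig type results \cite{CS}. The potential is first made one-sided using a Sinai-type cohomology, which preserves the H\"older property and the Birkhoff sums up to bounded coboundaries. Given the spectral gap, we apply Nagaev--Guivarc'h perturbation to the twisted operator $L_z g=L(e^{z\overline{\psi}_p/\sqrt n}g)$. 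Its leading eigenvalue is analytic in $z$ near $0$, and expanding it to second order gives $\int e^{z\overline{\psi}_{np}/\sqrt{n}}d\overline{\mu}\to e^{\frac12 z^2\sigma^2}$. Passing from multiples of $p$ to general $n$ costs a bounded error of size $\|\psi\|_\infty p/\sqrt n$. Summability of exponentially decaying covariances then gives $\operatorname{Var}(\overline{\psi}_{n})/n\to\sigma^2_\psi$, and grouping the sum in blocks of length $p$ produces the Green--Kubo formula with the factor $\frac1p$.

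\textbf{Main obstacle.} The hard step is the spectral gap, that is, the perturbation theory for $L_z$ with complex $z$ on a countable-state SPR shift. Exponential decay of correlations alone does not give the Laplace transform asymptotic for all $z\in\mathbb{C}$. I would try first to cite Theorem 11.1 of \cite{BCS3} and its symbolic ingredients verbatim; this is legitimate because our lifted setting matches theirs, namely an SPR irreducible component, a H\"older coding, and an MME lifted to the Gurevich measure. For $z$ outside a small neighborhood of $0$, note that $z/\sqrt n\to0$, so for $n$ large enough the twist parameter falls in the perturbative neighborhood anyway.
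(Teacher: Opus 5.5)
Your proposal is correct and follows the paper's route. The paper omits the proof, saying that once Theorem \ref{thm:EM} and Proposition \ref{prop:5-cSPR} place the lift of the MME on an SPR irreducible component of a H\"older coding, the argument of Theorem 11.1 in \cite{BCS3} goes through verbatim; this is exactly your reduction, and your Nagaev--Guivarc'h sketch just unpacks what that citation contains.

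One minor point: the paper only records that the period of $\mu$ divides the period of $\hat{\Sigma}'$, not that the two coincide. You do not need equality, since grouping Birkhoff sums into blocks of length $p$ (the period of $\mu$) and using the $f^p$-exponential mixing from Theorem \ref{thm:EM} gives the Green--Kubo formula directly, with $\operatorname{Var}_\mu(\psi_p)$ as the first term.
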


\begin{corollary}[Central limit theorem, Corollary 11.14, \cite{BCS3}]\label{cor:CLT-BCS}
    Let $f:\mathcal{M}\rightarrow \mathcal{M}$ be a $C^r$-map, $r>1$, and expanding-SPR. Let $\mu\in \mathbb{P}_{\rm{erg}}(f)$ be an MME of $(\mathcal{M},f)$. Let $\psi:\mathcal{M}\rightarrow \mathbb{R}$ be H\"older continuous, with $\int \psi d\mu=0$. Then for every $a<b$,
    $$
    \lim_{n\rightarrow \infty }\hat{\mu}\left(\{ \hat{x}: \frac{\psi_n(x)}{\sqrt{n}}\in (a,b) \}\right)=
    \begin{cases}
        \frac{1}{\sqrt{2\pi\sigma^2_{\psi}}} \int_a^b e^{-t^2/2\sigma^2_{\psi}} dt & \text{ if $\sigma_{\psi}\neq  0$}\\
        1_{(a,b)}(0) & \text{ if $\sigma_{\psi}=0$, $ab\neq 0$. }
    \end{cases}
    $$
\end{corollary}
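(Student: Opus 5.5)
The statement is Corollary~\ref{cor:CLT-BCS}, the central limit theorem for Hölder observables along an MME of an expanding-SPR map. I would deduce it directly from the asymptotic Laplace transform, item~2 of Theorem~\ref{thm:AV-BCS}, via Lévy's continuity theorem. Concretely, I would work on the natural extension with the lifted measure $\hat\mu$. Set $\hat\psi:=\psi\circ\vartheta_0$, so that $\hat\psi_n=\psi_n\circ\vartheta_0$ and the events $\{\psi_n(x_0)/\sqrt n\in(a,b)\}$ have $\hat\mu$-measure equal to $\mu(\{\psi_n/\sqrt n\in(a,b)\})$. Under the standing hypothesis $\int\psi\,d\mu=0$, Theorem~\ref{thm:AV-BCS} gives, for every $z\in\mathbb{C}$,
$$
\int e^{z\psi_n/\sqrt n}\,d\mu\longrightarrow e^{\frac12 z^2\sigma_\psi^2}.
$$
Taking $z=it$ with $t\in\mathbb{R}$, the characteristic functions of the laws $\mathcal{L}_n$ of $\psi_n/\sqrt n$ under $\mu$ converge pointwise to $e^{-t^2\sigma_\psi^2/2}$. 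This limit is continuous at $0$ and is the characteristic function of the centred Gaussian $\mathcal{N}(0,\sigma_\psi^2)$, which is the Dirac mass $\delta_0$ when $\sigma_\psi=0$.

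By Lévy's continuity theorem, $\mathcal{L}_n\Rightarrow\mathcal{N}(0,\sigma_\psi^2)$ weakly. By the portmanteau theorem, $\mathcal{L}_n((a,b))\to\mathcal{N}(0,\sigma_\psi^2)((a,b))$ whenever the boundary $\{a,b\}$ is null for the limit law.

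I then split into the two cases of the statement. If $\sigma_\psi\neq0$, the Gaussian has a density, so every interval is a continuity set and the limit is $\frac{1}{\sqrt{2\pi\sigma_\psi^2}}\int_a^b e^{-t^2/2\sigma_\psi^2}dt$. If $\sigma_\psi=0$, the limit is $\delta_0$. Its only atom is at $0$, so the condition $ab\neq0$ makes $(a,b)$ a continuity set, and the limit is $1_{(a,b)}(0)$.

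The only real content is Theorem~\ref{thm:AV-BCS}, which is assumed. That theorem in turn rests on transferring the spectral-gap and Green--Kubo analysis of the SPR Markov shift (Proposition~\ref{prop:5-cSPR}, Theorem~\ref{thm:Markov}) through the Hölder coding $\hat\pi$, just as in \cite{BCS3}.

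The one point I expect to require care is the normalisation of $\sigma_\psi^2$. In the displayed definition the $\log$ is clearly a typo: the intended quantity is $\sigma_\psi^2=\lim_n\frac1n\operatorname{Var}_\mu(\psi_n)$, and this is what appears in the Laplace-transform limit. I would state this explicitly so that the Gaussian variance matches.

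With that settled, the argument is purely probabilistic, since the case $\sigma_\psi=0$ is exactly handled by excluding boundary points at $0$.
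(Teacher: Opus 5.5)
Your proposal is correct and follows the same route as the paper. The paper omits the proof and defers to \cite{BCS3}, where the CLT is derived from the asymptotic Laplace transform (item 2 of Theorem~\ref{thm:AV-BCS}) via $z=it$ and L\'evy's continuity theorem, with the case $\sigma_\psi=0$ handled by the continuity-set condition $ab\neq 0$. Your reading $\sigma_\psi^2=\lim_n\frac1n\operatorname{Var}_\mu(\psi_n)$, treating the $\log$ as a typo, is also the right one.
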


\section{Proof of the continuity of the exponents}

For any finite Borel measure $\rho$ on $\M$ and $x \in \M$, we let
$$
\Lambda_f(\rho) = \int \log |\Jac f| d \rho \;\;\; \text{and} \;\;\; \Lambda_f(x) = \limsup\limits_{n \to +\infty} \frac{1}{n} \log |\Jac_x f^n|.
$$
For any $i \in [\![ 1, d ]\!]$, we denote by $\lambda_f^i(x)$ the $i$-th Lyapunov exponent of $f$ at $x$.
We introduce these notations where we specify the map since we will consider a sequence of dynamical systems in this section.\\


If $(f_k : \M \to \M)_{k \in \bbN}$ is a sequence of $C^r$-maps, we say that it converges $C^r$-weakly to $f$ if it converges in the $C^1$-topology and if the sequence $(f_k)$ is bounded for the $C^r$ norm.
In this section, we prove the following statement:

\begin{theorem}[Entropic continuity of the Lyapunov exponent, $C^r$ pertubative version]
\label{th:main-cont}
Let $f : \M \to \M$ be a $C^r$-map, with $r > 1$.
Let $(f_k : \M \to \M)$ be a sequence of $C^r$-maps converging $C^r$-weakly to $f$.
Let $(\nu_k)$ be a sequence of Borel probability measures.
Assume that $\nu_k \overset{*}{\longrightarrow} \mu$ and that each $\nu_k$ is ergodic for $f_k$ and has $d$ positive Lyapunov exponents.
For any $\alpha > \left ( d-1 + \frac{d^2}{r} \right ) R(f)$ and $\alpha' > \frac{d}{r} R(f)$, there exist $\mu_0, \mu_1$ $f$-invariant probability measures and $\beta \in [0,1]$ such that $\mu = \beta \mu_1 + (1-\beta) \mu_0$ and
\begin{itemize}
\item[1)] if $\beta > 0$, then $\mu_1$ is expanding: $\lambda_f^d(x) > 0$,
\item[2)] $\limsup\limits_{k \to +\infty} h_{f_k}(\nu_k) \leq \beta h_f(\mu_1) + (1-\beta) \alpha + \alpha'$,
\item[3)] $\limsup\limits_{k \to +\infty} \Lambda_{f_k}(\nu_k) \geq \beta \Lambda_f(\mu_1)$,
\item[4)] NCS for $\beta > 0$: let $\mathcal{K}$ be a sequence of integers along which $(h_{f_k}(\nu_k))$ converges to its limsup. Then $\beta > 0$ if and only if $\liminf\limits_{\mathcal{K} \ni k \to +\infty} \Lambda_{f_k}(\nu_k) > \alpha$.
\end{itemize}
\end{theorem}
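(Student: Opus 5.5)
We follow the Burguet / Delplanque--Li scheme, with the Jacobian playing the role of the one-dimensional derivative. Fix a small $\delta>0$ and the observable $\phi=\log|\Jac f|$. The set $\{|\Jac f|<\delta\}$ is a neighbourhood of $C(f)$, and $\phi_k=\log|\Jac f_k|$ is uniformly bounded above and converges uniformly to $\phi$ off this set. For each $k$ and a $\nu_k$-typical point $x$ we consider the \emph{$\delta$-hyperbolic times} of $x$ for $f_k$: times $n$ such that for all $0\le j<n$,
\[
\frac{1}{n-j}\log|\Jac_{f_k^jx}f_k^{n-j}|\ \ge\ \alpha_0,
\]
with $\alpha_0$ slightly below the threshold $\alpha$. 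A Pliss-type argument (Lemma \ref{Pliss}) shows that when $\Lambda_{f_k}(\nu_k)$ stays above $\alpha$, these times have positive density. Along a subsequence we form the empirical measures of $\nu_k$ restricted to the pieces of orbit lying between consecutive hyperbolic times. Their weak-$*$ limit gives $\beta\mu_1$, and the complementary ``neutral'' pieces give $(1-\beta)\mu_0$. We check invariance of $\mu_1$ as in \cite{B}: the boundary terms vanish because hyperbolic blocks have unbounded length or are handled through a Kac-type tower decomposition.

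Item $1)$ follows because on hyperbolic pieces the Jacobian has backward-averaged growth $\ge\alpha_0$. Combined with Ruelle-type bounds $\lambda^i\le R(f)$ on the other $d-1$ directions, this forces every ergodic component of $\mu_1$ to have $\lambda_f^d>0$; this step uses $\alpha>(d-1)R(f)$. Item $3)$ is a semicontinuity statement. On hyperbolic blocks the orbit stays quantitatively away from the critical set, since $\Jac$ is bounded below along the block by the Pliss condition plus $\|df\|_\infty$ bounds. Hence $\phi$ is uniformly integrable there, and
\[
\lim_k\int_{\text{hyp}}\phi_k\,d\nu_k=\beta\Lambda_f(\mu_1).
\]
The neutral part contributes $\le 0$ up to $o(1)$ as $\delta\to0$, because it is a union of maximal segments on which the Jacobian average is below $\alpha_0$ after the Pliss cut. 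Strictly, it is bounded by the positive part only, so we would instead use the Pliss bound that partial sums on neutral segments are $\le \alpha_0\cdot$length. We then exploit that the limsup is taken, arranging the sign by choosing the decomposition so that neutral pieces are those with nonnegative but small increments. Item $4)$ follows from the construction: $\beta>0$ exactly when the hyperbolic density is positive, which by Pliss is equivalent to the liminf exceeding $\alpha$. Conversely, if $\beta=0$ then $2)$ gives $\limsup h\le \alpha+\alpha'$, and Ruelle's inequality (entropy $\le\Lambda$ for expanding measures) forces $\Lambda\le\alpha+\alpha'$.

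The main obstacle is item $2)$, the entropy bound, which relies on the reparametrization lemmas announced in the introduction. Step one is a non-invertible version of the Ben Ovadia--Burguet one-step lemma (Lemma \ref{lem:iter}). A $C^r$ ``bounded couple'' (a $d$-dimensional chart piece with controlled $C^r$ norm) is mapped by $f_k$ and reparametrized by Yomdin's lemma into at most
\[
\exp\big(\tfrac{d}{r}\log^+\|df\|+(d-1)\log^+\|df\|\cdot[\text{neutral}]+o(1)\big)
\]
bounded couples. The key point is that at a hyperbolic time the full Jacobian expansion is cancelled by the volume of Bowen balls. At neutral times, however, up to $d-1$ directions may still expand, each at rate at most $R(f)$. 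Step two iterates this along an orbit (Lemma \ref{lem:repara}, after Burguet's semi-local lemma). We obtain that the number of reparametrizations needed to cover a Bowen ball of a typical set is at most
\[
\exp\Big(n\big[(1-\beta)(d-1+\tfrac{d^2}{r})R(f)+\tfrac{d}{r}R(f)+o(1)\big]\Big)
\]
times a combinatorial factor for the positions of hyperbolic times. That combinatorial factor is made small by a choice of the scale $M$ of the hyperbolic times.

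Combining this count with a Misiurewicz/Katok entropy estimate for the hyperbolic part then yields $\limsup h_{f_k}(\nu_k)\le\beta h_f(\mu_1)+(1-\beta)\alpha+\alpha'$. The uniform $C^r$ bound on $(f_k)$ makes all Yomdin constants uniform in $k$. We expect the delicate geometric issue to be controlling how a small ball intersects a bounded couple, which produces the extra $d^2/r$ error. The first approach we would try is to cover each such intersection by $C^r$-boundedly many sub-couples, using semialgebraic approximation of the Taylor polynomial.
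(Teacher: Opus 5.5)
There is a genuine gap in item $2)$, at exactly the step you flag as the main obstacle. You take hyperbolic times for $\log|\Jac f_k|$ with threshold $\alpha_0$ close to $\alpha$. You then assert that at neutral times at most $d-1$ directions expand, so that a neutral step costs about $(d-1)\log^+\|df\|_\infty$ reparametrizations. Your definition does not give this, and in fact contradicts it. If $a<b$ are consecutive hyperbolic times, then $b$ is a Pliss time, so $\log|\Jac_{f^ax}f^{b-a}|\ge\alpha_0(b-a)$. On neutral segments the volume therefore grows at average rate at least $\alpha_0>(d-1)R(f)$. The partition does not localize the orbit at those times, so the covering has to absorb this growth. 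At best you get a neutral cost of order $\alpha_0$ plus the Yomdin losses, which is not $\le\alpha$ in general.

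The missing idea is to calibrate the observable defining hyperbolic times to the Yomdin loss. The paper uses $\psi=\log|\Jac f|-\frac{d}{r+d-1}\log^+|\Jac f|-\frac{d(r-1)}{r(r+d-1)}\log\|df\|_\infty$ with a \emph{small} threshold $\delta\le\frac{r-1}{r+d-1}\log 2$. The choice is made so that $|\Jac f|\,e^{-\frac{d}{r-1}\log^-|\Jac f|}\,\|df\|_\infty^{-d/r}=e^{\frac{r+d-1}{r-1}\psi}$, i.e. Jacobian growth times the volume contraction of the reparametrizations of Lemma \ref{lem:iter}.
\begin{itemize}
\item At a neutral time there is a window, starting at or after the last hyperbolic time, on which the $\psi$-sum is less than $\delta$ times its length.
\item This forces the current reparametrized couple to have Jacobian $\lesssim\varepsilon_q^d$. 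The interior squares then contribute boundedly, and the count is governed by the squares meeting the boundary of the image, controlled by $\|\Lambda^l df\|$ for $l\le d-1$ (Case 3 of Lemma \ref{lem:repara}, via Lemma C.1 of \cite{BB}). This is the actual source of the $d-1$.
\item The $\frac{d^2}{r}$ comes from bounding $\frac{d}{r-1}\sum k'$ on neutral segments using nonnegativity of $\psi$-sums there (Lemma \ref{lem:firstterm}). The ball--couple geometry is instead what produces the additional $\alpha'$, which is paid at every step and not weighted by $1-\beta$.
\item One must also pass to an iterate $f^p$ (Lemma \ref{lem:main-prop-iterate}) to bring $\log^+\|df\|_\infty$ and the constants in $C(f)$ below $\alpha$. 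You write $R(f)$ directly.
\end{itemize}

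Because these hyperbolic times have a small threshold, the limit $\widetilde\beta\widetilde\mu_1$ only satisfies $\Lambda_f\ge\delta$. So your direct proof of item $1)$ is not available for the decomposition that makes item $2)$ work. The paper reconciles the two requirements in two steps. First, a second family $H_k\subset E_k$ of hyperbolic times for $\log|\Jac f_k|$, at a threshold above $\alpha$, shows that $\widetilde\mu_1$ has a nonzero expanding component (Lemma \ref{lem:mu1-exp}); this is where your argument belongs. Second, $\beta\mu_1$ is taken to be the expanding part of $\widetilde\beta\widetilde\mu_1$, and the non-expanding remainder is moved into $\mu_0$. That costs at most $(d-1)R(f)<\alpha$ in entropy by Ruelle's inequality, and preserves item $3)$ since $\Lambda_f\ge\delta\ge0$ on the remainder.

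Your items $3)$ and $4)$ also contain logical errors.
\begin{itemize}
\item For $3)$ you need a \emph{lower} bound on the neutral contribution. It comes from the fact that every gap between consecutive hyperbolic times has Jacobian growth at least $\delta$ times its length, giving $\ge(1-\beta)\delta$ (Proposition \ref{prop:expo-neutral}), with end effects controlled on $\mathtt{F}_k$. You instead bound the neutral part from above and then ``arrange the sign'', which proves nothing.
\item For $4)$, with your threshold $\alpha_0<\alpha$, any value of $\liminf_{\mathcal K}\Lambda_{f_k}(\nu_k)$ in $(\alpha_0,\alpha]$ still gives positive density and hence $\beta>0$. Also, Ruelle's inequality $h\le\Lambda$ cannot yield an upper bound on $\Lambda$. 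The equivalence holds by construction. Set $\beta=0$ when $\liminf_{\mathcal K}\Lambda_{f_k}(\nu_k)\le\alpha$; item $2)$ then follows from $h_{f_k}(\nu_k)\le\Lambda_{f_k}(\nu_k)$ (Lemma \ref{th:small-liminf}). Otherwise invoke Lemma \ref{lem:mu1-exp}.
\end{itemize}
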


Notice that this statement is empty if $r \leq d(d+1)$, since this gives $\alpha+\alpha' > d R(f)$ and taking $\beta = 0$ suffices.
We prove that this implies Theorem \ref{thm:main-con-1} from the introduction at the end of the paper.


\subsection{Decomposition of the limit measure}
\label{sec:decomp-meas}

In this section, we consider a fixed ergodic system $(\M, f, \nu)$.
We will apply the results of this section to each $f_k$ in the sequence $(f_k)$ in Theorem \ref{th:main-cont}.
We do not give any proof in this section, since they are all the same as their counterparts in \cite{B} \cite{A} \cite{DL}.

\subsubsection{Hyperbolic times}

For $x \in \M$, let $\psi(x) = \log |\Jac_x f| - \frac{d}{r+d-1}\log^+ |\Jac_x f| - \frac{d (r-1)}{r(r+d-1)} \log || df ||_{\infty} \leq \log |\Jac_x f|$ and $\psi^n = \sum\limits_{k=0}^{n-1} \psi \circ f^k$.
Let $\delta > 0$.

\begin{definition}[Hyperbolic times]
A $\delta$-hyperbolic time for $x$ is a $\delta$-Pliss time for the sequence $(\psi^n(x))_{n \in \mathbb{N}}$.
Meaning that for any $k \in [\![0, n-1 ]\!]$, we have
$$
\sum\limits_{i=k}^{n-1} \psi(f^i x) \geq (n-k) \delta.
$$
We denote by $E_{\delta}(x)$ the set of $\delta$-hyperbolic times for $x$.
\end{definition}

For $x \in \M$ and $n \in \bbN$, we let $m_n(x) = \max E_{\delta}(x) \cap [\![0, n-1 ]\!]$ and $m_n^+(x) = \min E_{\delta}(x) \backslash [\![0 , n-1 ]\!]$.
For $E \subset \bbN$, we denote its \textit{lower-density} by
$$
\underline{d}(E) = \liminf\limits_{n \to +\infty} \frac{1}{n} \#  \left ( E \cap [\![0, n-1 ]\!] \right ).
$$

\begin{lemma}[Lemmas 2.1 and 2.3 from \cite{DL}]
\label{lem:density-HB}
If $\nu$ is ergodic and $\int \psi d \nu > \delta$, then there is $\beta > 0$ such that for $\nu$-a.e. $x$, $\underline{d}(E_{\delta}(x)) \geq \beta$. We also have $m_n^+(x) \underset{n \to +\infty}{\sim} m_n(x) \underset{n \to +\infty}{\sim} n$.
\end{lemma}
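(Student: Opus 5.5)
The plan is to apply the Pliss lemma to the bounded-above sequence $\psi$ along $\nu$-typical orbits, and then to convert positive frequency of Pliss times into positive lower density through the Birkhoff ergodic theorem. Note first that $\psi\le \log|\Jac f|\le d\log\|df\|_\infty=:A$, so $\psi$ is bounded above, and $\int\psi\,d\nu>\delta$. If $\psi\notin L^1(\nu)$, then $\int\psi\,d\nu=-\infty$, which contradicts $\int\psi\,d\nu>\delta$. So $\psi\in L^1(\nu)$, and by Birkhoff, for $\nu$-a.e.\ $x$ we have $\frac1n\psi^n(x)\to\int\psi\,d\nu=:\bar\psi>\delta$.

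\textbf{Step 1 (Pliss times have positive density).} I will use the combinatorial Pliss lemma. Let $a_i=\psi(f^ix)\le A$ and suppose $\sum_{i<n}a_i\ge n\delta'$ with $\delta<\delta'<\bar\psi$. Then the number of indices $m\le n$ with $\sum_{i=k}^{m-1}a_i\ge (m-k)\delta$ for all $k<m$ is at least $\frac{\delta'-\delta}{A-\delta}\,n$. The proof I have in mind runs a backward greedy procedure: set $S_m=\psi^m(x)-m\delta$, and observe that $m$ is a $\delta$-hyperbolic time iff $S_m\ge S_k$ for all $k<m$, i.e.\ $m$ is a record time of the walk $S$. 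The increments $S_{m+1}-S_m$ are at most $A-\delta$. The walk reaches height $S_n\ge n(\delta'-\delta)$ at time $n$, so it must pass through at least $n(\delta'-\delta)/(A-\delta)$ record times along the way. Applying this at every large $n$ gives $\#(E_\delta(x)\cap[\![0,n-1]\!])\ge \beta n - o(n)$ with $\beta=\frac{\bar\psi-\delta}{A-\delta}>0$ (taking $\delta'\uparrow\bar\psi$). Hence $\underline d(E_\delta(x))\ge\beta$.

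\textbf{Step 2 (asymptotics of $m_n$ and $m_n^+$).} The claim $m_n(x)\sim n$ is the real content, and it follows from the record-time description. Suppose along a subsequence $m_n\le(1-\eta)n$. Then no record occurs in $[\![(1-\eta)n,n-1]\!]$. This gives $S_n<S_{m_n}$, i.e.\ $\max_{k\le(1-\eta)n}S_k\ge S_n$. Birkhoff gives $S_k=k(\bar\psi-\delta)+o(k)$ uniformly in the sense $\max_{k\le(1-\eta)n}S_k\le (1-\eta)n(\bar\psi-\delta)+o(n)$. That is strictly less than $S_n=n(\bar\psi-\delta)+o(n)$ for large $n$, a contradiction. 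For $m_n^+$, the same reasoning applied at time $n'=\lceil(1+\eta)n\rceil$ yields a record time in $[\![n,n'-1]\!]$. Otherwise $S_{n'}\le\max_{k<n}S_k\le n(\bar\psi-\delta)+o(n)$, which is again a contradiction. Also $m_n^+<\infty$ because records occur infinitely often. Hence $n\le m_n^+\le(1+\eta)n$ eventually, and $m_n^+\sim m_n\sim n$.

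The main obstacle I expect is only bookkeeping: making the Birkhoff $o(n)$ error uniform over $k\le n$. This is standard, since $\max_{k\le n}|S_k-k(\bar\psi-\delta)|=o(n)$ follows from pointwise convergence. The second point of care is checking the reformulation of hyperbolic times as record times of $S$, including the convention for $k=m-1$, which requires $\psi(f^{m-1}x)\ge\delta$. These are exactly as in Lemmas 2.1 and 2.3 of \cite{DL}, and the higher-dimensional nature of $\psi$ plays no role because $\psi$ is just a real observable bounded above.
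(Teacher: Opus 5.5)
Your proof is correct and follows the same route as the cited Lemmas 2.1 and 2.3 of \cite{DL}: the record-time form of the Pliss lemma gives the positive lower density (taking $A=\log\|\Jac f\|_\infty$ recovers exactly the constant $\beta_{inf}$ of Remark \ref{rk:unif-beta}), and Birkhoff's theorem with the uniform $o(n)$ control gives $m_n\sim m_n^+\sim n$. The only slip is an off-by-one. Since $n$ itself may be a hyperbolic time, the right conclusion from $m_n\le(1-\eta)n$ is $S_{n-1}\le S_{m_n}$ rather than $S_n<S_{m_n}$; likewise use $S_{n'-1}$ in the $m_n^+$ step. The Birkhoff contradiction is unchanged by this correction.
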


\begin{remark}[Remark 2.2 from \cite{DL}]
\label{rk:unif-beta}
In the proof of Lemma \ref{lem:density-HB} in \cite{DL}, the lower bound $\beta_{inf}$ we obtain on $\underline{d}(E_{\delta}(x))$ can be chosen to be
$$
\beta_{inf} = \frac{\int \psi d \nu - \delta}{\log || \Jac f ||_{\infty} - \delta}.
$$
We will use this remark in the proof of Theorem \ref{th:main-cont} to obtain a non-zero $\beta$.
\end{remark}

Until Section \ref{sec:ent-exp-HB}, we fix $\delta > 0$ such that $\int \psi d \nu > \delta$ and $\delta < \frac{r-1}{r+d-1} \log 2$.
We will use this second bound in the proof of Lemma \ref{lem:repara}.

\subsubsection{Defining the unstable and neutral component}
\label{ssec:def-compo}

For $E \subset \bbN$ and $n,M \in \bbN$, we let $E_n^M = \bigcup\limits_{
\substack{n > a,b \in E\\
\text{s.t. } |b-a| \leq M}} [\![ a , b [\![
$.

Then, consider the connected components of $E_n^M$ and write $E_n^M = \bigsqcup\limits_{j} [\![ a_j , b_j [\![$.
For $m \in \mathbb{N}$, we let
$$
E_n^{M,m} = \bigsqcup\limits_j [\![ a_j, c_j [\![
$$
where $c_j$ is the largest element of $E$ such that $c_j \leq b_j - m +1$.
Notice that $\partial E_n^M \subset E$, this gives $E_n^{M,1} = E_n^M$.
For $q \in \mathbb{N}$, we also define $E_n^{M,m,q} = E_n^{M,\max (m,q)}$.

\begin{lemma}[Lemma 4.1 from \cite{A}]
\label{lem:EnM}
For any $n,M,m \in \bbN$, we have
\begin{itemize}
\item[1)] $\partial E_n^{M,m} \subset E$,
\item[2)] $\# \partial E_n^{M,m} \leq \frac{2(n+M)}{M}$,
\item[3)] $\limsup\limits_{n \to +\infty} d_n(E_n^{M,m}) \leq \frac{2}{M}$.
\end{itemize}
\end{lemma}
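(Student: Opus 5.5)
The plan is to argue directly from the definitions, component by component. Throughout, the boundary $\partial F$ of a finite disjoint union of integer intervals $F=\bigsqcup_j [\![ a_j , c_j [\![$, written with non-empty, non-adjacent components, is the set of endpoints $\{a_j, c_j\}$. I read item $3)$ as a statement about the density of the boundary, $\limsup_n \frac{1}{n}\#\partial E_n^{M,m}\leq \frac{2}{M}$, since this is what follows from item $2)$; the density of $E_n^{M,m}$ itself can be close to $1$.

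\textbf{Item $1)$.} First I will identify the components of $E_n^M$. The set $E_n^M$ is a union of intervals $[\![ a,b[\![$ with $a,b\in E$, $a,b<n$ and $|b-a|\leq M$. So each connected component $[\![ a_j,b_j[\![$ is a finite union of such intervals. Its left endpoint $a_j$ is the left endpoint of one of them, and its right endpoint $b_j$ is the right endpoint of one of them; hence $a_j,b_j\in E$. For $E_n^{M,m}$, the new right endpoint $c_j$ lies in $E$ by definition. Components with $c_j\leq a_j$ are empty and are discarded. The remaining components $[\![ a_j,c_j[\![$ satisfy $c_j\leq b_j$, so they stay pairwise separated. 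Therefore $\partial E_n^{M,m}\subset\{a_j,c_j\}\subset E$. The case $E_n^{M,m,q}$ is the same statement with $m$ replaced by $\max(m,q)$.

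\textbf{Item $2)$.} The key step is a gap estimate between consecutive components: $a_{j+1}-b_j>M$. Suppose instead $a_{j+1}-b_j\leq M$. Since $b_j,a_{j+1}\in E$ and both are $<n$, the interval $[\![ b_j,a_{j+1}[\![$ would itself be one of the intervals making up $E_n^M$. Then the two components would merge, a contradiction. Let $J$ be the number of components. The $J-1$ gaps each have length $>M$ and all lie in $[\![0,n[\![$, so $(J-1)M< n$, that is $J\leq \frac{n}{M}+1=\frac{n+M}{M}$. Each component contributes at most two boundary points, which gives $\#\partial E_n^M\leq \frac{2(n+M)}{M}$. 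Passing from $E_n^M$ to $E_n^{M,m}$ only shortens components on the right or deletes them, so the number of components, and hence the boundary count, does not increase.

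\textbf{Item $3)$.} Divide the bound of item $2)$ by $n$ and let $n\to+\infty$; this gives $\limsup_n \frac{1}{n}\#\partial E_n^{M,m}\leq \frac{2}{M}$.

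The only delicate point is the gap estimate in item $2)$, specifically that both endpoints $b_j$ and $a_{j+1}$ are in $E$ and $<n$, so that the merging argument applies. This is supplied by item $1)$, so I expect no real obstacle.
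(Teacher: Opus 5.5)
Your proof is correct. The paper gives no argument of its own for this lemma and simply defers to Lemma 4.1 of \cite{A}, while your argument is the standard proof of exactly this fact: endpoints of components lie in $E$, distinct components of $E_n^M$ are separated by gaps longer than $M$ (so there are at most $n/M+1$ of them), and the right-truncation defining $E_n^{M,m}$ never increases the number of components. Your reading of item $3)$ as a bound on the density of $\partial E_n^{M,m}$ is the right one, since the literal statement about $d_n(E_n^{M,m})$ already fails for $E=\bbN$.
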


\begin{lemma}[Section 4 from \cite{A}]
\label{lem:inc-EnM}
For any $E \subset \bbN$ and $n, m \in \mathbb{N}$, the sequence $\left ( E_n^{M,m} \right )_M$ is nondecreasing for the inclusion.
\end{lemma}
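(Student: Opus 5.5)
The plan is to prove the inclusion $E_n^{M,m}\subseteq E_n^{M',m}$ for $M\le M'$ by comparing the connected components of $E_n^M$ and $E_n^{M'}$ one at a time. No dynamics is needed: the statement is purely combinatorial.

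\textbf{Step 1 (the unions are monotone).} By definition, $E_n^M$ is the union of the integer intervals $[\![a,b[\![$ with $a,b\in E$, $a,b<n$ and $|b-a|\le M$. Every pair admissible for $M$ is also admissible for $M'\ge M$. Hence
$$
E_n^M\subseteq E_n^{M'} .
$$

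\textbf{Step 2 (each small component sits inside a large one).} Fix a connected component $[\![a_j,b_j[\![$ of $E_n^M$. It is a connected subset of $E_n^{M'}$, so it lies in a unique connected component $[\![a',b'[\![$ of $E_n^{M'}$. This gives
$$
a'\le a_j \quad\text{and}\quad b_j\le b' .
$$

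\textbf{Step 3 (compare the truncation points).} Let $c_j$ be the largest element of $E$ with $c_j\le b_j-m+1$. Let $c'$ be the largest element of $E$ with $c'\le b'-m+1$. Since $b_j-m+1\le b'-m+1$, the element $c_j$ is a candidate in the definition of $c'$, so $c_j\le c'$. Combined with $a'\le a_j$, this yields
$$
[\![a_j,c_j[\![\;\subseteq\;[\![a',c'[\![ .
$$
If $c_j\le a_j$, the left-hand interval is empty and the inclusion is trivial.

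\textbf{Step 4 (conclude).} Taking the union over all components $j$ of $E_n^M$ gives $E_n^{M,m}\subseteq E_n^{M',m}$. The same argument with $\max(m,q)$ in place of $m$ gives the analogous monotonicity for $E_n^{M,m,q}$.

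The only point requiring any care is Step 2. One must make sure each component of the smaller set is contained in a single component of the larger set, which holds because it is connected. One must also make sure the truncation in Step 3 is taken relative to the right endpoint of that containing component. The monotonicity of the ``largest element of $E$ below a threshold'' then finishes the argument.
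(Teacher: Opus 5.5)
Your proof is correct and complete. The three ingredients suffice: $E_n^M\subseteq E_n^{M'}$; each component $[\![a_j,b_j[\![$ of $E_n^M$ lies in a component $[\![a',b'[\![$ of $E_n^{M'}$ with $a'\le a_j$ and $b_j\le b'$; and the truncation point is monotone in the right endpoint, giving $[\![a_j,c_j[\![\subseteq[\![a',c'[\![$. The same monotonicity also covers the degenerate case where no element of $E$ lies below the threshold.

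The paper gives no argument of its own and only refers to Section 4 of the cited work. Your direct combinatorial check is the routine verification that reference stands for.
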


By using the fact that $\psi \leq \log |\Jac f|$, we have the following lemma.

\begin{lemma}[Lemma 4.2 from \cite{A}]
\label{lem:prop-hb}
Let $x \in \M$ and $n,M,m \in \bbN$.
We have:
\begin{itemize}
\item[1)] for $l \in E(x)$ and $k \leq l$, we have $\log |\Jac_{f^k x} f^{l-k}| \geq (l-k) \delta$,
\item[2)] for any connected component $[\![ k,l [\![$ of $E_n^{M,m}(x)$, we have $\log |\Jac_{f^k x} f^{l-k}| \geq (l-k) \delta$,
\item[3)] for any $[\![ k,l [\![ \subset E_n^{M,m}(x)$, we have $\log |\Jac_{f^k x} f^{l-k}| \geq -M \log || \Jac f ||_{\infty}$.
\end{itemize}
\end{lemma}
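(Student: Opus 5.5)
We need to prove Lemma 4.2 from [A]: items 1)–3) about Jacobians along hyperbolic times.

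The whole argument rests on the pointwise bound $\psi \leq \log|\Jac f|$ stated just before the lemma, combined with the chain rule $\log|\Jac_{f^k x} f^{l-k}| = \sum_{i=k}^{l-1}\log|\Jac_{f^i x} f|$. With these two facts, each item reduces to comparing sums of $\psi$ or of $\log|\Jac f|$ over integer intervals, and I will treat the items in order.

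For item 1), I first take $l \in E(x)$, i.e.\ $l$ is a $\delta$-hyperbolic time. By definition, for every $k \in [\![0,l-1]\!]$,
$$
\sum_{i=k}^{l-1}\psi(f^ix) \geq (l-k)\delta .
$$
Applying $\log|\Jac f| \geq \psi$ termwise and then the chain rule gives $\log|\Jac_{f^kx}f^{l-k}| \geq (l-k)\delta$. The case $k=l$ is trivial, since both sides vanish.

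For item 2), I take a connected component $[\![k,l[\![$ of $E_n^{M,m}(x)$. By Lemma \ref{lem:EnM} item 1), its endpoints lie in $\partial E_n^{M,m} \subset E(x)$. In particular $l \in E(x)$ and $k \leq l$, so item 1) applies directly and gives $\log|\Jac_{f^kx}f^{l-k}| \geq (l-k)\delta$.

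Item 3) is the only step needing some care, and it is where I expect the main (mild) obstacle: the endpoints of an arbitrary subinterval $[\![k,l[\![ \subset E_n^{M,m}(x)$ need not be hyperbolic times. My plan is to decompose this subinterval using the structure of $E_n^M$.
\begin{itemize}
\item[(a)] By construction, $E_n^{M,m} \subset E_n^M$, which is a union of intervals $[\![a,b[\![$ with $a,b \in E$ and $|b-a| \leq M$.
\item[(b)] Hence the points of $E \cap [\![k,l]\!]$ that are relevant to $[\![k,l[\![$ are spaced at most $M$ apart, provided I work inside a single component of $E_n^{M,m}$. Note that item 3) only needs a single component: if $[\![k,l[\![$ meets several components, it is not contained in $E_n^{M,m}$ unless it lies in one of them.
\item[(c)] Let $k'$ be the smallest element of $E$ that is $\geq k$, and $l'$ the largest element of $E$ that is $\leq l$. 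Then $k'-k \leq M$ and $l-l' \leq M$; more precisely, $k$ and $l$ each lie in some interval $[\![a,b[\![$ of length at most $M$ with endpoints in $E$.
\end{itemize}
I then split
$$
\log|\Jac_{f^kx}f^{l-k}| = \log|\Jac_{f^kx}f^{l'-k}| + \log|\Jac_{f^{l'}x}f^{l-l'}| .
$$
The first term is $\geq (l'-k)\delta \geq 0$ by item 1), because $l' \in E$ and $k \leq l'$. The second term is $\geq -(l-l')\log\|\Jac f\|_\infty \geq -M\log\|\Jac f\|_\infty$, using $|\Jac f| \leq \|\Jac f\|_\infty$ and interpreting the bound with the convention that $\log\|\Jac f\|_\infty \geq 0$, which I may assume by the definition of $\log^+$-type quantities.

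Care is needed in the degenerate case $l' < k$, that is, when no element of $E$ lies in $[\![k,l]\!]$. In that case, $[\![k,l[\![$ sits inside a single interval $[\![a,b[\![$ with $b-a \leq M$, so $l-k \leq M$, and the crude bound $\log|\Jac_{f^kx}f^{l-k}| \geq -(l-k)\log\|\Jac f\|_\infty$ is false in general (the Jacobian can be small). I therefore instead use item 1) at the right endpoint $b \in E$ of that interval, and expect the cancellation to go through as follows:
$$
\log|\Jac_{f^kx}f^{l-k}| = \log|\Jac_{f^kx}f^{b-k}| - \log|\Jac_{f^lx}f^{b-l}| \geq 0 - (b-l)\log\|\Jac f\|_\infty \geq -M\log\|\Jac f\|_\infty .
$$
The same right-endpoint trick handles the general case as well: I take $l'' \in E$ with $l'' \geq l$ and $l''-l \leq M$, which exists because $l$ lies in an interval $[\![a,b[\![ \subset E_n^M$, or at a boundary point of $E$. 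This avoids needing lower bounds on $|\Jac f|$ at all, and it is the route I will write up.
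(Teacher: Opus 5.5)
Your final right-endpoint route is correct and is the standard argument behind the paper's one-line justification ("by using the fact that $\psi\le\log|\Jac f|$"): items 1) and 2) follow from that inequality and from $\partial E_n^{M,m}\subset E$; for item 3), $l-1\in E_n^{M,m}\subset E_n^M$ gives some $b\in E$ with $b\ge l$ and $b-l\le M$, so that
$\log|\Jac_{f^kx}f^{l-k}|=\log|\Jac_{f^kx}f^{b-k}|-\log|\Jac_{f^lx}f^{b-l}|\ge -M\log\|\Jac f\|_\infty$.
Do discard the earlier left-endpoint split, since its bound on $\log|\Jac_{f^{l'}x}f^{l-l'}|$ has the flaw you spotted later ($|\Jac f|\le\|\Jac f\|_\infty$ only bounds from above), and justify $\log\|\Jac f\|_\infty>0$ by applying item 1) to a hyperbolic time $b\ge 1$ rather than by a $\log^+$ convention.
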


From the above item $3)$, the orbit of $x$ is away from the critical points at times of $E_n^{M,m}(x)$, uniformly in $n$ and in $x$ in the set $\{y \in \M : E_n^{M,m}(y) = E_n^{M,m}(x) \}$.

For $x \in \M$, we call the connected components of $E^M_n(x)$ the \textit{$M$-hyperbolic blocks} of $x$.
We let
$$
\xi^{M,m,q}_{\nu,n}:= \frac{1}{n} \int \sum_{i\in E^{M,m,q}_n(x)}\delta_{f^i x}d\nu(x) \;\;\; \text{and} \;\;\; \beta_{\nu,n}^{M,m,q} = \frac{1}{n} \int \# E_n^{M,m,q}(x) d \nu(x).
$$
We let $\xi^{M,m}_{\nu,n}=\xi^{M,m,1}_{\nu,n}$ and  $\xi^M_{\nu,n}=\xi^{M,1,1}_{\nu,n}$,
this is the hyperbolic component of the measure $\nu$.
We let $\beta^{M,m}_{\nu,n}=\beta^{M,m,1}_{\nu,n}$ and $\beta^M_{\nu,n}=\beta^{M,1,1}_{\nu,n}$.
We let $\eta_{\nu,n}^{M,m,q} = \nu - \xi_{\nu,n}^{M,m,q}$, $\eta_{\nu,n}^{M,m} = \eta_{\nu, n}^{M,m,1}$ and $\eta_{\nu,n}^M = \eta_{\nu, n}^{M,1,1}$, this is the \textit{neutral component} of the measure $\nu$.
By Cantor's diagonal argument, there is a sequence of integers $\mathcal{n}$ such that for all $M,m,q \in \mathbb{N}^*$
\begin{align*}
\xi_{\nu,n}^{M,m,q} \underset{\mathcal{n} \ni n \to +\infty}{\longrightarrow} \xi_{\nu}^{M,m,q},\\
\eta_{\nu,n}^{M,m,q} \underset{\mathcal{n} \ni n \to +\infty}{\longrightarrow} \eta_{\nu}^{M,m,q},\\
\beta_{\nu,n}^{M,m,q} \underset{\mathcal{n} \ni n \to +\infty}{\longrightarrow} \beta_{\nu}^{M,m,q}.
\end{align*}
We also let $\overline{\xi_n^{M,m,q}}$ be the normalization of the measure $\xi_n^{M,m,q}$, so that $\xi_n^{M,m,q} = \beta_n^{M,m,q} \overline{\xi_n^{M,m,q}}$.
Similarly, we define $\overline{\xi^{M,m}}$, $\overline{\xi_n^{M,m}}$ and $\overline{\xi_n^M}$.\\

\subsubsection{Decomposition of the entropy}

\begin{definition}[$\lambda$-partition]
For a measure $\lambda$ on $\M$, a $\lambda$-partition is a countable collection of Borel subsets of $\M$ whose union has full $\lambda$-measure and whose pairwise intersections have zero $\lambda$-measure.
\end{definition}

For a $\lambda$-partition $\mathcal{P}$, we denote by $\mathcal{P}_{\lambda}$ the set $\{ P \in \mathcal{P} : \lambda(P) > 0 \}$.\\

Let $\iota > 0$ and let $\mathcal{P}$ be a finite partition such that $\nu(\partial \mathcal{P}) = 0$ and 
$$
\lim\limits_{n \to +\infty} \frac{1}{n} H_{\nu}(\mathcal{P}^n) \geq h(\nu) - \iota.
$$
For a $\nu$-partition $\mathcal{F}$ and $n,M,m \in \mathbb{N}^*$, we denote by $\mathcal{F}^{E^{M,m}_n}$ the $\nu$-partition for which two points $x$ and $y$ are in the same atom if $E_n^{M,m}(x) = E_n^{M,m}(y)$ and if $f^l x$ and $f^l y$ are in the same atom of $\mathcal{F}$ for every $l \in E_n^{M,m}(x) = E_n^{M,m}(y)$.\\

For $n,M,m$, let $\mathcal{E}_n^{M,m}$ be the partition of $\M$ whose atoms are the sets $\{ x \in \M : E_n^{M,m}(x) = E \}$ where $E \subset \mathbb{N}$ are such that the associated atoms are of positive $\nu$-measure.
To shorten notations, we will denote by $E$ both the atom of $\mathcal{E}_n^{M,m}$ and the set $E_n^{M,m}(x)$ for $x$ in this atom. Then for $E \in \mathcal{E}_n^{M,m}$, we write
$$
\nu_E = \frac{\nu(E \cap \cdot)}{\nu(E)} \; \; \; \text{and} \; \; \; \nu_E^E = \frac{1}{\# E} \sum\limits_{i \in E} f^i_* \nu_E.
$$
The main proposition of this section is the following entropy inequality, which splits the entropy of the measure $\nu$ into the sum of the entropy of the hyperbolic component of $\nu$ and the entropy conditioned to the hyperbolic component of $\nu$, with some error terms.

\begin{proposition}[Proposition 2.9 from \cite{DL}]
\label{prop:entropy-ineq}
Let $\mathcal{Q}$ be a $\nu$-partition into disjoint sets such that
\begin{itemize}
\item[i)] $\forall n,M,m \in \bbN$, $\forall E \in \mathcal{E}_n^{M,m}, \forall i \in E, f^{-i} \mathcal{Q}$ is a $\nu_E$-partition,
\item[ii)] $\forall M,m, \# \{ Q \in \mathcal{Q}^m : \exists n, \exists E \in \mathcal{E}_n^{M,m}, \nu_E^E(Q) > 0 \} < +\infty$,
\item[iii)] $\forall m,M \in \mathbb{N}^*, \overline{\xi_{\nu}^{M,m}}(\partial \mathcal{Q}^m)=0$.
\end{itemize}
Then for any $M$ large enough and $m \in \mathbb{N}^*$,
\begin{align*}
h(\nu) \leq &\beta_{\nu}^{M,m} \frac{1}{m} H_{\overline{\xi_{\nu}^{M,m}}}(\mathcal{Q}^m) + \limsup\limits_{n \to +\infty} \frac{1}{n} H_{\nu}(\mathcal{P}^n \mid \mathcal{Q}^{E_n^{M,m}}) + \frac{3 \log M}{M} + \iota  \\
&+\limsup\limits_{n \to +\infty} \frac{m}{n}\sum\limits_{E \in \mathcal{E}_n^{M,m}} \nu(E) \log \left ( \#\mathcal{Q}_{\nu_E^E} \right ) \#\partial E.  
\end{align*}
\end{proposition}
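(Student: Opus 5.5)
We follow the scheme of \cite{DL} (itself adapted from \cite{B}): first make the entropy of $\nu$ visible on a single Bowen-type partition refinement, then condition on the combinatorial data, namely the atom $E$ of $\mathcal{E}_n^{M,m}$ together with the $\mathcal{Q}$-itinerary along the times of $E$. First, since $\nu$ is ergodic and $\nu(\partial\mathcal{P})=0$, we write
\[
h(\nu)-\iota\le \lim_n \tfrac1n H_\nu(\mathcal{P}^n),
\]
and then apply the chain rule
\[
H_\nu(\mathcal{P}^n)\le H_\nu(\mathcal{E}_n^{M,m})+H_\nu(\mathcal{Q}^{E_n^{M,m}}\mid \mathcal{E}_n^{M,m})+H_\nu(\mathcal{P}^n\mid \mathcal{Q}^{E_n^{M,m}}),
\]
using that $\mathcal{Q}^{E_n^{M,m}}$ refines $\mathcal{E}_n^{M,m}$ by definition. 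The last term is exactly the second term in the statement.

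For the combinatorial term, we use Lemma \ref{lem:EnM}: the set $E_n^{M,m}(x)$ is determined by its boundary, a subset of $[\![0,n+M]\!]$ with at most $2(n+M)/M$ elements. Hence
\[
\#\mathcal{E}_n^{M,m}\le \binom{n+M}{\le 2(n+M)/M},
\]
and Stirling's formula gives $\tfrac1n H_\nu(\mathcal{E}_n^{M,m})\le \tfrac{3\log M}{M}$ for $M$ large and $n\to\infty$. This is where "$M$ large enough" enters.

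The main term is $H_\nu(\mathcal{Q}^{E}\mid \mathcal{E}_n^{M,m})=\sum_E\nu(E)H_{\nu_E}(\bigvee_{i\in E}f^{-i}\mathcal{Q})$; hypothesis i) makes each $f^{-i}\mathcal{Q}$ a $\nu_E$-partition. For fixed $E=\bigsqcup_j[\![a_j,c_j[\![$, we cut each component into consecutive blocks of length $m$ according to a residue $s\in[\![0,m[\![$. This leaves at most $m$ leftover times near each boundary point of each component, giving at most $m\,\#\partial E$ leftover times in total. Each leftover time contributes at most $\log\#\mathcal{Q}_{\nu_E^E}$; here $\mathcal{Q}_{\nu_E^E}$ bounds the number of atoms charged at any time $i\in E$, because $f^i_*\nu_E\ll \nu_E^E$. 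This produces the last error term. The full blocks give $\sum_{\text{blocks}}H_{f^{i}_*\nu_E}(\mathcal{Q}^m)$. Averaging over $s$ and using concavity of $H$, we get $\le \tfrac{\#E}{m}H_{\nu_E^E}(\mathcal{Q}^m)$ plus the same boundary error. Summing over $E$ with weights $\nu(E)$ and using concavity again (since $\sum_E\nu(E)\#E\,\nu_E^E=n\,\xi_{\nu,n}^{M,m}$), we obtain $n\beta_{\nu,n}^{M,m}\tfrac1m H_{\overline{\xi_{\nu,n}^{M,m}}}(\mathcal{Q}^m)$.

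The step I expect to be the main obstacle is passing to the limit $n\in\mathcal{n}\to\infty$ in this term: we need $H_{\overline{\xi^{M,m}_{\nu,n}}}(\mathcal{Q}^m)\to H_{\overline{\xi^{M,m}_\nu}}(\mathcal{Q}^m)$ even though $\mathcal{Q}$ is countable. Hypothesis ii) reduces the sum to finitely many atoms of $\mathcal{Q}^m$ uniformly in $n$, and hypothesis iii) gives weak-$*$ convergence of the mass of each such atom. Together these yield continuity of the entropy functional and conclude. The only care needed is to check that the measures at $n$ give zero mass to the atoms excluded by ii), which follows from the same absolute continuity $f^i_*\nu_E\ll\nu_E^E$.
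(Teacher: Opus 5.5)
Your proposal is correct and follows the same route as the argument the paper defers to (Proposition 2.9 of \cite{DL}, after \cite{B}): the chain rule through $\mathcal{E}_n^{M,m}$, the bound $\frac{3\log M}{M}$ obtained by counting via $\#\partial E_n^{M,m}\le 2(n+M)/M$, a Misiurewicz-type cutting of each $E$ into length-$m$ blocks with at most $m\,\#\partial E$ leftover times, concavity to pass first to $\nu_E^E$ and then to $\overline{\xi_{\nu,n}^{M,m}}$, and hypotheses ii)--iii) for the limit along the chosen subsequence. The one detail to write out is in the concavity step: the block starting times, taken over all residues $s$, form a proper subset of $E$, so before applying concavity you must add the missing nonnegative terms $H_{f^t_*\nu_E}(\mathcal{Q}^m)$, $t\in E$; these terms are finite by ii), since $f^t_*\nu_E\le \#E\,\nu_E^E$.
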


Let $\mathtt{F}_{\nu} \subset \M$ be a set of positive $\nu$-measure where the convergence $\frac{m_n^+(x)}{n} \underset{n \to +\infty}{\longrightarrow} 1$ is uniform.
Let $\zeta$ be the normalized restriction of $\nu$ to $\mathtt{F}_{\nu}$.
By relating the entropy of $\nu$ to the entropy of $\zeta$, we obtain the following:




\begin{proposition}[Proposition 2.11 from \cite{DL}]
\label{prop:entropy-final}
If a partition $\mathcal{Q}$ satisfies the hypotheses of Proposition \ref{prop:entropy-ineq}, then for any $M$ large enough and $m \in \mathbb{N}^*$, we have
\begin{align*}
h(\nu) \leq &\beta_{\nu}^{M,m} \frac{1}{m} H_{\overline{\xi_{\nu}^{M,m}}}(\mathcal{Q}^m) + \limsup\limits_{n \to +\infty} \frac{1}{n} H_{\zeta}(\mathcal{P}^n \mid \mathcal{Q}^{E_n^{M,m}})\\
&+ \nu(\M \backslash \mathtt{F}_{\nu}) \log \# \mathcal{P} + \frac{3 \log M}{M} + \iota  \\
&+\limsup\limits_{n \to +\infty} \frac{m}{n}\sum\limits_{E \in \mathcal{E}_n^{M,m}} \nu(E) \log \left ( \#\mathcal{Q}_{\nu_E^E} \right ) \#\partial E.
\end{align*}
\end{proposition}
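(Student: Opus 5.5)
The plan is to deduce Proposition \ref{prop:entropy-final} from Proposition \ref{prop:entropy-ineq}. The only change is that the conditional term $\frac{1}{n} H_{\nu}(\mathcal{P}^n \mid \mathcal{Q}^{E_n^{M,m}})$ is replaced by the same quantity for $\zeta$, the normalized restriction of $\nu$ to $\mathtt{F}_{\nu}$, at an extra cost of $\nu(\M \setminus \mathtt{F}_{\nu}) \log \#\mathcal{P}$. All other terms are carried over verbatim. So the whole argument reduces to one inequality, for each $n$:
$$
H_{\nu}(\mathcal{P}^n \mid \mathcal{Q}^{E_n^{M,m}}) \leq \nu(\mathtt{F}_{\nu})\, H_{\zeta}(\mathcal{P}^n \mid \mathcal{Q}^{E_n^{M,m}}) + \nu(\M\setminus \mathtt{F}_{\nu})\, n \log \#\mathcal{P} + \log 2 .
$$
Dividing by $n$, taking $\limsup$, and using $\nu(\mathtt{F}_\nu)\le 1$ together with the fact that conditional entropies are nonnegative then gives the statement.

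To prove this inequality, I would refine the conditioning partition by the two-set partition $\{\mathtt{F}_{\nu}, \M\setminus\mathtt{F}_{\nu}\}$, which costs at most $\log 2$. Concavity (the chain rule), in the form $H_\nu(\mathcal{A}\mid\mathcal{B}) \le H_\nu(\mathcal{A} \mid \mathcal{B}\vee\{F,F^c\}) + H_\nu(\{F,F^c\})$, then gives
$$
H_\nu(\mathcal{P}^n\mid\mathcal{B}) \le \nu(F) H_{\nu_F}(\mathcal{P}^n\mid \mathcal{B}) + \nu(F^c) H_{\nu_{F^c}}(\mathcal{P}^n\mid\mathcal{B}) + \log 2 .
$$
Here $\mathcal{B}=\mathcal{Q}^{E_n^{M,m}}$, $\nu_F=\zeta$, and $\nu_{F^c}$ is the normalized restriction of $\nu$ to $\M\setminus\mathtt{F}_\nu$. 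The second term is bounded trivially by $\log\#\mathcal{P}^n \le n\log\#\mathcal{P}$.

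One technical point needs checking: $\mathcal{Q}^{E_n^{M,m}}$ is defined as a $\nu$-partition. I would verify that it remains a $\zeta$-partition, and this is immediate since $\zeta \ll \nu$. The hypotheses i)–iii) of Proposition \ref{prop:entropy-ineq} are only used through that proposition itself, so nothing new is required of $\mathcal{Q}$.

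I expect no real obstacle. The point to watch is to state the refinement carefully, since the conditional entropy is taken with respect to a countable partition and the $\log 2$ term has to be shown to be harmless. It disappears after division by $n$.

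One might worry that the role of $\mathtt{F}_\nu$, namely uniform convergence of $m_n^+/n$, should enter here. It does not: it is only used later, when the entropy of $\zeta$ along neutral times is estimated by reparametrization. For this proposition, $\mathtt{F}_\nu$ can be any set of positive measure.
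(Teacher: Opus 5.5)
Your proposal is correct and follows the intended route. The paper gives no separate proof: it defers to \cite{DL}, describing the step as ``relating the entropy of $\nu$ to the entropy of $\zeta$''. That step is exactly your almost-affinity bound $H_{\nu}(\mathcal{P}^n \mid \mathcal{B}) \leq \nu(\mathtt{F}_{\nu}) H_{\zeta}(\mathcal{P}^n \mid \mathcal{B}) + \nu(\M \setminus \mathtt{F}_{\nu})\, n \log \#\mathcal{P} + \log 2$ with $\mathcal{B} = \mathcal{Q}^{E_n^{M,m}}$, plugged into Proposition \ref{prop:entropy-ineq}; you are also right that only $\nu(\mathtt{F}_{\nu})>0$ is used here, not the uniform convergence on $\mathtt{F}_{\nu}$.
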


\subsubsection{Defining the adapted partition}
\label{ssec:def-partition}

In this section, we define a partition satisfying the hypotheses of Proposition \ref{prop:entropy-ineq} and such that we control the distortion on each atom.\\

Given any $q \in \bbN^*$ and $a \in \left ( -\frac{1}{q}, 0 \right )$, we define
$$
I_q = \{ \left [ \frac{i}{q} + a, \frac{i+1}{q} + a \right ) : i \in \bbZ \}.
$$
Define the partition
$$
\mathcal{Q}_q = \{ (\log |\Jac f|)^{-1}(J) : J \in I_q \}.
$$
We also require small enough atoms.
Hence for $\varepsilon > 0$, we consider a finite partition $\M(\varepsilon)$ whose atoms have diameter smaller than $\varepsilon$.
Finally, let
$$
\mathcal{R}_{q, \varepsilon} = \mathcal{Q}_q \vee \M(\varepsilon).
$$
We choose $a$ and $\M(\varepsilon)$ such that the border of $\mathcal{R}_{q, \varepsilon}^m$ has measure zero for $\xi^{M,m}$ for any $M,m \in \bbN^*$.
We will show in Lemma \ref{lem:entropy-R} that this partition satisfies the hypotheses of Proposition \ref{prop:entropy-ineq}.

\subsection{Entropy of the neutral component}

In this section, we consider a fixed ergodic system $(\M, f, \nu)$.
We will apply the results of this section to each $f_k$ in the sequence $(f_k)$ in Theorem \ref{th:main-cont}.

\subsubsection{General bounds}

By the same proof as Lemma 9 form \cite{B}, we have:

\begin{lemma}[Lemma 9 from \cite{B}]
\label{lem:reparamization-partition}
Let $\mathcal{Q}$ be a $\nu$-partition which satisfies the hypotheses of Proposition \ref{prop:entropy-ineq}.
Assume that for every atom $Q$ of $\mathcal{Q}^{E_n^{M,m}}$, the set $Q \cap \mathtt{F}_{\nu}$ is covered by a family $\Psi_Q$ of continuous maps $\theta : [0,1]^d \to \M$ that are $C^1$ on $(0,1)^d$ and such that for any $k \in [\![0 , n-1 ]\!]$, we have $|| d(f^k \circ \theta) ||_{\infty} \leq 1$.
Then, we have
$$
\limsup\limits_{n \to +\infty} \frac{1}{n} H_{\zeta}(\mathcal{P}^n \mid \mathcal{Q}^{E_n^{M,m}})
\leq \limsup\limits_{n \to +\infty} \frac{1}{n}\sum\limits_{Q \in \mathcal{Q}^{E_n^{M,m}}}\zeta(Q) \log \# \Psi_Q.
$$
\end{lemma}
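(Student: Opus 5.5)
The plan follows the proof of Lemma 9 in \cite{B}. Everything reduces to a counting estimate: each reparametrization family gives a covering of $Q \cap \mathtt{F}_{\nu}$ by sets of small dynamical diameter, and each such set meets only boundedly many atoms of $\mathcal{P}^n$.

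First, I fix an atom $Q$ of $\mathcal{Q}^{E_n^{M,m}}$ and a map $\theta \in \Psi_Q$. Since $\|d(f^k\circ\theta)\|_\infty \le 1$ for all $k \in [\![0,n-1]\!]$, the mean value inequality on the convex cube gives that $f^k\circ\theta$ is $\sqrt{d}$-Lipschitz. This is applied on $(0,1)^d$ and extended to $[0,1]^d$ by continuity. Consequently, for a fixed small scale $\rho$ (to be chosen), I subdivide $[0,1]^d$ into $N(\rho,d)$ subcubes of side $\rho/\sqrt d$, with $N(\rho,d)$ independent of $n$. The image of each subcube under $\theta$ then lies in a single Bowen ball $B_f(\cdot,n,\rho)$.

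Second, I use the boundary condition. $\mathcal{P}$ is a finite partition with $\nu(\partial\mathcal{P})=0$, and I may refine the usual Lebesgue-number argument as in \cite{B}. I choose $\rho$ so small that the $\rho$-neighbourhood $U_\rho$ of $\partial \mathcal{P}$ satisfies $\nu(U_\rho) < \iota'$. Then a Bowen ball $B_f(x,n,\rho)$ meets at most $\#\mathcal{P}^{\#\{k<n : f^k x \in U_\rho\}}$ atoms of $\mathcal{P}^n$, since at each time $k$ with $f^kx\notin U_\rho$ the $\rho$-ball around $f^kx$ lies in one atom of $\mathcal{P}$.

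To make this count uniform, I restrict to points whose orbits visit $U_\rho$ with frequency at most $2\iota'$ up to time $n$. By the ergodic theorem and a further restriction of $\zeta$ (absorbed as in Proposition~\ref{prop:entropy-final}, since we only need a $\limsup$), this set has $\zeta$-measure tending to $1$. Combining the two steps, each set $Q\cap \mathtt{F}_{\nu}$, intersected with this good set, is covered by at most $\#\Psi_Q \cdot N(\rho,d)\cdot \#\mathcal{P}^{2\iota' n}\cdot \binom{n}{2\iota' n}$ atoms of $\mathcal{P}^n$. Conditional entropy is at most the logarithm of the number of atoms meeting the conditioning set, with a correction of order $\zeta(\text{bad set})\, n\log\#\mathcal{P}$ for the complement. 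Hence
$$\frac1n H_\zeta(\mathcal{P}^n\mid \mathcal{Q}^{E_n^{M,m}}) \le \frac1n\sum_Q \zeta(Q)\log\#\Psi_Q + \frac{\log N(\rho,d)}{n} + O\big(\iota'\log\#\mathcal{P} + H(2\iota')\big) + o(1).$$

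Taking $\limsup$ in $n$ and then letting $\iota'\to 0$ yields the claim. The main obstacle is the second step: making the "bounded number of $\mathcal{P}^n$-atoms per Bowen ball" count uniform in $n$. The plan is to handle it exactly as in \cite{B}, through the small-boundary neighbourhood and the ergodic-frequency control. If this is too lossy, an alternative is to replace $\mathcal{P}$ at the outset by a partition with small boundary, which changes the entropy by at most $\iota$.
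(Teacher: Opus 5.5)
Your proposal is correct and follows the same route as the argument the paper relies on; the paper gives no proof of its own and defers to Lemma 9 of \cite{B}. That argument is the one you give: cut each $\theta\in\Psi_Q$ into boundedly many pieces, independently of $n$, each of small $n$-dynamical diameter; use $\nu(\partial\mathcal{P})=0$ and the ergodic theorem to bound the visits to a neighbourhood of $\partial\mathcal{P}$ on a set of $\zeta$-measure tending to $1$; and pay at most $\zeta(\text{bad set})\log\#\mathcal{P}$ per unit time on the complement. As a minor simplification, if you fix one good point in each piece, its own set of bad times already controls the itineraries of all points of that piece, so the factor $\binom{n}{2\iota' n}$ is unnecessary (though harmless).
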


Thus, the main statement of this section is the following.
This is an analog of Proposition 1 from \cite{B} and Lemma 3.2 from \cite{DL}. 
We prove it in the next subsection.
For any Borel set $A \subset \M$ such that $\nu(A) > 0$, we denote by $\nu_A$ the normalized restriction of $\nu$ to $A$.

\begin{lemma}
\label{lem:repabound}
Let $q,m, M,n \in \mathbb{N}$ be large enough.
There exist $\gamma_{n,M,m,q}(f) > 0$ and $\varepsilon_q > 0$ satisfying the following.
Let $\mathcal{R}_q = \mathcal{R}_{q, \varepsilon_q}$.
Let $F_n$ be the intersection between an atom of $\mathcal{R}_q^{E_n^{M,m}}$ and $\mathtt{F}_{\nu}$.
Then $F_n$ is covered by a family $\Psi_{F_n}$ of continuous maps $\theta: [0,1]^d \to \M$ that are $C^1$ on $(0,1)^d$ and such that for any $k \in [\![ 0, n-1 ]\!]$ we have $|| d(f^k \circ \theta) ||_{\infty} \leq 1$, and such that
\begin{align*}
\frac{1}{n}\log \sharp \Psi_{F_n}\leq &\left( 1-\frac{\# E^{M,m}_n}{n} \right)C(f) + \frac{d}{r} \log ||df ||_{\infty}\\
&+\frac{d}{r-1}\left( \int \frac{\log^+| \Jac f^q|}{q}d\xi^{M,m,q}_{n,\nu,F_n}-\int\frac{\log|\Jac f^q|}{q}d\xi^{M,m,q}_{n,\nu,F_n} \right)\\
&+\gamma_{n,M,m,q}(f)
\end{align*}
where $\xi_{n,\nu, F_n}^{M,m,q}$ is the measure $\xi_{\rho, n}^{M,m,q}$ with $\rho = \nu_{F_n}$ and $\gamma_{n,M,m,q}(f)$ satisfies for any $K>0$
$$
\limsup\limits_{q \to +\infty} \limsup\limits_{M \to +\infty} \limsup\limits_{n \to +\infty} \sup \{ \gamma_{n,M,m,q}(f) : f : \M \to \M \text{ is } C^r \text{ and } || df ||_{\infty} \leq K \} = 0.
$$
\end{lemma}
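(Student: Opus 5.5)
We build the family $\Psi_{F_n}$ inductively along the orbit segment $[\![0,n[\![$. The idea is to treat hyperbolic blocks and neutral times differently. At each time $k$ we keep a family of maps $\theta:[0,1]^d\to\M$ with $\|d(f^j\circ\theta)\|_\infty\le 1$ for $j\le k$ whose images cover $F_n$. Passing from $k$ to $k+1$, we apply the non-invertible one-step reparametrization lemma (Lemma \ref{lem:iter}, the analogue of Lemma 4.1 of Ben Ovadia–Burguet) to each $f^k\circ\theta$. Concatenating these steps over the orbit gives the semi-local reparametrization Lemma \ref{lem:repara}, in the spirit of Burguet \cite{B}. The main counting work is to estimate how many new charts each step creates.

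\emph{Neutral times, $k\notin E_n^{M,m}$.} We do not use the Jacobian. The Yomdin bound in dimension $d$ lets us cut each $C^r$ chart, after rescaling, into $\|df\|_\infty^{d/r}$-many pieces times a constant. We also have to keep the norm of $d(f^{k+1}\circ\theta)$ at most $1$. Since $\theta$ only needs to contract in the $d-1$ directions complementary to the best expanded one, this costs at most $\|df\|^{d-1}$ per step, up to subexponential factors; this produces the term $C(f)$ (morally $(d-1)\log\|df\|_\infty$) times the density of neutral times. The $\frac{d}{r}\log\|df\|_\infty$ term absorbs the Yomdin cost at every time.

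\emph{Hyperbolic blocks $[\![a_j,c_j[\![$.} We work with steps of length $q$ and use the partition $\mathcal{R}_q$. On an atom, $\log|\Jac f^q|$ varies by at most $1/q$ up to distortion, and $\varepsilon_q$ is chosen so small that distortion is controlled at scale $\varepsilon_q$ (Lemma \ref{lem:prop-hb}, item 3, keeps the orbit uniformly away from $C(f)$ there). The $q$-step reparametrization then costs about $\exp\big(\frac{d}{r-1}(\log^+|\Jac f^q|-\log|\Jac f^q|)\big)$ charts, through the semi-algebraic/Yomdin estimate of the $C^r$ size of $f^q\circ\theta$ relative to the volume expansion. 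Averaging these costs over the times of $E_n^{M,m,q}$ gives exactly the integrals against $\xi^{M,m,q}_{n,\nu,F_n}$. At hyperbolic times the Pliss condition on $\psi$, together with $\delta<\frac{r-1}{r+d-1}\log 2$, guarantees the expansion needed to reset the charts to bounded couples without further cost.

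\emph{Error terms and the main obstacle.} The remaining costs are collected in $\gamma_{n,M,m,q}(f)$. They come from the boundaries $\partial E_n^{M,m}$, which number at most $2(n+M)/M$ by Lemma \ref{lem:EnM}; from the transitions between block and neutral regimes, where at most $M$ steps each have $\log$-cost bounded by $\|df\|_\infty$; and from the $1/q$ oscillation and distortion. Every such constant depends on $f$ only through $\|df\|_\infty$ and the $C^r$ bound, so the required uniform limit holds. The hardest step is the hyperbolic-block estimate in dimension $d$. The intersection of a ball with a bounded couple (Definition \ref{def:bounded-couples}) is not nicely shaped, so controlling the number of subcharts needed to keep derivatives at most $1$ requires a careful Yomdin-type covering of that intersection. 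It is this covering that forces the $\frac{d}{r-1}$ factor instead of the one-dimensional constant, and we would try to handle it first using a Gromov–Yomdin algebraic lemma applied to the Taylor polynomial of $f^q\circ\theta$.
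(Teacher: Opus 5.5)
\textbf{The neutral-time step has a genuine gap.} Your overall architecture does match the paper's proof. The paper also iterates Lemma \ref{lem:iter} along the times $\mathcal{A}_n$, and pays $\frac{d}{r}\log\|df^{b_j}\|_\infty$ per step, which sums to $\frac{d}{r}n\log\|df\|_\infty$. It converts the $q$-step costs $e^{\frac{d}{r-1}k'}$ on $\widetilde E_n^{M,m,q}$ into the $\xi^{M,m,q}_{n,\nu,F_n}$-integrals via Lemma \ref{lem:q-control}, which needs a pigeonhole choice of residue class $c$. Boundary effects go into $\gamma_{n,M,m,q}$. The problem is at neutral times, which is exactly where the $(d-1)$ in $C(f)$, and hence the SEE mechanism, comes from.

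You justify a cost of $\|df\|_\infty^{d-1}$ per neutral step by saying that $\theta$ only needs to contract in $d-1$ directions, while explicitly not using the Jacobian. This does not work. The number of unit-scale pieces needed to cover the rescaled image of a chart is at least its volume. If a neutral step expands by $\|df\|_\infty$ in every direction, nothing in your argument prevents $\asymp\|df\|_\infty^d$ pieces. A $(d-1)$-dimensional count therefore requires an \emph{upper bound on the volume} of the image, which your plan never provides.

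\textbf{How the paper gets the volume bound.} Let $K_x$ be the last hyperbolic time before a neutral time $a_{i+1}$. Then there is $j\in[\![K_x,a_{i+1}]\!]$ with $\psi^{a_{i+1}-j}(f^jx)<\delta(a_{i+1}-j)$. The intermediate reparametrizations contract volume by $\frac1C e^{-\frac{dk'}{r-1}}\|df\|_\infty^{-d/r}$ per step (item 4 of Lemma \ref{lem:repara}). One also uses the identity $\log t-\frac{d}{r-1}\log^-t=\frac{r+d-1}{r-1}\bigl(\log t-\frac{d}{r+d-1}\log^+t\bigr)$. Together these give $\|\Jac(f^{a_{i+1}}\circ\sigma\circ\theta_i\circ\psi)\|_\infty\le\varepsilon_q^d/C_d$.

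This is why $\psi$ has its specific coefficients, and it is where $\delta\le\frac{r-1}{r+d-1}\log 2$ is used. So you have misplaced that bound: it is not what resets charts at hyperbolic times. On $E_n^{M,m}$ the reset is free, because the atom of $\M(\varepsilon_q)$ already puts $f^{a_{i+1}}F$ in a ball of size $\varepsilon_q$, and the algebraic lemma then gives $C_{r,d}$ pieces.

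Once the volume is small, only $O(\#\Psi)$ interior unit cubes meet the rescaled image. The boundary cubes are counted by $\max_{l\le d-1}\|\Lambda^l d\overline\sigma\|_\infty$ (Lemma C.1 of \cite{BB}), which gives the $(d-1)\log\|df\|_\infty$.

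\textbf{What else this forces.} The volume argument needs bounded couples at every step. So the Jacobian cost $e^{\frac{d}{r-1}k'_{a_i}}$ of Lemma \ref{lem:iter} must also be paid at neutral times, and $k'$ is unbounded near $C(f)$. The paper controls this in two ways:
\begin{itemize}
\item[1)] Before $m_n$, Lemma \ref{lem:firstterm} applies: each neutral stretch ends at a hyperbolic time, so $\sum\psi\ge 0$ on it. This produces the $\frac{d^2}{r}\log\|df\|_\infty$ part of $C(f)$.
\item[2)] After $m_n$, the paper uses $m_n^+$. This is why $F_n$ is intersected with $\mathtt{F}_\nu$.
\end{itemize}
The paper must also split $F_n$ into the sets $F_n^{(\mathbf{k},\overline{E},m_n)}$, which fix the Jacobian levels fed to Lemma \ref{lem:iter}. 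Their number is not subexponential: by Lemma \ref{lem:combinatorics} it contributes $\bigl(1-\frac{\#E_n^{M,m}}{n}\bigr)\bigl(1+2\log(1+\log\|\Jac f\|_\infty)\bigr)$ to $C(f)$, which becomes negligible only after passing to iterates $f^p$. None of this appears in your plan.

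\textbf{Two smaller misidentifications.}
\begin{itemize}
\item[1)] The factor $\frac{d}{r-1}$ comes from Step 3 of Lemma \ref{lem:iter}: an affine contraction by $e^{-a^-/(r-1)}$ in each of $d$ directions, so that $d^{r-1}$ of the Jacobian becomes small relative to $e^a$. It does not come from the geometry of a ball intersected with a bounded couple. The paper handles that geometry crudely in Step 6, and it shows up as the $\frac{d}{r}\log\|df\|_\infty$ term.
\item[2)] $\varepsilon_q$ is chosen through $\varepsilon_0(f^k)$ for $k\le q$ and through $\|df\|_\infty^{-k}$, not through the distance to $C(f)$ from item 3) of Lemma \ref{lem:prop-hb}. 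That distance depends on $M$, and using it would make $\mathcal{R}_q$ depend on $M$.
\end{itemize}
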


\subsubsection{Semi-local Reparametrization lemma}

This section contains the main difference between our proof and those in \cite{B} \cite{DL}.
The main statement of this section is Lemma \ref{lem:repara}, which is a high-dimensional version of Lemma 15 from \cite{B} and Lemma 3.8 from \cite{DL}.
These were obtained by iterating one-dimensional results obtained by using Yomdin theory, respectively Lemma 12 in \cite{B} and Lemma 3.5 in \cite{DL}.
Here, we obtain Lemma \ref{lem:repara} by iterating Lemma \ref{lem:iter}.
The latter is an adaptation of Lemma 4.3 from \cite{BB} for non-invertible maps with arbitrary critical points.\\

We first state some definitions and results from \cite{BB}.\\

For $r \geq 0$, when we consider possible orders of differentiation, we denote by $[1,r]$ the set $[\![1 , \lfloor r \rfloor ]\!] \cup \{r \}$, we define $[0,r]$ the same way and we denote by $(1,r]$ the set $[\![2 , \lfloor r \rfloor ]\!] \cup \{r \}$.

\begin{definition}[Bounded couple, Definition 3.1 from \cite{BB}]
\label{def:bounded-couples}
A couple $(\sigma, \theta)$ of $C^r$-maps $\sigma : [0,1]^d \to \M$ and $\theta : [0,1]^d \to [0,1]^d$ is a bounded couple if the restriction $\theta : (0,1)^d \to (0,1)^d$ is a homeomorphism onto its image and if
$$
\forall s \in [1, r-1], || d^s \left (t \mapsto \Jac_{\theta(t)} \sigma \right ) ||_{\infty} \leq \frac{1}{10 d} || \Jac_{\theta (\cdot)} \sigma ||_{\infty}.
$$
We say that it is $\varepsilon$-bounded if additionally $|| d(\sigma \circ \theta) ||_{r-1} \leq \varepsilon$.
\end{definition}

Bounded couples satisfy the following distortion inequality.

\begin{lemma}[Distortion inequality, Lemma 3.4 from \cite{BB}]
\label{lem:dist-dim}
Let $(\sigma, \theta)$ be a bounded couple.
For any $s,t \in [0,1]^d$, we have
$$
\frac{|\Jac_{\theta(t)} \sigma|}{|\Jac_{\theta(s)} \sigma|} \leq \sqrt{2}.
$$
\end{lemma}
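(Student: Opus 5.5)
The plan is to reduce to a one-variable Taylor estimate for the Jacobian along straight segments in the parameter cube. Write $J(t) := \Jac_{\theta(t)}\sigma$, viewed as a function $[0,1]^d \to \bbR$; this is well defined and $C^{r-1}$ since $\sigma\circ\theta$ is built from $C^r$ maps. Set $\|J\|_\infty := \sup_{t}|J(t)|$. The case $s=1$ of the bounded-couple condition (Definition \ref{def:bounded-couples}) gives $\|dJ\|_\infty \leq \frac{1}{10d}\|J\|_\infty$. This is the only derivative bound I intend to use.

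First, compare values with the supremum. Let $t^*$ be a point of $[0,1]^d$ where $|J|$ attains, or nearly attains, $\|J\|_\infty$. For arbitrary $s$, join $s$ to $t^*$ by the straight segment in $[0,1]^d$; the cube is convex, so the segment stays inside. The length of this segment is at most $\diam([0,1]^d)=\sqrt d$. The mean value inequality then gives
$$|J(s)-J(t^*)| \leq \sqrt d\,\|dJ\|_\infty \leq \frac{\sqrt d}{10 d}\|J\|_\infty \leq \frac{1}{10}\|J\|_\infty.$$
If the paper's norm on $d J$ is the sum of partials, so that it is the $\ell^1$ dual norm, I would instead use the bound $\sum_i|t_i-s_i|\le d$. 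This gives $|J(s)-J(t^*)|\le\frac{1}{10}\|J\|_\infty$ in the same way. This is exactly why the constant $\frac{1}{10d}$ appears.

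Consequently $|J(s)| \geq \frac{9}{10}\|J\|_\infty$ for every $s$. In particular $J$ does not vanish, which we need so that the ratio makes sense, unless $J\equiv 0$; that degenerate case I would exclude or treat as trivial. On the other side we have $|J(t)|\leq \|J\|_\infty$ for every $t$. Therefore
$$\frac{|J(t)|}{|J(s)|}\leq \frac{10}{9}<\sqrt2.$$

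I do not expect any of these steps to be a real obstacle. The only delicate points are bookkeeping ones. One is fixing the convention for $\|d^1 J\|$, which determines whether we get the factor $\sqrt d$ or the factor $d$. The other is justifying differentiability of $t\mapsto J(t)$ up to the boundary of the cube, when $\theta$ is only a homeomorphism on the interior. To handle the boundary, I would apply the argument on $(0,1)^d$ and then pass to the closed cube by continuity.
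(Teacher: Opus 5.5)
Your argument is correct. The paper gives no proof of its own and only cites Lemma 3.4 of \cite{BB}; your mean-value comparison of $|J(s)|$ with $\sup|J|$ is the natural argument behind that bound, and it even gives the sharper ratio $10/9$. The one case to patch is $1<r<2$: there $1\notin[1,r-1]$, so the hypothesis bounds only the $(r-1)$-H\"older constant of $J$ and not $dJ$. Use $|J(s)-J(t^*)|\le\frac{1}{10d}\|J\|_\infty\,\|s-t^*\|^{r-1}\le\frac{1}{10}\|J\|_\infty$ instead, and the rest of your argument goes through unchanged.
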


We say that a set is semialgebraic if it is defined by polynomial inequalities.

\begin{lemma}[Algebraic Lemma]
\label{lem:AL}
\cite{SB_1985-1986__28__225_0,Burguet2008,PilaWilkie,BinyaminiNovikov}
Let $P : [0,1]^d \to \bbR^d$ be a polynomial map with $\deg(P) \leq r$.
Let $Y \subset \bbR^d$ be a bounded semialgebraic set of $\bbR^d$.
Then there is a constant $B_{r,d}$ depending only on $r,d,\deg(Y), \diam(Y)$ and semialgebraic analytic injective maps $( \theta_i : (0,1)^d \to (0,1)^d)_{i \in I}$ such that
\begin{itemize}
\item[1)] $\# I \leq B_{r,d}$,
\item[2)] $||  d \theta_i ||_{r-1}, || d(P \circ \theta_i) ||_{r-1} \leq \frac{1}{100 d}$,
\item[3)] $\bigsqcup\limits_{i \in I} \im(\theta_i) = P^{-1}(Y)$, the equality and disjoint union hold modulo sets of dimension at most $d-1$.
\end{itemize}
\end{lemma}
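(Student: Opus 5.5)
This Algebraic Lemma is the standard Yomdin–Gromov parametrization result. I would assemble it from the cited sources rather than reprove it from scratch. The plan has four steps.

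\textbf{Step 1: set up the semialgebraic family.} Consider the set $X = P^{-1}(Y) \cap (0,1)^d$, a semialgebraic subset of $(0,1)^d$. Its complexity, meaning the number and degrees of the defining polynomials, is bounded in terms of $r$, $d$ and $\deg(Y)$ only. Also consider the graph $\Gamma = \{(t, P(t)) : t \in X\} \subset \bbR^{2d}$. Because $\diam(Y)$ is bounded, $\Gamma$ lies in a box of size controlled by $\diam(Y)$ and $\|P\|$ on $[0,1]^d$. Here I would first normalise $P$ by rescaling the target, replacing $P$ with $P/\lambda$ and $Y$ with $Y/\lambda$, so that $\Gamma$ sits in a unit box.

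\textbf{Step 2: apply the $C^r$-parametrization theorem.} Apply the Yomdin–Gromov algebraic lemma, in the form of \cite{Burguet2008} or the uniform version of \cite{PilaWilkie, BinyaminiNovikov}, to the $d$-dimensional cells of $\Gamma$. This produces a number $B$ of maps $\phi_i : (0,1)^{d} \to \Gamma$, with $B$ depending only on $r$, $d$ and the complexity of $\Gamma$. These maps are semialgebraic, analytic on $(0,1)^d$ and injective, their images cover $\Gamma$ up to a set of dimension at most $d-1$, and they satisfy $\|d\phi_i\|_{r-1} \le 1$. Injectivity and disjointness of the images come from first taking a cylindrical cell decomposition and parametrizing each open cell separately; lower-dimensional cells are discarded, which produces exactly the "modulo dimension $d-1$" clause in item 3). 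Writing $\phi_i = (\theta_i, P\circ\theta_i)$, where $\theta_i$ is the first-coordinate projection, gives maps $\theta_i$ that are injective, have disjoint images, and together cover $X$ up to dimension $d-1$.

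\textbf{Step 3: shrink the constants.} To go from the bound $1$ to $\frac{1}{100d}$, precompose each $\theta_i$ with the affine contractions onto the subcubes of a grid of side $1/N$, with $N$ fixed depending on $d$ and $r$ (for instance $N = 100d$). Derivatives of order $s \ge 1$ then scale by $N^{-s} \le N^{-1}$, so both $\|d\theta_i\|_{r-1}$ and $\|d(P\circ\theta_i)\|_{r-1}$ become at most $\frac{1}{100d}$. This multiplies the number of maps by $N^d$, so the final count $B_{r,d}$ still depends only on $r$, $d$, $\deg(Y)$ and $\diam(Y)$. The subcubes meet only along faces, which are sets of dimension at most $d-1$, so disjointness modulo such sets is preserved.

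\textbf{Main obstacle.} The delicate point is the rescaling in Step 1. Normalising $P$ introduces a dependence on $\|P\|$, which must not enter the constant. I would try first to use the form of the algebraic lemma in which the bounds depend only on the degrees. In that form one parametrizes the bounded piece $X\subset(0,1)^d$, and $P\circ\theta_i$ is controlled because $P\circ\theta_i$ takes values in the bounded set $Y$. The sharp versions of Pila–Wilkie and Binyamini–Novikov give derivative bounds for bounded semialgebraic graphs with no dependence on coefficients. If that fails, the fallback is to accept the dependence on $\diam(Y)$ through the size of the box containing $\Gamma$, which is exactly what the statement allows.
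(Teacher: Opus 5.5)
Your route is the standard derivation behind the references the paper cites; the paper gives no proof of its own. You apply the Yomdin--Gromov lemma for sets to the graph of $P$ over $P^{-1}(Y)$, project to the first factor, and shrink by affine contractions. The approach is therefore the same, but three points need fixing.

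First, the ``main obstacle'' does not exist. For $t\in X$ you have $P(t)\in Y$, so $\Gamma\subset(0,1)^d\times Y$. Translate the target by some $y_0\in Y$; this is harmless, since item 2) only constrains derivatives of order $\geq 1$ of $P\circ\theta_i$. Then divide by $\lambda:=2\max(1,\diam(Y))$ and shift by $\tfrac12$. The graph now lies in $[0,1]^{2d}$, and its complexity is bounded in terms of $d$, $r$ and $\deg(Y)$, since affine changes of coordinates do not change it. So $\|P\|$ never enters.

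Your ``first try'' of parametrizing $X$ alone would not work. Knowing that $P\circ\theta_i$ takes values in a bounded set gives no control on its derivatives, which is exactly why the graph has to be parametrized, as in your Step 2.

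Second, Step 3 is wrong as written. After the rescaling, the algebraic lemma only gives $\|d(P\circ\theta_i)\|_{r-1}\leq\lambda$, so a grid of side $1/(100d)$ is not enough. You need contractions of ratio $1/N$ with $N=\lceil 100d\lambda\rceil$. This is precisely where $\diam(Y)$ enters the constant: $\#I\leq B(d,r,\deg Y)\,N^d$.

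Third, a cylindrical decomposition only makes the images coming from different cells disjoint. The maps produced for a single cell may overlap. Disjointness modulo sets of dimension at most $d-1$ there must come from the version of the lemma you invoke, whose constructions parametrize pieces of a refined decomposition by homeomorphisms. It cannot be obtained from the initial cell decomposition alone.
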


Notice that each $\theta_i$ can be continuously extended to $[0,1]^d$, since $||d \theta_i ||_{\infty} \leq 1$.

\begin{lemma}[General Leibniz rule]
\label{Leibniz}
Let $U \subset \bbR^d$ be an open set.
Let $f,g : U \to \bbR$ be $C^r$-maps with $r > 1$.
We have 
$$
|| f \times g ||_r \leq C_{r,d} || f ||_r \times || g ||_r
$$
where $C_{r,d}$ is a constant depending only on $r$ and on the dimension $d$.
\end{lemma}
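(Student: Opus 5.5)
The plan is to expand every derivative of order up to $r$ of the product by the classical multivariate Leibniz formula, and then treat the fractional (H\"older) part separately. Throughout, $\|\cdot\|_r$ is the $C^r$ norm from the preliminaries: the maximum of the sup norms of the derivatives of order $k \leq \lfloor r \rfloor$ together with the $(r-\lfloor r\rfloor)$-H\"older constant of $d^{\lfloor r\rfloor}$.

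\textbf{Integer orders.} For a multi-index $\alpha$ with $|\alpha| = k \leq \lfloor r \rfloor$, the classical formula gives
$$
\partial^\alpha (fg) = \sum_{\gamma \leq \alpha} \binom{\alpha}{\gamma}\, \partial^\gamma f \, \partial^{\alpha-\gamma} g .
$$
Each term is bounded in sup norm by $\binom{\alpha}{\gamma}\|f\|_r\|g\|_r$. The number of multi-indices, and the size of the binomial coefficients, depend only on $k \leq r$ and on $d$. Passing from partial derivatives to the norm of the multilinear map $d^k(fg)$ costs another factor that depends only on $k$ and $d$. So $\|d^k(fg)\|_\infty \leq C_{r,d}\|f\|_r\|g\|_r$ for every $k\leq\lfloor r\rfloor$.

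\textbf{H\"older part.} When $r$ is not an integer, write $s = r-\lfloor r\rfloor$. Each term $\partial^\gamma f\,\partial^{\alpha-\gamma}g$ with $|\alpha| = \lfloor r \rfloor$ is handled by the elementary identity
$$
uv(x)-uv(y)=u(x)\bigl(v(x)-v(y)\bigr)+\bigl(u(x)-u(y)\bigr)v(y).
$$
Its $s$-H\"older constant is therefore at most $\|u\|_\infty [v]_s + [u]_s\|v\|_\infty$.

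\textbf{Main obstacle.} The difficulty is that one factor may be a derivative of order strictly less than $\lfloor r\rfloor$, whose $s$-H\"older constant is not directly part of the norm. I would control it through the next derivative: if $\partial^\gamma f$ has order $j < \lfloor r \rfloor$, then it is $C^1$ with derivative bounded by $\|f\|_r$. For $|x-y| \leq 1$ this gives
$$
|\partial^\gamma f(x)-\partial^\gamma f(y)| \leq \|f\|_r\,|x-y| \leq \|f\|_r\,|x-y|^s .
$$
For $|x-y|>1$ the crude bound $2\|f\|_r$ suffices. On a general open $U$ the mean value step is valid only along segments contained in $U$. In the paper's applications $U$ is convex, namely a cube $(0,1)^d$, or one can work locally on balls. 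I would state this convexity, or the local version, explicitly.

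\textbf{Conclusion.} Summing the finitely many contributions from the integer orders and the H\"older part gives $\|fg\|_r \leq C_{r,d}\|f\|_r\|g\|_r$, with $C_{r,d}$ depending only on $r$ and $d$.
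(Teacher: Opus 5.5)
The paper gives no proof of this lemma and quotes it as a standard fact. Your argument is the standard one and is correct: the multivariate Leibniz formula handles all orders $\le \lfloor r\rfloor$, the splitting $uv(x)-uv(y)=u(x)(v(x)-v(y))+(u(x)-u(y))v(y)$ handles the top-order H\"older seminorm, and lower-order factors are controlled by the next derivative together with the crude bound for $|x-y|>1$.

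Your convexity caveat is not a mere technicality. With the global H\"older constant, the statement as written is false for an arbitrary open $U$.
\begin{itemize}
\item On $U=(0,1)\cup(1,2)\subset\bbR$ with $r\in(1,2)$, take $f=1_{(1,2)}$ and $g(x)=x$. Then $\|f\|_r=1$ and $\|g\|_r=2$. However, $(fg)'=1_{(1,2)}$ has infinite $(r-1)$-H\"older constant on $U$, because it jumps across the gap at $1$.
\item A connected counterexample is the slit annulus $\{(x,y): 1<x^2+y^2<4\}$ with $(0,\infty)\times\{0\}$ removed, taking $f$ the polar angle and $g(x,y)=y$. Here $d(fg)$ jumps by $2\pi\,dy$ across the slit, while $\|f\|_r,\|g\|_r<\infty$.
\end{itemize}

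So the lemma should be stated for convex $U$, or with a local H\"older seminorm, or with a constant depending on $U$; this is exactly your repair. It costs nothing in the paper, since the lemma is only applied to functions of $t$ on the cube $(0,1)^d$.
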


\begin{lemma}[Fa{\'a} di Bruno's formula, \cite{EM_FDB}]
\label{FDB}
Let $U \subset \bbR^n$ and $V \subset \bbR^m$ be open sets.
Let $g : U \to V$ and $f : V \to \bbR^d$ be $C^r$-maps with $r > 1$.
Assume that $|| dg ||_{r-1} \leq 1$, then we have
$$
|| d (f \circ g)||_{r-1} \leq C_r || df ||_{r-1} || dg ||_{r-1}
$$
where $C_r$ is a constant depending only on $r$.
\end{lemma}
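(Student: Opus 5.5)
The plan is to prove the Faà di Bruno estimate by induction on the integer part of $r$, working with the chain rule for the first derivative. Write $d(f\circ g)_x = d_{g(x)}f\circ d_xg$, that is, $d(f\circ g) = (df\circ g)\cdot dg$. This exhibits $d(f\circ g)$ as a product, in the matrix sense, of the two matrix-valued maps $df\circ g$ and $dg$. The general Leibniz rule (Lemma \ref{Leibniz}) then gives
$$
\| d(f\circ g)\|_{r-1}\leq C_{r-1,n}\,\| df\circ g\|_{r-1}\,\| dg\|_{r-1},
$$
after applying it entrywise and summing over the finitely many entries, which only changes the constant by a factor depending on the dimensions. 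So everything reduces to the claim
$$
\|df\circ g\|_{r-1}\leq C\,\|df\|_{r-1} \quad\text{whenever } \|dg\|_{r-1}\leq 1 .
$$
This is a composition estimate of order $r-1$, with $F:=df$ in place of $f$.

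I will prove the claim by induction on $\lfloor r\rfloor$.

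\textbf{Base case} ($1<r<2$, so $r-1=\beta\in(0,1)$). Here $\|F\circ g\|_{\beta}$ consists of the sup norm, bounded by $\|F\|_\infty$, plus the $\beta$-Hölder constant. The Hölder constant is bounded by $\mathrm{Hol}_\beta(F)\cdot\mathrm{Lip}(g)^\beta\leq \mathrm{Hol}_\beta(F)$, since $\|dg\|_\infty\leq 1$ makes $g$ $1$-Lipschitz. If $U$ is not convex, I will either accept the convention that Hölder norms are taken along segments, as is implicit in the paper's setting on $(0,1)^d$, or assume $U$ is convex. The case $r$ an integer is handled the same way using sup norms only.

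\textbf{Inductive step} ($r\geq 2$). The $C^{r-1}$ norm of $F\circ g$ is the maximum of $\|F\circ g\|_\infty\leq \|F\|_\infty$ and $\|d(F\circ g)\|_{r-2}$. Now $d(F\circ g)=(dF\circ g)\cdot dg$. By Leibniz again this is bounded by
$$
C\,\|dF\circ g\|_{r-2}\,\|dg\|_{r-2}\leq C\,\|dF\circ g\|_{r-2},
$$
using $\|dg\|_{r-2}\leq\|dg\|_{r-1}\leq 1$. The induction hypothesis at order $r-1$, applied to $dF$ (which is legitimate since $\|dg\|_{r-2}\leq 1$), gives $\|dF\circ g\|_{r-2}\leq C\|dF\|_{r-2}\leq C\|F\|_{r-1}$. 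This closes the induction, with constants depending only on $r$ and the dimensions.

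The main obstacle is bookkeeping rather than ideas: matching the paper's slightly nonstandard definition of $\|\cdot\|_r$ (the maximum over $\|\cdot\|_k$ and the Hölder constant of the top derivative) across the induction, and handling fractional versus integer $r$ uniformly. I would first fix the convention that $\|h\|_s=\max_{0\le k\le\lfloor s\rfloor}\|d^kh\|_\infty$ together with the $(s-\lfloor s\rfloor)$-Hölder constant of $d^{\lfloor s\rfloor}h$. Then I would check that Lemma \ref{Leibniz} holds in this form for matrix-valued maps; the dimension dependence of the constant is harmless. The hypothesis $\|dg\|_{r-1}\leq1$ is used only to keep $g$ $1$-Lipschitz and to absorb the $dg$ factors, which is why no powers of $\|dg\|$ appear in the final bound.
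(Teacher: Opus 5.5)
Your argument is correct, but it takes a different route from the one the paper relies on. The paper gives no proof and simply cites Fa\`a di Bruno's formula, that is, the explicit expansion of $d^k(f\circ g)$ as a sum, over set partitions $\{B_1,\dots,B_j\}$ of $\{1,\dots,k\}$, of the terms $(d^jf\circ g)[d^{|B_1|}g,\dots,d^{|B_j|}g]$. The bound is then read off term by term: each term is at most $\|d^jf\|_\infty\prod_i\|d^{|B_i|}g\|_\infty\le\|df\|_{r-1}\|dg\|_{r-1}$, since $\|dg\|_{r-1}\le 1$ absorbs all but one $g$-factor, and a H\"older estimate handles the top-order terms.

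You instead use only the first-order chain rule, Lemma~\ref{Leibniz} and an induction on $\lfloor r\rfloor$. This avoids the combinatorics and shows exactly where $\|dg\|_{r-1}\le 1$ enters: it absorbs the extra $dg$ factors, and it makes $g$ $1$-Lipschitz.

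The explicit formula, with operator norms on multilinear maps, yields a constant depending only on $r$, as stated. Your entrywise use of Lemma~\ref{Leibniz} makes the constant depend on $n,m,d$ as well. This is harmless, since the paper only ever uses this lemma inside $\lesssim$ with constants $C_{r,d}$. It can also be avoided by applying the product rule directly to the bilinear pairing $(A,B)\mapsto A\circ B$ with operator norms.

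Two minor remarks. First, you apply Lemma~\ref{Leibniz} at orders $r-1$ and $r-2$, which may lie in $[0,1]$, outside its stated range $r>1$; the product estimate there is elementary, so this is only bookkeeping. Second, your convexity caveat is genuinely needed, not just a convenience. With global H\"older seminorms the statement fails on non-convex $U$: take a slit annulus and $g$ a rescaled angular coordinate, which sends nearby points on the two sides of the slit far apart. Both routes use that $g$ is $1$-Lipschitz. In the paper the inner map is always defined on $(0,1)^d$, so nothing downstream is affected.
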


Let $g$ be an iterate of $f$. 
The input of the following lemma is an $\varepsilon$-bounded couple $(\sigma, \theta)$ and the output is a family $\Phi$ such that each $(g \circ \sigma, \theta \circ \phi)$ is a bounded couple, for $\phi \in \Phi$ (item $3)$).
Additionally, the cardinality $\#\Phi$ is controlled (item $5)$) and the set $\bigcup\limits_{\phi \in \Phi} \im(\sigma \circ \theta \circ \phi)$ covers the set where the Jacobian of $g$ takes values in some fixed interval $[a, e \times a]$ (item $2)$).
Let us first state the lemma, then give an idea of its proof before giving the complete proof.\\


\begin{lemma}
\label{lem:iter}
Let $p \in \bbN^*$ and $g = f^p$.
There exists $\varepsilon_0(g) > 0$ satisfying the following.
Let $\varepsilon \in (0, \varepsilon_0)$ and $(\sigma, \theta)$ be an $\varepsilon$-bounded couple.
There exists a chart $\Gamma : \bbR^d \to \M$ such that for any $a \in \bbZ$, there exists a family $\Phi$ of maps $(\phi: [0,1]^d \to [0,1]^d)$ such that
\begin{itemize}
\item[1)] each $\phi$ is the continuous extension of an analytic homeomorphism from $(0,1)^d$ onto its image,
\item[2)] $(\sigma \circ \theta)^{-1}\{ x : \lfloor \log |\Jac_x g| \rfloor = a \} \subset \bigcup\limits_{\phi} im(\phi)$,
\item[3)] $(\Gamma^{-1} \circ g \circ \sigma, \theta \circ \phi)$ is a bounded couple,
\item[4)] $|| d \phi ||_{\infty} \leq \frac{1}{C_{r,d}} e^{-\frac{a^-}{r-1}}|| dg ||^{-1/r}$, where $C_{r,d}$ here is chosen at the end of the proof of Lemma \ref{lem:repara}.
\item[5)] $\# \Phi \leq C_{r,d} e^{\frac{d a^-}{r-1}}|| dg ||_{\infty}^{d/r}$,
\item[6)] $|| d^r (\Gamma^{-1} \circ g \circ \sigma \circ \theta \circ \phi) ||_{\infty} \leq \frac{1}{C_{r,d}}$,
\item[7)] $\Gamma = \widetilde{\Gamma} \left ( \varepsilon \times \cdot \right )$ with $|| d \widetilde{\Gamma}^{\pm 1} ||_{r-1} \leq C$ where $C$ is a universal constant,
\item[8)] the quantity $\varepsilon_0(g)$ depends only on the $C^r$ norm of $g$.
\end{itemize}
\end{lemma}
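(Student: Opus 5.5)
The plan follows the Yomdin-type scheme of Lemma 4.3 in \cite{BB}, adapted to the non-invertible setting. The Jacobian of $g$ is allowed to vanish, but only on the region where $\lfloor \log|\Jac g|\rfloor = a$.

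\textbf{Step 1: the chart and the rescaling.} Let $\sigma\circ\theta$ be $\varepsilon$-bounded with $\varepsilon<\varepsilon_0(g)$. Its image is contained in a ball of radius $\lesssim\varepsilon$ around a point $x_0=\sigma\theta(0)$. We take $\widetilde\Gamma$ to be an exponential chart at $g(x_0)$, so that $\Gamma=\widetilde\Gamma(\varepsilon\,\cdot)$ and item $7)$ holds. The constant $\varepsilon_0$ is chosen, depending only on $\|g\|_r$, so that $g$ maps the $\varepsilon$-ball into the chart domain. This gives item $8)$.

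We then write $G:=\Gamma^{-1}\circ g\circ\widetilde\Gamma_0(\varepsilon\,\cdot)$ in local coordinates. The rescaling gives
$$\|d^sG\|_\infty\lesssim \varepsilon^{s-1}\|d^s g\|_\infty \quad\text{for } s\in(1,r],$$
so all higher derivatives become small once $\varepsilon$ is small.

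\textbf{Step 2: reparametrizing by $r$-th Taylor polynomials.} We subdivide $[0,1]^d$ into subcubes of side $\rho$. On each subcube, $F:=G\circ(\sigma\theta)$ has $r$-th derivative of size $\lesssim\rho^r\|dg\|$ after affine rescaling. We choose
$$\rho\simeq e^{-\frac{a^-}{r-1}}\|dg\|_\infty^{-1/r}.$$
This choice makes the $r$-th derivative of the rescaled map at most $e^{-a^-}\cdot\frac{1}{C_{r,d}}$, that is, small compared with the target Jacobian $e^{a}$. The number of subcubes is $\rho^{-d}$, which gives item $5)$ up to constants. The affine maps, composed with the later reparametrizations, give item $4)$.

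On each subcube we replace $F$ by its Taylor polynomial $P$ of degree $\lfloor r\rfloor$. We then apply Lemma \ref{lem:AL} twice.

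First, we apply it to the semialgebraic set where $\Jac P$ lies in a slightly enlarged interval $[e^{a-1},e^{a+2}]$. This is possible because $\Jac P$ is polynomial, and it requires working with the augmented map $t\mapsto(P(t),\Jac P(t))$.

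Second, we apply it so that the derivatives of $\Jac_{\theta\phi(\cdot)}(\Gamma^{-1} g\sigma)$ are controlled after composition. Here we use Lemma \ref{FDB} and Lemma \ref{Leibniz}.

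The number of pieces per subcube is bounded by $B_{r,d}$, which is absorbed into $C_{r,d}$. Item $2)$ follows because the Taylor error is smaller than the gap between $e^a$ and the enlarged interval. Item $1)$ holds because the $\theta_i$ from Lemma \ref{lem:AL} are analytic homeomorphisms.

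\textbf{Step 3: the bounded couple condition (item $3)$).} We need
$$\|d^s(\Jac_{\theta\phi(\cdot)}\,\Gamma^{-1}g\sigma)\|_\infty\le \tfrac{1}{10d}\,\|\Jac\|_\infty \quad\text{for } s\in[1,r-1].$$
The Jacobian is multiplicative:
$$\Jac(\Gamma^{-1} g\sigma)=\Jac_{\sigma}(\Gamma^{-1}g)\cdot\Jac\sigma .$$
The factor $\Jac\sigma$ already satisfies the bound, because $(\sigma,\theta)$ is bounded and $\phi$ is a contraction. The factor $\Jac_{\sigma\theta\phi}(\Gamma^{-1}g)$ has size about $e^{a}$ on the piece, and its derivatives are controlled by the polynomial approximation together with the small $r$-th derivative. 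We combine the two factors using Lemma \ref{Leibniz}.

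Item $6)$ is the same estimate applied to $d^r$ of the composition, again from the choice of $\rho$.

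\textbf{Main obstacle.} The hardest step is item $3)$ near the critical set. When $a\ll 0$, the Jacobian is tiny, so the relative bound on its derivatives requires the extra factor $e^{-a^-/(r-1)}$ in $\rho$. Checking that this exponent is exactly what the Taylor remainder for $\Jac$ needs is delicate: $\Jac$ involves $dF$, so one loses one derivative, which is why $r-1$ appears. The first thing to try is to apply the algebraic lemma to $\Jac P$ directly, viewed as a degree-$\le d\lfloor r\rfloor$ polynomial.
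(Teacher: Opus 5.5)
There is a genuine gap at the core Yomdin step. You polynomially approximate the \emph{map} $F=G\circ\Sigma\circ\theta$ and work with $\Jac P$. But the bounded-couple condition and item $2)$ concern the \emph{scalar function} $\Omega(t)=\Jac_{\theta(t)}(G\circ\Sigma)=\Jac_{\Sigma\theta(t)}G\cdot\Jac_{\theta(t)}\Sigma$, where $\Sigma$ is $\sigma$ read in the rescaled source chart. That is, the Jacobian of $\sigma$ evaluated along $\theta$, not the Jacobian of a composition. This breaks your argument twice.

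\begin{itemize}
\item[(i)] $\Jac(F\circ\psi)=\rho^d\,(\Omega\circ\psi)\,(\Jac\theta\circ\psi)$ contains the factor $\Jac\theta$, on which a bounded couple gives no control. So the set $\{\Jac P\in[e^{a-1},e^{a+2}]\}$ has no relation to $(\sigma\circ\theta)^{-1}\{\lfloor\log|\Jac g|\rfloor=a\}$.
\item[(ii)] Even for $\theta=\mathrm{id}$, multilinearity of the determinant only gives $|\Jac(F\circ\psi)-\Jac P|\lesssim\|d^r(F\circ\psi)\|_\infty\|d(F\circ\psi)\|_\infty^{d-1}\lesssim\rho^{r+d-1}\|dg\|_\infty^d$. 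Relative to $\rho^de^a$, with your $\rho$ and $a\le 0$, this is $\rho^{r-1}\|dg\|_\infty^de^{-a}=\|dg\|_\infty^{d-1+1/r}$, which is not small.
\end{itemize}
So item $2)$ does not follow. Your ``second application'' of Lemma \ref{lem:AL} also has no specified input.

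What is needed is the following sequence.
\begin{itemize}
\item[1.] Impose on $\varepsilon_0$ that the rescaled $\Jac G$ has derivatives of orders $1,\dots,r-1$ bounded by $1$. This is a condition on derivatives of $\Jac g$, and it is absent from your Step 1.
\item[2.] Deduce $\|d^{r-1}\Omega\|_\infty\lesssim e^{a^+}\inf|\Jac_{\theta(\cdot)}\Sigma|$ near the level set.
\item[3.] Subdivide at scale $e^{-a^-/(r-1)}$ only.
\item[4.] Take the Taylor polynomial of $\Omega\circ\psi$ of degree $\lceil r\rceil-2$ and normalize it by $e^a\inf|\Jac_{\theta(\cdot)}\Sigma|$.
\item[5.] Apply Lemma \ref{lem:AL} to it with $Y$ the shell $\{\frac{1}{2C_{r,d}}\le|y|\le 2C_{r,d}\}$.
\end{itemize}
The algebraic lemma bounds all derivatives of the normalized polynomial composed with $\xi$. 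That is what controls the intermediate derivatives of the Jacobian relative to $e^a$. Affine subdivision alone would need scale $e^{-a^-}$, at a cost of $e^{da^-}$ pieces.

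Second, item $6)$ fails with your ordering. Merging both contractions into a single affine subdivision at scale $\rho$ \emph{before} the algebraic lemma loses the $\rho^r$ gain. The maps $\xi$ of Lemma \ref{lem:AL} only satisfy $\|d\xi\|_{r-1}\le\frac{1}{100d}$. So $d^r(F\circ\psi\circ\xi)$ contains the term $d(F\circ\psi)\circ\xi\cdot d^r\xi$, which is bounded only by $\rho\|dg\|_\infty$, not by $\rho^r\|dg\|_\infty$.

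The $\|dg\|_\infty^{-1/r}$ contraction must come last:
\begin{itemize}
\item[1.] After the algebraic lemma you still have $\|d^k(\Sigma\circ\theta\circ\psi\circ\xi)\|_\infty\lesssim1$ for $k\in[1,r]$.
\item[2.] Lemma \ref{FDB}, together with $\|d^sG\|_\infty\le1$ for $s>1$, then gives $\|d^r(G\circ\Sigma\circ\theta\circ\psi\circ\xi)\|_\infty\lesssim\|dg\|_\infty$.
\item[3.] Composing with affine maps $\gamma$ of ratio $(C_{r,d}\|dg\|_\infty)^{-1/r}$ multiplies $d^r$ exactly by $\|d\gamma\|^r$.
\end{itemize}
This yields items $4)$--$6)$ with the same total count $e^{da^-/(r-1)}\|dg\|_\infty^{d/r}$ you were aiming for.
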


As explained in the introduction of this subsection, we adapt the proof of Lemma 4.3 from \cite{BB}. We explain the strategy.

In \hyperref[RL:step1]{\textbf{Step 1}}, we chose $\varepsilon > 0$ such that, after a change of scale $\varepsilon$, the high order derivatives of $g$ and of its Jacobian are small compared to $1$.

In \hyperref[RL:step2]{\textbf{Step 2}}, we explicit the choice of a chart $\Gamma$ so that the $\varepsilon$-bounded couple $(\sigma, \theta)$ gives a $1$-bounded couple $(\varepsilon^{-1} \Gamma^{-1} \circ \sigma, \theta \circ \varphi)$, where $\varphi$'s are affine contractions whose rate depends only on $r$ and $d$ (these will in fact not appear since we will do all the calculations up to a constant depending on $r$ and $d$).

In \hyperref[RL:step3]{\textbf{Step 3}}, we show that the high order derivatives of the Jacobian of $g \circ \sigma$ are small compared to $e^{a^+} \times || \Jac \sigma ||_{\infty}$, where $a^+ = \max(a, 0)$ and $a$ is approximately the value of the Jacobian of $g$.
Then, we compose by the main affine contraction to make the high order derivatives of the Jacobian of $g \circ \sigma$ small compared to $e^a \times || \Jac \sigma ||_{\infty}$.
One of the main features of Yomdin theory is that we only need to make the $r$-th order derivative small, and not all the orders.
Thus, the cost in cardinality of these affine reparametrizations is $e^{d \frac{a^-}{r-1}}$.

In \hyperref[RL:step4]{\textbf{Step 4}}, we approach the Jacobian of $g \circ \sigma$ by a polynomial, to which we apply the Algebraic Lemma \ref{lem:AL}.
This gives a family of reparametrizations, we show that they cover the desired set.

In \hyperref[RL:step5]{\textbf{Step 5}}, we show that the reparametrizations from \hyperref[RL:step4]{\textbf{Step 4}} give a family of maps $\Delta : [0,1]^d \to [0,1]^d$ such that each $(\Gamma^{-1} \circ g \circ \sigma, \theta \circ \Delta )$ is a bounded couple.

In \hyperref[RL:step6]{\textbf{Step 6}}, we compose by a last affine contraction to obtain item $6)$.\\

We do all the calculations up to a constant depending on $r$ and $d$.
By composing by additional affine contractions, one can always make this constant disappear without changing the final bound.
We will write $X \lesssim Y$ when the inequality $X \leq C_{r,d} Y$ holds for some constant $C_{r,d}$ depending only on $r$ and $d$.

\begin{proof}
\textbf{Step 1:} Choice of $\varepsilon_0(g)$.
\phantomsection
\label{RL:step1}\\

Let $r' = \min(r,2)$.
Let $\varepsilon_0(g,r,d,\M) > 0$ be small enough so that
\begin{itemize}
\item[i)] $\varepsilon_0 < r_{inj}(\M) / || dg ||_{\infty}$, where $r_{inj}(\M)$ is the injectivity radius of $\M$,
\item[ii)] $\varepsilon_0 < \left ( C_{r,d} \max\limits_{k \in [ 0 , r-1 ]} || d^s (\Jac g) ||_{\infty} \right )^{-1}$,
\item[iii)] $\varepsilon_0 < \left( C_{r,d} \max\limits_{k \in [1 , r]} || d^s g ||_{\infty} \right)^{-\frac{1}{r'-1}}$,
\item[iv)] $\varepsilon_0 < \left( C_{r,d} || dg ||_{\infty}^{d-1} || d^{r'} g ||_{\infty} \right)^{-\frac{1}{r'-1}}$,
\end{itemize}
where $C_{r,d}$ is chosen in the rest of the proof. In particular, we choose it such that the following holds for any $0 < \varepsilon < \varepsilon_0$:
\begin{itemize}
\item[1)] for any local chart $\alpha : \bbR^d \to U$ where $\diam(U) < \varepsilon$, the set $g(U)$ is contained in a local chart,
\item[2)] if $\alpha, U$ are as in $1)$ and if $\beta : \bbR^d \to V$ is a local chart such that $g(U) \subset V$ and $G = \varepsilon^{-1} \beta^{-1} \circ g \circ \alpha(\varepsilon \cdot)$, then for any $k \in [1,r]$ we have
$
|| d^k (\Jac G) ||_{\infty} \leq 1,
$
\item[3)] with the notations of $2)$, we have $|| d G ||_{\infty} \leq C_{r,d} || dg ||_{\infty}$ and for any $k \in ( 1,r ]$ we have $|| d^k G ||_{\infty} \leq 1$.
\end{itemize}
\vspace{1em}

\textbf{Step 2:} Defining the chart $\Gamma$.
\phantomsection
\label{RL:step2}\\

We introduce some notations, illustrated in Figure \ref{fig-lem-iter}.
Let $U \subset \M$ be a local chart containing $\im (\sigma \circ \theta)$.
Let $V \subset \M$ be a local chart containing $\im (g \circ \sigma \circ \theta)$.
Let $\alpha : \bbR^d \to U$ and $\widetilde{\Gamma} : \bbR^d \to V$ be associated charts.
We may assume that their $C^r$ norm and that of their Jacobian are bounded by some universal constant $C$.
Let $\Sigma := \varepsilon^{-1} \times (\alpha^{-1} \circ \sigma) : \sigma^{-1}(U) \subset [0,1]^d \to \bbR^d$.
Let $G = \varepsilon^{-1} \times \widetilde{\Gamma}^{-1} \circ g \circ \alpha ( \varepsilon \cdot) : 
(\alpha \circ \varepsilon)^{-1}(U) \subset \bbR^d \to \bbR^d$.
Hence, if we let $\Gamma^{-1} = \varepsilon^{-1} \times \widetilde{\Gamma}^{-1} : V \to \bbR^d$, we have $G \circ \Sigma = \Gamma^{-1} \circ g \circ \sigma : \sigma^{-1}(U) \to \bbR^d$.\\

\begin{figure}[h!]
\centering
\includegraphics[width=0.9\linewidth]{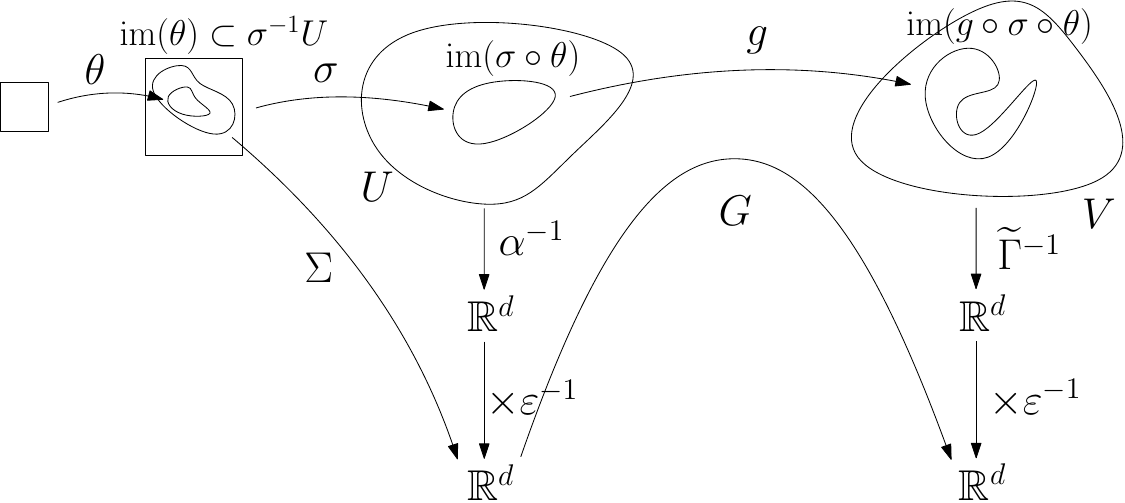}
\vspace{0.5em}
\caption{Notations for Step 2 of Lemma \ref{lem:iter}}
\label{fig-lem-iter}
\vspace{1.5em}
\end{figure}

For $t \in [0,1]^d$, we let $\Omega(t) = \Jac_{\theta(t)} (G \circ \Sigma)$.
Let $J : \begin{cases}
(\alpha \circ \varepsilon)^{-1}(U) \subset \bbR^d \to \bbR\\
x \mapsto \Jac_x G
\end{cases}$.
Item $2)$ from \hyperref[RL:step1]{\textbf{Step 1}} gives
$$
\forall k \in [1, r-1], || d^k J ||_{\infty}\leq 1.
$$
From the fact that $(\sigma, \theta)$ is $\varepsilon$-bounded and by using Lemma \ref{FDB}, we have for any $s \in [1,r]$
$$
|| d^s (\Sigma \circ \theta) ||_{\infty} = \frac{1}{\varepsilon} || d^s (\alpha^{-1} \circ \sigma \circ \theta) ||_{\infty} \lesssim 1.
$$
Therefore, Lemma \ref{FDB} gives
\begin{align*}
|| d^s (\Jac_{\Sigma \circ \theta( \cdot)} G) ||_{\infty}
&= || d^s (J \circ \Sigma \circ \theta) ||_{\infty}\\
&\lesssim \left ( \max\limits_{k \in [ 1 , r-1 ] } || d^k J ||_{\infty} \right) \times \left ( \max\limits_{k \in [ 1 , r-1 ] } || d^k (\Sigma \circ \theta) ||_{\infty} \right )\\
&\lesssim 1.
\end{align*}
We now show that $(\Sigma, \theta)$ is bounded:
\begin{align*}
|| d^s ( (\Jac \Sigma) \circ \theta) ||_{\infty}
&= \varepsilon^{-d} || d^s [ (\Jac_{\sigma \circ \theta} \alpha^{-1}) \times (\Jac_{\theta} \sigma)] ||_{\infty}\\
\text{Lemma \ref{Leibniz}} \hspace{1em} &\lesssim \varepsilon^{-d} \left (\max_{k \in [ 1 , r-1 ]} || d^k [(\Jac\sigma) \circ\theta] \right)\\
(\sigma, \theta) \text{ is a bounded couple}\hspace{1em}&\lesssim \varepsilon^{-d} || (\Jac \sigma) \circ \theta ||_{\infty}\\
&\lesssim || \left (\Jac (\varepsilon^{-1} \times \alpha^{-1} \circ \sigma)\right ) \circ \theta||_{\infty}\\
&\lesssim \frac{1}{10d} || (\Jac \Sigma) \circ \theta||_{\infty}.
\end{align*}
Thus, for any $k \in [ 1 , r-1 ]$, Lemma \ref{lem:dist-dim} gives
\begin{align*}
|| d^k (\Jac_{\theta (\cdot)} \Sigma) ||_{\infty}
\leq || \Jac_{\theta (\cdot)} \Sigma ||_{\infty}
\leq \sqrt{2} \inf \left | \Jac_{\theta (\cdot)} \Sigma \right |.
\end{align*}
\vspace{1em}

\textbf{Step 3:} Main affine reparametrization.
\phantomsection
\label{RL:step3}
\vspace{1em}

From the inequalities in \hyperref[RL:step2]{\textbf{Step 2}} and item $2$ from \hyperref[RL:step1]{\textbf{Step 1}}, we have
\begin{align*}
|| d^{r-1} \Omega||_{\infty} 
&= || d^{r-1} \left ( (\Jac(G \circ \Sigma)) \circ \theta \right)  ||_{\infty}\\
\text{Lemma \ref{Leibniz}}\hspace{1em}&\lesssim \left ( \max\limits_{k \in [ 0 , r-1 ]} || d^k \left ( \Jac_{\Sigma \circ \theta (\cdot)} G \right ) ||_{\infty} \right ) \times \left ( \max\limits_{k \in [ 0 , r-1 ]} || d^k \left ( \Jac_{\theta ( \cdot )} \Sigma \right ) ||_{\infty} \right )\\
&\lesssim \max ( || \Jac_{\Sigma \circ \theta} G ||_{\infty}, 1) \inf \left | \Jac_{\theta (\cdot)} \Sigma \right |.
\end{align*}
Recall that $r' = \min(r,2)$.
Then for $s,t \in [0,1]^d$, we have
\begin{align*}
|\Jac_{\Sigma \circ \theta (s)} G - \Jac_{\Sigma \circ \theta(s)} G)|
\leq ||s-t||^{r' - 1}.
\end{align*}
Therefore, for any $y \in \im (\Sigma \circ \theta)$, we have
$$
|| d^{r-1} \Omega ||_{\infty} \lesssim \max \left ( |\Jac_y G| + \sqrt{d}, 1 \right ) \inf \left | \Jac_{\theta (\cdot)} \Sigma \right |.
$$
To simplify notations, we let $\inf \Sigma = \inf \left | \Jac_{\theta (\cdot)} \Sigma \right |$.
Let
$$
H(a) = \{ t \in [0,1]^d : \log |\Jac_{\Sigma \circ \theta (t)} G| \in a + \left [ - C_{r,d} , C_{r,d}\right] \}
$$
where $C_{r,d}$ is such that $(\sigma \circ \theta)^{-1}(\{ x : \lfloor \log |\Jac_x g |\rfloor = a \}) \subset H(a)$.
Hence taking $y \in H(a)$ gives
$$
|| d^{r-1} \Omega ||_{\infty} \lesssim \frac{1}{10} e^{a^+} \inf \Sigma.
$$
Then, we consider a family $\Psi = (\psi : [0,1]^d \to [0,1]^d)$ of affine maps such that
\begin{itemize}
\item[i)] for any $\psi \in \Psi$, $|| d \psi ||_{\infty} \leq e^{-\frac{a^-}{r-1}}$,
\item[ii)] for any $\psi \in \Psi$, we have $\im(\psi) \cap H(a) \neq \emptyset$,
\item[iii)] the union $\bigcup\limits_{\psi \in \Psi} \im(\psi)$ contains $H(a)$,
\item[iv)] $\#\Psi \leq e^{\frac{d a^-}{r-1}}$.
\end{itemize}


Thus, for any 
$\psi \in \Psi$, we have:
\begin{align*}
|| d^{r-1} (\Omega \circ \psi) ||_{\infty} \leq \frac{1}{10 C_{r,d}} e^a \inf \Sigma.
\end{align*}
\vspace{1em}

\textbf{Step 4:} Polynomial interpolation.
\phantomsection
\label{RL:step4}
\vspace{1em}

Let 
$\psi \in \Psi$.
Let $P_{\psi}$ be the interpolation polynomial of $\Omega \circ \psi$ at $0$ of degree $\lceil r \rceil-2$, and we let $\widetilde{P_{\psi}} = \frac{P_{\psi}}{e^a \inf \Sigma}$.
We let $Y = B(0, 2C_{r,d}) \backslash B\left(0, \frac{1}{2 C_{r,d}}\right)$, with $C_{r,d}$ chosen below, and we apply the Algebraic Lemma \ref{lem:AL} to $\widetilde{P}_{\psi}$ and $Y$, which gives us a family $\left ( \xi_i \right )_{i \in I(\psi)}$ such that
$$
\bigsqcup\limits_{ i \in I(\psi)} \im(\xi_i) = \widetilde{P_{\psi}}^{-1}(Y)
$$


We only keep the $\xi_i$'s such that $\im(\psi \circ \xi_i) \cap H(a) \neq \emptyset$.
We show that the union $\bigcup\limits_{\psi \in \Psi} \bigcup\limits_{j \in I(\psi)} \im(\psi \circ \xi_j)$ covers $H(a)$.
For $t \in H(a)$, we have
\begin{align*}
| \Omega (t) |
&= | \Jac_{\theta (t)} (G \circ \Sigma) |\\
&= | \Jac_{\Sigma \circ \theta (t)} G | \times | \Jac_{\theta(t)} \Sigma |\\
&\in e^a \inf \Sigma \times \left [ \frac{1}{C_{r,d}} , C_{r,d} \right ].
\end{align*}
Then, by definition of $\Psi$, there exist $\psi \in \Psi$ and $s \in [0,1]^d$ such that $\psi(s) = t$.
Therefore we have
\begin{align*}
|\widetilde{P_{\psi}}(s)|
&= \left|\frac{P_{\psi}(s)}{e^a \inf \Sigma}\right|\\
&\in \frac{|\Omega \circ \psi(s)|}{e^a \inf \Sigma} + \left [ \frac{- 1}{10 C_{r,d}}, \frac{1}{10 C_{r,d}} \right ]\\
&\subset \left [ \frac{9}{10 C_{r,d}} , \frac{11}{10} C_{r,d} \right ]\\
&\subset Y.
\end{align*}
\vspace{1em}

\textbf{Step 5:} Proof of the bounded property.
\phantomsection
\label{RL:step5}
\vspace{1em}

Let $\psi \in \Psi$ and $\xi$ be in $(\xi_i)_{i \in I(\psi)}$.
Let $\Delta = \theta \circ \psi \circ \xi$.
We show that $(G \circ \Sigma, \Delta)$ is bounded.
For $k \in [ 1 , r-1 ]$ we have
\begin{align*}
|| d^k (\Jac_{\Delta(\cdot)} (G \circ \Sigma)) ||_{\infty}
&= || d^k (\Omega \circ \psi \circ \xi) ||_{\infty}\\
&\leq || d^k (P_{\psi} \circ \xi) ||_{\infty} + || d^k ((\Omega \circ \psi - P_{\psi})\circ \xi) ||_{\infty}\\
&\leq \frac{1}{100d} e^a \inf \Sigma
+\frac{1}{100 d} || d^{r-1} (\Omega \circ \psi) ||_{\infty}\\
&\leq \frac{2}{100d} e^a \inf \Sigma.
\end{align*}
Then, let $t \in \im(\psi \circ \xi) \cap H(a)$, which is nonempty by construction.
Thus $\log |\Jac_{\Sigma \circ \theta(t)} G| \in a + [- C_{r,d}, C_{r,d}]$ and
\begin{align*}
|\Jac_{\Delta(t)}(G \circ \Sigma)| &= |\Jac_{\Sigma \circ \Delta(t)} G| \times |\Jac_{\Delta(t)} \Sigma| \gtrsim e^a \inf \Sigma.
\end{align*}
Therefore
$$
|| d^k (\Jac_{\Delta(\cdot)} (G \circ \Sigma)) ||_{\infty} \lesssim \frac{1}{10 d} || \Jac_{\Delta( \cdot )}(G \circ \Sigma)||_{\infty}.
$$
\vspace{1em}

\textbf{Step 6:} Final affine reparametrization for item $6)$.
\phantomsection
\label{RL:step6}
\vspace{1em}

By item $3)$ from \hyperref[RL:step1]{\textbf{Step 1}}, we have $|| d^s G ||_{\infty} \leq 1$ for $s > 1$ and $|| d G ||_{\infty} \leq C_{r,d} || dg ||_{\infty}$, and \hyperref[RL:step2]{\textbf{Step 2}} gave for any $k \in [1 , r]$
\begin{align*}
|| d^k (\Sigma \circ \Delta) ||_{\infty}
\lesssim 1.
\end{align*}
Hence Lemma \ref{FDB} gives
\begin{align*}
|| d^r (G \circ \Sigma \circ \Delta) ||_{\infty}
&\lesssim || dg ||_{\infty}.
\end{align*}

Let $(\gamma_i : [0,1]^d \to [0,1]^d)_{i \in N_1}$ be a family of affine maps such that
\begin{itemize}
\item[i)] for any $i \in N_1$, $|| d \gamma_i ||_{\infty} = (C_{r,d} || d g ||_{\infty})^{-1/r}$,
\item[ii)] $\bigsqcup\limits_{i \in N_1} \im (\gamma_i ) = [0,1]^d$,
\item[iii)] $\# N_1 \leq C_{r,d} ||dg ||_{\infty}^{d/r}$.
\end{itemize}
Let $\gamma$ be one of the $\gamma_i$.
We let $\phi = \Delta \circ \gamma$.
Hence $(G \circ \Sigma, \phi)$ is bounded and
$$
|| d^r (G \circ \Sigma \circ \phi) ||_{\infty} \leq 1/C_{r,d}.
$$
\end{proof}

For $k \in \bbN^*$, we define $\varepsilon(f,k)$ by induction by letting $\varepsilon(f,k) = \min\limits_{l < k} \left ( \varepsilon_0(f^k), \varepsilon(f,l), || df ||_{\infty}^{-k} \right )$.
Let $\varepsilon_q \leq \varepsilon(f,q)/4$.
Let $\mathcal{R}_q = \mathcal{Q}_q \vee \M(\varepsilon_q)$ as defined in Section \ref{ssec:def-partition}.
\\

Let $F_n$ be the intersection between an atom of $\mathcal{R}_q^{E_n^{M,m}}$ and $\mathtt{F}_{\nu}$.
Thus there exists $E_n^{M,m} \subset \bbN$ such that $E_n^{M,m}(x) = E_n^{M,m}$ for any $x \in F_n$.
We define the set $\widetilde{E}_n^{M,m,q} \subset \bbN$ to be the set $E_n^{M,m}$ to which we remove the last $q-1$ elements of each connected component.
We assume that $m \geq q$, so that for any $x \in F_n$, we have
$$
\widetilde{E}_n^{M,m,q} \subset E_n^{M,m} = E_n^{M,m,q}(x)
$$
these sets differ by at most $q \times \# \partial E_n^M / 2$ elements.
For $x \in \M$, $i,l \in \bbN$, we let
\begin{align*}
&k_{f^l, i+1}(x) = \lfloor \log |\Jac_{f^i x} f^l| \rfloor,\\
&k_{f^l, i+1}'(x) = \lfloor \log^- |\Jac_{f^i x} f^l| \rfloor.
\end{align*}

\begin{lemma}[Lemma 3.6 from \cite{DL}]
\label{lem:q-control}
There exists $c \in [\![ 0, q-1]\!]$ such that for any $x \in F_n$,
$$
\sum\limits_{l \in (c+q\bbN) \cap \widetilde{E}_n^{M,m,q}} k'_{f^q}(f^l x) \leq \frac{n}{q} + n \left ( \int \frac{\log^+ |\Jac f^q|}{q} d \xi_{n, \nu, F_n}^{M,m,q} - \int \frac{\log |\Jac f^q|}{q} d \xi_{n, \nu, F_n}^{M,m,q} \right ).
$$
\end{lemma}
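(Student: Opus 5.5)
The plan is a pigeonhole argument over the $q$ residue classes modulo $q$, followed by an averaging over $F_n$. Since $k'_{f^q}\le 0$ always ($\log^-$ is non-positive), the statement presumably means $k'$ as the integer part of $\log^-$ in absolute value, i.e. $k'_{f^q}(y)=\lfloor(\log|\Jac_y f^q|)^-\rfloor\ge0$ with $t^-=\max(-t,0)$. The relevant identity is then $t^-=t^+-t$, and the right-hand side is $n\int (\log^+-\log)|\Jac f^q|/q \, d\xi$, which is exactly the average of $(\log|\Jac f^q|)^-/q$.

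\textbf{Step 1: a single point.} First I will show, for a single $x\in F_n$, that
$$
\sum_{c=0}^{q-1}\sum_{l\in (c+q\bbN)\cap \widetilde{E}_n^{M,m,q}} k'_{f^q}(f^lx)=\sum_{l\in \widetilde{E}_n^{M,m,q}} k'_{f^q}(f^l x)\le \sum_{l\in \widetilde{E}_n^{M,m,q}}\left(\log|\Jac_{f^lx}f^q|\right)^-.
$$
The equality holds because the residue classes partition $\widetilde{E}_n^{M,m,q}$.

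\textbf{Step 2: uniformity over the atom.} The issue is that $c$ must not depend on $x$. Here I will use that $F_n$ lies in one atom of $\mathcal{R}_q^{E_n^{M,m}}$, and that $\mathcal{R}_q$ refines $\mathcal{Q}_q$, so $\log|\Jac f|$ is known up to $1/q$ at each time of $E_n^{M,m}$.

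For $l\in\widetilde{E}_n^{M,m,q}$, the times $l,\dots,l+q-1$ all lie in $E_n^{M,m}$; this is why the last $q-1$ elements of each component were removed. Hence
$$
\log|\Jac_{f^lx}f^q|=\sum_{j=0}^{q-1}\log|\Jac_{f^{l+j}x}f|
$$
varies by at most $q\cdot\frac1q=1$ over $x\in F_n$. Consequently $k'_{f^q}(f^lx)$ is, up to an additive error of at most $1$, independent of $x\in F_n$. Each class $c+q\bbN$ meets $[\![0,n[\![$ in about $n/q$ points. So for all $x\in F_n$ and each $c$,
$$
\sum_{l\in(c+q\bbN)\cap\widetilde{E}} k'_{f^q}(f^lx)\le \int \sum_{l\in(c+q\bbN)\cap\widetilde{E}} k'_{f^q}(f^ly)\,d\nu_{F_n}(y)+\frac nq+1.
$$
The extra $+1$ can be absorbed, or avoided by comparing with the infimum over $F_n$ rather than the mean.

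\textbf{Step 3: choosing $c$.} Pick $c$ minimizing the $\nu_{F_n}$-mean over $c$; this mean is at most $1/q$ of the full sum. Averaging Step 1 over $y$ gives
$$
\frac1q\int\sum_{l\in\widetilde E}\left(\log|\Jac_{f^ly}f^q|\right)^-d\nu_{F_n}\le \frac nq\int(\log|\Jac f^q|)^-\,d\xi^{M,m,q}_{n,\nu,F_n}.
$$
This uses $\widetilde E\subset E_n^{M,m,q}(y)$ together with the definition of $\xi^{M,m,q}_{n,\rho}$ as $\frac1n\int\sum_{i\in E}\delta_{f^iy}\,d\rho$. Writing $(\cdot)^-=\log^+-\log$ then yields exactly the stated right-hand side.

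\textbf{Main obstacle.} The delicate point is Step 2: the bookkeeping of the boundary and rounding errors so that they are absorbed into the single term $n/q$. Rounding via $\lfloor\cdot\rfloor$ helps, since $k'\le(\log)^-$. A careful choice, taking the infimum over $F_n$ and using a fixed $x_0\in F_n$ as reference, should make the per-$l$ error at most $1$. Summed over the about $n/q$ indices in a residue class, this gives precisely the term $n/q$.
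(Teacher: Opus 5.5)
Your argument is correct and is essentially the standard one behind the cited Lemma 3.6 of \cite{DL}; the paper itself gives no proof. The $1/q$-fineness of $\mathcal{Q}_q$ makes $\log|\Jac_{f^l x} f^q|$ vary by less than $1$ over $F_n$ for $l\in\widetilde{E}_n^{M,m,q}$, and averaging over the $q$ residue classes then selects $c$.

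The stray $+1$ is not removed by the infimum trick. It disappears once you note that $l+q-1\in E_n^{M,m}\subset[\![0,n-1]\!]$ forces $l\le n-q$, so each residue class contains at most $\lfloor n/q\rfloor\le n/q$ indices. Also, the paper's convention is already $\log^-=\max(-\log,0)\geq 0$, so $k'\ge 0$ needs no reinterpretation.
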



Let $c$ be given by Lemma \ref{lem:q-control}.
Define
\begin{align*}
&\partial_l E^{M,m}_n = E_n^{M,m} \backslash (E_n^{M,m} + 1),\\
&\mathcal{A}_n =\{ 0=a_1<a_2< ... <a_{\mathcal{m}} \} = \partial_l E^{M,m}_n\cup \left((c+q\mathbb{N})\cap[\![0,n-1 ]\!]\right)\cup \left([\![0,n-1]\!] \backslash E^{M,m}_n\right).
\end{align*}
Then, let $b_i =a_{i+1}-a_i \leq q$ for $i \in [\![ 1, {\mathcal{m}}-1 ]\!]$ and $b_{\mathcal{m}} =n-a_{\mathcal{m}} \leq q$.\\

Given ${\bf{k}}:=(k_l,k_l')_{l\in \mathcal{A}_n}$, $\overline{E}\subset [\![0,n-1 ]\!]$, $m_n\in [\![0,n - 1 ]\!]$, define $F_n^{({\bf{k}}, \overline{E}, m_n)}$ to be the set of $x\in F_n$ satisfying:
\begin{itemize}
    \item[1)] $\overline{E}=E_{\delta}(x) \cap [\![  0 , n- 1 ]\!] \setminus E^{M,m}_n$,
    \item[2)] $k_{a_i}=k_{f^{b_i}}(f^{a_i}x)$, $k'_{a_i}=k'_{f^{b_i}}(f^{a_i}x)$, for any $i \in [\![ 1 , {\mathcal{m}}-1 ]\!]$,
    \item[3)]$m_n(x)=m_n$.
\end{itemize}

The number of possible such sets is subexponential in $n$, as given by the following lemma. Indeed, the term $\log(\log ||\Jac f ||_{\infty})$ will be negligible when taking large iterates of $f$.

\begin{lemma}[Lemma 3.7 from \cite{DL}]
\label{lem:combinatorics}
    For every $m^2 \leq M\leq \frac{n}{2}$ and $M \geq 16$, we have
    $$
    \begin{aligned}
    \frac{1}{n} \log \# \{ ({\bf{k}}, \overline{E}, m_n): F_n^{({\bf{k}}, \overline{E}, m_n)} \neq \emptyset \} \leq &\frac{\log n}{n} + H\left ( \frac{2}{\sqrt{M}} \right ) + \frac{1}{q} + \frac{1}{M}
    + \sup\limits_{x \in \mathtt{F}_{\nu}} \frac{m_n^+(x)-n}{n}\\ &+ \left ( 1 - \frac{\# E_n^{M,m}}{n} \right )(1+2\log(1+\log || \Jac f ||_{\infty})).
    \end{aligned}
    $$
\end{lemma}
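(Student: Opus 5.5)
The plan is to count the possible data $(\mathbf{k},\overline{E},m_n)$ coordinate by coordinate and bound each count separately. Concretely, I would bound the cardinality by the product of three factors. The first is the number of choices of $m_n\in[\![0,n-1]\!]$. The second is the number of admissible sets $\overline{E}$. The third is the number of admissible sequences $\mathbf{k}=(k_l,k'_l)_{l\in\mathcal{A}_n}$ once $\overline{E}$ is fixed. Taking $\frac1n\log$ of this product gives a sum, and I would match each piece to one term of the claimed bound.

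\textbf{Step 1: the factor from $m_n$.} This factor is at most $n$, which gives the term $\frac{\log n}{n}$.

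\textbf{Step 2: the factor from $\overline{E}$.} The set $\overline{E}$ is the set of $\delta$-hyperbolic times of $x$ lying in $[\![0,n-1]\!]\setminus E_n^{M,m}$. Any two consecutive hyperbolic times that are at most $M$ apart are absorbed into $E_n^{M}$. Hence, outside the last stretch of length about $m$ in each block, the elements of $\overline{E}$ are more than $M$ apart, so $\#\overline{E}\lesssim n/M + m\,\#\partial E_n^{M,m}$. Lemma \ref{lem:EnM} gives $\#\partial E_n^{M,m}\le 2(n+M)/M$. With $m\le\sqrt M$, this yields $\#\overline{E}\lesssim 2n/\sqrt M$. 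Counting subsets of $[\![0,n-1]\!]$ of that size by the usual binomial estimate gives $nH(2/\sqrt M)$, plus the $\frac1M$ slack. The term $\sup_{x\in\mathtt F_\nu}\frac{m_n^+(x)-n}{n}$ accounts for hyperbolic times just past $n$ that are needed to determine $\overline{E}$ near the right end; uniformity on $\mathtt F_\nu$ is exactly what allows this.

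\textbf{Step 3: the factor from $\mathbf{k}$.} Here I would split $\mathcal{A}_n$ into two kinds of indices.

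The first kind is indices inside $E_n^{M,m}$. These come from $\partial_l E_n^{M,m}$ or from $c+q\bbN$. The contribution from $\partial_l E_n^{M,m}$ is negligible by Lemma \ref{lem:EnM}, and there are about $n/q$ indices from $c+q\bbN$. The key observation is that $F_n$ lies inside a single atom of $\mathcal{R}_q^{E_n^{M,m}}$, and $\mathcal{Q}_q$ records $\log|\Jac f|$ to precision $1/q$ at those times. So $\log|\Jac_{f^{a_i}x}f^{b_i}|$ is determined up to an additive error $\le b_i/q\le 1$, and each $(k_{a_i},k'_{a_i})$ takes $O(1)$ values. Raising a bounded number to the power $n/q$ gives the term $\frac1q$, after absorbing constants by working with an iterate if needed.

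The second kind is indices in $[\![0,n-1]\!]\setminus E_n^{M,m}$. There $b_i=1$, and $k_{a_i}$ can take any value in $(-\infty,\log\|\Jac f\|_\infty]$. Its value is unbounded below near critical points, and this is the main obstacle. Since $\log|\Jac f|$ is unbounded from below, I cannot simply count integer values. My plan is to encode $k$ in the variable-length form suggested by the bound: record $\lfloor\log(1+|k|)\rfloor$-type data, at a cost of about $2\log(1+\log\|\Jac f\|_\infty)$ per time, plus one sign/overflow bit, at a cost of $1$ per time. Times where $|k|$ exceeds this range would then need a separate argument. For those I would try to use the fact that they are non-hyperbolic times, so that the Pliss condition bounds $\sum k^-$ linearly by the number of such times. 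Failing that, I would fall back on the observation that the term is multiplied by $(1-\#E_n^{M,m}/n)$ and is explicitly described as negligible after passing to iterates.

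Summing the logarithms of the three factors and dividing by $n$ would then give the stated inequality.
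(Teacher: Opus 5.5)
Steps 1 and 2, and the part of Step 3 for indices in $E_n^{M,m}$, are essentially right. The part you flag as the main obstacle, however, carries the whole content of the lemma, and your proposal does not resolve it. At a time $l\in S:=[\![0,n-1]\!]\setminus E_n^{M,m}$ one has $b_l=1$, and $k'_l=\lfloor\log^-|\Jac_{f^lx}f|\rfloor$ can be arbitrarily large. So no per-time encoding, variable-length or not, gives a finite count. What you need is an aggregate bound on $\sum_{l\in S}k'_l$ that holds for every $x\in F_n$.

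The source you suggest for such a bound points the wrong way. A time that is \emph{not} $\delta$-hyperbolic only tells you that some Birkhoff sum of $\psi$ ending there is less than $\delta$ times its length. That is an upper bound, and it gives no lower bound on the Jacobian. The fallback to iterates is not available either. The remark before the lemma only says that $\log(1+\log\|\Jac f^p\|_\infty)$ is small compared with $p$, and that already presupposes the finite count.

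The missing idea is to use the hyperbolic time that \emph{closes} each gap. All components of $E_n^{M,m}$ lie in $[\![0,m_n[\![$, and their endpoints belong to $E_\delta(x)$ (Lemma \ref{lem:EnM}). Hence each maximal interval $[\![u,v[\![$ of $S\cap[\![0,m_n[\![$ ends at some $v\in E_\delta(x)$. The Pliss property at $v$, together with $\psi\le\log|\Jac f|$, gives $\sum_{i=u}^{v-1}\log|\Jac_{f^ix}f|\ge 0$. Therefore $\sum_{i=u}^{v-1}\log^-|\Jac_{f^ix}f|\le (v-u)L$, where $L=\log^+\|\Jac f\|_\infty$. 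The final piece $[\![m_n,n[\![\subset S$ is not closed inside $[\![0,n-1]\!]$, so you must run up to the next hyperbolic time $m_n^+(x)$. Altogether
$$
\sum_{l\in S}k'_l\le L\,(\#S+t),\qquad t:=\sup_{x\in\mathtt{F}_{\nu}}\bigl(m_n^+(x)-n\bigr).
$$
So the number of possible $(k'_l)_{l\in S}$ is at most $\binom{\lfloor L(\#S+t)\rfloor+\#S}{\#S}$. Using $\binom{a}{b}\le (ea/b)^b$, its logarithm is at most $\#S\,(1+\log(1+L))+t$. Given $k'_l$, the value $k_l$ takes at most $L+2$ values.

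This stars-and-bars count is what produces the factor $\bigl(1-\#E_n^{M,m}/n\bigr)\bigl(1+2\log(1+\log\|\Jac f\|_\infty)\bigr)$, up to bookkeeping of additive constants. It is also where the term $\sup_{x\in\mathtt{F}_{\nu}}(m_n^+(x)-n)/n$ comes from, not Step 2: $\overline{E}\subset[\![0,n-1]\!]$ requires no information beyond time $n$.
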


By iterating Lemma \ref{lem:iter}, we obtain the following adaptation of Lemma 15 from \cite{B} and of Lemma 3.8 from \cite{DL}.

\begin{lemma}[Semi-local reparametrization lemma]
\label{lem:repara}
Let $(k_{a_i}, k_{a_i}')_{i \in [\![ 1, \mathcal{m} ]\!]}$, $\overline{E} \subset [\![0, n-1 ]\!]$ and $m_n \in [\![ 0, n-1 ]\!]$.
Let $(\sigma, \theta)$ be an $\varepsilon_q$-bounded couple.
There exist families of maps $(\Theta_i)_{i \in [\![ 1, \mathcal{m} ]\!]}$ such that
\begin{itemize}
\item[1)] the families $\Theta_1, ..., \Theta_{\mathcal{m}}$ have a tree structure, where maps in $\Theta_{i+1}$ are children of maps in $\Theta_i$,
\item[2)] for any $i \leq \mathcal{m}$, $\sigma^{-1}(F_n^{({\bf{k}}, \overline{E}, m_n)}) \subset \bigcup\limits_{\theta \in \Theta_i} im(\theta)$,
\item[3)] for any $\theta \in \Theta_i$ and $j \leq i$, $(f^{a_j} \circ \sigma, \theta)$ is an $\varepsilon_q$-bounded couple,
\item[4)] for any $i \in [\![ 1, \mathcal{m} ]\!]$, $\theta \in \Theta_i$ and $j < i$, there exists $\theta_j^i \in \Theta_j$ such that $\theta = \theta_j^i \circ \theta_{j \to i}$ with
$$
||\Jac \theta_{j \to i}||_{\infty} \leq \prod\limits_{j \leq l < i} \left ( \frac{1}{C} e^{- \frac{d k_{a_l}'}{r-1}} || df^{b_l} ||_{\infty}^{-d/r} \right )
$$
where $C$ depends only on $r,d, \delta$ and is chosen at the end of the proof,
\item[5)] for any $i \in [\![ 1, \mathcal{m} ]\!]$,
$$
\# \Theta_i \leq \max\left ( 1, || df ||_{\infty}^d \right )^{\# \overline{E} \cap [\![ 1 , a_i ]\!]} \times \prod\limits_{1 \leq j \leq i} C_{r,d} \exp \left ( (d-1)\log ||df || + \frac{d}{r} \log || df^{b_j} ||_{\infty} + \frac{d k_{a_i}'}{r-1} \right )
$$
\end{itemize}
\end{lemma}

\begin{proof}
We let $F = F_n^{({\bf{k}}, \overline{E}, m_n)}$.
We do an induction on $i \leq \mathcal{m}$.
Assume that $\Theta_i$ has been built up to $i < \mathcal{m}$.
Let $\theta_i \in \Theta_i$, we define the children of $\theta_i$ in $\Theta_{i+1}$.
Let $\Psi$ be the family given by Lemma \ref{lem:iter} for the $\varepsilon_q$-bounded couple $(f^{a_i} \circ \sigma, \theta_i)$, $a=k_{a_i}$ and the map $g = f^{b_i}$, and let $\Gamma : \bbR^d \to \M$ be the associated local chart.
We indeed have $\varepsilon_q \leq \varepsilon_0(f^{b_i})$ since $b_i \leq q$.
Let $\Theta_{i+1}' = \{ \theta_i \circ \psi : \theta_i \in \Theta_i, \psi \in \Psi \}$ and $\theta_i \circ \psi \in \Theta_{i+1}'$.
Thus $(\Gamma^{-1} \circ f^{a_{i+1}} \circ \sigma, \theta_i \circ \psi)$ is a bounded couple.
To build $\Theta_{i+1}$, and obtain $\varepsilon_q$-bounded couples, we compose by a new family of maps.\\

\textbf{Case 1:} $a_{i+1} \in E_n^{M,m}$. Then the set $f^{a_{i+1}} F$ is contained in an atom of $\mathcal{R}_q$, which we recall has a diameter smaller than $\varepsilon_q$.
Thus, there is ball $\widetilde{B} \subset \bbR^d$ of diameter $D = C_{r,d}$ containing $\Gamma^{-1}(f^{a_{i+1}}F)$.
Indeed, item $7)$ from Lemma \ref{lem:iter} gives that $\Gamma = \widetilde{\Gamma}(\varepsilon_q \times \cdot)$ with $||d \widetilde{\Gamma}^{-1} ||_{\infty} \leq C$, where $C$ is a universal constant.
Let $B \subset \bbR^d$ be the ball with the same center as $\widetilde{B}$ but with twice the radius. 
Let $Q$ be the approximation polynomial of $\Gamma^{-1} \circ f^{a_{i+1}} \circ \sigma \circ \theta_i \circ \psi$ at $0$ of degree $\lceil r \rceil - 1$.
Therefore, item $6)$ from Lemma \ref{lem:iter} gives
\begin{align*}
|| Q - \Gamma^{-1} \circ f^{a_{i+1}} \circ \sigma \circ \theta_i \circ \psi ||_r
\leq || d^r \left(\Gamma^{-1} \circ f^{a_{i+1}} \circ \sigma \circ \theta_i \circ \psi \right ) ||_{\infty}
\leq \frac{1}{C_{r,d}}
\leq \frac{D}{2}.
\end{align*}
Let $(\xi_j)_{j \in J}$ be given by the Algebraic Lemma for $Q$ and $B$, so that $\bigcup\limits_{j \in J} im(\xi_j) = Q^{-1}B$ and $\# J \leq C_{r,d}$.
Therefore $(\Gamma^{-1} \circ f^{a_{i+1}} \circ \sigma \circ \theta_i \circ \psi)^{-1} \widetilde{B} \subset \bigcup\limits_{j \in J} im(\xi_j)$.
It remains to show that $(f^{a_{i+1}} \circ \sigma, \theta_i \circ \psi \circ \xi_j)$ is an $\varepsilon_q$-bounded couple.
We first show that it is a bounded couple.
For $s \in [1,r-1]$ we have
\begin{align*}
|| d^s \left ( \Jac_{\theta_i \circ \psi \circ \xi_j} \left ( f^{a_{i+1}} \circ \sigma \right ) \right ) ||_{\infty}
&= || d^s \left ( \Jac_{\theta_i \circ \psi \circ \xi_j} \left ( \Gamma \circ  \Gamma^{-1} \circ f^{a_{i+1}} \circ \sigma \right ) \right ) ||_{\infty}\\
\text{Lemmas } \ref{FDB} \text{ and } \ref{Leibniz} \hspace{1em}&\lesssim \varepsilon_q^d \max\limits_{l \in [0,r-1]} || d^l \left ( \Jac_{\theta_i \circ \psi \circ \xi_j} (\Gamma^{-1} \circ f^{a_{i+1}} \circ \sigma)\right ) ||_{\infty}\\
\text{$(\Gamma^{-1} \circ f^{a_{i+1}} \circ \sigma, \theta_i \circ \psi)$ is bounded} \hspace{1em}
&\lesssim \varepsilon_q^d || \Jac_{\theta_i \circ \psi} (\Gamma^{-1} \circ f^{a_{i+1}} \circ \sigma) ||_{\infty}\\
\text{Lemma } \ref{lem:dist-dim}\hspace{1em}&\lesssim \varepsilon_q^d || \Jac_{\theta_i \circ \psi \circ \xi_j} (\Gamma^{-1} \circ f^{a_{i+1}} \circ \sigma) ||_{\infty}\\
&\lesssim \frac{1}{10d} || \Jac_{\theta_i \circ \psi \circ \xi_j} (f^{a_{i+1}} \circ \sigma) ||_{\infty}
\end{align*}
Then we show that it is an $\varepsilon_q$-bounded couple.
The idea is to show that after the change of scale $\Gamma$, we have a unit size.
For $s \in [1,r]$, we have
\begin{align*}
|| d^s (f^{a_{i+1}} \circ \sigma \circ \theta_i \circ \psi \circ \xi_j) ||_{\infty}
&\leq || d^s \left(\Gamma \circ \Gamma^{-1} \circ f^{a_{i+1}} \circ \sigma \circ \theta_i \circ \psi \circ \xi_j \right ) ||_{\infty}\\
\text{Lemma \ref{FDB}} \hspace{1em}&\lesssim \varepsilon_q \max\limits_{l \in [1,r]} || d^l \left(\Gamma^{-1} \circ f^{a_{i+1}} \circ \sigma \circ \theta_i \circ \psi \circ \xi_j \right ) ||_{\infty}\\
&\lesssim \varepsilon_q \max\limits_{l \in [1,r]} \left ( || d^l ( Q \circ \xi_j ) ||_{\infty} \right.\\
&\hspace{1em}\left.+ || d^l \left ( Q \circ \xi_j - \Gamma^{-1} \circ f^{a_{i+1}} \circ \sigma \circ \theta_i \circ \psi \circ \xi_j \right ) ||_{\infty} \right )\\
&\lesssim \varepsilon_q \left ( \frac{1}{100d} + \frac{1}{ C_{r,d}}  \right )\\
&\leq \varepsilon_q.
\end{align*}

\textbf{Case 2:} $a_{i+1} \in \overline{E}$. We have $b_{i+1} = 1$, and given the expression of the bound in item $5)$, we may multiply the amount of required reparametrizations by $\max(1, || df ||_{\infty}^d)$. In particular, we simply cut $[0,1]^d$ into $\lceil || df ||_{\infty}^d \rceil$ pieces and reparametrize according to these pieces, which concludes.\\

\textbf{Case 3:} $a_{i+1} \notin E_n^{M,m} \cup \overline{E}$. We have $b_{i+1} = 1$.
For surface diffeomorphisms and one-dimensional maps, we could show that $f^{a_{i+1}} \circ \sigma \circ \theta_i \circ \psi$ was already $\varepsilon$-bounded.
We cannot expect the same thing here, since the ``neutral time'' condition does not imply ``no expansion''.
However, it implies ``expansion in at most $d-1$ directions''.
Therefore, we expect that the number of reparametrizations required to obtain $\varepsilon_q$-bounded couples will be bounded by a $(d-1)$-dimensional quantity, such as $(d-1) R(f)$.

Let $\overline{\sigma} = \Gamma^{-1} \circ f^{a_{i+1}} \circ \sigma \circ \theta_i \circ \psi$ and $Q$ be the polynomial approximation of $\overline{\sigma}$ of degree $\lceil r \rceil - 1$ at 0.
Consider a partition $\mathcal{C}$ of $\mathbb{R}^d$ into squares of side $1$.
For each $C \in \mathcal{C}$, we apply the Algebraic Lemma \ref{lem:AL} to $Q$ at $C$, and denote by $\Theta_C$ the associated family.
We consider two types of squares in $\mathcal{C}$: the ones near the border of the image of $\overline{\sigma}$ and the ones far from it, in the interior.
More precisely, let $\mathcal{C}_{out} = \{ C \in \mathcal{C} : C \cap \partial \textup{im}(\overline{\sigma}) \neq \emptyset \}$ and $\mathcal{C}_{in} = \{ C \in \mathcal{C} \backslash \mathcal{C}_{out} : C \cap \textup{im}(\overline{\sigma}) \neq \emptyset \}$. 
We denote by $\Theta_{i+1}$ the reparametrizations of the form $\theta_i \circ \psi \circ \overline{\theta}$ for $\psi \in \Psi$ and $\overline{\theta}$ in some $\Theta_C$, with $C \in \mathcal{C}_{in} \cup \mathcal{C}_{out}$ (we are abusing notations, since we are defining the children of $\theta_i$ only, not all of $\Theta_{i+1}$).

We denote by $\overline{T}$ (resp. $\underline{T}$) the reparametrizations in $\Theta_{i+1}$ given by a $\overline{\theta}$ from $\mathcal{C}_{in}$ (resp. $\mathcal{C}_{out}$).
By Lemma C.1 from \cite{BB}, we have
\begin{align*}
\underline{T}
&\leq \sum\limits_{\psi \in \Psi} C_{r,d} \times \#\mathcal{C}_{out}\\
&\leq \sum\limits_{\psi \in \Psi} C_{r,d} \max\limits_{l \in [\![ 0 , d-1 ]\!]} || \Lambda^l d_{\cdot} \overline{\sigma} ||_{\infty}\\
&\lesssim \#\Psi \times \max\limits_{l \in [\![ 0 , d-1 ]\!]} || \Lambda^l d_{\cdot} f ||_{\im(\sigma \circ \theta_i \circ \psi)}\\
\text{Lemma \ref{lem:iter}.5)} \hspace{1em}&\lesssim \exp \left ( (d-1 + \frac{d}{r}) \log || df ||_{\infty} + \frac{d k_{a_i}'}{r-1} \right )
\end{align*}
On the other hand we have
\begin{align*}
\# \overline{T}
\leq \sum\limits_{\psi \in \Psi} ||\Jac \overline{\sigma} ||_{\infty}
\end{align*}
Let $\psi \in \Psi$ and let us estimate $||\Jac \overline{\sigma} ||_{\infty}$.
\begin{align*} 
||\Jac \overline{\sigma} ||_{\infty}
&\leq || \Jac \Gamma^{-1} ||_{\infty} \times || \Jac (f^{a_{i+1}} \circ \sigma \circ \theta_i \circ \psi) ||_{\infty}\\
&\lesssim \frac{1}{\varepsilon_q^d} || \Jac (f^{a_{i+1}} \circ \sigma \circ \theta_i \circ \psi) ||_{\infty}.
\end{align*}

Let $K_x = \max \{ k < a_{i+1} : k \in E_{\delta}(x) \}$ be the last hyperbolic time before $a_{i+1}$.
Therefore $]\!] K_x , a_{i+1} [\![$ is made of $a_l$'s only.
Since $a_{i+1}$ is not in $E_{\delta}(x)$, and $K_x \in E_{\delta}(x)$, there is $j \in [\![ K_x , a_{i+1} ]\!]$ such that $\psi^{a_{i+1} - j}(f^j x) < \delta (a_{i+1} - j)$.
Thus $j=a_l$ for some $l < i+1$ and $a_{i+1}-a_l = (i+1)-l$, and for $x = \theta_i \circ \psi(t)$ we have
\begin{align*}
|| \Jac (f^{a_{i+1}} \circ \sigma \circ \theta_i \circ \psi) ||_{\infty}
&\leq 2 |\Jac_t (f^{a_{i+1}} \circ \sigma \circ \theta_i \circ \psi) |\\
&=2 |\Jac_{f^j x} f^{a_{i+1} - j} | \times |\Jac_{\theta_{l \to i} \circ \theta (t)} (f^j \circ \sigma \circ \theta_l)| \times |\Jac_t (\theta_{l \to i} \circ \psi)|\\
\text{Induction with item $4)$}\hspace{1em}&\leq 2 \varepsilon_q^d \times \left( \prod\limits_{s=j}^{a_{i+1} - 1} |\Jac_{f^s x} f| \right) \times \left ( \prod\limits_{s=l}^{i-1} \frac{1}{C} e^{-\frac{d k'_{a_s}}{r-1}} || df ||_{\infty}^{-d/r} \right ) \times |\Jac_t \psi|\\
\text{item $4)$ from Lemma \ref{lem:iter}}\hspace{1em}&\leq 2 \varepsilon_q^d \prod\limits_{s=j}^{a_{i+1}-1} \left ( |\Jac_{f^s x} f| \frac{1}{C} e^{-\frac{d k'_s}{r-1}} || df ||_{\infty}^{-d/r} \right ).
\end{align*}

Then for any $t > 0$, we have $\log(t)-\frac{d}{r-1} \log^-(t) =\frac{r+d-1}{r-1}\left ( \log(t) - \frac{d}{r+d-1} \log^+(t) \right )$. Hence
\begin{align*}
|| \Jac (f^{a_{i+1}} \circ \sigma \circ \theta_i \circ \psi) ||_{\infty}
&\leq \frac{2}{C^{a_{i+1}-j}} \varepsilon_q^d \exp\left ( \frac{r+d-1}{r-1} \sum\limits_{s=j}^{a_{i+1}-1} \log |\Jac_{f^s x} f| \right . \\
&\hspace{1em}\left. - \frac{d}{r+d-1}\log^+ |\Jac_{f^s x} f| - \frac{d (r-1)}{r(r+d-1)} \log || df ||_{\infty} \right )\\
&= \frac{2}{C^{a_{i+1}-j}} \varepsilon_q^d \exp\left (\frac{r+d-1}{r-1}\psi^{a_{i+1}-j}(f^j x)\right)\\
&\leq \varepsilon_q^d \left (\frac{2 \exp \left (\frac{r+d-1}{r-1}\delta \right )}{C} \right)^{a_{i+1}-j}\\
&\leq \frac{1}{C_d}\varepsilon_q^d
\end{align*}
where we obtain the last inequality by choosing $C=4 C_d$ and $\delta \leq \frac{r-1}{r+d-1} \log 2$.
This gives $\# \overline{T} \leq C_{r,d} \times \# \Psi \lesssim \exp\left(\frac{d}{r} \log ||df || + \frac{d k_{a_i}'}{r-1}\right)$, so
$$
\# T \leq C_{r,d} \exp \left ( (d-1 + \frac{d}{r}) \log || df ||_{\infty} + \frac{d k_{a_i}'}{r-1} \right ).
$$
\end{proof}

The following lemma gives a part of the main term of Lemma \ref{lem:repabound}.

\begin{lemma}[Lemma 3.10 form \cite{DL}]
\label{lem:firstterm}
$$\frac{d}{r-1}\sum_{i \text{ s.t. } \ m_n>a_i\notin \widetilde{E}_n^{M,m,q}} k'_{a_i}\leq \left(n-\sharp \widetilde{E}_n^{M,m,q}\right) \frac{d^2}{r} \log^+ \| df \|_\infty.
$$ 
\end{lemma}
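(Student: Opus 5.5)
The inequality will come from an elementary per-index bound, with the definition of $\widetilde{E}_n^{M,m,q}$ used only to count the relevant indices. I will bound each $k'_{a_i}$ with $a_i\notin\widetilde{E}_n^{M,m,q}$ and $a_i<m_n$ by a uniform quantity times $b_i$, and then sum over $i$.

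First I will check that these indices have $b_i=1$. If $a_i\notin E_n^{M,m}$, then $a_i$ lies in $[\![0,n-1]\!]\setminus E_n^{M,m}$, and every integer of that set belongs to $\mathcal{A}_n$, so $a_{i+1}=a_i+1$. If instead $a_i\in E_n^{M,m}\setminus\widetilde{E}_n^{M,m,q}$, then $a_i$ is among the last $q-1$ elements of a connected component of $E_n^{M,m}$. There $b_i\le q$, and consecutive such $a_i$ tile these tail blocks. In both cases the intervals $[\![a_i,a_i+b_i[\![$ with $a_i\notin\widetilde{E}_n^{M,m,q}$ are disjoint subsets of $[\![0,n-1]\!]\setminus\widetilde{E}_n^{M,m,q}$. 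Hence $\sum b_i\le n-\sharp\widetilde{E}_n^{M,m,q}$.

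Next comes the pointwise bound on $k'_{a_i}=\lfloor\log^-|\Jac_{f^{a_i}x}f^{b_i}|\rfloor$, which is the crux. The truncation at $m_n$ is there because we need the times before the last hyperbolic time. For $a_i<m_n$ and $m_n\in E_\delta(x)$, item 1) of Lemma \ref{lem:prop-hb} gives $\log|\Jac_{f^{a_i}x}f^{m_n-a_i}|\ge (m_n-a_i)\delta\ge0$. The individual Jacobian $\Jac_{f^{a_i}x}f^{b_i}$ can still be very small, so this alone does not bound $k'_{a_i}$.

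The intended route is the sign convention in the paper: $\log^-$ is nonpositive, so $k'\le 0$ in the tree-count formulas. If that holds, each $k'_{a_i}\le 0$, the left side is at most $0$, and the claimed bound is trivial. If the intended meaning is instead $\log^-=\max(-\log,0)$, I will follow \cite{DL}. There $k'$ enters only through the reparametrization count $e^{dk'/(r-1)}$, and for these neutral indices the reparametrization in Lemma \ref{lem:iter} can be replaced by the cruder affine one used in Case 3. I will therefore bound the effective contribution by $\frac{d}{r-1}k'_{a_i}\le b_i\frac{d^2}{r}\log^+\|df\|_\infty$, using $\log|\Jac f|\le d\log^+\|df\|_\infty$ from \eqref{eqn:Jacnor} together with the Pliss property at $m_n$. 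The hyperbolic time $m_n$ gives a lower bound on the product of Jacobians from $a_i$ to $m_n$. Combined with the upper bound $\|df\|^{d}$ per step on the indices in between, this bounds the deficit.

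The main obstacle is making that deficit control rigorous index by index. I expect to have to redistribute it via Pliss-type summation: sum $k'_{a_i}$ over a maximal run of non-$\widetilde{E}$ indices before $m_n$ and compare with $\psi$-sums. The exact constant $\frac{d^2}{r}$ should come out of the normalization of $\psi$, namely its coefficient $\frac{d(r-1)}{r(r+d-1)}\log\|df\|_\infty$ multiplied by $\frac{r+d-1}{r-1}$, which gives $\frac{d}{r}\log\|df\|_\infty$ per step. This is then multiplied by $d$, the exponent in the reparametrization count.
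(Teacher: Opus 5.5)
There is a genuine gap. None of the three routes you sketch proves the inequality.

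\textbf{The sign convention.} The route you call intended rests on a misreading. In this paper $\log^-=\max(-\log,0)\ge 0$. This is forced by the identity $\log t-\frac{d}{r-1}\log^- t=\frac{r+d-1}{r-1}\bigl(\log t-\frac{d}{r+d-1}\log^+ t\bigr)$ used in Case 3 of Lemma \ref{lem:repara}. It is also forced by the cost $e^{\frac{d a^-}{r-1}}$ in Lemma \ref{lem:iter}, which must grow as the Jacobian gets small. Hence $k'_{a_i}\ge 0$, the left side is not trivially nonpositive, and the lemma is exactly what controls this cost along neutral times.

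\textbf{The fallbacks.} Replacing the reparametrization of Lemma \ref{lem:iter} by a cruder one changes the construction; it does not prove the stated inequality about the numbers $k'_{a_i}$. Moreover, Case 3 is itself built on the family $\Psi$ of Lemma \ref{lem:iter}, so $k'$ genuinely enters there. The per-index bound $\frac{d}{r-1}k'_{a_i}\le b_i\frac{d^2}{r}\log^+\|df\|_\infty$ is false, as you note yourself.

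\textbf{The missing idea.} Your last paragraph points the right way, but the Pliss property at $m_n$ is the wrong tool. The sum $\sum_{s=a_i}^{m_n-1}\psi(f^s x)\ge 0$ also runs over the times of $\widetilde{E}_n^{M,m,q}$ in between, where $\psi$ can be large and absorb any deficit on the neutral times. What you need is that every maximal run $[\![c_j,d_j[\![$ of consecutive indices $i$ with $m_n>a_i\notin\widetilde{E}_n^{M,m,q}$ ends at a hyperbolic time. There are two ways a run can end:
\begin{itemize}
\item[(a)] $a_{d_j}=m_n$, which lies in $\mathcal{A}_n$ because $m_n\notin E_n^{M,m}$ by maximality of $m_n$;
\item[(b)] $a_{d_j}$ is the left endpoint of a component of $E_n^{M,m}$, since $\widetilde{E}_n^{M,m,q}$ is an initial segment of each component and $\partial_l E_n^{M,m}\subset\mathcal{A}_n$.
\end{itemize}
In both cases $a_{d_j}\in E_\delta(x)$, using item 1) of Lemma \ref{lem:EnM} in case (b). The Pliss property at $a_{d_j}$, started at $a_{c_j}$, then gives $\sum_{s=a_{c_j}}^{a_{d_j}-1}\psi(f^s x)\ge 0$, a sum over exactly the run.

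\textbf{Finishing the estimate.} Sum over runs and use the following ingredients:
\begin{itemize}
\item[(i)] subadditivity of $\log^+$, which gives $\psi^{b_i}(f^{a_i}x)\le \log|\Jac_{f^{a_i}x}f^{b_i}|-\frac{d}{r+d-1}\log^+|\Jac_{f^{a_i}x}f^{b_i}|-b_i\frac{d(r-1)}{r(r+d-1)}\log\|df\|_\infty$;
\item[(ii)] $\log=\log^+-\log^-$ and $k'_{a_i}\le\log^-|\Jac_{f^{a_i}x}f^{b_i}|$;
\item[(iii)] $\log^+|\Jac f^{b_i}|\le b_i\, d\log^+\|df\|_\infty$;
\item[(iv)] your count $\sum b_i\le n-\sharp\widetilde{E}_n^{M,m,q}$, which is correct.
\end{itemize}
This yields the claim with constant $\frac{d^2(r-1)}{r(r+d-1)}\le\frac{d^2}{r}$. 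Your account of where $\frac{d^2}{r}$ comes from is off. The normalization term of $\psi$ enters with a favorable sign. The leading term is $\frac{d}{r-1}\cdot\frac{r-1}{r+d-1}\cdot d\log^+\|df\|_\infty$, which comes from bound (iii).
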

\begin{proof}
Fix $x\in F_n$. 
Write
$$
\{i: m_n>a_i\notin \widetilde{E}_n^{M,m,q} \} = \bigcup\limits_{j = 1}^s [\![ c_j , d_j [\![
$$
where $[\![c_j , d_j [\![$ are the consecutive $i$'s such that $a_i \notin \widetilde{E}_n^{M,m,q}$. Notice that $a_{d_j}\in E(x)$ for $j=1,\ldots, s$.
Then for $a,b >0$, we have $\log^+ (ab) \leq \log^+ a + \log^+ b$, thus
$$
\begin{aligned}
0 &\leq \sum_{i, \ m_n>a_i\notin \widetilde{E}_n^{M,m,q}} \psi_{b_i}(f^{a_i} x)\\
&\leq\sum_{i, \ m_n>a_i\notin \widetilde{E}_n^{M,m,q}}\left( \log|\Jac_{f^{a_i} x} f^{b_i}|-\frac{d}{r+d-1}\log^+| \Jac_{f^{a_i} x} f^{b_i} | \right.\\ 
&\hspace{8em}\left.- \frac{d(r-1)}{r(r+d-1)} \log || df ||_{\infty} \right )\\
&\leq\sum_{i, \ m_n>a_i\notin \widetilde{E}_n^{M,m,q}} \left ( \left ( 1 - \frac{d}{r+d-1} \right ) \log^+ |\Jac_{f^{a_i} x} f^{b_i}| \right.\\
&\hspace{8em}\left.- \log^- |\Jac_{f^{a_i} x} f^{b_i}| - \frac{d(r-1)}{r(r+d-1)} \log || df ||_{\infty} \right )\\
&\leq \left ( d \left (1 - \frac{d}{r+d-1} \right ) - \frac{d(r-1)}{r(r+d-1)} \right ) \left ( n - \# \widetilde{E}_n^{M,m,q} \right ) \log || df ||_{\infty} - \sum_{i, \ m_n>a_i\notin \widetilde{E}_n^{M,m,q}} k_{a_i}'.
\end{aligned}
$$
Since we have $\frac{d}{r-1} \left ( d \left (1 - \frac{d}{r+d-1} \right ) - \frac{d(r-1)}{r(r+d-1)} \right ) \leq \frac{d^2}{r}$, we obtain
\begin{align*}
\frac{d}{r-1}\sum_{i, \ m_n>a_i\notin \widetilde{E}_n^{M,m,q}} k_{a_i}'
\leq (n-\# \widetilde{E}_n^{M,m,q}) \frac{d^2}{r} \log^+||df||_{\infty}.
\end{align*}
\end{proof}

Another part of the main term of Lemma \ref{lem:repabound} is given by the following:

\begin{lemma}
\label{lem:error-dim-sup}
$$
\sum\limits_{j=1}^{\mathcal{m}} \log ||df^{b_j} ||_{\infty} \leq n \log ||df ||_{\infty}.
$$
\end{lemma}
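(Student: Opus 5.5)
The plan is to combine submultiplicativity of the operator norm with the fact that the lengths $b_j$ exactly tile the time interval $[\![0,n-1]\!]$. Recall that $\mathcal{A}_n=\{0=a_1<a_2<\dots<a_{\mathcal{m}}\}$, with $b_j=a_{j+1}-a_j$ for $j<\mathcal{m}$ and $b_{\mathcal{m}}=n-a_{\mathcal{m}}$. The first step is to record that the intervals $[\![a_j,a_j+b_j[\![$, $j\in[\![1,\mathcal{m}]\!]$, form a partition of $[\![0,n-1]\!]$. This is immediate since $a_1=0$ and the $a_j$ are increasing. Consequently
$$
\sum_{j=1}^{\mathcal{m}} b_j = n .
$$

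The second step uses the chain rule, $d_xf^{b}=d_{f^{b-1}x}f\circ\cdots\circ d_xf$. Submultiplicativity of operator norms then gives $\|d_xf^{b}\|\leq\|df\|_\infty^{b}$ for every $x$. Taking the supremum over $x$ yields
$$
\|df^{b_j}\|_\infty\leq\|df\|_\infty^{b_j}, \qquad \text{hence} \qquad \log\|df^{b_j}\|_\infty\leq b_j\log\|df\|_\infty .
$$
Summing over $j$ and using the first step gives
$$
\sum_{j=1}^{\mathcal{m}}\log\|df^{b_j}\|_\infty \leq \Big(\sum_{j=1}^{\mathcal{m}} b_j\Big)\log\|df\|_\infty = n\log\|df\|_\infty ,
$$
which is the statement of Lemma \ref{lem:error-dim-sup}.

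The only point requiring care is the sign of $\log\|df\|_\infty$ if $\|df\|_\infty<1$. The inequality $\log\|df^{b_j}\|_\infty\leq b_j\log\|df\|_\infty$ holds for either sign, so no issue arises and there is no real obstacle. If one prefers the $\log^+$ version used elsewhere in Lemma \ref{lem:repara}, the same argument gives the bound with $\log^+$ on both sides, since $\log^+$ is subadditive on products.
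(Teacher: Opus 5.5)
Your proof is correct. The paper states this lemma without proof, and the argument it leaves implicit is exactly yours: since $a_1=0$ and $b_{\mathcal{m}}=n-a_{\mathcal{m}}$, the lengths telescope to $\sum_{j} b_j=n$, and the chain rule with submultiplicativity gives $\log\|df^{b_j}\|_\infty\le b_j\log\|df\|_\infty$ whatever the sign of $\log\|df\|_\infty$.
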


We conclude this section by the proof of Lemma \ref{lem:repabound}.

\begin{proof}[Proof of Lemma \ref{lem:repabound}]
We let $\Psi_{F_n}$ be a family $\Theta_{\mathcal{m}}$ of reparametrizations as given in Lemma \ref{lem:repara}.
Then we decompose the bound on $\sharp \Theta_{\mathcal{m}}$ given by item $5)$ of Lemma \ref{lem:repara} into four terms:
\begin{itemize}
\item[1)] by Lemma \ref{lem:firstterm}, 
$$\frac{d}{r-1}\sum_{i, \ m_n>a_i\notin \widetilde{E}_n^{M,m,q}} k'_{a_i}\leq \left(n-\sharp \widetilde{E}_n^{M,m,q}\right) \frac{d^2}{r} \log^+ \| df \|_\infty.$$
\item[2)] Let $x \in F_n^{({\bf{k}}, \overline{E}, m_n)}$, so that $m_n=m_n(x)=\max E(x)\cap [\![0,n[\![$.
Therefore, we have
\begin{align*}
\hspace{-3em}\sum_{i, m_n\leq a_i} \frac{k'_{a_i}}{r-1} &\leq\sum_{k=m_n}^{n-1} \frac{1}{r-1} \log^- |\Jac_{f^k x} f|\\
&\leq \frac{1}{r-1}\sum_{k=m_n}^{n-1} \left(\log|| \Jac f||_{\infty} -\log|\Jac_{f^k x} f| \right)\\
&=\frac{1}{r-1}\left((n-m_n)\log ||\Jac f||_{\infty} \right .\\
&\hspace{1em}\left.-\left( \log| \Jac_{f^{m_n} x} f^{m_n^{+}(x)-m_n}|- \log| \Jac _{f^n x} f^{m_n^+(x)-n}|\right)\right)\\
m_n^+(x) \in E_{\delta}(x) \hspace{1em} &\leq\frac{1}{r-1}\left( (n-m_n)\log ||\Jac f||_{\infty}+(m^+_n(x)-n)\log ||\Jac f||_{\infty}\right)\\
&=\frac{1}{r-1}(m^+_n(x)-m_n)\log || \Jac f||_{\infty}.
\end{align*}
\item[3)] By Lemma \ref{lem:q-control},
$$
\frac{d}{r-1}\sum_{i,a_i\in \widetilde{E}_n^{M,m,q}\cap (c+q\mathbb{N})} k'_{a_i}
\leq n\frac{d}{r-1}\left( \frac{1}{q}+ \int \frac{\log^+|\Jac f^q|}{q}d\xi^{M,m,q}_{n,\nu,F_n}-\int\frac{\log|\Jac f^q|}{q}d\xi^{M,m,q}_{n,\nu,F_n}
 \right).$$
\item[4.] if $a_i\in \widetilde{E}^{M,m,q}_n\setminus (c+q\mathbb{N})$, then $a_i\in \partial_l E^{M,m}_n$ and $[\![a_i , a_i+b_i [\![ \subset E_n^{M,m}$, therefore Lemma \ref{lem:prop-hb} gives
$$
\begin{aligned}
\frac{d}{r-1}\sum_{i,a_i\in \widetilde{E}^{M,m,q}_n\setminus (c+q\mathbb{N})} k'_{a_i}
&\leq \frac{d}{r-1}\sum_{i,a_i\in \partial_l E^{M,m}_n} k'_{a_i}\\
&\leq \frac{d}{r-1}\sum_{i,a_i\in \partial_l E^{M,m}_n} \log^{-} |(f^{b_i})'(f^{a_i} x)|\\
&\leq \frac{d}{r-1}\frac{\sharp\partial E^{M,m}_n}{2}\cdot M\cdot \log|| \Jac f ||_{\infty}.
\end{aligned}
$$
\end{itemize}
We define
$$
\begin{aligned}
&\gamma_{n,M,m,q}(f):=
\left [ 
d^2 \left ( \frac{M}{r-1} + \frac{q}{r} \right ) \frac{\# \partial E_n^{M,m}}{2n} + \left ( \frac{1}{q} + \frac{1}{M} \right ) C_{r,d} +
\frac{d^2}{r-1}\sup\limits_{x \in \mathtt{F}_{\nu}} \frac{m_n^+(x) - m_n(x)}{n}
\right ] \cdot \log|| df ||_{\infty}\\
&\hspace*{6em}+ \frac{\log n}{n} + H\left(\frac{2}{\sqrt{M}}\right)+ \left (\frac{1}{q} + \frac{1}{M} \right ) \log C_{r,d} +\sup_{x\in \mathtt{F}_{\nu}}\frac{m_n^+(x)-n}{n},\\
&C(f):=\left( d-1 + \frac{d^2}{r} \right )\log^{+}|| df ||_{\infty} + \log C_{r,d} + 1 + 2\log (1+d\log || df ||_{\infty}).
\end{aligned}
$$
Then, by noting that $\sharp \mathcal{A}_n \leq n - \sharp E_n^{M,m} + \frac{n}{q} + \frac{n}{M}$
and by Lemmas \ref{lem:combinatorics} and \ref{lem:error-dim-sup}, we obtain
\begin{align*}
\frac{1}{n}\log \# \Psi_{F_n}\leq &\left( 1-\frac{\# E^{M,m}_n}{n} \right)C(f) + \frac{d}{r} \log ||df ||_{\infty}\\
&+\frac{d}{r-1}\left( \int \frac{\log^+|(f^q)'|}{q}d\xi^{M,m,q}_{n,\nu,F_n}-\int\frac{\log|(f^q)'|}{q}d\xi^{M,m,q}_{n,\nu,F_n} \right)\\
&+\gamma_{n,M,m,q}(f).
\end{align*}
Then for $\theta \in \Psi_{F_n}$ and $i \in [\![ 1, \mathcal{m} ]\!]$, the choice of $\varepsilon_q \leq \varepsilon(f,b_i)$ gives
$$
|| d (f^{a_i} \circ \sigma \circ \theta) ||_{\infty} \leq \varepsilon(f, b_i) \leq \max \{ 1, || df ||_{\infty} \}^{- b_i}.
$$
Therefore, for $j \in [\![ 0 , n]\!]$, we have
$
|| d (f^j \circ \sigma \circ \theta) ||_{\infty} \leq 1.
$
\end{proof}

\subsection{Entropy and exponent of the hyperbolic component}
\label{sec:ent-exp-HB}

\subsubsection{Main Proposition}
\label{ssec:main-prop}

Let $f : \M \to \M$ be a $C^r$-map of positive topological entropy.
Let $(f_k : \M \to \M)$ be a sequence of maps converging $C^r$-weakly to $f$.
Let $(\nu_k)$ be a sequence such that each $\nu_k$ is ergodic for $f_k$, has $d$ positive Lyapunov exponents, and such that $(\nu_k)$ converges to some $\mu$ in the weak-$*$ topology.
The $f_k$'s will be the $f$'s of the previous sections.
Instead of proving Theorem \ref{th:main-cont}, we prove the following more technical statement.

\begin{theorem}
\label{th:main2}
Let $(f_k)$ and $(\nu_k)$ be as above.
Let $\mathcal{K}$ be a sequence of integers such that $(h_{f_k}(\nu_k))$ converges to its limsup along $\mathcal{K}$.
For all $\alpha > \left ( d-1 + \frac{d^2}{r} \right ) R(f)$ and $\alpha' > \frac{d}{r} R(f)$, there exist two $f$-invariant probability measures $\mu_1, \mu_0$ and $\beta \in [0,1]$, with $\mu = \beta \mu_1 + (1-\beta) \mu_0$ and
\begin{itemize}
\item[1)] $\beta > 0$ if and only if $\liminf\limits_{\mathcal{K} \ni k \to +\infty} \Lambda_{f_k}(\nu_k) > \alpha$,
\item[2)] $\limsup\limits_{k \to +\infty} h_{f_k}(\nu_k) \leq \beta h_f(\mu_1)+(1-\beta)\alpha + \alpha'$,
\item[3)] assume that $\beta>0$ and that $p \in \bbN^*$ is such that
$$
\liminf\limits_{\mathcal{K} \ni k \to +\infty} \Lambda_{f_k}(\nu_k)
> \alpha + \frac{d (dr+r-1)}{r(r+d-1)} \left ( \frac{1}{p} \log^+ || d f^p ||_{\infty} - R(f)\right ).
$$
Then, by letting $\delta=\min(\alpha - \frac{d(dr+r-1)}{r(r+d-1)} R(f), \frac{r-1}{p (r+d-1)} \log 2 )$, we have
\begin{itemize}
    \item[1')] for $\mu_1$-a.e. $x \in \M$, we have $\Lambda_f(x)\geq \delta$,
    \item[3')] $\liminf\limits_{\mathcal{K} \ni k \rightarrow \infty} \Lambda_{f_k}(\nu_k)\geq \beta\Lambda_f(\mu_1) + (1-\beta) \delta$.
\end{itemize}
\end{itemize}
\end{theorem}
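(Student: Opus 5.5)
We follow the scheme of \cite{B} and \cite{DL}, applying the machinery of the previous subsections to each $(f_k,\nu_k)$ and passing to the limit along $\mathcal{K}$. Fix $\delta$ as in item $3)$, or any small $\delta>0$ with $\delta<\frac{r-1}{p(r+d-1)}\log 2$ otherwise. Work with the iterate $f^p$, so that $\psi$ is built with $\frac1p\log\|df^p\|_\infty$ in place of $\log\|df\|_\infty$; this is why the extra term $\frac1p\log^+\|df^p\|_\infty-R(f)$ appears in $3)$. Since $f_k\to f$ in $C^1$, we have $\limsup_k \frac1p\log\|df_k^p\|_\infty\le \frac1p\log\|df^p\|_\infty$. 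We then use $\int\psi_k\,d\nu_k$, with $\psi_k$ the potential of the hyperbolic times for $f_k$.

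\textbf{Step 1: the decomposition.} If $\liminf_{\mathcal{K}}\int\psi_k d\nu_k>\delta$, Lemma \ref{lem:density-HB} and Remark \ref{rk:unif-beta} give a lower density $\beta_{inf}>0$ of $\delta$-hyperbolic times, uniform in $k$. This holds under the hypothesis of $1)$ or $3)$, because the explicit formula for $\psi$ turns the condition $\Lambda_{f_k}(\nu_k)>\alpha+\cdots$ into $\int\psi_k d\nu_k>\delta$, using $\log^+|\Jac|\le d\log^+\|df\|$. For each $k$ take the limits $\xi_{\nu_k}^{M,m,q}$ and $\beta_{\nu_k}^{M,m,q}$. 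By a diagonal extraction along $\mathcal{K}$, let $k\to\infty$, then $M\to\infty$ (using the monotonicity of Lemma \ref{lem:inc-EnM}), then $m,q\to\infty$, to get $\xi^{M,m,q}\to\beta\mu_1$. Invariance of $\mu_1$ follows from item $3)$ of Lemma \ref{lem:EnM}: the boundary of $E_n^M$ has density at most $2/M$. Set $\mu_0$ by $(1-\beta)\mu_0=\mu-\beta\mu_1$. Then $\beta\ge\beta_{inf}>0$ in that case. If instead the liminf is at most $\alpha$, the entropy bound below forces $\beta=0$, which gives the converse in $1)$.

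\textbf{Step 2: the entropy bound $2)$.} Apply Proposition \ref{prop:entropy-final} to $\nu_k$ with the partition $\mathcal{R}_q$ of Section \ref{ssec:def-partition}. The hypotheses of Proposition \ref{prop:entropy-ineq} are checked as in Lemma \ref{lem:entropy-R}. Finiteness of the relevant atoms uses item $3)$ of Lemma \ref{lem:prop-hb}, which keeps hyperbolic blocks away from $C(f)$. We bound the terms separately.
\begin{itemize}
\item[(i)] The hyperbolic term $\beta_{\nu_k}^{M,m}\frac1m H_{\overline{\xi}}(\mathcal{R}_q^m)$ converges to at most $\beta h_f(\mu_1)$. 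This uses the zero-measure boundaries, $\mathcal{R}_q$ being a generator-like partition of small diameter, and upper semicontinuity as $m\to\infty$.
\item[(ii)] The conditional term is bounded via Lemma \ref{lem:reparamization-partition} and Lemma \ref{lem:repabound}. It gives $(1-\beta)C(f)$, which after passing to iterates $f^p$ and letting $p\to\infty$ is at most $(1-\beta)\alpha$. It also gives the $\frac dr\log\|df\|$ part, which is at most $\alpha'$.
\item[(iii)] The Jacobian-distortion term is also bounded via Lemma \ref{lem:repabound}. It vanishes in the limit because $\mathcal{Q}_q$ fixes $\log|\Jac|$ up to $1/q$ on atoms, and $\log^+|\Jac f^q|-\log|\Jac f^q|$ is close to $0$ on hyperbolic blocks.
\item[(iv)] The boundary terms $\frac{m}{n}\log\#\mathcal{Q}\,\#\partial E$ and $\gamma_{n,M,m,q}$ vanish in the iterated limit by the stated property of $\gamma$, uniformly over $\|df_k\|\le K$.
\end{itemize}

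\textbf{Step 3: the exponent statements.} For $1')$, item $2)$ of Lemma \ref{lem:prop-hb} shows each hyperbolic block has average Jacobian at least $\delta$. By weak convergence, using that $\mathcal{Q}_q$-atoms are $\log|\Jac|$-level sets with null boundary, $\int\frac1q\log|\Jac f^q|\,d\xi^{M,m,q}\ge\delta\beta$ up to errors. Applying this on every subset gives $\Lambda_f\ge\delta$ $\mu_1$-a.e. via ergodic decomposition. For $3')$, split $\int\log|\Jac f_k|d\nu_k$ into the hyperbolic part, which tends to at least $\beta\Lambda_f(\mu_1)$, and the neutral part. Times in $E(x)$ contribute at least $\delta$ each, and the interval from $m_n$ to $m_n^+$ is negligible by Lemma \ref{lem:density-HB}, giving $(1-\beta)\delta$.

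\textbf{Main obstacle.} The hardest step is the limit in $\log|\Jac|$, which is unbounded below near $C(f)$. We must show the hyperbolic part's Jacobian integrals converge. We would try to use that on $E_n^{M,m}$ orbits stay uniformly away from $C(f_k)$ (item $3)$ of Lemma \ref{lem:prop-hb}), so that $\log|\Jac|$ is effectively continuous there, uniformly in $k$ by $C^1$ convergence.
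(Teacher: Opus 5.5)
Your architecture matches the paper's: hyperbolic times for $\psi$, the decomposition $\mu=\beta\mu_1+(1-\beta)\mu_0$ from the limits of $\xi_{k,n}^{M,m,q}$, Proposition \ref{prop:entropy-final} combined with Lemma \ref{lem:repabound}, an iterate $f^p$, and the hyperbolic/neutral splitting of $\Lambda_{f_k}(\nu_k)$.

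\textbf{The gap: the case $\liminf_{\mathcal K\ni k\to+\infty}\Lambda_{f_k}(\nu_k)\le\alpha$ is not handled.} Your assertion that ``the entropy bound below forces $\beta=0$'' is not an argument. That bound only controls $\limsup_k h_{f_k}(\nu_k)$ from above and says nothing about $\beta$. In this regime your construction fails in one of two ways:
\begin{itemize}
\item[(a)] If $\int\psi_k\,d\nu_k\le\delta$, Lemma \ref{lem:density-HB} does not apply. You then have neither positive density of hyperbolic times nor $m_n^+(x)\sim n$, hence no $\mathtt{F}_{\nu_k}$, and neither Proposition \ref{prop:entropy-final} nor Lemma \ref{lem:repabound} is available.
\item[(b)] Otherwise it can produce $\beta\ge\beta_{inf}>0$, since nothing in your Step 1 prevents $\int\psi_k\,d\nu_k>\delta$ while $\Lambda_{f_k}(\nu_k)\le\alpha$. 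That contradicts the ``only if'' half of 1).
\end{itemize}
Because the theorem is existential, the missing step is to \emph{choose} $\beta=0$ (with $\mu_0=\mu$ and $\mu_1$ arbitrary) and prove 2) by Ruelle's inequality. Each $\nu_k$ has $d$ positive exponents, so $h_{f_k}(\nu_k)\le\Lambda_{f_k}(\nu_k)$. Therefore
$\limsup_k h_{f_k}(\nu_k)=\lim_{\mathcal K\ni k\to+\infty}h_{f_k}(\nu_k)\le\liminf_{\mathcal K\ni k\to+\infty}\Lambda_{f_k}(\nu_k)\le\alpha$.
This is exactly where the choice of $\mathcal K$ in the statement is used (Lemma \ref{th:small-liminf}). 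Without it you have no proof of 2) in this case.

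\textbf{Two further repairs.}

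First, you cannot ``let $p\to\infty$'' in $(1-\beta)C(f^p)/p$, because $\beta$ and $\mu_1$ are built from $f^p$ and change with $p$. Instead:
\begin{itemize}
\item[(a)] Fix one $p$ for which hypothesis i) of Proposition \ref{prop:main-prop} holds for $f^p$, together with $C(f^p)<p\alpha$ and $\frac dr\log^+\|df^p\|_\infty<p\alpha'$. This is possible by the strict inequalities on $\alpha$, $\alpha'$ and the liminf.
\item[(b)] Run the construction for an $f_k^p$-ergodic component $\nu_k^p$ of $\nu_k$, since Lemma \ref{lem:density-HB} and Proposition \ref{prop:entropy-ineq} require ergodicity.
\item[(c)] Return to $f$ via $\mu_{0/1}=\frac1p\sum_{i<p}f^i_*\mu^p_{0/1}$, dividing entropies, exponents and $\delta_p$ by $p$. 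This is where the $\frac1p$ in $\delta$ comes from.
\end{itemize}

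Second, your justification for the vanishing of the Jacobian term (iii) is wrong. The partition $\mathcal Q_q$ plays no role in that term. Also, $\log^-|\Jac f_k^q|$ is not pointwise small on hyperbolic blocks: on the support of $\xi_{k,n}^{M,m,q}$, item 3) of Lemma \ref{lem:prop-hb} only bounds it by $M\log\|\Jac f_k\|_\infty$, and $M\to\infty$ before $q$. That bound does let you pass to $\frac{\beta}{q}\int\log^-|\Jac f^q|\,d\mu_1$, using monotone convergence in $M$ and $\log|\Jac f|\in L^1(\mu_1)$. This quantity tends to $0$ as $q\to\infty$ only because $\Lambda_f\ge\delta>0$ holds $\mu_1$-a.e. (Lemma \ref{lem:term-II}). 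So 1') must be established before 2).
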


Notice that if $0 < \beta < 1$, then from upper semi-continuity of $\mu \mapsto \Lambda_f(\mu)$, item \textit{3')} gives $\Lambda_f(\mu_0) \geq \delta$.
The proof that the above implies Theorem \ref{th:main-cont} is given at the end of the paper.\\

In order to prove Theorem \ref{th:main2}, we prove, in Section \ref{section:proof-th}, that some iterate of $f$ satisfies the hypotheses of the following main Proposition \ref{prop:main-prop}, whose proof will be completed in Section \ref{ssec:proof}.
Finally, in Section \ref{section:proof-th} as well, we show that this suffices to get Theorem \ref{th:main2}.

\begin{proposition}
\label{prop:main-prop}
Let $(f_k)_{k\in \mathbb{N}}$ and $(\nu_k)_{k\in \mathbb{N}}$ be as above.
Let $\mathcal{K}$ be a sequence of integers such that $(h_{f_k}(\nu_k))_k$ converges to its limsup along $\mathcal{K}$.
Let $\alpha' > \frac{d}{r} \log^+||df ||_{\infty}$.
Assume that
\begin{itemize}
\item[i)] $\liminf\limits_{\mathcal{K} \ni k \to +\infty} \Lambda_{f_k}(\nu_k) > \alpha + \frac{d (dr+r-1)}{r(r+d-1)} \left ( \log^+ || d f ||_{\infty} - R(f) \right )$,
\item[ii)] $\alpha > C(f)$, 
where $C(f) > \left ( d-1 + \frac{d^2}{r} \right ) R(f)$ is the quantity of Lemma \ref{lem:repabound}.
\end{itemize}
Then there exist $\beta, \mu_0, \mu_1$ as in Theorem \ref{th:main2} with $\beta > 0$ and $p=1$ in the definition of $\delta$.
\end{proposition}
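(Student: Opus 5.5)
The plan follows the scheme of \cite{B} and \cite{DL}: apply the machinery of Sections \ref{sec:decomp-meas} and the neutral-component section to each $(f_k,\nu_k)$, then pass to the limit in $k$, then in $n$, $M$, $m$, $q$.

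\textbf{Step 1: choice of $\delta$ and density of hyperbolic times.} Set $\delta=\min(\alpha-\frac{d(dr+r-1)}{r(r+d-1)}R(f),\frac{r-1}{r+d-1}\log 2)$. Let $\psi_k$ be the function $\psi$ built from $f_k$. Integrating, $\int\psi_k\,d\nu_k\geq \Lambda_{f_k}(\nu_k)-\frac{d}{r+d-1}\int\log^+|\Jac f_k|\,d\nu_k-\frac{d(r-1)}{r(r+d-1)}\log\|df_k\|_\infty$. Bounding $\log^+|\Jac f_k|\leq d\log^+\|df_k\|_\infty$ and using $C^1$ convergence $\|df_k\|_\infty\to\|df\|_\infty$, hypothesis i) should give $\int\psi_k\,d\nu_k>\delta$ uniformly for $k\in\mathcal K$ large. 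I expect the constant $\frac{d(dr+r-1)}{r(r+d-1)}$ to arise exactly from this computation, but this needs checking.

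Lemma \ref{lem:density-HB} then applies to each $\nu_k$. Remark \ref{rk:unif-beta} gives a uniform lower bound $\beta_{inf}>0$ on the density of $E_\delta$, hence on $\beta^{M}_{\nu_k}$.

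\textbf{Step 2: defining the decomposition.} Take $\mu_1$ and $\beta$ as limits, along $\mathcal K$ and then $M\to\infty$, of $\overline{\xi^{M,m,q}_{\nu_k}}$ and $\beta^{M,m,q}_{\nu_k}$. Lemma \ref{lem:inc-EnM} gives the monotonicity in $M$ needed for these limits to exist. Set $\mu_0$ as the normalized limit of the neutral parts $\eta$. Invariance of the limits follows from Lemma \ref{lem:EnM}, since the boundary terms have density at most $2/M$. The bound $\beta\geq\beta_{inf}>0$ follows from Step 1.

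Item 1') comes from Lemma \ref{lem:prop-hb} 2): on each hyperbolic block the Jacobian grows at rate $\delta$. This should pass to the limit as $\int\frac{1}{q}\log|\Jac f^q|\,d\overline{\xi}\geq\delta$ by the usual argument of \cite{A}, \cite{DL}. Uniform integrability of $\log|\Jac f_k|$ on $E^{M,m}_n$ is guaranteed by item 3) of that lemma, which keeps orbits uniformly away from critical points. Item 3') is the same computation carried out on both components.

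\textbf{Step 3: the entropy bound (main obstacle).} Apply Proposition \ref{prop:entropy-final} with $\mathcal Q=\mathcal R_q$; the verification of its hypotheses is deferred to Lemma \ref{lem:entropy-R}. This bounds three kinds of terms.
\begin{itemize}
\item[a)] The hyperbolic term $\beta^{M,m}\frac1m H_{\overline{\xi}}(\mathcal R_q^m)$. Using boundary-zero atoms, upper semicontinuity in $k$ is handled by Lemma \ref{lem:entropy-R}. This tends to $\beta h_f(\mu_1)$ after $m\to\infty$ and a fine enough partition.
\item[b)] The boundary terms. These are $O(\log M/M)$ and vanish.
\item[c)] The conditional term. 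Lemma \ref{lem:reparamization-partition} together with Lemma \ref{lem:repabound} bounds it by $(1-\beta)C(f_k)+\frac dr\log\|df_k\|_\infty$, plus the distortion term $\frac{d}{r-1}(\int\log^+-\int\log)$ with respect to $\xi$, plus $\gamma$.
\end{itemize}

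Since hypothesis ii) gives $C(f)<\alpha$ and the standing assumption gives $\frac dr\log^+\|df\|<\alpha'$, term c) yields $(1-\beta)\alpha+\alpha'$. The delicate point is the distortion term. It must vanish as $q\to\infty$ in the limit, which requires that $\log|\Jac f^q|/q$ be nonnegative, up to $o(1)$, on the support of the hyperbolic limit. This again uses 1') and the uniform away-from-critical property, exactly as in \cite{DL}. A further care is needed for uniformity of $\gamma_{n,M,m,q}(f_k)$ in $k$, which is provided by the $\sup$ over $\|df\|\leq K$ in Lemma \ref{lem:repabound}.
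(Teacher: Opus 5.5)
Your plan matches the paper's proof of Proposition \ref{prop:main-prop} step by step. You use the same $\delta$, and the constant $\frac{d(dr+r-1)}{r(r+d-1)}$ does come out of exactly the computation you describe. You also have the uniform density bound from Remark \ref{rk:unif-beta}, the construction of $\beta,\mu_0,\mu_1$ as iterated limits with invariance coming from the $O(1/M)$ boundary terms, and the entropy bound through Proposition \ref{prop:entropy-final}, Lemmas \ref{lem:reparamization-partition} and \ref{lem:repabound}, hypothesis ii) and the choice of $\alpha'$, with the distortion term removed by Lemma \ref{lem:term-II}. Two small corrections:
\begin{itemize}
\item[$\cdot$] The $n$-limit must be taken before the $k$-limit, as in your Step 2, not after it as your opening sentence says.
\item[$\cdot$] For 3'), the neutral part is not ``the same computation''. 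It is Proposition \ref{prop:expo-neutral}, which uses that each neutral stretch ends at a hyperbolic time. It also needs the restriction $\zeta_k$ to $\mathtt{F}_k$ to control the tail after $m_n$.
\end{itemize}

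The step that fails as you wrote it is item 1'). Lemma \ref{lem:prop-hb} 2) gives $\int\log|\Jac f_k|\,d\xi^M_{k,n}\geq\delta\,\beta^M_{k,n}$. Passing this, or any inequality of the form $\int\frac1q\log|\Jac f^q|\,d\overline{\xi}\geq\delta$, to the limit only yields the averaged bound $\Lambda_f(\mu_1)\geq\delta$. Since $\mu_1$ is not ergodic, this says nothing about $\mu_1$-a.e.\ $x$.

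The a.e.\ version is genuinely needed downstream. By Birkhoff's theorem in $L^1(\mu_1)$ (using Proposition \ref{proposition:mu1-integrable}), $\frac1q\int\log^-|\Jac f^q|\,d\mu_1\to\int\Lambda_f^-\,d\mu_1$. So the distortion term you rely on in Step 3 can only be made to vanish if $\Lambda_f\geq 0$ holds $\mu_1$-a.e.

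The argument behind Lemma \ref{lem:mu1-HB} is instead a support argument:
\begin{itemize}
\item[$\cdot$] Each $i\in E^M_n(x)$ lies in some $[\![a,b[\![$ with $a,b\in E$ and $b-a\leq M$. So $b$ is a hyperbolic time with $0<b-i\leq M$ and $\log|\Jac_{f^ix}f_k^{b-i}|\geq(b-i)\delta$. Hence $\xi^M_{k,n}$ is carried by $\{y:\exists\,1\leq l\leq M,\ \log|\Jac_yf_k^l|\geq l\delta\}$.
\item[$\cdot$] By $C^1$ convergence and continuity of $\Jac f^l$, for every $\eta>0$ the limit $\xi^M$ is carried by the closed set $G^\eta_M=\{y:\exists\,1\leq l\leq M,\ \log|\Jac_yf^l|\geq l(\delta-\eta)\}$. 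Lemma \ref{lem:monotone-xi} then shows $\mu_1$ is carried by $\bigcup_M G^\eta_M$.
\item[$\cdot$] By $f$-invariance, $\mu_1$ gives full measure to $\bigcap_{j\geq0}f^{-j}\bigcup_M G^\eta_M$. Iterating the first such time $l(y)$ produces infinitely many $t$ with $\log|\Jac_yf^t|\geq t(\delta-\eta)$. This gives $\Lambda_f(y)\geq\delta-\eta$ for $\mu_1$-a.e.\ $y$, and letting $\eta\to0$ gives 1').
\end{itemize}
With this replacement, the rest of your plan goes through as in the paper.
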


In the rest of this section, we detail the notations of the previous sections now that we have the additional parameter $k$. We will also assume that $k$ always belongs to $\mathcal{K}$.
For $x \in \M$, define $\psi_k: \M\rightarrow \mathbb{R}\cup \{ -\infty \}$ by
$$
\psi_k = \log | \Jac f_k| - \frac{d}{r+d-1} \log^+ |\Jac f_k| - \frac{d(r-1)}{r(r+d-1)} \log || df_k ||_{\infty}.
$$
Hence, from Lemma \ref{lem:density-HB}, whenever $\delta_k$ is such that $0 < \delta_k < \int \psi_k d \nu_k$, then the set $E_k(x) = E_{k, \delta_k}(x)$ has lower density bounded away from 0 $\nu_k$-almost everywhere. 

We explain how to use Remark \ref{rk:unif-beta} to make this lower bound uniform in $k$.
By using hypothesis $i)$ from Proposition \ref{prop:main-prop} and the inequality $\psi_k \geq \log |\Jac f_k| - \frac{d}{r+d-1}\left ( d + \frac{r-1}{r} \right ) \log || d f_k ||_{\infty}$, we get
$$
\liminf\limits_{k \to +\infty} \int \psi_k d \nu_k > \alpha - \frac{d (dr+r-1)}{r(r+d-1)} R(f) > 0.
$$ 
Therefore, if we take $\delta = \alpha - \frac{d (dr+r-1)}{r(r+d-1)} R(f)$, then for $k$ large enough, for $\nu_k$-almost every $x \in \M$, the set $E_k(x)$ defined for $\delta_k = \delta$ has positive lower density independent from $k$ (and $x$).
In other words, the hypothesis of Proposition \ref{prop:main-prop} allows us to choose $\delta$ independent from $k$.
However, we mentioned in the proof of the semi-local Reparametrization Lemma \ref{lem:repara} that $\delta$ must satisfy an additional requirement, so we actually define
\begin{align*}
&\widetilde{\delta} = \alpha - \frac{d (dr+r-1)}{r(r+d-1)} R(f)\\
&\delta = \min \left ( \widetilde{\delta}, \frac{r-1}{r+d-1}\log 2 \right ).
\end{align*}

Since this still does not depend on $k$, Remark \ref{rk:unif-beta} gives a lower bound $\beta_{inf} > 0$ uniform in $k$.\\

We write $E_k(x)$, $E_{k,n}^{M,m,q}(x)$, $\xi_{k,n}^{M,m,q}$, $\beta_{k,n}^{M,m,q}$, $\overline{\xi_{k,n}^{M,m,q}}$ and $\eta_{k,n}^{M,m,q}$ for the objects defined in Section \ref{ssec:def-compo} for $f_k$, $\nu_k$.
From Cantor's diagonal argument, we may take some subsequences and assume the convergence to the following limits:
\begin{align*}
&\beta_{k,n}^{M,m,q} \underset{n \to +\infty}{\longrightarrow} \beta_k^{M,m,q} \underset{k \to +\infty}{\longrightarrow} \beta^{M,m,q} \underset{M \to +\infty}{\longrightarrow} \beta \geq \beta_{inf} > 0,\\
\beta_{k,n}^{M,m,q}\overline{\xi_{k,n}^{M,m,q}} = \; &\xi_{k,n}^{M,m,q} \underset{n \to +\infty}{\longrightarrow} \xi_k^{M,m,q} \underset{k \to +\infty}{\longrightarrow} \xi^{M,m,q} \underset{M \to +\infty}{\longrightarrow} \beta \mu_1,\\
&\eta_{k,n}^{M,m,q} \underset{n \to +\infty}{\longrightarrow} \eta_k^{M,m,q} \underset{k \to +\infty}{\longrightarrow} \eta^{M,m,q} \underset{M \to +\infty}{\longrightarrow} (1-\beta)\mu_0.
\end{align*}
The fact that the last column of limits does not depend on $m$ and $q$ comes from item $ii)$ of Lemma \ref{lem:EnM}.
Therefore, we have $\mu = \beta \mu_1 + (1-\beta) \mu_0$ and $\beta > 0$. We also write $\xi^M = \xi^{M,1,1}$.

\begin{lemma}[Lemma 10 from \cite{B}]
\label{lem:f-inv}
The measures $\mu_0$ and $\mu_1$ are $f$-invariant probability measures.
\end{lemma}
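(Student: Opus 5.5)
The plan is to prove invariance separately for the hyperbolic and the neutral components at each finite level, and then pass to the limits in the order $n$, then $k$, then $M$. Since $\nu_k$ is $f_k$-invariant and $\eta_{k,n}^{M,m,q}=\nu_k-\xi_{k,n}^{M,m,q}$, it suffices to show that the limit $\beta\mu_1$ of $\xi^{M,m,q}$ as $M\to+\infty$ is $f$-invariant. Then $(1-\beta)\mu_0=\mu-\beta\mu_1$ is $f$-invariant as well, because $\mu$ is $f$-invariant. That $\mu$ is $f$-invariant follows from $\nu_k\to\mu$, $f_k\to f$ uniformly, and the identity $\int\varphi\circ f_k\,d\nu_k=\int\varphi\,d\nu_k$ for continuous $\varphi$. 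Positivity of both measures is immediate. Normalizing by $\beta>0$ and $1-\beta$ (when nonzero; if $\beta=1$ or $\beta=0$, pick $\mu_0$ or $\mu_1$ arbitrary invariant) gives probability measures.

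For the hyperbolic part, fix a continuous $\varphi$ and compute
$$
\int \varphi\circ f_k\, d\xi_{k,n}^{M}-\int\varphi\, d\xi_{k,n}^{M}=\frac1n\int \sum_{i\in E_{k,n}^M(x)}\bigl(\varphi(f_k^{i+1}x)-\varphi(f_k^ix)\bigr)\,d\nu_k(x).
$$
Here $E_{k,n}^M(x)$ is a disjoint union of integer intervals $[\![a_j,b_j[\![$, so the inner sum telescopes over each component. It is therefore bounded by $2\|\varphi\|_\infty\,\#\partial E_{k,n}^M(x)/2$. By Lemma \ref{lem:EnM}, item 2, this gives a bound of $\|\varphi\|_\infty\cdot 2(n+M)/(Mn)$. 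Letting $n\to\infty$ along the chosen subsequence, the weak-$*$ limit satisfies $|\int\varphi\circ f_k\,d\xi_k^M-\int\varphi\,d\xi_k^M|\le 2\|\varphi\|_\infty/M$.

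Next let $k\to\infty$. We have $\|\varphi\circ f_k-\varphi\circ f\|_\infty\to0$ by uniform continuity of $\varphi$ and $C^1$-convergence of $f_k$, and the masses of $\xi_k^M$ are at most $1$. Hence $\int\varphi\circ f_k\,d\xi_k^M-\int \varphi\circ f\,d\xi_k^M\to0$. Since $\varphi\circ f$ is continuous, weak-$*$ convergence gives $|\int\varphi\circ f\,d\xi^M-\int\varphi\,d\xi^M|\le 2\|\varphi\|_\infty/M$. Finally let $M\to\infty$ to get $\int\varphi\circ f\,d(\beta\mu_1)=\int\varphi\,d(\beta\mu_1)$. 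The $m,q$ variants differ by sets whose density is controlled by the same boundary count, so the argument is identical.

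The only delicate point is the interchange of limits. The bound at each stage must be uniform in the later parameters, and this holds here because the bound $2(n+M)/(Mn)$ is independent of $k$. One also has to make sure the weak-$*$ limits of the $\xi$'s are taken along the diagonal subsequence fixed earlier, which is already arranged by the Cantor argument. No mass escapes, since $\M$ is compact.
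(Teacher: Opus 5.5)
Your proof is correct, and it is the standard argument behind the cited Lemma 10 of \cite{B}; the paper gives no proof beyond that citation. The argument telescopes $\varphi\circ f_k-\varphi$ along each block of $E_{k,n}^{M}(x)$, bounds the number of blocks by item 2 of Lemma \ref{lem:EnM} to get an error of order $\|\varphi\|_\infty/M$, and passes to the iterated limits in $n$, $k$, $M$ using the uniform convergence $f_k\to f$, after which invariance of $(1-\beta)\mu_0=\mu-\beta\mu_1$ and the normalizations follow exactly as you describe.
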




Then Lemma \ref{lem:inc-EnM} gives the following:

\begin{lemma}[Proposition 5.2 from \cite{A}]
\label{lem:monotone-xi}
For any $m \in \bbN^*$ and Borel set $B \subset \M$, we have $\xi^{M,m}(B) \underset{M \to +\infty}{\nearrow} \beta \mu_1(B)$.
\end{lemma}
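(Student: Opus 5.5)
The statement to prove is Lemma \ref{lem:monotone-xi}: for fixed $m$ and a Borel set $B$, $\xi^{M,m}(B)$ increases to $\beta\mu_1(B)$ as $M\to+\infty$. The plan is to establish monotonicity at the finite level and then pass it through the successive limits $n\to\infty$, $k\to\infty$. Identifying the limit then uses the fact that the $M$-limit is independent of $m$, which comes from item 3) of Lemma \ref{lem:EnM}.

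\textbf{Monotonicity.} Fix $k,n,m$ and $x$. By Lemma \ref{lem:inc-EnM}, the set $E_{k,n}^{M,m}(x)$ is nondecreasing in $M$ for inclusion. Hence for any Borel $B$ and $M\le M'$,
$$\sum_{i\in E_{k,n}^{M,m}(x)}\delta_{f_k^ix}(B)\le \sum_{i\in E_{k,n}^{M',m}(x)}\delta_{f_k^ix}(B).$$
Integrating against $\nu_k$ and dividing by $n$ gives $\xi_{k,n}^{M,m}\le \xi_{k,n}^{M',m}$ as measures. An inequality between measures survives weak-$*$ limits when tested against nonnegative continuous functions. So along the chosen subsequences, $\xi_k^{M,m}\le\xi_k^{M',m}$, and then $\xi^{M,m}\le \xi^{M',m}$ on nonnegative continuous functions. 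By regularity of finite Borel measures on the compact metric space $\M$ (outer and inner approximation), this inequality holds on every Borel set. Thus $M\mapsto \xi^{M,m}(B)$ is nondecreasing. It is bounded by $\mu(B)$, since $\xi^{M,m}\le\nu$ holds at every finite stage and passes to the limit in the same way. So the limit exists.

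\textbf{Identification of the limit.} The limit $\lambda:=\lim_M\xi^{M,m}$ is a setwise limit of a monotone sequence of measures. It is countably additive (monotone convergence for increasing sequences of measures) and hence a Borel measure. Monotone setwise convergence implies $\int\varphi\,d\xi^{M,m}\to\int\varphi\,d\lambda$ for every bounded Borel $\varphi$, in particular for continuous $\varphi$. So $\lambda$ is also the weak-$*$ limit, which was declared to be $\beta\mu_1$, once we check that it does not depend on $m$. For that, $E_n^{M,m}$ and $E_n^{M,1}$ differ only in the removed tails. Their number is at most $\#\partial E_n^M/2\le (n+M)/M$ components, and each tail has length at most $m$ up to the next element of $E$. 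The proportion of removed times is therefore $O(m/M)+o(1)$ by Lemma \ref{lem:EnM}, uniformly in $k$ and $x$. Hence the total variation $\|\xi^{M,m}-\xi^{M}\|\le Cm/M\to0$, and the weak-$*$ limits coincide with $\beta\mu_1$.

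\textbf{Main obstacle.} The delicate point is the tail estimate just used. Removing the last elements of each component until the next element $c_j$ of $E$ could, a priori, remove more than $m$ points if $E$ is sparse near $b_j$. I would first handle this by bounding the number of removed indices per component using the gap bound $\le M$ between consecutive elements of $E$ inside $E_n^M$. If that is not sharp enough, I would instead bound it directly via the density statement of Lemma \ref{lem:EnM}, item 3). The remaining steps — monotone passage through the limits and regularity — are routine.
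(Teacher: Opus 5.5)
Your monotonicity step is correct: Lemma \ref{lem:inc-EnM} at finite $n,k$, passage through the weak-$*$ limits on nonnegative continuous functions, regularity, and the upgrade of a bounded increasing sequence of measures to a total-variation limit. This is essentially all the paper has in mind, since Section \ref{ssec:main-prop} already asserts that the weak-$*$ limit of $\xi^{M,m}$ is $\beta\mu_1$ for every $m$.

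What fails is your verification of that $m$-independence. The removed tail $[\![c_j,b_j[\![$ of a component $[\![a_j,b_j[\![$ of $E_n^M$ need not have length at most $m$. Maximality of $c_j$ only says that $E$ has no element in $]\!]c_j,b_j-m+1]\!]$. Inside a component, consecutive elements of $E$ may be $M$ apart, so the tail can have length $M+m-1$. Concretely, take $m=3$, $P=3M+2$ and $E=\bigcup_{j\ge 0}\big(jP+([\![0,M]\!]\cup\{2M,2M+1\})\big)$. The components of $E_n^M$ are $[\![jP,jP+2M+1[\![$, one gets $c_j=jP+M$, and $\frac1n\#(E_n^M\setminus E_n^{M,3})\to\frac{M+1}{3M+2}>\frac13$ for every $M$. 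Nothing in the definitions or in Lemma \ref{lem:EnM} rules this out, so $\|\xi^{M,m}-\xi^{M}\|\le Cm/M$ is unjustified.

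Your fallbacks do not repair it. The gap bound gives tails of length at most $M+m$, hence a total of order $\frac{n}{M}(M+m)$, which is comparable to $n$. Item 3) of Lemma \ref{lem:EnM} is, up to a typo, the density form of the boundary count in item 2), so it says nothing about tail lengths.

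The fix is to compare $E_n^{M,m}$ with $E_n^{M'}$ for a fixed smaller $M'\le M$, instead of with $E_n^M$.
Since $E_n^{M'}\subset E_n^M$, a point $t\in E_n^{M'}\setminus E_n^{M,m}$ lies in some tail $[\![c_j,b_j[\![$. Pick $a\le t<b$ in $E$ with $b-a\le M'$. If $t\le b_j-m+1$, then $a\le b_j-m+1$, so $a\le c_j$ by maximality and $t<b\le c_j+M'$. Otherwise $t\in\,]\!]b_j-m+1,b_j[\![$.

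Thus each component of $E_n^M$ contributes at most $M'+m$ points, and item 2) of Lemma \ref{lem:EnM} gives $\#(E_n^{M'}\setminus E_n^{M,m})\le (M'+m)\frac{n+M}{M}$. For continuous $\varphi\ge 0$ this yields $\int\varphi\,d\xi_{k,n}^{M,m}\ge\int\varphi\,d\xi_{k,n}^{M'}-\|\varphi\|_\infty(M'+m)\frac{n+M}{nM}$. After the limits in $n$ and $k$, this becomes $\int\varphi\,d\xi^{M,m}\ge\int\varphi\,d\xi^{M'}-\|\varphi\|_\infty\frac{M'+m}{M}$.

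Letting $M\to\infty$ and then $M'\to\infty$ gives $\lim_M\xi^{M,m}\ge\lim_M\xi^M=\beta\mu_1$. The inclusion $E_n^{M,m}\subset E_n^M$ gives the reverse inequality. With this replacement the rest of your proof goes through.
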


\subsubsection{Entropy of the hyperbolic component}
\label{ssec:ent-HB}

For each $k$, there is $\varepsilon_{q,k} > 0$ given by Lemma \ref{lem:repabound}.
By item $8)$ from Lemma \ref{lem:iter} and the $C^r$-weak convergence of $(f_k)$ to $f$, we may assume that $\varepsilon_{q,k}$ is independant from $k$.
We denote it by $\varepsilon_q > 0$.
Then, let
$$
\mathcal{R}_{q,k} = \mathcal{Q}_{q,k} \vee \mathcal{M}(\varepsilon_q) \;\;\; \text{and} \;\;\; \mathcal{R}_q = \mathcal{Q}_q \vee \mathcal{M}(\varepsilon_q).
$$
where $\mathcal{M}(\varepsilon_q)$ is, as in Section \ref{ssec:def-partition}, a partition whose atoms have diameter smaller than $\varepsilon_q$.
Choose $a$ and $\mathcal{M}(\varepsilon_q)$ from Section \ref{ssec:def-partition} such that the border of $\mathcal{R}_{q,k}^m$ has zero $\mu_1$-, $\xi^{M,m}$- and $\xi_k^{M,m}$-measures, for any $k,M,m \in \bbN^*$.\\

The main result of this section is the following:

\begin{proposition}[Proposition 4.5 from \cite{DL}]
\label{prop:entropy-cv}
For any $q \in \bbN^*$, we have
$$
\lim\limits_{m \to +\infty} \lim\limits_{M \to +\infty} \lim\limits_{k \to +\infty} \frac{1}{m} H_{\overline{\xi_k^{M,m}}}(\mathcal{R}_{q,k}^m) = h_f(\mu_1, \mathcal{R}_{q}) \leq h_f(\mu_1).
$$
\end{proposition}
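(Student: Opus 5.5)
We prove Proposition \ref{prop:entropy-cv} by adapting the proof of Proposition 4.5 in \cite{DL}. The argument has two parts. First, for fixed $q,m,M$, we pass to the limit in $k$ using weak-$*$ convergence and the fact that the boundaries have zero measure. Second, we let $M\to+\infty$ with the monotonicity of Lemma \ref{lem:monotone-xi}, and then $m\to+\infty$, which recovers the entropy of $\mu_1$ with respect to $\mathcal{R}_q$.

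\textbf{Limit in $k$.} Fix $q,m,M$. The atoms of $\mathcal{R}_{q,k}^m$ are intersections $\bigcap_{j<m} f_k^{-j}(Q_j\cap A_j)$, with $Q_j\in\mathcal{Q}_{q,k}$ given by level sets of $\log|\Jac f_k|$ and $A_j\in\M(\varepsilon_q)$. Since $f_k\to f$ in $C^1$, $\log|\Jac f_k|\to\log|\Jac f|$ locally uniformly away from the critical set of $f$, and $f_k^j\to f^j$ uniformly. Hence each atom of $\mathcal{R}_{q,k}^m$ converges to the corresponding atom of $\mathcal{R}_q^m$ in the following sense: the symmetric difference is contained in a small neighbourhood of $\partial\mathcal{R}_q^m$ together with a neighbourhood of $C(f)$.

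\textbf{Only finitely many atoms matter.} On the support of $\xi_{k,n}^{M,m}$, item $3)$ of Lemma \ref{lem:prop-hb} gives $|\Jac f_k|\geq \|\Jac f_k\|_\infty^{-M}$. This bound is uniform in $k$ by $C^r$-weak convergence. So only finitely many atoms of $\mathcal{Q}_{q,k}^m$, uniformly in $k$, carry mass, and the critical set is avoided uniformly. The boundary hypotheses are $\xi^{M,m}(\partial\mathcal{R}_q^m)=0$, together with the analogous conditions chosen for $\xi_k^{M,m}$. With these, weak-$*$ convergence $\xi_k^{M,m}\to\xi^{M,m}$ and $\beta_k^{M,m}\to\beta^{M,m}>0$ gives
\[
\lim_{k}\frac1m H_{\overline{\xi_k^{M,m}}}(\mathcal{R}_{q,k}^m)=\frac1m H_{\overline{\xi^{M,m}}}(\mathcal{R}_q^m).
\]
The identity $H=\sum \phi(\mu(A))$ is continuous in finitely many atom masses, so the convergence of atom masses suffices. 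I expect this step to be the main technical obstacle. It requires a careful joint-continuity argument in $(k,\text{measure})$ for sets depending on $f_k$: we need $\xi_k^{M,m}(B_k)\to\xi^{M,m}(B)$ when $B_k\to B$ with $\xi^{M,m}(\partial B)=0$. I would handle it with a sandwich argument, taking open $U\supset \overline B$ and compact $K\subset \operatorname{int}B$ with $B_k$ eventually between $K$ and $U$. This works away from the critical set, and the critical set is excluded by the lower Jacobian bound above.

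\textbf{Limit in $M$.} By Lemma \ref{lem:monotone-xi}, $\xi^{M,m}(B)\nearrow\beta\mu_1(B)$ for every Borel $B$. Dividing by $\beta^{M,m}\to\beta$, we get $\overline{\xi^{M,m}}(B)\to\mu_1(B)$ setwise. The partition $\mathcal{R}_q^m$ may now have countably many atoms, since $\mathcal{Q}_q$ is infinite near $C(f)$. Setwise convergence gives lower semicontinuity of $H$ by Fatou, and the reverse inequality needs a tail bound. For the tail, note that for fixed $M$ all mass lies in finitely many atoms, and monotonicity in $M$ gives $\overline{\xi^{M,m}}\leq \frac{\beta}{\beta^{M,m}}\mu_1$. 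The tail sum $\sum\phi(\mu(A))$ over atoms $A$ near the critical set is therefore dominated by the corresponding sum for $\mu_1$. That sum is finite because $H_{\mu_1}(\mathcal{Q}_q)<\infty$, which follows from $\int|\log|\Jac f||\,d\mu_1<\infty$; this integrability is a consequence of item $1')$ of Theorem \ref{th:main2} together with upper boundedness of $\log|\Jac f|$. Hence
\[
\lim_M \frac1m H_{\overline{\xi^{M,m}}}(\mathcal{R}_q^m)=\frac1m H_{\mu_1}(\mathcal{R}_q^m).
\]

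\textbf{Limit in $m$.} Since $\mu_1$ is $f$-invariant by Lemma \ref{lem:f-inv}, $\frac1m H_{\mu_1}(\mathcal{R}_q^m)\to h_f(\mu_1,\mathcal{R}_q)$ as $m\to+\infty$. Finally, $h_f(\mu_1,\mathcal{R}_q)\leq h_f(\mu_1)$ holds because $\mathcal{R}_q$ has finite entropy, which concludes the proof.
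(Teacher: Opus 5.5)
Your proposal is correct and follows essentially the same route as the argument the paper imports from \cite{DL}. You pass to the limit in $k$ using that only finitely many atoms are charged (uniformly in $k$) and that the boundaries are null. You then pass to the limit in $M$ using the monotone setwise convergence of Lemma \ref{lem:monotone-xi}, together with $H_{\mu_1}(\mathcal{R}_q)<\infty$ to control the countable tail. Finally you let $m\to+\infty$ using the invariance of $\mu_1$.

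Two points are worth making explicit. First, the Jacobian lower bound is needed along the first $m$ iterates, not only at the point itself; this holds because $i\in E_n^{M,m}(x)$ forces $i+j\in E_n^{M}(x)$ for $0\le j<m$, which is item $4)$ of Lemma \ref{lem:entropy-R}. Second, the $\mu_1$-integrability of $\log|\Jac f|$ can simply be quoted from Proposition \ref{proposition:mu1-integrable}.
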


We also give some properties of the partitions defined above.
In particular, they satisfy the hypotheses of Proposition \ref{prop:entropy-ineq}.

\begin{lemma}[Proposition 6.6 from \cite{A} and Lemma 4.6 from \cite{DL}]
\label{lem:entropy-R}
For any $q,k$, the collections $\mathcal{R}_{q,k}$ and $\mathcal{R}_q$ satisfy:
\begin{itemize}
\item[1)] the collection $\mathcal{R}_q$ is a $\mu_1$-partition,
\item[2)] $H_{\mu_1}(\mathcal{R}_q) < +\infty$,
\item[3)] $\forall E \in \mathcal{E}_{k,n}^{M,m}, \forall i \in E, f_k^{-i} \mathcal{R}_{q,k}$ is a $(\nu_k)_E$-partition,
\item[4)] $\forall M,m,k, \sharp \{ R \in \mathcal{R}_{q,k}^m : \exists n, \exists E \in \mathcal{E}_n^{M,m}, (\nu_k)_E^E(R) > 0 \} < +\infty$,
\item[5)] $\forall m,M \in \bbN^*, \overline{\xi_k^{M,m}}(\partial \mathcal{R}_{q,k}^m)=0$,
\item[6)] $\limsup\limits_{k \to +\infty}\limsup\limits_{n \to +\infty} \frac{1}{n} \sum\limits_{E \in \mathcal{E}_{k,n}^{M,m}} \nu_k(E) \log \left ( \sharp(\mathcal{R}_{q,k})_{(\nu_k)_E^E} \right ) \sharp\partial E \underset{M \to +\infty}{\longrightarrow} 0$.
\end{itemize}
\end{lemma}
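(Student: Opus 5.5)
We check the six items of Lemma \ref{lem:entropy-R} one at a time, following Proposition 6.6 of \cite{A} and Lemma 4.6 of \cite{DL}. The one-dimensional derivative $|f'|$ is replaced throughout by $|\Jac f|$, and the partition $\mathcal{R}_{q,k} = \mathcal{Q}_{q,k} \vee \M(\varepsilon_q)$ is defined through the level sets of $\log|\Jac f_k|$ cut by the intervals of $I_q$.

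For items 1) and 3), note that $\mathcal{R}_q$ covers $\M \setminus C(f)$. So it is a $\mu_1$-partition as soon as $\mu_1(C(f))=0$, since the atoms are disjoint by construction. We get $\mu_1(C(f))=0$ by Lemma \ref{lem:monotone-xi}. For fixed $M$, Lemma \ref{lem:prop-hb} item 3) shows that at times of $E_n^{M,m}(x)$ we have $\log|\Jac f_k| \ge -M\log\|\Jac f_k\|_\infty$. Hence $\xi_k^{M,m}$, and then $\xi^{M,m}$, gives no mass to the closed set $\{|\Jac f|<e^{-M\log\|\Jac f\|_\infty}\}$ (by the portmanteau theorem and $C^1$ convergence $f_k\to f$). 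Letting $M\to\infty$ yields $\mu_1(C(f))=0$. The same pointwise bound along $E$ gives item 3): for $i\in E$, the point $f_k^i x$ avoids the critical set of $f_k$ for $x\in E$, so $f_k^{-i}\mathcal{R}_{q,k}$ is a $(\nu_k)_E$-partition.

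Item 4) also follows from this bound. The support of $(\nu_k)_E^E$ lies in $\{\log|\Jac f_k| \in [-M\log\|\Jac f_k\|_\infty, \log\|\Jac f_k\|_\infty]\}$, which meets only $O(qM)$ atoms of $\mathcal{Q}_{q,k}$. Together with the finite partition $\M(\varepsilon_q)$, iterated $m$ times, this leaves finitely many relevant atoms. The bound is uniform in $n$ and $E$, and in $k$ because $\|df_k\|_\infty$ is bounded.

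For item 2), we estimate $H_{\mu_1}(\mathcal{R}_q) \le \log\#\M(\varepsilon_q) + H_{\mu_1}(\mathcal{Q}_q)$. The atom of $\mathcal{Q}_q$ with index $i$ sits at height $\log|\Jac f|\approx i/q$, and only $i\le q\log\|\Jac f\|_\infty$ occurs. We use the standard bound $H(p)\le \sum p_i\log(1/p_i)$ with a geometric comparison: $H \le C + 2\sum_i p_i \log(1+|i|)$. The tail sum is controlled by $\int \log(1+q|\log|\Jac f||)\,d\mu_1$. This is finite once $\int -\log|\Jac f|\,d\mu_1<\infty$, which holds by item 1') of Theorem \ref{th:main2}, or more directly by Fatou: $\Lambda_f(\mu_1)\ge \limsup$ of the hyperbolic-component integrals, and these are bounded below via Lemma \ref{lem:prop-hb} item 2). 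We expect this integrability of $\log^-|\Jac f|$ with respect to $\mu_1$ to be the delicate point. The natural route is to show $\int \log|\Jac f|\,d\xi^{M}\ge -C$ uniformly in $M$, using that hyperbolic blocks have Jacobian sum at least $(l-k)\delta\ge 0$, and then pass to the limit by monotone convergence (Lemma \ref{lem:monotone-xi}).

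Item 5) comes from the choice of $a\in(-1/q,0)$ and of $\M(\varepsilon_q)$. The measures in question are countably many, and $\log|\Jac f_k|$ pushes each of them to a measure on $\bbR$ with at most countably many atoms. So all but countably many $a$ give boundaries of measure zero for every iterate $f_k^{-j}$, $j<m$. Likewise, balls of generic radius give $\M(\varepsilon_q)$.

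Item 6) uses item 4) more quantitatively: $\log\#(\mathcal{R}_{q,k})_{(\nu_k)_E^E} \le \log(C_q M)$. Lemma \ref{lem:EnM} gives $\#\partial E\le 2(n+M)/M$, so the expression is at most $\frac{2(n+M)}{nM}\log(C_qM)$. As $n\to\infty$ this tends to $\frac{2\log(C_qM)}{M}$, which goes to $0$ as $M\to\infty$, uniformly in $k$.
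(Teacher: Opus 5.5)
Your proposal follows the route of the cited references; the paper itself gives no proof beyond citing \cite{A} and \cite{DL}. Items 1), 2), 3), 5) and 6) are essentially right.

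The step that does not go through as written is item 4). An atom of $\mathcal{R}_{q,k}^m=\bigvee_{j=0}^{m-1} f_k^{-j}\mathcal{R}_{q,k}$ has the form $R_0\cap f_k^{-1}R_1\cap\cdots\cap f_k^{-(m-1)}R_{m-1}$. The support bound you state for $(\nu_k)_E^E$ only confines $R_0$ to finitely many atoms of $\mathcal{R}_{q,k}$. To confine $R_1,\dots,R_{m-1}$, you need $(f_k^j)_*(\nu_k)_E^E$, for $j<m$, to be carried by $\{\log|\Jac f_k|\geq -M\log\|\Jac f_k\|_\infty\}$ as well. This is exactly what the truncation in the definition of $E_n^{M,m}$ provides. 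If $i\in[\![a_l,c_l[\![$ with $c_l\le b_l-m+1$, then $i+j\le b_l-1$ for every $j<m$. Hence $i+j\in E_n^{M}(x)$, and Lemma \ref{lem:prop-hb} item 3) (with $m=1$) applies at $f_k^{i+j}x$. Without this, i.e. with $E_n^M$ in place of $E_n^{M,m}$, the claim can fail. Near the right end of a block, $f_k^{i+j}x$ may come arbitrarily close to $C(f_k)$, and $(f_k^{i+j})_*(\nu_k)_E$ may then charge infinitely many level sets of $\log|\Jac f_k|$. Your item 6) only involves $\mathcal{R}_{q,k}$ itself, so it is unaffected.

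There are two smaller points.
\begin{itemize}
\item[a)] In item 1), the set $\{|\Jac f|<e^{-M\log\|\Jac f\|_\infty}\}$ is open, not closed. Openness is what makes the portmanteau inequality $\lambda(U)\le\liminf_n\lambda_n(U)=0$ usable. When passing $k\to+\infty$, take a slightly smaller threshold, so that the set is contained in the corresponding sets for $f_k$ with $k$ large.
\item[b)] In item 2), item 1') of Theorem \ref{th:main2} only gives $\Lambda_f\ge\delta$ $\mu_1$-a.e. You need an ergodic decomposition to turn this into integrability: each ergodic component then satisfies $\int\log^-|\Jac f|\,d\mu_{1,x}\le\log^+\|\Jac f\|_\infty-\delta$. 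Your second route is self-contained and yields the integrability recorded in Proposition \ref{proposition:mu1-integrable}. It uses $\int\log|\Jac f_k|\,d\xi_{k,n}^M\ge0$ from Lemma \ref{lem:prop-hb} item 2), then truncation of $\log|\Jac f_k|$ from below to pass to the limits in $n$ and $k$, then monotone convergence in $M$ via Lemma \ref{lem:monotone-xi}.
\end{itemize}
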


\subsubsection{Continuity of the exponent of the hyperbolic component}
\label{ssec:cont-expo-HB}

We follow the notations from Section \ref{sec:decomp-meas} and we let $\mathtt{F}_k = \mathtt{F}_{\nu_k}$ and we denote by $m_{k,n}(x)$ (resp. $m_{k,n}^+(x)$) the quantity $m_n(x)$ (resp. $m_n^+(x)$) for $f_k$.
Hence the convergence $\frac{m_{k,n}^+(x)}{n} \underset{n \to +\infty}{\longrightarrow} 1$ is uniform on $\mathtt{F}_k$. We also defined the measure $\zeta_k$ to be the normalized restriction of $\nu_k$ to $\mathtt{F}_k$.
We define $\xi_{k,n,\mathtt{F}_k}^M$ and $\eta_{k,n,\mathtt{F}_k}^M$ the same way as $\xi_{k,n}^M$ and $\eta_{k,n}^M$ in Section \ref{sec:decomp-meas}, but by pushing forward $\zeta_k$ instead of $\nu_k$.
When $n$ goes to infinity, we also define $\xi_{k,\mathtt{F}_k}^M$ (resp. $\eta_{k,\mathtt{F}_k}^M$) to be the limit of $\xi_{k,n,\mathtt{F}_k}^M$ (resp. $\eta_{k,n,\mathtt{F}_k}^M$).\\

From the hypotheses of Proposition \ref{prop:main-prop}, we have $\liminf\limits_{\mathcal{K} \ni k \to +\infty} \Lambda_{f_k}(\nu_k) > \alpha > (d-1) R(f)$.
Thus for $k$ large enough, the measure $\nu_k$ has only positive Lyapunov exponents, hence $\log |\Jac f_k|$ is $\nu_k$-integrable.
Therefore, we may assume that $\nu_k(\mathtt{F}_k)$ converges quickly enough to 1 so that
$$
\lim\limits_{k \to +\infty} |\Lambda_{f_k}(\nu_k) - \Lambda_{f_k}(\zeta_k)| = 0.
$$
Then, we have $\zeta_k = \xi_{k,n,\mathtt{F}_k}^M + \eta_{k,n,\mathtt{F}_k}^M$, so that
$$
\Lambda_{f_k}(\zeta_k) = \Lambda_{f_k}(\xi_{k,n,\mathtt{F}_k}^M) + \Lambda_{f_k}(\eta_{k,n,\mathtt{F}_k}^M),
$$
where we recall that $\Lambda_{f_k}(\xi_{k,n,\mathtt{F}_k}^M) = \int \log |\Jac f_k| d\xi_{k,n,\mathtt{F}_k}^M$ and $\Lambda_{f_k}(\eta_{k,n,\mathtt{F}_k}^M) = \int \log |\Jac f_k| d\eta_{k,n,\mathtt{F}_k}^M$.
In this section, we will see that $\Lambda_{f_k}(\xi_{k,n,\mathtt{F}_k}^M)$ converges to $\beta \Lambda_f(\mu_1)$.

\begin{proposition}[Proposition 4.10 from \cite{DL}]
\label{proposition:mu1-integrable}
The function $\log |\Jac f|$ is $\mu_1$-integrable.
\end{proposition}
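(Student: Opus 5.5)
The plan is to obtain integrability of $\log|\Jac f|$ against $\mu_1$ by monotone approximation, using the hyperbolic components $\xi^{M}$ and Lemma \ref{lem:monotone-xi}. Since $\log|\Jac f|\le \log\|\Jac f\|_\infty$, only the negative part needs control. I will show that $\int \log^-|\Jac f|\,d\mu_1<\infty$, where $\log^- = \max(-\log,0)$.

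First I fix a truncation level $N\in\bbN$ and the continuous bounded function $\phi_N=\min(\log^-|\Jac f|,N)$. I also consider its analogues $\phi_{N,k}=\min(\log^-|\Jac f_k|,N)$, which converge uniformly to $\phi_N$ because $f_k\to f$ in $C^1$. By Lemma \ref{lem:monotone-xi} and monotone convergence,
$$
\beta\int\phi_N\,d\mu_1=\lim_{M\to\infty}\int\phi_N\,d\xi^{M}.
$$
Hence it suffices to bound $\int\phi_N\,d\xi^M$ uniformly in $M$ and $N$. Weak-$*$ convergence in $n$ and then in $k$ gives $\int\phi_N\,d\xi^M=\lim_k\lim_n\int\phi_{N,k}\,d\xi^M_{k,n}$. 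I then expand
$$
\int\phi_{N,k}\,d\xi^M_{k,n}=\frac1n\int\sum_{i\in E^M_{k,n}(x)}\phi_{N,k}(f_k^ix)\,d\nu_k(x).
$$

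The key step is to bound this sum along each connected component $[\![a,b[\![$ of $E^M_{k,n}(x)$. By Lemma \ref{lem:prop-hb}, item 2), we have $\sum_{i=a}^{b-1}\log|\Jac_{f_k^ix}f_k|\ge (b-a)\delta\ge 0$. Writing $\log=\log^+-\log^-$ and using $\log^+|\Jac f_k|\le d\log^+\|df_k\|_\infty$, this gives
$$
\sum_{i=a}^{b-1}\log^-|\Jac_{f_k^ix}f_k|\le (b-a)\,d\log^+\|df_k\|_\infty .
$$
Summing over components and dividing by $n$, I get $\int\phi_{N,k}\,d\xi^M_{k,n}\le d\log^+\|df_k\|_\infty$. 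This bound is uniform in $n$ and $M$, and in $k$ because $\|df_k\|_\infty$ is bounded under $C^r$-weak convergence. Passing to the limits yields $\beta\int\phi_N\,d\mu_1\le d\log^+\|df\|_\infty$. Since $\beta\ge\beta_{inf}>0$, letting $N\to\infty$ and applying monotone convergence gives $\int\log^-|\Jac f|\,d\mu_1\le \beta^{-1}d\log^+\|df\|_\infty<\infty$.

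The main obstacle I expect is the order of limits: the truncation and the test functions must not depend on $k$ or $M$. Truncating by $N$ makes $\phi_N$ continuous, so the weak-$*$ limits are legitimate, and the componentwise Pliss bound survives truncation because $\phi_{N,k}\le\log^-|\Jac f_k|$. A second point to check is that Lemma \ref{lem:monotone-xi} applies to a nonnegative function and not only to sets. This follows from its statement for Borel sets by approximating $\phi_N$ from below by simple functions, or it can be bypassed using $\xi^M\le\beta\mu_1$ and the increasing limit.
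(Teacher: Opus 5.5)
Your proof is correct. On each $M$-hyperbolic block, Lemma~\ref{lem:prop-hb}~2) gives $\sum\log^-|\Jac f_k|\le\sum\log^+|\Jac f_k|$, which yields the bound $\int\min(\log^-|\Jac f_k|,N)\,d\xi^M_{k,n}\le d\log^+\|df_k\|_\infty$ uniformly in $n,k,M,N$; the truncation at level $N$ makes the weak-$*$ limits legitimate, the passage to $\beta\mu_1$ then follows (even directly from $\xi^M\to\beta\mu_1$ weak-$*$, since $\phi_N$ is continuous), and $\beta>0$ concludes. The paper gives no argument of its own here beyond citing Proposition 4.10 of \cite{DL}; your self-contained route uses exactly the tools the paper sets up for this step (Lemmas~\ref{lem:prop-hb} and~\ref{lem:monotone-xi}), so it fills the cited step in the expected way.
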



\begin{proposition}[Proposition 4.11 from \cite{DL}]
\label{prop:conv-expo-HB}
We have
\begin{align*}
\int \log |\Jac f_k| d \xi_{k,n,\mathtt{F}_k}^{M} \underset{n \to +\infty}&{\longrightarrow}
\int \log |\Jac f_k| d \xi_{k,\mathtt{F}_k}^{M}\\
\underset{k \to +\infty}&{\longrightarrow}\int \log |\Jac f| d \xi^{M}\\
\underset{M \to +\infty}&{\longrightarrow} \beta \int \log |\Jac f| d \mu_1 = \beta \Lambda_f(\mu_1).
\end{align*}
\end{proposition}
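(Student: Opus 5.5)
The plan is to handle the three limits one at a time, in the order they are displayed, following the scheme of Proposition 4.11 of \cite{DL}. Throughout, the key tool is item $3)$ of Lemma \ref{lem:prop-hb}: on each connected component of $E_{k,n}^{M}(x)$, the Jacobian along blocks is bounded below by $\|\Jac f_k\|_\infty^{-M}$. This keeps the support of the hyperbolic component uniformly away from the critical set. Concretely, I will show that there is $\eta_M>0$, independent of $k$ and $n$, such that $\xi_{k,n,\mathtt{F}_k}^M(\{|\Jac f_k|<\eta_M\})=0$. The uniformity comes from the $C^1$-convergence of $f_k$, which bounds $\|\Jac f_k\|_\infty$ uniformly.

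\textbf{First limit ($n\to\infty$).} Once the support bound holds, $\log|\Jac f_k|$ is bounded and continuous on the relevant region $\{|\Jac f_k|\ge \eta_M\}$. I will replace it by a bounded continuous function $\phi_{k,M}$ that agrees with $\log|\Jac f_k|$ there. I will use the truncation $\max(\log|\Jac f_k|,\log\eta_M)$, which is continuous. Weak-$*$ convergence $\xi^M_{k,n,\mathtt{F}_k}\to\xi^M_{k,\mathtt{F}_k}$ then gives the first limit directly. For this I need the support property to pass to the limit measure, which it does because the set $\{|\Jac f_k|<\eta_M\}$ is open.

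\textbf{Second limit ($k\to\infty$).} I use that $\max(\log|\Jac f_k|,\log\eta_M)$ converges uniformly to $\max(\log|\Jac f|,\log\eta_M)$, again by $C^1$-convergence. Combined with $\xi^M_{k,\mathtt{F}_k}\to\xi^M$, this gives the second limit. Here a small point needs care: the measures built from $\zeta_k$ must be compared with those built from $\nu_k$. I will use $\nu_k(\mathtt{F}_k)\to1$ together with the uniform bound $|\phi|\le\max(\log\|\Jac f_k\|_\infty,-\log\eta_M)$, so the difference is at most a constant times $\nu_k(\M\setminus\mathtt{F}_k)$, which vanishes in the limit.

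\textbf{Third limit ($M\to\infty$).} This is the main obstacle, since the truncation level $\eta_M$ degenerates as $M$ grows. I plan to split the integral into its positive part and its negative part. The positive part $\log^+|\Jac f|$ is bounded and continuous, so monotone convergence of Lemma \ref{lem:monotone-xi} handles it. The negative part $\log^-|\Jac f|$ is nonnegative, and $\xi^M\nearrow\beta\mu_1$ setwise by Lemma \ref{lem:monotone-xi}. Approximating $\log^-$ by simple functions, monotone convergence yields $\int\log^-|\Jac f|\,d\xi^M\to\beta\int\log^-|\Jac f|\,d\mu_1$. This quantity is finite by Proposition \ref{proposition:mu1-integrable}. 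Combining the two parts gives the last limit $\beta\Lambda_f(\mu_1)$.
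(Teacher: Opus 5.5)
Your proposal is correct and is essentially the argument the paper relies on when it cites Proposition 4.11 of \cite{DL}: item $3)$ of Lemma \ref{lem:prop-hb} keeps the hyperbolic components uniformly (in $k$ and $n$) off $\{|\Jac f_k|<\eta_M\}$, so the first two limits are weak-$*$ limits of a bounded continuous truncation (with your $\nu_k(\M\setminus\mathtt{F}_k)\to 0$ comparison between the $\zeta_k$- and $\nu_k$-based measures), and the last limit follows from the setwise monotone convergence of Lemma \ref{lem:monotone-xi} together with Proposition \ref{proposition:mu1-integrable}. The one detail you should state explicitly is that, for the $k$-limit, $\xi^M$ itself does not charge $\{|\Jac f|<\eta_M\}$. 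This follows from the portmanteau theorem applied to the open sets $\{|\Jac f|<\eta'\}$ with $\eta'<\eta_M$, since by uniform convergence of $\Jac f_k$ these sets lie in $\{|\Jac f_k|<\eta_M\}$ for $k$ large.
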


\subsection{Proof of Theorem \ref{th:main2}}

\subsubsection{Estimating the exponent of the neutral component}
\label{sec:expo-neutral}

We give the following estimate of the quantity $\Lambda_{f_k}(\eta_{k,n,\mathtt{F}_k}^M)$.

\begin{proposition}[Proposition 5.1 from \cite{DL}]
\label{prop:expo-neutral}
For any $k \in \bbN$ and $M \in \bbN^*$
$$
\liminf\limits_{n \to +\infty} \int \log |\Jac f_k| d \eta_{k,n,\mathtt{F}_k}^M \geq (1 - \beta_k^M) \delta.
$$
\end{proposition}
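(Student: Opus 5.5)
We need to prove: for fixed $k$, $M$, $\liminf_n \int \log|\Jac f_k|\, d\eta^M_{k,n,\mathtt{F}_k} \geq (1-\beta_k^M)\delta$.

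\textbf{Strategy.} Decompose the neutral times $[\![0,n[\![\setminus E^M_{k,n}(x)$ of each orbit into maximal intervals. On each such interval, use the Pliss property of the hyperbolic times at the interval's endpoint to get a Jacobian lower bound, and show that the boundary pieces cost only $o(n)$. Integrating against $\zeta_k$ then gives the claim.

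\textbf{Step 1: rewrite the integral along orbits.} By definition,
\[
\int \log|\Jac f_k|\, d\eta^M_{k,n,\mathtt{F}_k} = \frac1n\int \sum_{i\in [\![0,n[\![\setminus E^M_{k,n}(x)} \log|\Jac_{f_k^i x} f_k|\, d\zeta_k(x).
\]
So it suffices to bound the inner orbit sum from below for each $x\in\mathtt{F}_k$.

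\textbf{Step 2: the gaps.} Write $[\![0,n[\![\setminus E^M_{k,n}(x)$ as a union of maximal intervals $[\![u,v[\![$. Take such an interval with $v\le m_{k,n}(x)$. Its right endpoint $v$ is a hyperbolic time: it is either the left endpoint of an $M$-hyperbolic block, and $\partial E^M_n\subset E$, or it is itself a hyperbolic time with the next one farther than $M$ away. Hence the Pliss property gives
\[
\sum_{i=u}^{v-1}\psi_k(f_k^i x)\ge (v-u)\delta .
\]
Since $\psi_k\le \log|\Jac f_k|$ (indeed $\psi_k$ is obtained from $\log|\Jac f_k|$ by subtracting nonnegative terms once $\|df_k\|_\infty\ge1$), this yields $\log|\Jac_{f_k^u x} f_k^{v-u}|\ge (v-u)\delta$, in the spirit of Lemma \ref{lem:prop-hb} 1).

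\textbf{Step 3: the two boundary pieces.}
\begin{itemize}
\item The initial interval, before the first hyperbolic time, also ends at a hyperbolic time, so Step 2 applies to it verbatim.
\item The final piece $[\![m_{k,n}(x),n[\![$ has no hyperbolic time to its right inside the window. Extend it to $m^+_{k,n}(x)$, which is a hyperbolic time, and apply Step 2 to $[\![m_{k,n}(x),m^+_{k,n}(x)[\![$. Then subtract the overshoot using $\log|\Jac f_k^{m^+_{k,n}-n}|\le (m^+_{k,n}-n)\log\|\Jac f_k\|_\infty$, exactly as in item 2) of the proof of Lemma \ref{lem:repabound}.
\item The resulting error is bounded by $C\,(m^+_{k,n}(x)-m_{k,n}(x))$ with $C=\delta+\log^+\|\Jac f_k\|_\infty$. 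By definition of $\mathtt{F}_k$ and Lemma \ref{lem:density-HB}, this is $o(n)$ uniformly on $\mathtt{F}_k$.
\end{itemize}

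\textbf{Step 4: integrate and take limits.} Summing over all gaps gives, for $x\in\mathtt{F}_k$,
\[
\sum_{i\notin E^M_{k,n}(x)}\log|\Jac_{f_k^i x}f_k| \ge \delta\,\bigl(n-\#E^M_{k,n}(x)\bigr) - o(n).
\]
Integrate against $\zeta_k$ and divide by $n$. This gives
\[
\int \log|\Jac f_k|\,d\eta^M_{k,n,\mathtt{F}_k}\ \ge\ \delta\,\bigl(1-\beta^M_{k,n,\mathtt{F}_k}\bigr)-o(1),
\]
where $\beta^M_{k,n,\mathtt{F}_k}$ is the $\zeta_k$-average density of the hyperbolic blocks. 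Taking $\liminf$ along $n$, and using $\nu_k(\mathtt{F}_k)\to1$ so that this density is interchangeable with $\beta^M_k$ up to the conventions fixed above, gives the stated inequality.

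\textbf{Main obstacle.} The delicate point is the final tail interval, where the orbit may be near the critical set and $\log|\Jac|$ is unbounded below. It is controlled only because the extension argument uses a hyperbolic time beyond $n$, so only the upper bound of $\log|\Jac f_k|$ is ever needed. The uniform convergence $m^+_{k,n}/n\to1$ on $\mathtt{F}_k$ then makes the error vanish.
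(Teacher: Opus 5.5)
\textbf{Steps 1--3.} These are correct and are the standard argument behind this statement; the paper gives no proof and defers to Proposition 5.1 of \cite{DL}. Each maximal neutral gap in $[\![0,m_{k,n}(x)[\![$ is closed on the right by a hyperbolic time. The Pliss property and $\psi_k\le\log|\Jac f_k|$ then bound its contribution below by $\delta$ times its length. The tail $[\![m_{k,n}(x),n[\![$ loses at most $(m^+_{k,n}(x)-n)\log^+\|\Jac f_k\|_\infty$, which is $o(n)$ uniformly on $\mathtt F_k$.

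Step 1 relies on reading $\eta^M_{k,n,\mathtt F_k}$ as the push-forward $\frac1n\int\sum_{i\in[\![0,n[\![\setminus E^M_{k,n}(x)}\delta_{f_k^ix}\,d\zeta_k(x)$. That is the right reading. But since $\zeta_k$ is not invariant, $\xi^M_{k,n,\mathtt F_k}+\eta^M_{k,n,\mathtt F_k}=\frac1n\sum_{i<n}(f_k^i)_*\zeta_k$ rather than $\zeta_k$. What your argument actually proves, for fixed $k$ and $M$, is
\[
\liminf_{n}\int\log|\Jac f_k|\,d\eta^M_{k,n,\mathtt F_k}\ \ge\ \delta\Bigl(1-\limsup_{n}\int\frac{\#E^M_{k,n}(x)}{n}\,d\zeta_k(x)\Bigr).
\]

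\textbf{The gap.} The problem is the final identification of this $\zeta_k$-averaged density with $\beta^M_k=\lim_n\frac1n\int\#E^M_{k,n}\,d\nu_k$. Invoking $\nu_k(\mathtt F_k)\to1$ is not legitimate here, because that concerns $k\to+\infty$ while $k$ is fixed. The only direct comparison is $\int\frac{\#E^M_{k,n}}{n}\,d\zeta_k\le\beta^M_{k,n}/\nu_k(\mathtt F_k)$. This leaves an extra error $\delta\beta^M_k\frac{1-\nu_k(\mathtt F_k)}{\nu_k(\mathtt F_k)}$. That error is harmless in the only application (item $3')$ of Theorem \ref{th:main2}, where $k\to+\infty$), but it does not give the statement as written.

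\textbf{How to close it.} Prove that $\#E^M_{k,n}(x)/n\to\beta^M_k$ for $\nu_k$-a.e.\ $x$ (for the $k$ with $\int\psi_k\,d\nu_k>\delta$). Then conclude by dominated convergence, since $\zeta_k\ll\nu_k$. To get the a.e.\ convergence, work on the natural extension:
\begin{itemize}
\item The one-sided Pliss times $E_k(x_0)$ differ by only finitely many elements from the two-sided ones $\hat E(\hat x)=\{a\in\bbZ:\sum_{i=j}^{a-1}(\psi_k(x_i)-\delta)\ge0 \text{ for all } j<a\}$.
\item Indeed, an extra time $a\ge0$ must satisfy $\sum_{i=0}^{a-1}(\psi_k(x_i)-\delta)<\sup_{j<0}\sum_{i=j}^{-1}(\delta-\psi_k(x_i))$.
\item Since $\int\psi_k\,d\nu_k>\delta$, Birkhoff's theorem applied forward and backward makes the left side tend to $+\infty$ and the right side finite a.e.
\item Since $\hat E(\hat f_k\hat x)=\hat E(\hat x)-1$, the density of the corresponding $M$-blocks is a Birkhoff average of an indicator function. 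It therefore converges a.e.\ to a constant, which must be $\beta^M_k$.
\end{itemize}
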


\subsubsection{Proof of the Main Proposition}
\label{ssec:proof}

We assume hypotheses $i)$ and $ii)$ from Proposition \ref{prop:main-prop}.
We show that the definitions from Section \ref{ssec:main-prop} of the measures $\mu_0$ and $\mu_1$ and of $\beta$ satisfy the conclusion of Proposition \ref{prop:main-prop}. Recall that the $f$-invariance of $\mu_0$ and $\mu_1$ has been proved in Lemma \ref{lem:f-inv}.
We start by proving item $1')$ from Theorem \ref{th:main2}. 

\begin{lemma}[Lemma 10 from \cite{B}, Lemma 5.2 from \cite{DL} and Proposition 5.4 from \cite{A}]
\label{lem:mu1-HB}
For $\mu_1$-almost every $x \in \M$, we have 
$\Lambda_f(x) \geq \delta$.
\end{lemma}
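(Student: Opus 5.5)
We follow the strategy of Lemma 10 in \cite{B} and Proposition 5.4 in \cite{A}. The key information is already in Lemma \ref{lem:prop-hb}: along the hyperbolic blocks, the Jacobian grows at rate at least $\delta$ from every point up to the right endpoint of the block. We transfer this property to the limit $\mu_1$ by means of closed sets. For $N \in \bbN^*$, let
$$
A_N := \{ x \in \M : \forall j \in [\![1, N]\!], \ \exists l \geq j \text{ with } \log |\Jac_x f^{l}| \geq l \delta \ \text{and}\ l \leq N' \},
$$
or, more simply, a compact set encoding ``there is an iterate $l \in [\![N, N']\!]$ with $\log|\Jac_x f^{l}| \geq l\delta - \kappa$''. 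We will show $\mu_1(A_N)$ is close to $1$ for a suitable family. Concretely, for fixed $L$, consider the closed set $K_L^{\kappa} := \{ x : \max_{L \le l \le 2ML} \frac1l \log|\Jac_x f^l| \geq \delta - \kappa \}$. It is closed because $\log|\Jac f^l|$ is upper semicontinuous, taking values in $[-\infty,\infty)$.

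\textbf{Step 1 (finite-time statement for $\xi_{k,n}^M$).} For $i \in E_{k,n}^M(x)$, the point $f_k^i x$ lies in a hyperbolic block $[\![a,b[\![$ with $b\in E_k(x)$ and $b-a\le M$. Item 1) of Lemma \ref{lem:prop-hb} gives $\log|\Jac_{f_k^i x} f_k^{b-i}|\ge (b-i)\delta$. Chaining consecutive blocks only separated by gaps of length at most $M$ is not allowed, so instead we use the following: $b$ is a Pliss time, so for every later hyperbolic time $b'$ the estimate still holds from $i$ to $b'$. By Lemma \ref{lem:density-HB}, hyperbolic times have positive density and $m_n^+\sim n$. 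Hence, except for a proportion of $i$ tending to $0$ (those within distance $L$ of $n$, or whose next hyperbolic time after $i+L$ exceeds $i+2ML$, controlled as in Lemma \ref{lem:EnM}), we get $f_k^i x\in K_{L,k}^{0}$, the analogous set defined with $f_k$. Thus $\xi_{k,n}^M(\M\setminus K^0_{L,k}) \le o_L(1) + o_n(1)$.

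\textbf{Step 2 (passing to limits).} Since $f_k\to f$ in $C^1$, the functions $\log|\Jac f_k^l|$ converge uniformly to $\log|\Jac f^l|$ away from the critical set. Near the critical set the Jacobian is small anyway, which only makes membership harder. So for $k$ large, $K^0_{L,k}\subset U$, where $U$ is any neighborhood of $K^{\kappa}_L$. Weak-$*$ upper semicontinuity on closed sets (portmanteau) then gives $\xi^M(K^\kappa_L)\ge \beta^M - o_L(1)$. Letting $M\to\infty$ and using the monotonicity of Lemma \ref{lem:monotone-xi}, we obtain $\mu_1(K^\kappa_L)\ge 1-o_L(1)$ for every $L$ and $\kappa>0$.

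\textbf{Step 3 (conclusion).} Apply Step 2 along $L=2^j$ with summable error; this needs the $o_L(1)$ to be quantitative, which is where the main difficulty lies. Borel–Cantelli then gives, for $\mu_1$-a.e. $x$ and all large $j$, some $l_j\ge 2^j$ with $\frac1{l_j}\log|\Jac_x f^{l_j}|\ge\delta-\kappa$. Hence $\Lambda_f(x)=\limsup \ge \delta-\kappa$, and $\kappa\to0$ concludes. If the summability in Step 3 fails, we use instead $\mu_1(\bigcap_{L\ge L_0}\bigcup_{L'\ge L}K^\kappa_{L'})\ge \limsup_L \mu_1(K^\kappa_L) = 1$, which needs no rate. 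Here $\Lambda_f$ is a $\limsup$, so infinitely many good times suffice; this is the route we commit to.

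The main obstacle is Step 1: bounding uniformly in $k$ the proportion of times $i\in E^M_{k,n}$ whose next good hyperbolic time is farther than $2ML$. We expect to handle it with the uniform density bound $\beta_{\inf}$ of Remark \ref{rk:unif-beta}, together with the fact that times in $E^M_n$ are by construction within $M$ of a hyperbolic time at both ends.
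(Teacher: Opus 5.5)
\textbf{There is a genuine gap in Step 1.} It is the core of the argument, you leave it open, and you state it in a form that is not the provable one.

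You claim that the proportion of $i\in E^M_{k,n}(x)$ with no hyperbolic time in $[\![i+L,i+2ML]\!]$ is $o_L(1)+o_n(1)$, uniformly in $k$ and $M$. But fix $M$ and take a block shorter than $L$ that is followed by more than $2ML$ consecutive non-hyperbolic times. Every $i$ in that block is of this kind. Nothing you invoke makes the total weight of such blocks small as $L\to\infty$ uniformly in $k$: the density bound $\beta_{\inf}$ of Remark \ref{rk:unif-beta} counts hyperbolic times, but says nothing about where they sit relative to a given $i$.

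The count you allude to does work, but the error is controlled by $M$, not by $L$:
\begin{itemize}
\item[1.] Suppose $i$ lies in a block $[\![a_j,b_j[\![$ with $b_j\ge i+L$. Then $i+L-1$ lies in some $[\![a,b[\![$ with $a,b\in E_k(x)$ and $b-a\le M$. So there is a hyperbolic time $b\in[\![i+L,i+L+M-1]\!]$, and item 1) of Lemma \ref{lem:prop-hb} gives $\log|\Jac_{f_k^ix}f_k^{b-i}|\ge (b-i)\delta$ with $L\le b-i\le 2ML$.
\item[2.] Hence the exceptional $i$ lie within $L$ of the right end of their block.
\item[3.] Item 2) of Lemma \ref{lem:EnM} gives at most $(n+M)/M$ blocks, so the exceptional proportion is at most $L(n+M)/(nM)$.
\end{itemize}
This is an $O(L/M)$ error: it vanishes as $M\to\infty$ at fixed $L$, not as $L\to\infty$.

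\textbf{Step 2 then has to be reorganized.} Your set $K^\kappa_L$ depends on $M$ through the window $[\![L,2ML]\!]$, so ``letting $M\to\infty$'' for it is not meaningful. Write $K^\kappa_{L,M}$ instead.
\begin{itemize}
\item[1.] For fixed $L,M$, let $n\to\infty$ and then $k\to\infty$. Use portmanteau on closed sets and the fact that $K^0_{L,M,k}\subset K^\kappa_{L,M}$ for $k$ large. This gives $\xi^M(K^\kappa_{L,M})\ge\beta^M-L/M$.
\item[2.] Since $\xi^M\le\beta\mu_1$ setwise (Lemma \ref{lem:monotone-xi}), the set $\{x:\exists\, l\ge L,\ \log|\Jac_xf^l|\ge l(\delta-\kappa)\}$, which contains every $K^\kappa_{L,M}$, has $\beta\mu_1$-measure at least $\beta^M-L/M\to\beta$.
\item[3.] Because $\beta>0$, this set has full $\mu_1$-measure for every $L$. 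Intersecting over $L$ and $\kappa$ concludes, and the Borel--Cantelli/limsup discussion of Step 3 becomes unnecessary.
\end{itemize}
Repaired this way, your route is valid, and it does not even use the invariance of $\mu_1$.

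\textbf{Comparison with the cited proof.} The paper gives no proof of its own and cites \cite{B}, \cite{DL}, \cite{A}; the standard argument there avoids the counting altogether.
\begin{itemize}
\item[1.] Every $i\in E^M_{k,n}(x)$ has a hyperbolic time in $[\![i+1,i+M]\!]$.
\item[2.] Passing to the limit in $n$ and $k$ as in your Step 2, $\xi^M$ is carried by the closed set $G_M=\{y:\exists\,1\le l\le M,\ \log|\Jac_yf^l|\ge l\delta\}$.
\item[3.] Hence $\mu_1\bigl(\bigcup_M G_M\bigr)=1$ by Lemma \ref{lem:monotone-xi}.
\item[4.] The $f$-invariance of $\mu_1$ (Lemma \ref{lem:f-inv}) gives $\mu_1\bigl(\bigcap_{j\ge0}f^{-j}\bigcup_M G_M\bigr)=1$.
\item[5.] Concatenating the successive good times along the orbit yields infinitely many $n$ with $\log|\Jac_yf^n|\ge n\delta$.
\end{itemize}
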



\begin{lemma}
\label{lem:mu1-exp}
The measure $\mu_1$ has a non-zero expanding component.
More precisely, there exists an $f$-invariant measure $\mu_2$ such that
\begin{itemize}
\item[1)] $0 < \mu_2 \leq \mu_1$,
\item[2)] for $\mu_2$-almost every $x \in \M$, $\lambda_f^d(x) \geq \frac{d^2}{r} R(f) > 0$.
\end{itemize}
\end{lemma}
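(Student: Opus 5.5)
The plan is to combine the entropy decomposition with Ruelle's inequality applied to the ergodic components of $\mu_1$. Write $\mu_1=\int \mu_{1,x}\,d\mu_1(x)$ for its ergodic decomposition and set
$$
A:=\{x : \lambda_f^d(\mu_{1,x})\geq \tfrac{d^2}{r}R(f)\},
$$
which is $f$-invariant and measurable. We define $\mu_2$ as the restriction of $\mu_1$ to $A$, so $\mu_2\leq\mu_1$ and $\mu_2$ is $f$-invariant. Item 2) then holds by construction, and it only remains to prove $\mu_1(A)>0$.

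Assume by contradiction that $\mu_1(A)=0$. Then for $\mu_1$-a.e. component the smallest exponent satisfies $\lambda^d<\frac{d^2}{r}R(f)$, and every exponent is at most $R(f)$. Ruelle's inequality for the $C^r$ map $f$ on each component gives
$$
h_f(\mu_{1,x})\leq \sum_i\lambda_i^+(\mu_{1,x})\leq (d-1)R(f)+\tfrac{d^2}{r}R(f).
$$
Integrating, $h_f(\mu_1)\leq (d-1+\frac{d^2}{r})R(f)<\alpha$. Feed this into item 2) of Theorem \ref{th:main2}, i.e. the entropy bound from Propositions \ref{prop:entropy-final}, \ref{prop:entropy-cv} and Lemma \ref{lem:repabound}:
$$
\limsup_k h_{f_k}(\nu_k)\leq \beta h_f(\mu_1)+(1-\beta)\alpha+\alpha' < \alpha+\alpha'.
$$

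For a contradiction we need a lower bound on $\limsup_k h_{f_k}(\nu_k)$, and this is the main obstacle: nothing in the hypotheses of Proposition \ref{prop:main-prop} directly bounds the entropy of $\nu_k$ from below. I would first try to extract the lower bound from the hypothesis on the Jacobian, arguing within the hyperbolic component. On hyperbolic blocks, Lemma \ref{lem:prop-hb} and Lemma \ref{lem:mu1-HB} give $\Lambda_f\geq\delta$ $\mu_1$-a.e. Combined with $\lambda^d<\frac{d^2}{r}R(f)$, this forces the Jacobian growth onto the top $d-1$ directions. The reparametrization count of Lemma \ref{lem:repara} (Case 3) shows that such growth costs at most $(d-1)R(f)$ in entropy, and I would compare this count with the dimension bound used in Case 3.

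If that route fails, the fallback is to restrict the statement to the situation where the lemma is actually used. That situation is the proof of Theorem \ref{thm:main-con-1}/Corollary \ref{cor:UI}, where $\nu_k$ has entropy close to $h_{\rm top}(f)>(d-1)R(f)$. In the $C^\infty$ case the $\frac{d^2}{r}$ and $\alpha'$ terms vanish, and the displayed inequality gives $\beta h(\mu_1)+(1-\beta)\alpha\geq h_{\rm top}(f)$. Then $h(\mu_1)>\alpha$, contradicting the Ruelle bound above and hence the assumption $\mu_1(A)=0$. Either way, the key step is the Ruelle-inequality contradiction on the components lacking expansion, and the delicate point is producing the entropy lower bound with the correct $\alpha$.
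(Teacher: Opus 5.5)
There is a genuine gap, and it is the one you flag yourself: your contradiction needs a lower bound on $\limsup_k h_{f_k}(\nu_k)$, and none is available. The hypotheses of Proposition \ref{prop:main-prop} constrain only the Jacobian of $\nu_k$. Nothing in i)--ii) prevents $h_{f_k}(\nu_k)$ from being small; think of $\nu_k$ supported on repelling periodic orbits with large Jacobian. So the bound $\limsup_k h_{f_k}(\nu_k)\le\alpha+\alpha'$ you derive contradicts nothing.

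Your first route does not repair this. Lemma \ref{lem:mu1-HB} only gives $\Lambda_f\geq\delta$ with $\delta$ possibly tiny, which is compatible with $\lambda^d_f$ being small. The reparametrization counts of Lemma \ref{lem:repara} only ever give \emph{upper} bounds on entropy.

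The fallback changes the statement. The lemma is needed exactly for item 4) of Theorem \ref{th:main-cont}, i.e. the implication $\liminf_{\mathcal K}\Lambda_{f_k}(\nu_k)>\alpha\Rightarrow\beta>0$, where no entropy assumption is made. Moreover $r,\alpha,\alpha'$ are fixed inside the lemma, so you cannot let the $\frac{d^2}{r}$ and $\alpha'$ terms vanish. You would have to add the extra hypothesis $\lim_{\mathcal K}h_{f_k}(\nu_k)>\alpha+\alpha'$.

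The missing idea is to use hypothesis i) of Proposition \ref{prop:main-prop} quantitatively, with no entropy at all. The construction goes as follows.
\begin{itemize}
\item[1.] Set $\gamma=\alpha+\frac{d(dr+r-1)}{r(r+d-1)}(\log^+\|df\|_\infty-R(f))\geq\alpha$. Pick $\iota>0$ with $\liminf_{\mathcal K}\int\log|\Jac f_k|\,d\nu_k>\gamma+\iota$.
\item[2.] Let $H_k(x)$ be the $(\gamma+\iota)$-Pliss times of $\log|\Jac f_k|$. Run the construction of $\mu_1$ with $H_k$ in place of $E_k$; this yields a measure $\mu_2$.
\item[3.] Lemma \ref{lem:density-HB}, with Remark \ref{rk:unif-beta}, gives lower density of $H_k(x)$ bounded away from $0$ uniformly in $k$. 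Hence $\mu_2\neq0$.
\item[4.] The argument of Lemma \ref{lem:mu1-HB} gives $\Lambda_f\geq\gamma+\iota\geq\alpha>(d-1+\frac{d^2}{r})R(f)$ for $\mu_2$-a.e. point. Hence $\lambda^d_f\geq\Lambda_f-(d-1)R(f)\geq\frac{d^2}{r}R(f)$ there.
\item[5.] The choice of $\gamma$ ensures $\log|\Jac f_k|-(\gamma+\iota)\leq\psi_k-\delta$ for $k$ large. So $H_k(x)\subset E_k(x)$, and the hyperbolic components built from $H_k$ are dominated by those built from $E_k$. Thus $\mu_2$ is, up to normalization, a component of $\mu_1$.
\end{itemize}
Your restriction $\mu_1|_A$ is then a fine way to phrase the conclusion, since this shows $\mu_1(A)>0$. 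The positivity of $\mu_1(A)$ has to come from this Pliss-time argument, not from Ruelle's inequality.
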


\begin{proof}
For this proof only, we consider hyperbolic times for $\phi_f(x) = \log |\Jac_x f|$.
We define $\phi_f^n(x) = \sum\limits_{i=0}^{n-1} \phi_f(f^i x)$.
We let
$$
\gamma = \alpha + \frac{d(dr+r-1)}{r(r+d-1)} \left ( \log^+ || df ||_{\infty} - R(f) \right ).
$$
By assumption, there exists $\iota > 0$ such that $\liminf\limits_{\mathcal{K} \ni k \to +\infty} \int \phi_{f_k} d \nu_k > \gamma + \iota$.
We denote by $H_k(x)$ the set of $(\gamma + \iota)$-hyperbolic times for $x$ for $\phi_{f_k}$.
We denote by $\mu_2$ the $f$-invariant measure we obtain by the same construction as $\mu_1$ by replacing $E_k$ by $H_k$.
By Lemma \ref{lem:density-HB}, for $k$ large enough, the set $H_k(x)$ has uniformly bounded lower density for $\nu_k$-almost every $x$, so $\mu_2$ is non-zero.
Then by the same proof as Lemma \ref{lem:f-inv}, $\mu_2$ is $f$-invariant.
By the same proof as Lemma \ref{lem:mu1-HB}, we have that $\Lambda_f(x) \geq \gamma+\iota \geq \alpha > \left ( d - 1 + \frac{d^2}{r} \right ) R(f)$ for $\mu_2$-almost every $x$, so $\lambda_d^f(x) \geq \Lambda_f(x) - (d-1) R(f) \geq \frac{d^2}{r} R(f)$.
It remains to show that $\mu_2$ is a component of $\mu_1$.
It suffices to show that $\phi_{f_k} - (\gamma+\iota) \leq \psi_k - \delta$, since this will imply $H_k(x) \subset E_k(x)$ for any $x$ and $k$.
Recall that $\delta \leq \alpha - \frac{d(dr+r-1)}{r(r+d-1)} R(f)$, which gives
\begin{align*}
\phi_{f_k} - \psi_k
&= \frac{d}{r+d-1} \log^+ |\Jac f_k| + \frac{d(r-1)}{r(r+d-1)} \log || df_k ||_{\infty}\\
&\leq \frac{d^2}{r+d-1} \log || d f_k || + \frac{d(r-1)}{r(r+d-1)} \log || df_k||_{\infty}\\
&= \frac{d(dr+r-1)}{r(r+d-1)} \log || d f_k ||_{\infty}\\
k \text{ large}\hspace{1em}&\leq \frac{d(dr+r-1)}{r(r+d-1)} \log || df ||_{\infty} + \iota\\
&=\gamma - \left ( \alpha - \frac{d(dr+r-1)}{r(r+d-1)} R(f)\right ) + \iota\\
&\leq \gamma + \iota - \delta.
\end{align*}
\end{proof}

Before proving item $2)$ from Theorem \ref{th:main-cont}, we state the following lemma:

\begin{lemma}[Lemma 5.3 from \cite{DL}]
\label{lem:term-II}
We have
$$
0 = \lim\limits_{q \to +\infty} \lim\limits_{m \to +\infty} \lim\limits_{M \to +\infty} \lim\limits_{k \to +\infty} \lim\limits_{n \to +\infty}
\frac{1}{q}\int \log^+ | \Jac f_k^q| d \xi_{k,n}^{M,m,q} - \frac{1}{q}\int \log |\Jac f_k^q| d \xi_{k,n}^{M,m,q}.
$$
\end{lemma}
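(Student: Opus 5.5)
The plan is to rewrite the quantity as a negative part, take the five limits from the inside out, and finish with the fact that $\mu_1$ has positive Jacobian exponent.

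\textbf{Step 0: rewriting.} Since $\log^+ t - \log t = \log^- t$ (with $\log^- t := \max(-\log t, 0)$), the expression inside the limits is
$$
\frac{1}{q}\int \log^- |\Jac f_k^q| \, d\xi_{k,n}^{M,m,q}.
$$
The goal is to identify the iterated limit as $\frac{\beta}{q}\int \log^-|\Jac f^q|\,d\mu_1$ and then to show that this tends to $0$ as $q\to+\infty$.

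\textbf{Step 1: the limits in $n$ and $k$.} For $i\in E_{k,n}^{M,m,q}(x)$ with $m\ge q$, the window $[\![ i, i+q [\![$ lies inside a connected component of $E_{k,n}^M(x)$, because the last $\max(m,q)$ elements of each component were removed. By item $3)$ of Lemma \ref{lem:prop-hb},
$$
\log|\Jac_{f_k^i x} f_k^q| \geq -M \log \|\Jac f_k\|_\infty .
$$
This bound is uniform in $k$ by $C^1$-convergence of $f_k$ to $f$. Hence $\xi_{k,n}^{M,m,q}$ only sees the truncated function
$$
\Phi_{k,K}:=\max\bigl(\log^-|\Jac f_k^q|, \ldots\bigr) \text{ replaced by } \min\bigl(\log^-|\Jac f_k^q|, K\bigr), \qquad K = M\log\|\Jac f\|_\infty + 1 .
$$
Each $\min(\log^-|\Jac f_k^q|, K)$ is continuous, and these functions converge uniformly to $\min(\log^-|\Jac f^q|,K)$, since $\Jac f_k^q \to \Jac f^q$ uniformly. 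Weak-$*$ convergence of $\xi_{k,n}^{M,m,q}$ to $\xi_k^{M,m,q}$ and then to $\xi^{M,m,q}$ therefore gives the limit
$$
\frac{1}{q}\int \min\bigl(\log^-|\Jac f^q|,K\bigr)\, d\xi^{M,m,q}.
$$
The part I expect to be the main obstacle is passing the truncation to the limit. The lower bound holds on the support of $\xi_{k,n}^{M,m,q}$, but the support can move in the limit. To handle this I would use a continuous cutoff at level $K$, which is exactly equal to the integrand on the supports for every $k,n$. The limit measure $\xi^{M,m,q}$ is then supported where $\log^-|\Jac f^q|\le K$, since this set is closed. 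This is the same mechanism used for Proposition \ref{prop:conv-expo-HB}.

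\textbf{Step 2: the limits in $M$ and $m$.} Next I let $M\to+\infty$. The truncation level $K$ depends on $M$, so I would compare with the untruncated integrand. By the monotone convergence $\xi^{M,m,q}\nearrow\beta\mu_1$ (the analogue of Lemma \ref{lem:monotone-xi} for $E^{M,m,q}=E^{M,\max(m,q)}$) and monotone convergence in $K$, the limit is $\frac{\beta}{q}\int\log^-|\Jac f^q|\,d\mu_1$. This value is finite because
$$
\log^-|\Jac f^q|\le \sum_{i<q}\log^-|\Jac f|\circ f^i ,
$$
and $\log|\Jac f|\in L^1(\mu_1)$ by Proposition \ref{proposition:mu1-integrable}. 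The result no longer depends on $m$, so the limit in $m$ is trivial.

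\textbf{Step 3: the limit in $q$.} Set $g_q=\frac1q\log|\Jac f^q|=\frac1q\sum_{i<q}\log|\Jac f|\circ f^i$. These are Birkhoff averages of an $L^1(\mu_1)$ function, so by the Birkhoff ergodic theorem they converge $\mu_1$-a.e. and in $L^1(\mu_1)$ to a limit $g\ge\Lambda$-type quantity; more precisely $g(x)=\Lambda_f(x)\ge\delta>0$ for $\mu_1$-a.e. $x$ by Lemma \ref{lem:mu1-HB}. Since $t\mapsto t^-$ is $1$-Lipschitz,
$$
\int g_q^-\,d\mu_1\le \int |g_q-g|\,d\mu_1 + \int g^-\,d\mu_1 = \int |g_q-g|\,d\mu_1 \to 0 .
$$
This proves that the iterated limit is $0$.
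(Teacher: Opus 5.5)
Your proof is correct. The paper gives no argument of its own here beyond citing Lemma 5.3 of \cite{DL}, and your route is the natural one behind that lemma: rewrite the integrand as $\frac1q\log^-|\Jac f_k^q|$; use the uniform bound $\log|\Jac_{f_k^i x} f_k^q|\geq -M\log\|\Jac f_k\|_\infty$ for $i\in E^{M,m,q}_{k,n}(x)$ (Lemma \ref{lem:prop-hb}) to pass to $\xi^{M,m,q}$; use monotone convergence to $\beta\mu_1$ (Lemma \ref{lem:monotone-xi}); and finish with $\Lambda_f\geq\delta$ $\mu_1$-a.e. (Lemma \ref{lem:mu1-HB}) together with Birkhoff's theorem in $L^1(\mu_1)$.

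Only cosmetic fixes are needed: the garbled definition of $\Phi_{k,K}$ should simply read $\min(\log^-|\Jac f_k^q|,K)$, and the assumption $m\geq q$ is superfluous since $E^{M,m,q}=E^{M,\max(m,q)}$.
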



We now prove item $2)$ from Theorem \ref{th:main-cont}.

\begin{proposition}
We have
$$
\limsup\limits_{k \to +\infty} h_{f_k}(\nu_k) \leq \beta h_f(\mu_1) + (1-\beta) \alpha + \alpha'.
$$
\end{proposition}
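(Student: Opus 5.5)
We plan to combine the entropy decomposition of Proposition \ref{prop:entropy-final} with the reparametrization bound of Lemma \ref{lem:repabound}, applied to each $f_k$ with the partition $\mathcal{R}_{q,k}$, and then pass to the limits in the order $n\to\infty$, $k\to\infty$, $M\to\infty$, $m\to\infty$, $q\to\infty$. Lemma \ref{lem:entropy-R} (items 3–5) shows that $\mathcal{R}_{q,k}$ satisfies the hypotheses of Proposition \ref{prop:entropy-ineq}. This gives, for $M$ large and $m\geq q$,
$$
h_{f_k}(\nu_k) \leq \beta_{k}^{M,m}\frac{1}{m}H_{\overline{\xi_k^{M,m}}}(\mathcal{R}_{q,k}^m) + \limsup_{n} \frac{1}{n}H_{\zeta_k}(\mathcal{P}^n\mid \mathcal{R}_{q,k}^{E_n^{M,m}}) + \nu_k(\M\setminus \mathtt{F}_k)\log\#\mathcal{P} + \frac{3\log M}{M}+\iota + \mathrm{(boundary\ term)} .
$$
The boundary term vanishes as $k$ and then $M$ tend to infinity, by item 6 of Lemma \ref{lem:entropy-R}. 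The term $\nu_k(\M\setminus\mathtt{F}_k)$ tends to $0$ by our choice of $\mathtt{F}_k$.

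For the first term, Proposition \ref{prop:entropy-cv} gives the limit $\beta h_f(\mu_1,\mathcal{R}_q)\leq \beta h_f(\mu_1)$, since $\beta_k^{M,m}\to\beta$. For the conditional term, we combine Lemma \ref{lem:reparamization-partition} with Lemma \ref{lem:repabound}: each atom $F_n$ of $\mathcal{R}_{q,k}^{E_n^{M,m}}\cap\mathtt{F}_k$ is covered by $\Psi_{F_n}$. Averaging $\frac1n\log\#\Psi_{F_n}$ against $\zeta_k$ bounds the conditional entropy by
$$
\Big(1-\beta_{k,n}^{M,m}\Big)C(f_k) + \frac{d}{r}\log\|df_k\|_\infty + \frac{d}{r-1}\Big(\tfrac1q\!\int\!\log^+|\Jac f_k^q|\,d\xi^{M,m,q}_{k,n} - \tfrac1q\!\int\!\log|\Jac f_k^q|\,d\xi^{M,m,q}_{k,n}\Big) + \gamma_{n,M,m,q}(f_k).
$$
Here we use that the average over atoms of $\xi^{M,m,q}_{n,\nu,F_n}$ recovers $\xi^{M,m,q}_{k,n}$ up to the restriction to $\mathtt{F}_k$, and that the map $\rho\mapsto\int(\log^+-\log)\,d\rho$ is linear and nonnegative. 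The last bracket vanishes in the iterated limit by Lemma \ref{lem:term-II}. The term $\gamma$ vanishes uniformly by Lemma \ref{lem:repabound}, since $\|df_k\|_\infty$ is bounded by $C^1$ convergence. Because $C(f_k)\to C(f)$ and $C(f)<\alpha$ (hypothesis ii), the first term is at most $(1-\beta)\alpha$ in the limit. The second term is at most $\frac dr\log^+\|df\|_\infty+o(1)<\alpha'$.

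Summing the three contributions and letting $\iota\to0$ yields
$$
\limsup_k h_{f_k}(\nu_k)\leq \beta h_f(\mu_1)+(1-\beta)\alpha+\alpha'.
$$
The limsup is taken along $\mathcal{K}$, which suffices by the choice of $\mathcal{K}$.

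The main obstacle is the averaging step. The bound in Lemma \ref{lem:repabound} involves the atom-dependent quantities $\#E_n^{M,m}$ and $\xi_{n,\nu,F_n}$, and these must be converted into $\beta_{k,n}^{M,m}$ and $\xi_{k,n}^{M,m,q}$ while controlling the discrepancy between $\zeta_k$ and $\nu_k$. The $(1-\#E/n)$ factor is constant on atoms, so it averages to $1-\beta_{k,n}^{M,m}$ up to an error of order $\nu_k(\M\setminus\mathtt{F}_k)$. For the $\log$ integral against the restriction to $\mathtt{F}_k$, we use the integrability established before Proposition \ref{prop:conv-expo-HB}, namely $|\Lambda_{f_k}(\nu_k)-\Lambda_{f_k}(\zeta_k)|\to0$. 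We must also check that the order of limits is compatible with the Cantor diagonal subsequences fixed earlier.
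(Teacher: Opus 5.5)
Your proposal is correct and follows the paper's proof essentially step by step. Both apply Proposition \ref{prop:entropy-final} with $\mathcal{R}_{q,k}$, use Proposition \ref{prop:entropy-cv} for the hyperbolic part, bound the neutral part via Lemmas \ref{lem:reparamization-partition} and \ref{lem:repabound} split into the same three terms, and conclude with Lemma \ref{lem:term-II}, hypothesis $ii)$ and the choice of $\alpha'$. Two small refinements: the partition must be a $k$-dependent $\mathcal{P}_k$ whose cardinality may be unbounded, so choose $\mathtt{F}_k$ after $\mathcal{P}_k$ to make the product $\nu_k(\M\setminus\mathtt{F}_k)\log\#\mathcal{P}_k$ tend to $0$. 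Also, for the $\log^-$ term the domination $\zeta_k\le \nu_k/\nu_k(\mathtt{F}_k)$ with the nonnegative integrand already suffices, so you do not need $|\Lambda_{f_k}(\nu_k)-\Lambda_{f_k}(\zeta_k)|\to 0$.
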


\begin{proof}
Let $k \in \mathcal{K}$, which is a sequence along which $(h_{f_k}(\nu_k))_k$ converges to its limsup. We start with some notations.
Let $\iota_k > 0$ be such that $\iota_k \to 0$.
Let $\mathcal{P}_k$ be a finite partition such that $\nu_k(\partial \mathcal{P}_k) = 0$ and such that
$$
\lim\limits_{n \to +\infty} \frac{1}{n} H_{\nu_k}(\mathcal{P}_k^n) \geq h(\nu_k) - \iota_k.
$$
We may assume that $\mathtt{F}_k$ from Section \ref{ssec:cont-expo-HB} has a $\nu_k$-measure sufficiently close to $1$ so that
$$
\nu_k(\M \backslash \mathtt{F}_k) \log \sharp\mathcal{P}_k \underset{k \to +\infty}{\longrightarrow} 0.
$$
Indeed, the sequence $(\sharp \mathcal{P}_k)_k$ may not be bounded.
Let $M, m \in \bbN^*$.
For $q \in \bbN^*$, we consider the partition $\mathcal{R}_{q,k}$ defined in Section \ref{ssec:ent-HB} by using the construction from Section \ref{ssec:def-partition}.
From Lemma \ref{lem:entropy-R}, the partition $\mathcal{R}_{q,k}$ satisfies the hypotheses of Proposition \ref{prop:entropy-ineq}.
Therefore, from Proposition \ref{prop:entropy-final}, we have for every $m \in \bbN^*$ and $M \in \bbN$ large enough
\begin{align*}
\limsup\limits_{k \to +\infty} h_{f_k}(\nu_k)
&\leq \limsup\limits_{k \to +\infty} \left ( \beta_k^{M,m} \frac{1}{m} H_{\overline{\xi_k^{M,m}}}(\mathcal{R}_{q,k}^m) + \limsup\limits_{n \to +\infty} \frac{1}{n} H_{\zeta_k}(\mathcal{P}_k^n \mid \mathcal{R}_{q,k}^{E_{k,n}^{M,m}}) \right .\\
&+ \nu(\M \backslash \mathtt{F}_k) \log \sharp \mathcal{P}_k + \frac{3 \log M}{M} + \iota_k  \\
&\left.+\limsup\limits_{n \to +\infty} \frac{m}{n}\sum\limits_{E \in \mathcal{E}_{k,n}^{M,m}} \nu_k(E) \log \left ( \sharp(\mathcal{R}_{q,k})_{(\nu_k)_E^E} \right ) \sharp\partial E \right ). 
\end{align*}

Then, by letting $M$ then $m$ go to infinity and applying Proposition \ref{prop:entropy-cv} and item $6)$ from Lemma \ref{lem:entropy-R}, we obtain for any $q \in \bbN^*$
\begin{align*}
\limsup\limits_{m \to +\infty}\limsup\limits_{M \to +\infty}\limsup\limits_{k \to +\infty} h_{f_k}(\nu_k) \leq \beta h_f(\mu_1) + \limsup\limits_{m \to +\infty} \limsup\limits_{M \to +\infty} \limsup\limits_{k \to +\infty} \limsup\limits_{n \to +\infty} \frac{1}{n} H_{\zeta_k}(\mathcal{P}_k^n \mid \mathcal{R}_{q,k}^{E_{k,n}^M}).
\end{align*}

By Lemma \ref{lem:reparamization-partition} and Lemma \ref{lem:repabound}, we have for any $k,M,m$
$$
\begin{aligned}
\limsup\limits_{n \to +\infty} \frac{1}{n}H_{\zeta_k}(\mathcal{P}_k^n \mid \mathcal{R}_{q,k}^{E_{k,n}^{M,m}})&\leq \limsup\limits_{n \to +\infty} \sum_{ Q\in \mathcal{R}^{E^{M,m}_n}_{q,k}} \zeta_k(Q)\frac{1}{n}\log\sharp\Psi_{Q}\\
&\leq \limsup\limits_{n \to +\infty} \int \left( 1-\frac{\# 
E_{k,n}^{M,m}(x)}{n} \right)C(f_k) d \zeta_k + \frac{d}{r}\log^+ ||df_k ||_{\infty}\\
&+\frac{1}{(r-1)\cdot \nu_k(\mathtt{F}_k)}\left( \int \frac{\log^{+}|\Jac f_k^q|}{q}d\xi^{M,m,q}_{k,n}-\int \frac{\log|\Jac f_k^q|}{q}d\xi^{M,m,q}_{k,n} \right)\\
&+\gamma_{n,k,M,m,q}(f_k).
\end{aligned}
$$
We then investigate the three terms separately:
$$
\begin{aligned}
    &\operatorname{(I)}= \int \left( 1-\frac{\# E^{M,m}_{k,n}(x)}{n} \right)C(f_k) d \zeta_k + \frac{d}{r}\log^+ ||d f_k ||_{\infty},\\
    &\operatorname{(II)}=\frac{1}{(r-1)\cdot \nu_k(\mathtt{F}_k)}\left( \int \frac{\log^{+}|\Jac f_k^q|}{q}d\xi^{M,m,q}_{k,n}-\int \frac{\log|\Jac f_k^q|}{q}d\xi^{M,m,q}_{k,n} \right),\\
    &\operatorname{(III)}=\gamma_{n,k,M,m,q}(f_k).
\end{aligned}
$$

Firstly, we have $\int \frac{\# E^{M,m}_{k,n}(x)}{n} d \zeta_k \rightarrow \beta$ when $m,M,k,n\rightarrow +\infty$. Thus by continuity of $f \mapsto C(f)$ in the $C^1$-topology, we have
$$
\limsup\limits_{m \to +\infty} \limsup\limits_{M \to +\infty} \limsup\limits_{k \to +\infty} \limsup\limits_{n \to +\infty}\operatorname{(I)}=(1-\beta) C(f) + \frac{d}{r}\log^+ ||df ||_{\infty}.
$$

Secondly, by Lemma \ref{lem:term-II},
$$
\limsup\limits_{q \to +\infty}\limsup\limits_{m \to +\infty} \limsup\limits_{M \to +\infty} \limsup\limits_{k \to +\infty} \limsup\limits_{n \to +\infty}\operatorname{(II)}=0.
$$

Thirdly, by Lemma \ref{lem:repabound}, we have
$$
\limsup\limits_{q \to +\infty}\limsup_{m\rightarrow +\infty}\limsup_{M\rightarrow +\infty}\limsup_{k\rightarrow+\infty}\limsup_{n\rightarrow +\infty}\operatorname{(III)}=0
$$
Therefore, the choice of $\alpha'$ and hypothesis $ii)$ from Proposition \ref{prop:main-prop} give
$$
\limsup\limits_{k \to +\infty} h_{f_k}(\nu_k) \leq \beta h_f(\mu_1) + (1-\beta) \alpha + \alpha'.
$$
\end{proof}

We are left to prove item $3')$ from Theorem \ref{th:main2} for $\delta$ defined with $p=1$.

\begin{proposition}
We have
$$
\liminf\limits_{\mathcal{K} \ni k \to +\infty} \Lambda_{f_k}(\nu_k) \geq \beta \Lambda_f(\mu_1) + (1 - \beta) \delta.
$$
\end{proposition}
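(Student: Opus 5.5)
We split the exponent of $\zeta_k$ into its hyperbolic and neutral parts and pass to the limit in each separately, using the results of Sections \ref{ssec:cont-expo-HB} and \ref{sec:expo-neutral}. For every $k \in \mathcal{K}$, $n$ and $M$ we have the exact decomposition
$$
\Lambda_{f_k}(\zeta_k) = \int \log |\Jac f_k| \, d\xi_{k,n,\mathtt{F}_k}^M + \int \log |\Jac f_k| \, d\eta_{k,n,\mathtt{F}_k}^M ,
$$
and $\Lambda_{f_k}(\zeta_k)$ does not depend on $n$. We also recall that $\mathtt{F}_k$ was chosen so that $|\Lambda_{f_k}(\nu_k) - \Lambda_{f_k}(\zeta_k)| \to 0$. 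It therefore suffices to bound $\liminf_k \Lambda_{f_k}(\zeta_k)$ from below.

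Fix $M$ and $k$, and take the $\liminf$ in $n$ along the subsequence $\mathcal{n}$ on which all the relevant limits exist. The first term converges to $\int \log|\Jac f_k| \, d\xi_{k,\mathtt{F}_k}^M$ by the first step of Proposition \ref{prop:conv-expo-HB}. For the second term, Proposition \ref{prop:expo-neutral} gives a lower bound $(1-\beta_k^M)\delta$. Hence
$$
\Lambda_{f_k}(\zeta_k) \geq \int \log|\Jac f_k| \, d\xi_{k,\mathtt{F}_k}^M + (1-\beta_k^M)\delta .
$$

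Next let $k \to +\infty$ along $\mathcal{K}$, restricted to the diagonal subsequence used to define $\beta^M$ and $\xi^M$. The second step of Proposition \ref{prop:conv-expo-HB} sends the first term to $\int \log|\Jac f| \, d\xi^M$, and $\beta_k^M \to \beta^M$. Then let $M \to +\infty$. The third step of Proposition \ref{prop:conv-expo-HB} gives the limit $\beta \Lambda_f(\mu_1)$, and $\beta^M \to \beta$. Combining these limits yields
$$
\liminf_{\mathcal{K} \ni k \to +\infty} \Lambda_{f_k}(\nu_k) \geq \beta \Lambda_f(\mu_1) + (1-\beta)\delta .
$$

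The main point to check is that the $\liminf$ along $\mathcal{K}$ agrees with the value along the extracted subsequence. We handle this by taking the subsequence first inside $\mathcal{K}$ so that it realises the $\liminf$, and only then applying Cantor's diagonal argument to it. A second point is the normalisation by $\mathtt{F}_k$: the quantities $\beta_k^M$ defined with $\nu_k$ and those defined with $\zeta_k$ differ by at most $\nu_k(\M\setminus\mathtt{F}_k) \to 0$. Finally, there is a possible issue of integrability near the critical set when passing to weak-$*$ limits. This is already absorbed in Propositions \ref{proposition:mu1-integrable} and \ref{prop:conv-expo-HB}, because $\xi^M$ lives on times uniformly away from critical points (Lemma \ref{lem:prop-hb}).
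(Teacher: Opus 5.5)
Your proposal is correct and follows the paper's proof. You use the same splitting of $\Lambda_{f_k}(\zeta_k)$ into hyperbolic and neutral parts, the same limits from Propositions \ref{prop:conv-expo-HB} and \ref{prop:expo-neutral}, and the same fix of extracting a subsequence of $\mathcal{K}$ that realises the $\liminf$ before the diagonal argument (which also determines $\beta,\mu_1$). Your added remark, that the $\zeta_k$- and $\nu_k$-normalised masses differ by at most $\nu_k(\M\setminus\mathtt{F}_k)$, is a correct detail the paper leaves implicit.
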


\begin{proof}
Recall that in the beginning of section \ref{ssec:cont-expo-HB}, we explained that we can replace $\nu_k$ with $\zeta_k$ and we wrote
$$
\Lambda_{f_k}(\zeta_k) = \Lambda_{f_k}(\xi_{k,n,\mathtt{F}_k}^M) + \Lambda_{f_k}(\eta_{k,n,\mathtt{F}_k}^M).
$$
In that section, we also proved Proposition \ref{prop:conv-expo-HB}, which gives
$$
\lim\limits_{M \to +\infty} \lim\limits_{k \to +\infty} \lim\limits_{n \to +\infty} \Lambda_{f_k}(\xi_{k,n,\mathtt{F}_k}^M) = \beta \Lambda_f(\mu_1).
$$
Then, Proposition \ref{prop:expo-neutral} gives
$$
\liminf\limits_{M \to +\infty} \liminf\limits_{k \to +\infty} \liminf\limits_{n \to +\infty} \Lambda_{f_k}(\eta_{k,n,\mathtt{F}_k}^M) \geq (1 - \beta) \delta.
$$
Recall that $M,k,n$ actually belong to some subsequences given in Section \ref{ssec:main-prop} by Cantor's diagonal argument.
In particular, $k$ may not go through all of $\mathcal{K}$.
However, we can obtain the liminf in $k \in \mathcal{K}$ in the statement by first choosing a subsequence of $\mathcal{K}$ along which $(\Lambda_{f_k}(\nu_k))_{k \in \mathcal{K}}$ converges to its liminf, then use Cantor's diagonal argument again.  
This concludes the proof of item $3')$ of Theorem \ref{th:main2}, hence of Proposition \ref{prop:main-prop}.
\end{proof}

\subsubsection{Proof of Theorem \ref{th:main2}}
\label{section:proof-th}

We show how to obtain an iterate of $f$ which satisfies the hypotheses of Proposition \ref{prop:main-prop}.
We first deal with the cases where Theorem \ref{th:main2} is true for $\beta = 0$.
Recall that $\mathcal{K}$ is a sequence of integers such that $(h_{f_k}(\nu_k))_k$ converges to its limsup along $\mathcal{K}$.

\begin{lemma}
\label{th:small-liminf}
If $\liminf\limits_{\mathcal{K} \ni k \to +\infty} \Lambda_{f_k}(\nu_k) \leq \alpha$, then Theorem \ref{th:main2} works with $\beta = 0$.
\end{lemma}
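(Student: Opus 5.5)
The plan is to take $\beta = 0$, $\mu_0 = \mu$, and choose $\mu_1$ arbitrarily (say $\mu_1 = \mu$); the decomposition $\mu = \beta\mu_1 + (1-\beta)\mu_0$ then holds trivially. I would check the items of Theorem \ref{th:main2} one at a time.

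\emph{Item 1).} Both sides of the equivalence are false. $\beta>0$ fails by choice, and $\liminf_{\mathcal{K}\ni k\to+\infty}\Lambda_{f_k}(\nu_k) > \alpha$ fails by hypothesis, so the equivalence holds.

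\emph{Item 3).} Its hypothesis requires $\beta>0$, so it is vacuous.

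\emph{Item 2).} This is the only substantive step: we must show
$$
\limsup_{k\to+\infty} h_{f_k}(\nu_k) \leq \alpha + \alpha'.
$$
\begin{itemize}
\item[(a)] Since $\mathcal{K}$ realizes the limsup of the entropies, it suffices to bound $h_{f_k}(\nu_k)$ for $k\in\mathcal{K}$.
\item[(b)] Each $\nu_k$ has $d$ positive exponents, so Ruelle's inequality (for $C^1$ maps on compact manifolds, valid with critical points) gives $h_{f_k}(\nu_k) \leq \sum_i \lambda_i^+(\nu_k) = \Lambda_{f_k}(\nu_k)$. Positivity of all exponents makes the sum of positive exponents equal to the full Jacobian exponent.
\item[(c)] Pass to a further subsequence of $\mathcal{K}$ along which $\Lambda_{f_k}(\nu_k)$ tends to its liminf over $\mathcal{K}$; along it the entropies still tend to the limsup. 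Then
$$
\limsup_k h_{f_k}(\nu_k) \leq \liminf_{\mathcal{K}\ni k} \Lambda_{f_k}(\nu_k) \leq \alpha \leq \alpha + \alpha',
$$
using $\alpha' > 0$.
\end{itemize}

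The main obstacle I expect is justifying $\Lambda_{f_k}(\nu_k) = \sum_i \lambda_i(\nu_k)$ when $\log|\Jac f_k|$ might fail to be integrable, that is, when it equals $-\infty$ on a set of positive measure. If it is not integrable, the integral is $-\infty$ while the exponents are positive, which is impossible. Indeed, the upper bound $\log|\Jac f_k| \leq d\log\|df_k\|_\infty$ makes the integral well defined in $[-\infty,\infty)$, and Oseledets applied to the exterior power cocycle (as in the preliminaries) identifies it with $\sum_i\lambda_i$ whenever the exponents are finite. Positivity of all exponents then forces integrability, so the identity holds and Ruelle's inequality applies.
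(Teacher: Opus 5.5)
Your proof is correct and follows the paper's argument: take $\beta=0$ with $\mu_1$ arbitrary, then use Ruelle's inequality and the positivity of all exponents of $\nu_k$ to get $\lim_{\mathcal{K}} h_{f_k}(\nu_k)\le\liminf_{\mathcal{K}}\Lambda_{f_k}(\nu_k)\le\alpha$. The paper leaves implicit your checks of items 1) and 3), your integrability remark, and the use of $\alpha'>0$.
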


\begin{proof}
We let $\mu_1$ be any $f$-invariant measure.
We recall that each $\nu_k$ has $d$ positive Lyapunov exponents.
In this setting, Ruelle's inequality \cite{Ruelle1978} gives 
\begin{align*}
\limsup\limits_{k \to +\infty} h_{f_k}(\nu_k)
&=\lim\limits_{\mathcal{K} \ni k \to +\infty} h_{f_k}(\nu_k)\\
&\leq \liminf\limits_{\mathcal{K} \ni k \to +\infty} \int \log |\Jac f_k| d \nu_k\\
&\leq \alpha.
\end{align*}
\end{proof}

We first give some notations to deal with iterates of $f_k$ and $f$.
If $p \geq 1$, we will consider $f_k^p$ and $f^p$.
Then, we want a converging sequence of $f_k^p$-ergodic measures.
Let $\nu_k^p$ be an ergodic component of $\nu_k$ for $f^p$, thus
$$
h_{f_k^p}(\nu_k^p) = p h_{f_k}(\nu_k) \; \; \; \text{and} \; \; \; \Lambda_{f_k^p}(\nu_k^p) = p \Lambda_{f_k}(\nu_k).
$$
By taking a subsequence, contained in $\mathcal{K}$, we may assume that $(\nu_k^p)_k$ converges to some $\mu^p$.
Notice that $\mu = \frac{1}{p}\sum\limits_{i=0}^{p-1} f^i_* \mu^p$.
Let $\alpha_p = p \alpha > p \left ( d - 1 + \frac{d^2}{r} \right ) R(f) = \left ( d - 1 + \frac{d^2}{r} \right ) R(f^p)$ and $\alpha_p' = p \alpha' > \frac{d}{r} R(f^p)$.\\


\begin{lemma}
\label{lem:main-prop-iterate}
If
$\liminf\limits_{\mathcal{K} \ni k \to +\infty} \Lambda_{f_k}(\nu_k) > \alpha$, then there exists $p$ such that $f_k^p$, $f^p$, $\nu_k^p$ and $\mu^p$ satisfy the hypotheses of Proposition \ref{prop:main-prop}. 
\end{lemma}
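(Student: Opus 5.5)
We need to check the two hypotheses of Proposition \ref{prop:main-prop} for $f_k^p$, $f^p$, $\nu_k^p$, $\mu^p$, with $\alpha_p = p\alpha$ and $\alpha_p' = p\alpha'$. The only freedom is $p$, and the plan is to exploit that every error quantity in those hypotheses compares $\log^+\|d f^p\|_\infty$ (not normalized by $p$) with $R(f^p) = pR(f)$. Since $R(f) = \lim_p \frac1p \log^+\|df^p\|_\infty$, this difference is $o(p)$. Meanwhile the margin $\liminf \Lambda_{f_k^p}(\nu_k^p) - \alpha_p = p(\liminf \Lambda_{f_k}(\nu_k) - \alpha)$ grows linearly in $p$.

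First, hypothesis i). Set $\kappa := \liminf_{\mathcal{K}\ni k} \Lambda_{f_k}(\nu_k) - \alpha > 0$. We have $\Lambda_{f_k^p}(\nu_k^p) = p\Lambda_{f_k}(\nu_k)$, and the subsequence chosen for $\nu_k^p$ lies in $\mathcal{K}$, so its liminf is at least $p\kappa + p\alpha$. The hypothesis to verify is therefore
$$p\kappa > \frac{d(dr+r-1)}{r(r+d-1)}\left(\log^+\|df^p\|_\infty - pR(f)\right).$$
Dividing by $p$, the right-hand side tends to $0$, so this holds for all large $p$. Two side points need checking. Along the subsequence, $h_{f_k^p}(\nu_k^p) = p\,h_{f_k}(\nu_k)$ still converges to its limsup. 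And $(f_k^p)$ converges $C^r$-weakly to $f^p$, since composition preserves $C^1$ convergence and $C^r$ bounds. The measures $\nu_k^p$ are ergodic with $d$ positive exponents, because the exponents of $f^p$ are $p$ times those of $f$.

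Second, hypothesis ii) and the condition on $\alpha'$. For $\alpha_p' > \frac{d}{r}\log^+\|df^p\|_\infty$, it suffices that $p\alpha' > \frac dr \log^+\|df^p\|_\infty$, which holds for large $p$ since $\alpha' > \frac dr R(f)$ strictly. For ii) we need $p\alpha > C(f^p)$, where
$$C(f^p) = \left(d-1+\frac{d^2}{r}\right)\log^+\|df^p\|_\infty + \log C_{r,d} + 1 + 2\log\left(1+d\log\|df^p\|_\infty\right).$$
Divided by $p$, the first term tends to $\left(d-1+\frac{d^2}{r}\right)R(f) < \alpha$. The constant $\log C_{r,d}+1$ is $O(1)$. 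The last term is $O(\log p)$ because $\log\|df^p\|_\infty \le p\log\|df\|_\infty$. So $C(f^p)/p \to \left(d-1+\frac{d^2}{r}\right)R(f) < \alpha$, and ii) holds for large $p$. The key point is that $C_{r,d}$ depends only on $r$ and $d$, not on $p$. Choosing $p$ large enough to satisfy all three inequalities at once finishes the proof.

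The main obstacle is this $p$-independence. Lemma \ref{lem:repabound} was proved for a general $C^r$ map with the constants $C_{r,d}$ coming from the Algebraic Lemma and Lemma \ref{lem:iter}, and I need to confirm that no hidden dependence on $\|f\|_r$ enters $C(f)$ rather than the vanishing error term $\gamma$. From the displayed formula, the $C^r$ norm enters only through $\varepsilon_0$ and hence through $\gamma$. The uniformity of $\gamma$ in Lemma \ref{lem:repabound} is over maps with $\|df\|_\infty \le K$, and $f_k^p$ for fixed $p$ satisfies such a bound uniformly in $k$. So fixing $p$ first and then applying Proposition \ref{prop:main-prop} is legitimate.
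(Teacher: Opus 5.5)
Your proposal is correct and follows the paper's proof. You choose $p$ large so that $\frac{d}{r}\log^+\|df^p\|_\infty < p\alpha'$. You then use $\frac{1}{p}\log^+\|df^p\|_\infty \to R(f)$ together with $\Lambda_{f_k^p}(\nu_k^p) = p\,\Lambda_{f_k}(\nu_k)$ for hypothesis i), and $C(f^p)/p \to \left(d-1+\frac{d^2}{r}\right)R(f) < \alpha$ for hypothesis ii). Your side checks are details the paper leaves implicit: the $O(\log p)$ term, the $p$-independence of $C_{r,d}$, the $C^r$-weak convergence of the iterates, and that the entropies still converge to their limsup along the subsequence.
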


\begin{proof}
For $p$ large enough we have $\frac{d}{r} \log^+ || d f^p ||_{\infty} < \alpha_p'$.
Then there is $p$ large enough such that
$$
\liminf\limits_{\mathcal{K} \ni k \to +\infty} \Lambda_{f_k}(\nu_k) > 
 \alpha + \frac{d (dr+r-1)}{r(r+d-1)} \left ( \frac{1}{p}\log^+ || d f^p ||_{\infty} - R(f) \right ).
$$
Therefore, we have
$$
\liminf\limits_{\mathcal{K} \ni k \to +\infty} \Lambda_{f_k^p}(\nu_k^p) = p\liminf\limits_{\mathcal{K} \ni k \to +\infty} \Lambda_{f_k}(\nu_k) > \alpha_p + \frac{d (dr+r-1)}{r(r+d-1)} \left ( \log^+ || d f^p ||_{\infty} - R(f^p) \right ).
$$
which is hypothesis $i)$ from Proposition \ref{prop:main-prop}.
For item $ii)$, notice that $C(f^p)/p \underset{p \to +\infty}{\longrightarrow} \left (d-1 + \frac{d^2}{r} \right ) R(f) < \alpha$.
Therefore, we can assume that $p$ is large enough so that $C(f^p) < p \alpha = \alpha_p$.
\end{proof}

We are now ready to prove Theorem \ref{th:main2}.

\begin{proof}[Proof of Theorem \ref{th:main2}]
If
$\liminf\limits_{\mathcal{K} \ni k \to +\infty} \Lambda_{f_k}(\nu_k) \leq \alpha$,
then Lemma \ref{th:small-liminf} concludes.
Assume now that
$\liminf\limits_{\mathcal{K} \ni k \to +\infty} \Lambda_{f_k}(\nu_k) > \alpha$.
Let $p$ be given by Lemma \ref{lem:main-prop-iterate}.
We apply Proposition \ref{prop:main-prop} to the $p$-th iterate of $f$, which gives $\beta_p > 0, \mu_0^p$ and $\mu_1^p$ such that Theorem \ref{th:main2} holds for $f_k^p, f^p, \nu_k^p,\mu^p$ and $\delta_p = \min \left \{ \alpha_p - \frac{d(dr+r-1)}{r(r+d-1)} R(f^p), \frac{r-1}{r+d-1} \log 2 \right \}$.
Let $\beta = \beta_p$, $\mu_{0/1} = \frac{1}{p} \sum\limits_{i=0}^{p-1} f^i_* \mu_{0/1}^p$, $\delta = \frac{1}{p} \delta_p$.
We show that these quantities give Theorem \ref{th:main2} for $f,f_k,\nu_k$ and $\mu$.\\

Proof of \ref{th:main2}.1):
We chose $\beta = 0$ whenever the announced condition was not satisfied. Conversely, if this condition is satisfied, then we apply Proposition \ref{prop:main-prop} which gives $\beta_p > 0$, so $\beta = \beta_p$ concludes.\\

Proof of \ref{th:main2}.3)1'):
We have a set of full $\mu_1^p$-measure where $\Lambda_{f^p} \geq \delta_p = p \delta$.
Therefore, we have a set of full $\mu_1^p$-measure where $\Lambda_f \geq \delta$.
Since this property is $f$-invariant $\mu_1^p$-a.e., there is a set of full $\mu_1$-measure where $\Lambda_f \geq \delta$, which concludes.\\

Proof of \ref{th:main2}.3)3'):
We have
$$
\liminf\limits_{k \to +\infty} \Lambda_{f_k}(\nu_k) = \frac{1}{p}\liminf\limits_{k \to +\infty} \Lambda_{f_k^p}(\nu_k^p)
\geq \frac{1}{p} \left ( \beta_p \Lambda_{f^p}(\mu_1^p) + (1-\beta_p) \delta_p\right )
=\beta \Lambda_f(\mu_1) + (1-\beta) \delta.
$$

Proof of \ref{th:main2}.2):
We have
\begin{align*}
\limsup\limits_{k \to +\infty} h_{f_k}(\nu_k)
&= \frac{1}{p} \limsup\limits_{k \to +\infty} h_{f_k^p}(\nu_k^p)\\
&\leq \frac{1}{p}\left ( \beta_p h_{f^p}(\mu_1^p) + (1 - \beta_p) \alpha_p + \alpha_p' \right )\\
&= \beta h_f(\mu_1) + (1-\beta)\alpha + \alpha'.
\end{align*}
\end{proof}

\begin{proof}[Proof of Theorem \ref{th:main-cont}]
Let $\widetilde{\mu}_1, \widetilde{\mu}_0, \widetilde{\beta}$ be given by Theorem \ref{th:main2}.
Let $\beta \mu_1$ be the expanding component of $\widetilde{\beta} \widetilde{\mu}_1$, meaning that $\mu_1$ is an $f$-invariant probability measure, that $\beta$ is in $[0,1]$, and that $\lambda_d^f(x) > 0$ for $\mu_1$-almost every $x$ and $\lambda_d^f(x) \leq 0$ for $(\widetilde{\beta} \widetilde{\mu}_1 - \beta \mu_1)$-almost every $x$.
Let $(1 - \beta)\mu_0 = \mu - \beta \mu_1$.
Thus item $1)$ is satisfied.
To show item $3)$, we use item $3')$ and the facts that $\beta \mu_1$ is a component of $\widetilde{\beta} \widetilde{\mu}_1$ and that $\Lambda_f$ is nonnegative $\widetilde{\mu}_1$-almost everywhere, which comes from item $1')$.
Let $\widetilde{\nu}$ (resp. $\widetilde{\gamma}$) be the normalization (resp. mass) of the measure $\widetilde{\beta} \widetilde{\mu}_1 - \beta \mu_1$, thus $\widetilde{\gamma} \widetilde{\nu} = \widetilde{\beta} \widetilde{\mu}_1 - \beta \mu_1$ and $\widetilde{\gamma} = \widetilde{\beta} - \beta$.
To show item $2)$, we notice that Ruelle's inequality gives $h_f(\widetilde{\nu}) \leq (d-1) R(f)$.
Then we write
\begin{align*}
\limsup\limits_{k \to +\infty} h_{f_k}(\nu_k)
&\leq \widetilde{\beta} h_f(\widetilde{\mu}_1) + (1-\widetilde{\beta}) \alpha + \alpha'\\
&= \beta h_f(\mu_1) + \widetilde{\gamma} h_f(\widetilde{\nu}) + (1 - \widetilde{\beta}) \alpha + \alpha'\\
&\leq \beta h_f(\mu_1) + (\widetilde{\beta} - \beta )(d-1) R(f) + (1 - \widetilde{\beta}) \alpha + \alpha'\\
\alpha > (d-1) R(f) \hspace{1em}&\leq \beta h_f(\mu_1) + (1 - \beta) \alpha + \alpha'. 
\end{align*}
It remains to show item $4)$. By item $1)$ from Theorem \ref{th:main2}, and from the fact that $\widetilde{\beta} \geq \beta$, one direction of the equivalence is immediate.
Then assume that $\liminf\limits_{\mathcal{K} \ni k \to +\infty} \Lambda_{f_k}(\nu_k) > \alpha$.
Then we are in the setting where $\widetilde{\beta}, \widetilde{\mu_1}$ and $\widetilde{\mu_0}$ are obtained from the constructions from Section \ref{sec:ent-exp-HB}.
In particular, Lemma \ref{lem:mu1-exp} gives that $\widetilde{\mu_1}$ has a non-zero expanding component, concluding that $\beta > 0$.
\end{proof}

\begin{proof}[Proof of Theorem \ref{thm:main-con-1}]
Let $r_n \to +\infty$.
For any $n \in \bbN$, let $\alpha_n > \left ( d-1 + \frac{d^2}{r_n} \right ) R(f)$ and $\alpha_n' > \frac{d}{r_n} R(f)$.
Assume that $\alpha_n \to (d-1)R(f)$ and $\alpha_n' \to 0$.
For any $n$, let $\beta_n, \mu_0^n$ and $\mu_1^n$ be given by Theorem \ref{th:main-cont}.
By taking subsequences, we may assume that $\beta_n \to \beta, \mu_i^n \overset{\text{weak-}*}{\longrightarrow} \mu_i$, $i \in \{0, 1 \}$.
By upper semi-continuity of the metric entropy for $C^{\infty}$ maps \cite{NewhouseContEntropy}, letting $n$ go to infinity gives
$$
\limsup\limits_{k \to +\infty} h_f(\nu_k) \leq \beta h_f(\mu_1) + (1-\beta) (d-1) R(f).
$$
To obtain items $1)$ and $3)$, weak-$*$ convergence is not enough.
However, for any $n$, we have $\beta_n \mu_1^n \leq \mu$ and $\beta \mu_1 \leq \mu$.
By using the density of continuous functions in $L^1(\mu)$, one can obtain $\beta_n \mu_1^n (A) \underset{n \to +\infty}{\longrightarrow} \beta \mu_1(A)$ for any Borel set $A$.
This gives item $1)$, and one can then get the convergence on integrable test functions, which gives items $3)$.
\end{proof}


\vspace{2em}

\vspace{3em}

\noindent\textsc{Alexandre Delplanque}, Sorbonne Universit\'e and Universit\'e Paris Cit\'e, CNRS, Laboratoire de Probabilit\'es, Statistique et Mod\'elisation\\
F-75005 Paris, France\\
\textit{E-mail:} 
{\fontfamily{lmtt}\selectfont
delplanque@lpsm.paris
}

\vspace{1em}

\noindent\textsc{Hengyi Li}, Einstein Institute of Mathematics, The Hebrew University of Jerusalem,\\ Jerusalem 9190401, Israel \\
\textit{E-mail:}
{\fontfamily{lmtt}\selectfont
hengyi.li@mail.huji.ac.il
}

\end{document}